\theoremstyle{thmstyleone}%
\newtheorem{theorem}{Theorem}
\newtheorem{proposition}[theorem]{Proposition}%
\newtheorem{lemma}[theorem]{Lemma}
\newtheorem{corollary}[theorem]{Corollary}
\theoremstyle{thmstyletwo}%
\newtheorem{remark}{Remark}%
\theoremstyle{thmstylethree}%
\begin{document}

\title[Dewetting Regime]{Motion of Islands of Elastic Thin Films in the Dewetting Regime}


\author[1]{\fnm{Gianni} \sur{Dal Maso}}\email{dalmaso@sissa.it}
\equalcont{These authors contributed equally to this work.}

\author[2]{\fnm{Irene} \sur{Fonseca}}\email{fonseca@andrew.cmu.edu}
\equalcont{These authors contributed equally to this work.}

\author*[3]{\fnm{Giovanni} \sur{Leoni}}\email{giovanni@andrew.cmu.edu}

\affil*[1]{\orgname{SISSA}, \orgaddress{\street{Via Bonomea 265}, \city{Trieste}, \postcode{ 34136},  \country{Italy}}}

\affil[2]{\orgdiv{Department of Mathematical Sciences}, \orgname{Carnegie Mellon University}, \orgaddress{\street{5000 Forbes Avenue}, \city{Pittsburgh}, \postcode{15217}, \state{PA}, \country{USA}}}

\affil[3]{\orgdiv{Department of Mathematical Sciences}, \orgname{Carnegie Mellon University}, \orgaddress{\street{5000 Forbes Avenue}, \city{Pittsburgh}, \postcode{15217}, \state{PA}, \country{USA}}}


\abstract{This paper addresses a two-dimensional sharp interface variational model for
 solid-state dewetting of thin films with surface energies, introduced 
by Wang, Jiang, Bao, and Srolovitz in \cite{jiang2016solid}.
Using the $H^{-1}$-gradient 
flow structure of the evolution
law, short-time existence for a surface diffusion evolution
equation with curvature regularization is established in the context of epitaxially strained two-dimensional films. The main novelty, as compared to the study of the wetting regime, is the presence of moving contact lines.}

\keywords{Minimizing Movements, Moving Contact Angles, Elastic Thin Films}


\pacs[MSC Classification]{35K25, 35B65, 74K35}

\maketitle

\section{Introduction}\label{sec1}

Understanding the dewetting process and its underlying mechanisms is essential for controlling the morphology and properties of thin films, which have many applications in microelectronics, optics, and other fields (see \cite{freund2004thin}).
This type of dewetting is distinct from liquid dewetting and is primarily driven by surface diffusion-controlled mass transport at temperatures below the melting point of the film. The movement of the contact line, where the film, substrate, and vapor phases meet, plays a crucial role in this process.
Mathematically modeling the morphology evolution of solid-state dewetting involves treating it as a surface-tracking problem. The minimization of interfacial energy guides the process and consists of a combination of surface diffusion-controlled mass transport and the movement of the contact line. While moving contact line problems have been extensively studied in fluid mechanics, incorporating surface diffusion-based geometric evolution equations with moving contact lines presents significant challenges in materials science, applied mathematics, and scientific computing.

This paper studies a two-dimensional sharp interface variational model for
simulating solid-state dewetting of thin films with surface energies, proposed
by Wang, Jiang, Bao, and Srolovitz in \cite{jiang2016solid}, (see also
\cite{jiang2018}). The morphology evolution is driven by surface
diffusion and contact points migration, coupled with elastic deformation. We restrict our consideration to a
single island whose profile is given by a function $h:[\alpha,\beta
]\rightarrow\lbrack0,\infty)$, where $\alpha$ and $\beta$ are the contact
points, $h(\alpha)=h(\beta)=0$,  and $h(x)>0$ for every $x\in(\alpha,\beta)$.
The region occupied by the island is%
\[
\Omega_{h}:=\{(x,y)\in\mathbb{R}^{2}:\,\alpha<x<\beta,\,0<y<h(x)\}\,.
\]
We assume that the region $\mathbb{R}\times(-\infty,0]$ is filled by a rigid
substrate and $(\mathbb{R}\times(0,\infty))\setminus\Omega_{h}$ by a vapor.

The elastic displacement within the island is described by a function
$u:\Omega_{h}\rightarrow\mathbb{R}^{2}$, which satisfies the boundary
condition
\begin{equation}
u(x,0)=(e_{0}x,0)\,\text{ for }\,x\in(\alpha,\beta)\,.
\label{boundary condition u}%
\end{equation}
The parameter $e_{0}\neq0$ reflects the mismatch between the crystalline
structures of the thin film and the substrate. The underlying energy is 
\begin{equation}
\mathcal{W}(u,\Omega_{h})+\gamma\operatorname*{length}(\Gamma_{h})-\gamma
_{0}(\beta-\alpha)+\frac{\nu_{0}}{2}\int_{\Gamma_{h}}\kappa^{2}ds\,, \label{energy}%
\end{equation}
where $\mathcal{W}(u,\Omega_{h})$ is the linearized elastic energy of the
displacement $u$, $\Gamma_{h}$ is the graph of $h$, $\kappa$ is the curvature,
and $s$ is the arclength parameter on $\Gamma_{h}$. The constant
$\gamma=\gamma_{FV}>0$ is the surface energy density between film and vapor,
while $\gamma_{0}=\gamma_{VS}-\gamma_{FS}$, where $\gamma_{VS}$ is the surface
energy density between vapor and substrate, and $\gamma_{FS}$ between film and
substrate. The constant $\nu_{0}>0$ is a small parameter in the curvature
regularization, which is commonly used in the literature
(\cite{di1992regularized}, \cite{gurtin2002interface}). The dewetting regime
(Volmer--Weber) is characterized by the inequality $\gamma>\gamma_{0}$, which
favors the exposure of the substrate.

We will assume that at each time the displacement $u$ satisfies the elastic
equilibrium problem on $\Omega_{h}$ with natural boundary conditions on
$\Gamma_{h}$ and the Dirichlet boundary condition \eqref{boundary condition u}
in the rest of the boundary.

The time evolution of $(\alpha,\beta,h)$ is obtained as gradient flow of the
energy \eqref{energy} with the area constraint $|\Omega_{h}|=A_{0}>0$, where
for the dynamics of $h$ we use a type of $H^{-1}(\Gamma_{h})$ norm (see
\cite{ambrosio2008gradient}, \cite{cahn1994overview}). To describe the
equations of this evolution system, we introduce the chemical potential
$\zeta$ defined in terms of the arclength parameter $s$ of $\Gamma_{h}$ by%
\begin{equation}
\zeta=-\gamma\kappa+\nu_{0}\Big(  \partial_{ss}\kappa+\frac{\kappa^{3}}{2}\Big)
+\widetilde{W}\,, \label{evolution h}%
\end{equation}
 where $\widetilde{W}$
is the value of the elastic energy density of $u$ at the point of $\Gamma_{h}$
corresponding to $s$.

The equation for $h$ is given by%
\begin{equation}
\widetilde{V}=\rho_{0}\partial_{ss}\zeta\quad\text{on }\Gamma_{h}\,,
\label{normal equation}%
\end{equation}
where $\widetilde{V}$ denotes the normal velocity of the time dependent curve
$\Gamma_{h}$ at the point corresponding to $s$ and $\rho_{0}>0$ is a material constant.

The equations for the contact points migration are%
\begin{align}
&  \sigma_{0}\dot{\alpha}=\gamma\cos\theta_{\alpha}-\gamma_{0}+\nu_{0}\partial_{s}
\kappa_{\alpha}\sin\theta_{\alpha}\,,\label{young law}\\
&  \sigma_{0}\dot{\beta}=-\gamma\cos\theta_{\beta}+ \gamma_{0}-\nu_{0}
\partial_{s}\kappa_{\beta}\sin\theta_{\beta}\,,\nonumber
\end{align}
 where $\sigma_{0}>0$ is a material constant, the dot denotes the time
derivative, $\theta_{\alpha}$ and $\theta_{\beta}$ are the oriented angles
between the $x$-axis and the tangent  to  $\Gamma_{h}$ at
$(\alpha,0)$ and $(\beta,0)$,  both oriented with increasing values of $x$, while $\partial_{s}\kappa_{\alpha}$ and
$\partial_{s}\kappa_{\beta}$ are the derivatives with respect to $s$ of the curvature
$\kappa$ of $\Gamma_{h}$ at the values of $s$ corresponding to $(\alpha,0)$
and $(\beta,0)$. Observe that if we neglect the curvature regularization, that
is, we take $\nu_{0}=0$, then the resulting equations reduce to the usual
Young's law (\cite{davoli-piovano}, \cite{dephilippis-maggi2015}, \cite{freund2004thin}).

The main result of this paper is that for every initial condition $(\alpha
_{0},\beta_{0},h_{0})$ there exists a small time $T>0$ such that the evolution
equations \eqref{normal equation} and \eqref{young law} admit a weak solution
on $[0,T]$.
Our approach does not allow us to obtain the uniqueness of solutions.

The existence proof relies on a minimizing movements argument: we consider a
time discretization and construct an approximate solution via incremental
minimum problems involving the energy \eqref{energy}. While this approach is
not novel for epitaxial growth (see \cite{fonseca2012motion},
\ \cite{fonseca2015motion}, \cite{piovano2014evolution}), the major challenge
here is that the domain $\Omega_{h}$ has evolving corners. This requires a
delicate $W^{2,p}(\Omega_{h})$ estimate of the solution for the Lam\'{e}
system, with a precise dependence on the time step of the discretization. This
estimate plays a central role in the study of the convergence of the
discretized solutions for the generalized Young's law \eqref{young law}.
 We refer to \cite{wilkening-borucki-sethian} for a stress-driven grain boundary diffusion problem, where the analysis of the singularities of the solutions to the Lam\'{e} system near triple junctions plays a crucial role.

There is an extensive body of literature in two and three dimensions for the
static problem both in the wetting ($\gamma<\gamma_{0}$) and dewetting
($\gamma>\gamma_{0})$ regimes. We refer to \cite{bella2014study},
\cite{bonacini2013epitaxially}, \cite{bonacini2015stability},
\cite{braides2007relaxation}, \cite{capriani2013quantitative},
\cite{chambolle2002computing}, \cite{chambolle2007interaction},
\cite{crismale-friedrich}, \cite{de2012regularity},
\cite{fonseca2007equilibrium},\cite{fonseca2011material},
\cite{fonseca2014shapes}, \cite{fusco2012equilibrium},
\cite{goldman2014scaling}, \cite{kukta1997minimum},
\cite{spencer1997equilibrium}, and the references therein for the wetting
regime;\ and \cite{davoli-piovano}, \cite{piovano-velcic2022}  for the dewetting regime. We also refer to
 \cite{almi-lucardesi} for a study of the
Lam\'{e} system in the presence of cracks.

For the evolution case in the wetting regime, we refer to
\cite{fonseca2012motion}, \ \cite{fonseca2015motion},
\cite{fusco-julin-morini2d}, \cite{fusco-julin-morini3d},
\cite{piovano2014evolution}, \cite{siegel2004evolution}. Observe that in the
papers \cite{fusco-julin-morini2d}, \cite{fusco-julin-morini3d} the curvature
regularization is omitted.

While moving contact line problems have been studied in the fluid mechanics
community (see, e.g., \cite{guo-tice},\cite{guo-tice-ns}, \cite{tice-wu} and
the references therein), to our knowledge our work is the first to prove the
generalized Young's law in a problem involving elasticity.

\section{Preliminaries}

Throughout this paper, we fix the physical parameters $\gamma$, $\gamma_{0}$,
$\sigma_{0}$, $A_{0}$, $e_{0}\in\mathbb{R}$, with $\gamma>0$, $\gamma
>\gamma_{0}$, $\sigma_{0}>0$, $A_{0}>0$, $e_{0}\neq0$, the Lam\'{e}
coefficients $\lambda$ and $\mu$, with $\mu>0$ and $\lambda+\mu>0$, and the
regularizing parameters $L_{0}\geq1$  and  $\nu_{0}>0$. We renormalize the
parameter $\rho_{0}$ in \eqref{normal equation} to be one.
 We use standard notation for Lebesgue and Sobolev spaces, as well as for spaces of
H\"older continuous and differentiable functions. 

We introduce the class of admissible surface profiles, $\mathcal{A}_{s}$, as
the set of all $(\alpha,\beta,h)$ such that $\alpha<\beta$, $h\in
H^{2}((\alpha,\beta))\cap H_{0}^{1}((\alpha,\beta))$, with $h\geq0$ in
$(\alpha,\beta)$, $\operatorname*{Lip}h\leq L_{0}$, and%
\begin{equation}
\int_{\alpha}^{\beta}h(x)\,dx=A_{0}\,. \label{area constraint}%
\end{equation}
Moreover, if $(\alpha,\beta,h)\in\mathcal{A}_{s}$ then $\check{h}$ is the
extension of $h$ by zero outside of $[\alpha,\beta]$, and we set
\begin{equation}
H(x;\alpha,\beta,h):=\int_{-\infty}^{x}\check{h}(\rho)\,d\rho=\int_{\alpha
}^{x}\check{h}(\rho)\,d\rho\,,\quad x\in\mathbb{R}\,, \label{Functional H}%
\end{equation}
and%
\begin{equation}
\Omega_{h}:=\{(x,y)\in\mathbb{R}^{2}:\,\alpha<x<\beta,\,0<y<h(x)\}\,.
\label{Omega h}%
\end{equation}
Furthermore, the admissible class $\mathcal{A}_{e}(\alpha,\beta,h)$ of elastic
displacements in $\Omega_{h}$ is defined as%
\begin{equation}
\mathcal{A}_{e}(\alpha,\beta,h):=\{u\in H^{1}(\Omega_{h};\mathbb{R}%
^{2}):\,u(x,0)=(e_{0}x,0)\text{ for a.e. }x\in(\alpha,\beta)\}\,.
\label{class A h}%
\end{equation}
Finally the admissible class $\mathcal{A}$ for the total energy is%
\begin{equation}
\mathcal{A}:=\{(\alpha,\beta,h,u):\,(\alpha,\beta,h)\in\mathcal{A}_{s}%
,\,u\in\mathcal{A}_{e}(\alpha,\beta,h)\}\,. \label{class A}%
\end{equation}

In what follows we will use the result below.

\begin{lemma}
\label{lemma endpoints}We have
\begin{equation}
\beta-\alpha\geq\sqrt{\frac{2A_{0}}{L_{0}}}%
\label{interval lower estimate}%
\end{equation}
for every $(\alpha,\beta,h)\in\mathcal{A}_{s}$.
\end{lemma}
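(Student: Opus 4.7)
The plan is to exploit the Lipschitz bound on $h$ together with the boundary conditions $h(\alpha)=h(\beta)=0$ to obtain a pointwise upper bound on $h$, and then integrate to turn the area constraint into a lower bound on $\beta-\alpha$.

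First I would fix $(\alpha,\beta,h)\in\mathcal{A}_{s}$ and observe that since $h\in H^{2}\cap H_{0}^{1}$ the trace values at $\alpha$ and $\beta$ vanish, so from $\operatorname*{Lip}h\leq L_{0}$ one deduces
\[
h(x)\leq L_{0}(x-\alpha)\qquad\text{and}\qquad h(x)\leq L_{0}(\beta-x)
\]
for every $x\in[\alpha,\beta]$. Taking the smaller of the two right-hand sides and noting that $\min\{x-\alpha,\beta-x\}\leq(\beta-\alpha)/2$ for all $x\in[\alpha,\beta]$, this yields the uniform bound $h(x)\leq L_{0}(\beta-\alpha)/2$.

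Next I would insert this estimate into the area constraint \eqref{area constraint}:
\[
A_{0}=\int_{\alpha}^{\beta}h(x)\,dx\leq\frac{L_{0}(\beta-\alpha)}{2}\,(\beta-\alpha)=\frac{L_{0}(\beta-\alpha)^{2}}{2},
\]
and rearranging immediately gives the claimed inequality \eqref{interval lower estimate}.

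There is no serious obstacle here: the only thing to check is that the boundary condition $h(\alpha)=h(\beta)=0$ is indeed built into $\mathcal{A}_{s}$ via membership in $H_{0}^{1}((\alpha,\beta))$, and that the Lipschitz constant supplies the desired pointwise control. One could alternatively obtain a slightly sharper constant by integrating $L_{0}\min\{x-\alpha,\beta-x\}$ exactly, but the uniform bound above is sufficient for the statement and keeps the proof as short as possible.
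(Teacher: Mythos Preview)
Your proof is correct and follows essentially the same argument as the paper: bound $h$ pointwise by $L_{0}(\beta-\alpha)/2$ using the Lipschitz constant and the zero boundary values, then integrate against the area constraint. The paper states the pointwise bound directly, while you spell out the intermediate step via $\min\{x-\alpha,\beta-x\}$, but the two proofs are otherwise identical.
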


\begin{proof}
Since $h(\alpha)=h(\beta)=0$ and $\operatorname*{Lip}h\leq L_{0}$, we have
$h(x)\leq L_{0}\frac{\beta-\alpha}{2}$ for every $x\in(\alpha,\beta)$. Hence,
by \eqref{area constraint},%
\[
A_{0}=\int_{\alpha}^{\beta}h(x)\,dx\leq\frac{L_{0}}{2}(\beta-\alpha)^{2},
\]
which concludes the proof.
\end{proof}

For $(\alpha,\beta,h)\in\mathcal{A}_{s}$ we define the surface energy as%
\begin{equation}
\mathcal{S}(\alpha,\beta,h):=\ \gamma\int_{\alpha}^{\beta}\sqrt{1+(h^{\prime
}(x))^{2}}dx-\gamma_{0}(\beta-\alpha)+\frac{\nu_{0}}{2}\int_{\alpha}^{\beta}%
\frac{(h^{\prime\prime}(x))^{2}}{(1+(h^{\prime}(x))^{2})^{5/2}}%
dx\,.\label{surface energy}%
\end{equation}
 Note that since $\gamma>0$ and $\gamma>\gamma_{0}$, we have%
\begin{equation}
\gamma\int_{\alpha}^{\beta}\sqrt{1+(h^{\prime}(x))^{2}}dx-\gamma_{0}%
(\beta-\alpha)\geq(\gamma-\gamma_{0})(\beta-\alpha)\geq
0\label{surface energy 2}%
\end{equation}
and so
\begin{equation}
\mathcal{S}(\alpha,\beta,h)\geq0\,.\label{surface energy nonnegative}%
\end{equation}

For $(\alpha,\beta,h)\in\mathcal{A}_{s}$ and $u\in\mathcal{A}_{e}(\alpha
,\beta,h)$ we define the elastic energy as%
\begin{equation}
\mathcal{E}(\alpha,\beta,h,u):=\int_{\Omega_{h}}W(Eu(x,y))\,dxdy\,,
\label{elastic energy}%
\end{equation}
where $W:\mathbb{R}^{2\times2}\rightarrow\lbrack0,\infty)$ is given by
\begin{equation}
W(\xi):=\frac{1}{2}\mathbb{C}\xi\cdot\xi\,,\quad\text{with}\quad\mathbb{C}%
\xi:=\mu(\xi+\xi^{T})+\lambda(\operatorname*{tr}\xi)I\,, \label{W}%
\end{equation}
where $\mu$, $\lambda\in\mathbb{R}$ are the Lam\'{e} coefficients and $I$ is
the $2\times2$ identity matrix. Note that $\mathbb{C}\xi=\mathbb{C}%
\xi_{\operatorname*{sym}}\in\mathbb{R}_{\operatorname*{sym}}^{2\times2}$ for
every $\xi\in\mathbb{R}^{2\times2}$, where $\xi_{\operatorname*{sym}}%
:=(\xi+\xi^{T})/2$.

We assume that $\mu>0$ and $\lambda+\mu>0$ so that there exists a constant
$C_{W}>0$ such that
\begin{equation}
\frac{1}{C_{W}}|\xi|^{2}\leq W(\xi)\leq C_{W}|\xi|^{2} \label{W convex}%
\end{equation}
for all $\xi\in\mathbb{R}_{\operatorname*{sym}}^{2\times2}$.

In order to study the incremental problem, we introduce the following
functionals. Given $\tau>0$ and $(h^{0},\alpha^{0},\beta^{0})\in
\mathcal{A}_{s}$, for every $(h,\alpha,\beta)\in\mathcal{A}_{s}$ we define
\begin{equation}
\mathcal{T}_{\tau}(\alpha,\beta,h;\alpha^{0},\beta^{0},h^{0}):=\frac{1}{2\tau
}\int_{\mathbb{R}}(H-H^{0})^{2}\sqrt{1+((\check{h}^{0})^{\prime})^{2}}%
dx+\frac{\sigma_{0}}{2\tau}(\alpha-\alpha^{0})^{2}+\frac{\sigma_{0}}{2\tau
}(\beta-\beta^{0})^{2}, \label{tau energy}%
\end{equation}
where we abbreviate%
\begin{equation}
H(x):=H(x;\alpha,\beta,h)\,,\quad H^{0}(x):=H(x;\alpha^{0},\beta^{0},h^{0})\,,
\label{H and H0}%
\end{equation}
and $H$ is given in \eqref{Functional H}. Observe that
\begin{equation}
\frac{1}{2\tau}\int_{\mathbb{R}}(H-H^{0})^{2}\sqrt{1+((\check{h}^{0})^{\prime
})^{2}}dx=\frac{1}{2\tau}\int_{\min\{\alpha,\alpha^{0}\}}^{\max\{\beta
,\beta^{0}\}}(H-H^{0})^{2}\sqrt{1+((\check{h}^{0})^{\prime})^{2}}dx
\label{integral on interval}%
\end{equation}
since by construction and \eqref{area constraint}, $H-H^{0}=0$ for
$x\notin(\min\{\alpha,\alpha^{0}\},\max\{\beta,\beta^{0}\})$.

The existence of a minimizer for the incremental problem will be a consequence
of the following result.

\begin{theorem}
\label{theorem existence}For every $\tau>0$ and every $(\alpha^{0},\beta
^{0},h^{0})\in\mathcal{A}_{s}$ there exists a minimizer $(\alpha,\beta
,h,u)\in\mathcal{A}$ of the total energy functional
\begin{equation}
\mathcal{F}^{0}(\alpha,\beta,h,u):=\mathcal{S}(\alpha,\beta,h)+\mathcal{E}%
(\alpha,\beta,h,u)+\mathcal{T}_{\tau}(\alpha,\beta,h;\alpha^{0},\beta
^{0},h^{0})\,. \label{total energy}%
\end{equation}
 Moreover, there exists a constant $C>0$, depending only on the structural parameters $A_0$, $e_0$, $\lambda$, $\mu$, and $L_0$, such that
\begin{equation}\label{estimate in H1}
\Vert u\Vert_{H^{1}(\Omega_{h})}\le C
\end{equation}
for every minimizer $(\alpha,\beta
,h,u)$ of $\mathcal{F}^{0}$ in $\mathcal{A}$.

\end{theorem}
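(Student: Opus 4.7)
The plan is the direct method of the calculus of variations, with coercivity supplied by a uniform Korn--Poincar\'e estimate together with the Lipschitz/area constraints built into $\mathcal{A}_{s}$. I first note that $\mathcal{F}^{0}\geq 0$ by \eqref{surface energy nonnegative} and the nonnegativity of $\mathcal{E}$ and $\mathcal{T}_{\tau}$. The affine map $u^{\ast}(x,y):=(e_{0}x,0)$ belongs to $\mathcal{A}_{e}(\alpha^{0},\beta^{0},h^{0})$, with $E u^{\ast}=e_{0}\operatorname{diag}(1,0)$ and elastic energy $\tfrac{1}{2}(2\mu+\lambda)e_{0}^{2}A_{0}$; hence $\inf\mathcal{F}^{0}\leq\mathcal{S}(\alpha^{0},\beta^{0},h^{0})+\tfrac{1}{2}(2\mu+\lambda)e_{0}^{2}A_{0}=:M<\infty$. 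Along any minimizing sequence $(\alpha_{n},\beta_{n},h_{n},u_{n})$, each of the three functionals is bounded by $M$. The $\mathcal{T}_{\tau}$ bound yields boundedness of $(\alpha_{n},\beta_{n})$, so along a subsequence $\alpha_{n}\to\alpha$, $\beta_{n}\to\beta$ with $\beta-\alpha\geq\sqrt{2A_{0}/L_{0}}$ by Lemma~\ref{lemma endpoints}. Using $|h_{n}'|\leq L_{0}$ in \eqref{surface energy} gives $\int(h_{n}'')^{2}\leq 2\nu_{0}^{-1}(1+L_{0}^{2})^{5/2}M$, and together with $h_{n}(\alpha_{n})=h_{n}(\beta_{n})=0$ this produces a uniform $H^{2}$ bound after zero-extension. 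Compact embedding then yields $h_{n}\to h$ in $C^{1}$ and $h_{n}''\rightharpoonup h''$ in $L^{2}$, with $(\alpha,\beta,h)\in\mathcal{A}_{s}$.

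The estimate \eqref{estimate in H1} is proved as follows, and it simultaneously furnishes the compactness of $u_{n}$. From the bound on $\mathcal{E}$ and \eqref{W convex}, $\|E u_{n}\|_{L^{2}(\Omega_{h_{n}})}\leq C$. The difference $v_{n}:=u_{n}-u^{\ast}$ has zero trace on the bottom edge $(\alpha_{n},\beta_{n})\times\{0\}$ by \eqref{boundary condition u}, and $\|E v_{n}\|_{L^{2}(\Omega_{h_{n}})}\leq C+|e_{0}|\sqrt{A_{0}}$. Because $\Omega_{h_{n}}$ is the subgraph of a function with Lipschitz constant $\leq L_{0}$, area $A_{0}$, and base length $\geq\sqrt{2A_{0}/L_{0}}$, the family $\{\Omega_{h_{n}}\}$ has a uniform Lipschitz-graph geometry. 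A Korn--Poincar\'e inequality -- provable, for instance, by odd reflection of $v_{n}$ across $y=0$ into the reflected subgraph, followed by Korn's first inequality on the resulting uniformly Lipschitz set -- then gives $\|v_{n}\|_{H^{1}(\Omega_{h_{n}})}\leq C\|E v_{n}\|_{L^{2}(\Omega_{h_{n}})}$ with $C=C(A_{0},L_{0})$. Adding $\|u^{\ast}\|_{H^{1}(\Omega_{h_{n}})}\leq C(A_{0},e_{0})$ yields \eqref{estimate in H1}, and the same argument applied to any minimizer delivers the stated estimate.

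To pass to the limit in $\mathcal{E}$ across the varying domains, I pull back to the fixed square $(0,1)^{2}$ via the bi-Lipschitz maps $\Phi_{n}(s,t):=(\alpha_{n}+s(\beta_{n}-\alpha_{n}),\,t\,h_{n}(\alpha_{n}+s(\beta_{n}-\alpha_{n})))$, obtaining $\widetilde{u}_{n}:=u_{n}\circ\Phi_{n}$ bounded in a weighted $H^{1}$ space whose weights converge uniformly since $h_{n}\to h$ in $C^{1}$. A weakly convergent subsequence produces a limit whose push-forward $u$ belongs to $\mathcal{A}_{e}(\alpha,\beta,h)$, and convexity of $W$ together with the change-of-variables formula gives lower semicontinuity of $\mathcal{E}$. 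The length integral in $\mathcal{S}$ is continuous under $C^{1}$-convergence, the curvature integral is lower semicontinuous by the weak $L^{2}$-convergence of $h_{n}''$ paired with the uniform convergence of $(1+(h_{n}')^{2})^{-5/2}$, and $\mathcal{T}_{\tau}$ is continuous since $H_{n}\to H$ uniformly on compacts; minimality of the limit follows. The main obstacle is establishing the Korn--Poincar\'e constant in \eqref{estimate in H1} with the \emph{correct} dependence -- on $A_{0},L_{0},\lambda,\mu,e_{0}$ only, and in particular not on $\tau$ or on $(\alpha^{0},\beta^{0},h^{0})$ -- because this estimate is later iterated over arbitrarily many time steps in the minimizing-movements scheme; a secondary technical point is the degeneration of the Jacobian of $\Phi_{n}$ at the endpoints, which is controlled by the $H^{2}$ regularity of $h$ and the $C^{1}$ convergence of $h_{n}$.
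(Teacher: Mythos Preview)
Your overall strategy---direct method, Korn, $H^{2}$ bound on $h$---matches the paper's, and your Korn--Poincar\'e argument is essentially Lemma~\ref{lemma korn}. The substantive gap is in the compactness step for $u_{n}$.

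The maps $\Phi_{n}$ you propose are \emph{not} bi-Lipschitz: since $h_{n}(\alpha_{n})=h_{n}(\beta_{n})=0$, the Jacobian $\det D\Phi_{n}=(\beta_{n}-\alpha_{n})\,h_{n}\bigl(\alpha_{n}+s(\beta_{n}-\alpha_{n})\bigr)$ vanishes along $\{s=0\}$ and $\{s=1\}$ (and possibly in the interior, since $\mathcal{A}_{s}$ does not force $h_{n}>0$ there). The pulled-back functions $\widetilde u_{n}$ therefore live in a genuinely degenerate weighted space; the change of variables produces, among other terms, $\int_{(0,1)^{2}}|\partial_{t}\widetilde u_{n}|^{2}/h_{n}$. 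Extracting a weak limit in such a space with \emph{varying} degenerate weights $h_{n}$, and then showing the push-forward lies in $H^{1}(\Omega_{h})$, is not automatic. Your remark that the degeneration ``is controlled by the $H^{2}$ regularity of $h$'' does not hold: $H^{2}$ gives at best linear vanishing of $h$ at the endpoints (and nothing in $\mathcal{A}_{s}$ even guarantees $h'(\alpha)\neq0$), which does not tame the weight $1/h_{n}$, and the ratio $h_{n}/h$ need not stay bounded near the zeros. This is the step that fails as written.

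The paper sidesteps the issue: rather than a global change of variables, it fixes $\varepsilon>0$, sets $h^{\varepsilon}:=(h-\varepsilon)\vee0$, observes that $\Omega_{h^{\varepsilon}}$ is eventually contained in every $\Omega_{h_{n}}$ (by uniform convergence $\check h_{n}\to\check h$), extracts a weak $H^{1}(\Omega_{h^{\varepsilon}};\mathbb{R}^{2})$ limit of $u_{n}|_{\Omega_{h^{\varepsilon}}}$, and diagonalizes over $\varepsilon\downarrow0$. Lower semicontinuity of $\mathcal{E}$ then follows by exhausting $\Omega_{h}$ with such subdomains. This requires nothing at the corners and works even if $h$ vanishes in the interior of $(\alpha,\beta)$.

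A minor imprecision: ``$h_{n}\to h$ in $C^{1}$'' is not literally available because the domains vary and the zero-extensions $\check h_{n}$ are only Lipschitz (their derivatives jump at $\alpha_{n},\beta_{n}$); the paper proves instead $\check h_{n}'(x)\to\check h'(x)$ for every $x\in\mathbb{R}\setminus\{\alpha,\beta\}$, which suffices. Your claim that the length term is continuous is nevertheless correct once one has this pointwise convergence together with $|h_{n}'|\le L_{0}$; the paper chooses a slightly different route, combining the length and $-\gamma_{0}(\beta-\alpha)$ terms into $\int g(\check h)\sqrt{1+(\check h')^{2}}$ with $g(y)=\gamma$ for $y>0$ and $g(0)=\gamma_{0}$, and applying Fatou using the lower semicontinuity of $g$ (this is where the dewetting inequality $\gamma>\gamma_{0}$ enters).
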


The proof of the theorem relies on the following Korn's inequality.

\begin{lemma}
\label{lemma korn}Let $(\alpha,\beta,h)\in\mathcal{A}_{s}$, 
let   $\Omega_{h}$ be as in \eqref{Omega h}, and let $1<p<\infty$.
Then there exists a constant $C>0$,  depending only on $p$ and $L_{0}$,  such
that
\[
\int_{\Omega_{h}}\left\vert \nabla u\right\vert ^{p}dxdy\leq C\int_{\Omega
_{h}}\left\vert Eu\right\vert ^{p}dxdy+Ce_{0}^{p}A_{0}%
\]
for every $u\in W^{1,p}(\Omega_{h};\mathbb{R}^{2})$ such that $u(x,0)=(e_{0}%
x,0)$ for $x\in(\alpha,\beta)$ (in the sense of traces).
\end{lemma}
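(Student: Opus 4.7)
My plan is to reduce the stated inequality to a Korn inequality for functions with vanishing trace on the flat part of $\partial\Omega_{h}$, and then to apply a Korn--Poincar\'e inequality on a suitably enlarged Lipschitz domain, obtaining the required uniformity in $h$ via scale invariance together with a compactness argument.

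\textbf{Step 1 (Reduction to vanishing boundary data).} Set $v := u-(e_{0}x,0) \in W^{1,p}(\Omega_{h};\mathbb{R}^{2})$, which by the trace hypothesis satisfies $v(x,0)=0$ for a.e.\ $x\in(\alpha,\beta)$. Writing $M_{0}:=\mathrm{diag}(e_{0},0)$, we have $\nabla u=\nabla v+M_{0}$ and $Eu=Ev+M_{0}$ (the latter because $M_{0}$ is symmetric). The elementary bound $|a+b|^{p}\le 2^{p-1}(|a|^{p}+|b|^{p})$ together with the area constraint $|\Omega_{h}|=A_{0}$ and the identity $|M_{0}|^{p}=|e_{0}|^{p}$ reduce the claim to the Korn-type estimate
\[
\int_{\Omega_{h}}|\nabla v|^{p}\,dx\,dy\ \le\ C\int_{\Omega_{h}}|Ev|^{p}\,dx\,dy
\]
for every $v\in W^{1,p}(\Omega_{h};\mathbb{R}^{2})$ with $v(\cdot,0)=0$ in the trace sense, with $C=C(p,L_{0})$.

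\textbf{Step 2 (Extension to a Lipschitz domain with a flat Dirichlet piece).} Put $\ell:=\beta-\alpha$, $R:=(\alpha,\beta)\times(-\ell,0)$, and let $\Omega'_{h}:=\mathrm{int}(\overline{\Omega_{h}\cup R})$. Because the trace of $v$ on $(\alpha,\beta)\times\{0\}$ vanishes, the zero extension $\tilde v:=v\,\chi_{\Omega_{h}}$ lies in $W^{1,p}(\Omega'_{h};\mathbb{R}^{2})$ and is identically zero on the auxiliary rectangle $R$, in particular on the segment $(\alpha,\beta)\times\{-\ell\}$. The set $\Omega'_{h}$ is bounded and has Lipschitz boundary whose Lipschitz character is controlled by $L_{0}$ alone.

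\textbf{Step 3 (Uniform Korn--Poincar\'e constant).} I would apply a Korn--Poincar\'e inequality on $\Omega'_{h}$ to $\tilde v$, using that $\tilde v$ vanishes on the bottom segment. The estimate is scale invariant under $(x,y)\mapsto((x-\alpha)/\ell,y/\ell)$, since both integrals scale as $\ell^{p-2}$; after this rescaling, $\Omega'_{h}$ is mapped to a Lipschitz domain $\hat\Omega'_{h}\subset(0,1)\times(-1,L_{0}/2)$ containing the fixed rectangle $(0,1)\times(-1,0)$, whose top boundary is the graph of $\hat h:=h(\alpha+\ell\,\cdot)/\ell$, still Lipschitz with constant $\le L_{0}$ and vanishing at $0$ and $1$. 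By Arzel\`a--Ascoli the admissible $\hat h$ are precompact in $C^{0}([0,1])$, the corresponding $\hat\Omega'_{h}$ vary continuously in the Hausdorff sense, and the Korn--Poincar\'e constant depends continuously on the domain within this family (the Dirichlet piece $(0,1)\times\{-1\}$ is fixed and of positive measure). Hence there is $C=C(p,L_{0})$, independent of $h$, such that
\[
\int_{\hat\Omega'_{h}}|\nabla \hat{\tilde v}|^{p}\ \le\ C\int_{\hat\Omega'_{h}}|E\hat{\tilde v}|^{p}.
\]
Any degenerate limit $\hat h\equiv 0$ still yields the non-degenerate rectangle $(0,1)\times(-1,0)$ as limit, so the compactness argument is not obstructed. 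Undoing the rescaling, the same $C$ works on $\Omega'_{h}$. Since $\tilde v\equiv 0$ on $R$, the integrals on $\Omega'_{h}$ coincide with those on $\Omega_{h}$, and combining with Step 1 yields the bound in the statement.

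The main obstacle is Step 3: extracting a Korn--Poincar\'e constant that depends only on $L_{0}$ and $p$, rather than on finer geometric features of $h$. While Korn inequalities on individual bounded Lipschitz domains are classical, the sensitivity of the constant to the shape is delicate here because the graph $h$ may flatten arbitrarily (after rescaling) when the original $\ell$ is large; the compactness route above handles this by producing a non-degenerate limit thanks to the auxiliary strip $R$, but one must verify carefully that the constant is continuous with respect to Hausdorff perturbations of Lipschitz domains with a fixed flat Dirichlet piece.
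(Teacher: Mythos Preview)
Your Steps 1 and 2 are essentially the paper's argument: the paper also subtracts $(e_{0}x,0)$, extends the resulting $v$ by zero below the $x$-axis, and rescales by $\ell=\beta-\alpha$ to land on a domain over $(0,1)$ whose upper boundary is the graph of an $L_{0}$-Lipschitz function. The only cosmetic difference is that the paper extends to the semi-infinite strip $(0,\beta)\times(-\infty,0)$ rather than your finite box.

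The real divergence is your Step 3. The paper does not attempt a compactness argument; instead it invokes an existing uniform Korn inequality, Theorem~4.2 in \cite{fonseca2007equilibrium}, which states directly that for subgraph domains $D_{g}=\{0<x<1,\ y<g(x)\}$ with $\operatorname{Lip}g\le L_{0}$ and for functions vanishing on $\{y\le 0\}$, the Korn constant depends only on $p$ and $L_{0}$. That theorem is proved by exploiting the subgraph structure explicitly (one can flatten the top boundary by a Lipschitz change of variables with controlled Jacobian), so the uniformity is built in from the start rather than recovered a posteriori.

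Your compactness route is plausible but, as you yourself flag, incomplete: continuity of the Korn--Poincar\'e constant under Hausdorff convergence of Lipschitz domains is not automatic and requires something like a uniform extension operator or a Mosco-convergence argument to pass to the limit in a contradiction sequence. The fixed bottom strip you introduce does make the limiting domain nondegenerate, and the family is uniformly Lipschitz, so the argument can be completed---but the cleanest fix is simply to cite the ready-made result the paper uses.
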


\begin{proof}
By a translation we can assume that $\alpha=0$. Define $v(x,y):=u(x,y)-(e_{0}%
x,0)$ in $\Omega_{h}$ and $v:=0$ in $(0,\beta)\times(0,-\infty)$,  and
\[
w(x,y)=v(\beta x,\beta y)
\]
for $(x,y)\in D_{h_{\beta}}$, where%
\[
D_{h_{\beta}}:=\{(x,y)\in\mathbb{R}^{2}:\,0<x<1\,,\,y<h_{\beta}(x)\}
\]
and $h_{\beta}(x):=h(\beta x)/\beta$. Note that
\[
|h_{\beta}(x_{1})-h_{\beta}(x_{2})|\leq\frac{1}{\beta}|h(\beta x_{1})-h(\beta
x_{2})|\leq L_{0}|x_{1}-x_{2}|
\]
for all $x_{1},x_{2}\in(0,1)$. Applying Theorem 4.2 in
\cite{fonseca2007equilibrium} to $w$ we can find a constant $C$ depending only
on $p$ and $L_{0}$ such that%
\[
\int_{D_{h_{\beta}}}\left\vert \nabla w\right\vert ^{p}dxdy\leq C\int_{D_{h_{\beta}}%
}\left\vert Ew\right\vert ^{p}dxdy\,.
\]
By the change of variables $(x^{\prime},y^{\prime})=(\beta x,\beta y)$ we have%
\[
\int_{\Omega_{h}}\left\vert \nabla v\right\vert ^{p}dxdy\leq C\int_{\Omega
_{h}}\left\vert Ev\right\vert ^{p}dxdy\,,
\]
where we used the fact that $v=0$ in $(0,\beta)\times(-\infty,0)$. Recalling
that $v=u-w_{0}$, it follows that%
\[
\int_{\Omega_{h}}\left\vert \nabla u\right\vert ^{p}dxdy\leq C\int_{\Omega
_{h}}\left\vert E u\right\vert ^{p}dxdy+Ce_{0}^{p}\mathcal{L}^{2}(\Omega
_{h})=C\int_{\Omega_{h}}\left\vert E u\right\vert ^{p}dxdy+Ce_{0}^{p}A_{0}\,
\]
by \eqref{area constraint}.
\end{proof}

\begin{proof}
[Proof of Theorem \ref{theorem existence}]Let $\{(\alpha_{n},\beta_{n}%
,h_{n},u_{n})\}_{n}$ be a minimizing sequence in $\mathcal{A}$ for
\eqref{total energy}. Then there exists a constant $M>0$ such that%
\begin{equation}
\mathcal{F}^{0}(\alpha_{n},\beta_{n},h_{n},u_{n})\leq M \label{m1}%
\end{equation}
for all $n$. Since all the terms in $\mathcal{F}^{0}$ are nonnegative (see
\eqref{surface energy nonnegative}), by \eqref{tau energy} and \eqref{m1}
there exist two constants $M_{l}\in\mathbb{R}$ and $M_{r}\in\mathbb{R}$ such
that%
\begin{equation}
M_{l}\leq\alpha_{n}<\beta_{n}\leq M_{r} \label{m2}%
\end{equation}
for every $n$. Hence, up to a subsequence, not relabeled, we can assume that
\begin{equation}
\alpha_{n}\rightarrow\alpha\quad\text{and\quad}\beta_{n}\rightarrow
\beta\label{m3}%
\end{equation}
as $n\rightarrow\infty$, with $\alpha\leq\beta$. Since $\operatorname*{Lip}%
h_{n}\leq L_{0}$ for every $n$, the extension $\check{h}_{n}$ by zero of
$h_{n}$ outside of $[\alpha_{n},\beta_{n}]$ satisfies $\operatorname*{Lip}%
\check{h}_{n}\leq L_{0}$. Using \eqref{m3}, up to a further subsequence, not
relabeled, there exists a nonnegative Lipschitz continuous function $\check
{h}:\mathbb{R}\rightarrow\mathbb{R}$ such that $\check{h}_{n}\rightarrow
\check{h}$ uniformly and $\check{h}=0$ outside $(\alpha,\beta)$. In particular
this implies that the restriction $h$ of $\check{h}$ to $(\alpha,\beta)$
belongs to $H_{0}^{1}((\alpha,\beta))$ and that \eqref{area constraint} is
satisfied. Since $A_{0}>0$ we deduce that $\alpha<\beta$. By
\eqref{surface energy}, \eqref{m1}, and the fact that $\operatorname*{Lip}%
\check{h}_{n}\leq L_{0}$ for every $n$,
\begin{equation}
\sup_{n}\int_{\alpha_{n}}^{\beta_{n}}| h_{n}^{\prime\prime}|^{2}dx<\infty\,.
\label{m3a}%
\end{equation}
By \eqref{m3} this implies that $h\in H^{2}((\alpha,\beta))$, and so
$(h,\alpha,\beta)\in\mathcal{A}_{s}$ and $h\in C^{1}([\alpha,\beta])$.
Moreover, since $\check{h}=0$ outside $(\alpha,\beta)$, from \eqref{m3} and
\eqref{m3a} we deduce that
\begin{equation}
\check{h}_{n}^{\prime}(x)\rightarrow\check{h}^{\prime}(x) \label{m3b}%
\end{equation}
for every $x\in\mathbb{R}\setminus\{\alpha,\beta\}$.

By \eqref{elastic energy}, \eqref{W convex}, and \eqref{m1},%
\begin{equation}
\int_{\Omega_{h_{n}}}|Eu_{n}(x,y)|^{2}dxdy\leq MC_{W}\, \label{m4}%
\end{equation}
for every $n$. By Korn's inequality (see Lemma \ref{lemma korn})%
\begin{equation}
\int_{\Omega_{h_{n}}}\left\vert \nabla u_{n}\right\vert ^{2}dxdy\leq
C\int_{\Omega_{h_{n}}}\left\vert Eu_{n}\right\vert ^{2}dxdy+Ce_{0}^{2}%
A_{0}\leq CMC_{W}+Ce_{0}^{2}A_{0}\,. \label{m5}%
\end{equation}
Since $u_{n}(x,0)=(e_{0}x,0)$ for $x\in(\alpha,\beta)$, by Poincar\'{e}'s
inequality we find that
\begin{equation}
\int_{\Omega_{h_{n}}}|u_{n}(x,y)|^{2}\,dxdy\leq C \label{m5aa}%
\end{equation}
where $C>0$ is independent of $n$. By \eqref{m5}, \eqref{m5aa}, and a standard
diagonal argument, using the increasing sequence of sets $\Omega
_{h^{\varepsilon}}$, where $h^{\varepsilon}:=(h-\varepsilon)\vee0$, we
conclude that there exist a subsequence, not relabeled, and a function $u\in
H^{1}(\Omega_{h};\mathbb{R}^{2})$ such that $u_{n}\rightharpoonup u$ weakly in
$H^{1}(\Omega_{h^{\varepsilon}};\mathbb{R}^{2})$ for every $\varepsilon$. Note
that the trace of $u$ satisfies $u(x,0)=(e_{0}x,0)$ for a.e. $x\in\{h>0\}$. In
conclusion, we have shown that $(h,u,\alpha,\beta)\in\mathcal{A}$, where
$\mathcal{A}$ is given in \eqref{class A}.

Next we claim that%
\begin{equation}
\liminf_{n\rightarrow\infty}\mathcal{S}(\alpha_{n},\beta_{n},h_{n}%
)\geq\mathcal{S}(\alpha,\beta,h)\,, \label{m5a}%
\end{equation}
where $\mathcal{S}$ is given in \eqref{surface energy}. It is convenient to
write%
\begin{align}
\mathcal{S}(\alpha,\beta,h)&+\gamma_{0}(M_{r}-M_{l})\nonumber\\  &=  \int_{M_{l}}^{M_{r}%
}g(\check{h}(x))\sqrt{1+(\check{h}^{\prime}(x))^{2}}dx
+\frac{\nu_{0}}{2}\int_{\alpha}^{\beta}\frac{(h^{\prime\prime}(x))^{2}}%
{(1+(h^{\prime}(x))^{2})^{5/2}}dx\,,\label{m6}
\end{align}
where
\[
g(y):=\left\{
{\setstretch{1.3}
\begin{array}
[c]{ll}%
\gamma & \text{if }y>0\,,\\
\gamma_{0} & \text{if }y=0\,,
\end{array}
}
\right.
\]
and similarly for $\mathcal{S}(\alpha_{n},\beta_{n},h_{n})+\gamma_{0}%
(M_{r}-M_{l})$.

By \eqref{m3b}, $\check{h}_{n}^{\prime}(x)\rightarrow\check{h}^{\prime}%
(x)$\ for a.e. $x\in\mathbb{R}$. Moreover,
\[
\liminf_{n\rightarrow\infty}g(\check{h}_{n}(x))\geq g(\check{h}(x))
\]
for every $x\in\mathbb{R}$, since $\check{h}_{n}\rightarrow\check{h}$
uniformly and $g$ is lower semicontinuous in view of the inequality
$\gamma>\gamma_{0}$. Therefore, by Fatou's lemma we have%
\begin{equation}
\liminf_{n\rightarrow\infty}\int_{M_{l}}^{M_{r}}g(\check{h}_{n}(x))\sqrt
{1+(\check{h}_{n}^{\prime}(x))^{2}}dx\geq\int_{M_{l}}^{M_{r}}g(\check
{h}(x))\sqrt{1+(\check{h}^{\prime}(x))^{2}}dx\,. \label{m7}%
\end{equation}
On the other hand for every $[a,b]\subset(\alpha,\beta)$, by \eqref{m3},
\eqref{m3a}, and \eqref{m3b} we can apply a weak-strong lower semicontinuity
theorem (see  \cite[Theorem 2.3.1]{Butt}) to get
\begin{align*}
\liminf_{n\rightarrow\infty}\int_{\alpha_{n}}^{\beta_{n}}\frac{(h_{n}%
^{\prime\prime}(x))^{2}}{(1+(h_{n}^{\prime}(x))^{2})^{5/2}}dx&\geq
\liminf_{n\rightarrow\infty}\int_{a}^{b}\frac{(h_{n}^{\prime\prime}(x))^{2}%
}{(1+(h_{n}^{\prime}(x))^{2})^{5/2}}dx\\
&\geq\int_{a}^{b}\frac{(h^{\prime\prime
}(x))^{2}}{(1+(h^{\prime}(x))^{2})^{5/2}}dx\,.
\end{align*}
Taking the supremum over all $[a,b]\subset(\alpha,\beta)$, we obtain
\[
\liminf_{n\rightarrow\infty}\int_{\alpha_{n}}^{\beta_{n}}\frac{(h_{n}%
^{\prime\prime}(x))^{2}}{(1+(h_{n}^{\prime}(x))^{2})^{5/2}}dx\geq\int_{\alpha
}^{\beta}\frac{(h^{\prime\prime}(x))^{2}}{(1+(h^{\prime}(x))^{2})^{5/2}}dx\,.
\]
Therefore,  from  \eqref{m6}  and  \eqref{m7} we deduce \eqref{m5a}.

Now we prove that%
\begin{equation}
\liminf_{n\rightarrow\infty}\mathcal{E}(\alpha_{n},\beta_{n},h_{n},u_{n}%
)\geq\mathcal{E}(\alpha,\beta,h,u)\,, \label{m8}%
\end{equation}
where $\mathcal{E}$ is defined in \eqref{elastic energy}. Fix an open set
$U\Subset\Omega_{h}$. Since $u_{n}\rightharpoonup u$ weakly in $H^{1}%
(U;\mathbb{R}^{2})$, by \eqref{W} and \eqref{W convex} we have%
\begin{align*}
\liminf_{n\rightarrow\infty}\int_{\Omega_{h_{n}}}W(Eu_{n}(x,y))\,dxdy&\geq
\liminf_{n\rightarrow\infty}\int_{U}W(Eu_{n}(x,y))\,dxdy\\&\geq\int%
_{U}W(Eu(x,y))\,dxdy
\end{align*}
and letting $U\nearrow\Omega_{h}$ we obtain \eqref{m8}.

Finally, by \eqref{Functional H}, \eqref{H and H0},
\eqref{integral on interval}, and \eqref{m3}, the fact that $h_{n}\rightarrow
h$ uniformly, it follows from Lebesgue's dominated convergence theorem that%
\begin{equation}
\lim_{n\rightarrow\infty}\mathcal{T}_{\tau}(\alpha_{n},\beta_{n},h_{n}%
;\alpha^{0},\beta^{0},h^{0})=\mathcal{T}_{\tau}(\alpha,\beta,h;\alpha
^{0},\beta^{0},h^{0})\,.\nonumber
\end{equation}
This, together with \eqref{total energy}, \eqref{m5a},  and \eqref{m8}, allows
us to conclude that%
\[
\liminf_{n\rightarrow\infty}\mathcal{F}^{0}(\alpha_{n},\beta_{n},h_{n}%
,u_{n})\geq\mathcal{F}^{0}(\alpha,\beta,h,u)\,.
\]
Since $(\alpha,\beta,h,u)\in\mathcal{A}$ and $\{(\alpha_{n},\beta_{n}%
,h_{n},u_{n})\}_{n}$ is a minimizing sequence, we deduce that $(\alpha
,\beta,h,u)$ is a minimizer.

 The proof of \eqref{estimate in H1} can be obtained from Korn's and Poincar\'e's inequalities, arguing as in the estimates for the minimizing sequence.
\end{proof}

\section{Euler--Lagrange Equations}

 Given $(\alpha,\beta,h)\in\mathcal{A}_{s}$ and $\alpha\leq a%
<b\leq\beta$, we define%
\begin{equation}
\Omega_{h}^{a,b} :=\{(x,y)\in\mathbb{R}%
^{2}:\,a<x<b%
\,,\,0<y<h(x)\}\,,\label{Omega alpha beta}
\end{equation}

\begin{theorem}
[Euler-Lagrange equations]\label{theorem euler-lagrange}Let $\tau>0$, let
$(\alpha^{0},\beta^{0},h^{0})\in\mathcal{A}_{s}$,  and let $(\alpha,\beta,h,u)\in\mathcal{A}$ be a
minimizer of the total energy functional
\begin{equation}
\mathcal{F}^{0}(\alpha,\beta,h,u):=\mathcal{S}(\alpha,\beta,h)+\mathcal{E}%
(\alpha,\beta,h,u)+\mathcal{T}_{\tau}(\alpha,\beta,h;\alpha^{0},\beta
^{0},h^{0})\,. \label{total energy 2}%
\end{equation}
Assume that 
\begin{equation}\label{rtyu2}
\operatorname*{Lip}h<L_{0}
\ \text{ and }\ 
h(x)>0\ \text{ for all }x\in(\alpha,\beta) 
\,.
\end{equation}
Then 
\begin{align}
h &\in  C^{4,1}((\alpha,\beta))\cap C^{5}((\alpha,\beta)\setminus\{\alpha^{0},\beta^{0}\})\,,
\label{regularity h}
\\
u &\in    C^{3,1/2}(\overline{\Omega}_{h}^{a,b};\mathbb{R}^{2}) \quad\text{for every } \alpha<a<b <\beta\,,
\label{regularity u}
\end{align}
and $u$ satisfies the elliptic boundary value problem
\begin{equation}
\left\{
 {\setstretch{1.1}
\begin{array}
[c]{ll}%
-\operatorname{div}\mathbb{C}Eu(x,y)
=0 & \text{in }\Omega_{h}%
\,,\\
\mathbb{C}Eu(x,h(x)) \nu^h(x)=0 & \text{for }x\in(\alpha,\beta
)\,,\\
u(x,0)=(e_{0}x,0) & \text{for }x\in(\alpha,\beta)\,,
\end{array}
}
\right.
\label{dirichlet boundary}%
\end{equation}
where $\nu^{h}(x)$ denotes the outer unit normal to $\partial\Omega_{h}$ at
$(x,h(x))$.  Moreover, 
 
\begin{equation}
\frac{1}{\tau}(h-h^{0})=\Big[-\gamma\frac{1}{J^{0}}\Big(\frac{h^{\prime}}%
{J}\Big)^{\prime\prime}+\nu_{0}\frac{1}{J^{0}}\Big(\frac{h^{\prime\prime}%
}{J^{5}}\Big)^{\prime\prime\prime}+\frac{5}{2}\nu_{0}\frac{1}{J^{0}}\Big(\frac
{h^{\prime}(h^{\prime\prime})^{2}}{J^{7}}\Big)^{\prime\prime}+\frac{1}{J^{0}%
}\overline{W}{}^{\prime}\Big]^{\prime}\,, \label{equation h}%
\end{equation}
for every $x\in (\alpha,\beta)\setminus\{\alpha^{0},\beta^{0}\}$, where
\begin{align}
J(x):=&\sqrt{1+(h^{\prime}(x))^{2}}\,,\quad J^{0}(x):=\sqrt{1+((\check{h}%
^{0})^{\prime}(x))^{2}}\,,\nonumber\\&\text{and}\quad
\overline{W}(x)  :=W(Eu(x,h(x)))\,.
\label{J and J0}
\end{align}
\end{theorem}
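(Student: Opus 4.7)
The plan is to obtain the system by separately performing variations in $u$ and in $h$, and then to bootstrap the regularity. For $u$-variations, take $\phi\in H^{1}(\Omega_{h};\mathbb{R}^{2})$ with $\phi(x,0)=0$ on $(\alpha,\beta)$: then $u+\epsilon\phi\in\mathcal{A}_{e}(\alpha,\beta,h)$ for every $\epsilon$, and only $\mathcal{E}$ depends on $u$, so
\[
\int_{\Omega_{h}}\mathbb{C}\,Eu:E\phi\,dxdy=0.
\]
Testing against $\phi\in C_{c}^{\infty}(\Omega_{h};\mathbb{R}^{2})$ yields $-\operatorname{div}\mathbb{C}Eu=0$ in $\Omega_{h}$; testing against general $\phi$ vanishing only at $y=0$ and integrating by parts away from the corners $(\alpha,0),(\beta,0)$ yields the natural condition $\mathbb{C}Eu\,\nu^{h}=0$ on the top graph, while the Dirichlet condition is built into $\mathcal{A}_{e}$.

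For $h$-variations, fix $\varphi\in C_{c}^{\infty}((\alpha,\beta))$. Since $\operatorname{Lip}h<L_{0}$ and $h>0$ strictly on $(\alpha,\beta)$, the perturbation $h_{\epsilon}:=h+\epsilon\varphi$ satisfies all constraints defining $\mathcal{A}_{s}$ except possibly the area condition, which I handle with a Lagrange multiplier $\lambda$. Because $u$ is the elastic minimizer on $\Omega_{h}$, the Hadamard shape-derivative formula yields
\[
\frac{d}{d\epsilon}\mathcal{E}(\alpha,\beta,h_{\epsilon},\cdot)\Big|_{\epsilon=0}=\int_{\alpha}^{\beta}\overline{W}(x)\,\varphi(x)\,dx,
\]
while direct differentiation of \eqref{surface energy}, followed by two integrations by parts, gives
\[
\int_{\alpha}^{\beta}\Bigl[-\gamma\bigl(\tfrac{h'}{J}\bigr)'+\nu_{0}\bigl(\tfrac{h''}{J^{5}}\bigr)''+\tfrac{5\nu_{0}}{2}\bigl(\tfrac{h'(h'')^{2}}{J^{7}}\bigr)'\Bigr]\varphi\,dx.
\]
Since the variation of $H$ is $\Phi(x):=\int_{\alpha}^{x}\varphi$, the $\mathcal{T}_{\tau}$-contribution is $\tfrac{1}{\tau}\int_{\mathbb{R}}(H-H^{0})\,J^{0}\,\Phi\,dx$. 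Equating the total first variation to $\lambda\int\varphi$, restricting to $\varphi$ with $\int\varphi=0$ so that $\Phi\in C_{c}^{\infty}((\alpha,\beta))$, and integrating by parts the first integral against $\Phi$ produces the distributional identity
\[
\mu'(x)=\frac{1}{\tau}(H-H^{0})(x)\,J^{0}(x)\quad\text{on }(\alpha,\beta),
\]
where $\mu:=-\gamma(h'/J)'+\nu_{0}(h''/J^{5})''+\tfrac{5\nu_{0}}{2}(h'(h'')^{2}/J^{7})'+\overline{W}-\lambda$. Dividing by the positive function $J^{0}$, differentiating once more, and using $(H-H^{0})'=\check{h}-\check{h}^{0}=h-h^{0}$ on $(\alpha,\beta)\setminus\{\alpha^{0},\beta^{0}\}$, gives exactly \eqref{equation h}; the constant $\lambda$ is eliminated by the outer derivative.

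Regularity is then obtained by a bootstrap. Initially $h\in H^{2}\subset C^{1,1/2}$, so \eqref{dirichlet boundary} has a $C^{1,1/2}$ graph on top and, away from the two corners $(\alpha,0),(\beta,0)$, classical Schauder theory for the Lam\'e system yields $u\in C^{1,\alpha}$ up to $\partial\Omega_{h}^{a,b}$ for every $\alpha<a<b<\beta$. Hence $\overline{W}\in C^{0,\alpha}_{\mathrm{loc}}((\alpha,\beta))$ and the identity $\mu'/J^{0}=(H-H^{0})/\tau$, viewed as a nonlinear ODE for $h$, upgrades the smoothness of $h$; reinserting this into \eqref{dirichlet boundary} improves $u$, and iterating delivers $u\in C^{3,1/2}(\overline{\Omega}_{h}^{a,b};\mathbb{R}^{2})$ together with $h\in C^{4,1}((\alpha,\beta))\cap C^{5}((\alpha,\beta)\setminus\{\alpha^{0},\beta^{0}\})$. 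The excluded points are precisely where $(\check{h}^{0})'$, and therefore $J^{0}$, jumps, preventing $h$ from being $C^{5}$ there. The main obstacle will be managing this bootstrap in the presence of the low-regularity coefficient $1/J^{0}$ (only piecewise Lipschitz across $\{\alpha^{0},\beta^{0}\}$) and in the need for sharp elliptic estimates for the Lam\'e system on a domain with a $C^{1,\alpha}$ curved top, a flat bottom, and two corners at $(\alpha,0),(\beta,0)$, which \eqref{regularity u} deliberately avoids by restricting to $\overline{\Omega}_{h}^{a,b}$ with $a>\alpha$ and $b<\beta$.
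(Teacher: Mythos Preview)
Your overall structure is right and matches the paper's two-step scheme (derive the weak Euler--Lagrange system, then bootstrap). Two points need work.

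\textbf{Order of operations for the elastic variation.} You invoke the Hadamard shape-derivative formula before knowing anything about $u$ beyond $u\in H^{1}$. That formula requires the trace $W(Eu(\cdot,h(\cdot)))$ to make sense, which it does not for a generic $H^{1}$ function. The paper fixes this by reversing the order: first take the $u$-variation to get the weak Lam\'e system, then use elliptic regularity (Agmon--Douglis--Nirenberg, since $h\in H^{2}\subset C^{1,1/2}$) to obtain $u\in C^{1,1/2}(\overline{\Omega}_{h}^{a,b})$ for every $\alpha<a<b<\beta$. Only after this do they vary $h$, and they avoid the Hadamard machinery altogether: they extend $u$ to a $C^{1}$ function on $[a,b]\times\mathbb{R}$ and use the competitor $(\alpha,\beta,h+\varepsilon\varphi,u|_{\Omega_{h+\varepsilon\varphi}})$, so the elastic contribution is simply $\frac{d}{d\varepsilon}\int_{\Omega_{h+\varepsilon\varphi}}W(Eu)\,dxdy\big|_{\varepsilon=0}=\int\overline{W}\varphi$, a one-line computation once $u$ is $C^{1}$ near $\Gamma_{h}$.

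\textbf{The bootstrap for $h$.} You integrate the surface terms by parts twice at the outset and then claim a distributional fourth-order ODE ``upgrades the smoothness of $h$''. Starting from $h\in H^{2}$ only, the expression $(h''/J^{5})''$ is a distribution and the right-hand side you obtain mixes $L^{2}$ terms, derivatives of $L^{1}$ functions, and a H\"older term, so extracting regularity of $h''$ from that identity is not immediate. The paper sidesteps this by \emph{not} integrating by parts: it writes the first variation as
\[
\int_{\alpha}^{\beta}(A\,h''\varphi''+B\,h'\varphi'+f\varphi)\,dx=m\!\int_{\alpha}^{\beta}\varphi\,dx
\]
with $A=\nu_{0}/J^{5}$ bounded below, integrates by parts once, and then tests with $\varphi(x)=\int_{a}^{x}\psi$ for $\psi\in C_{c}^{\infty}$ with $\int\psi=0$. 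This yields the pointwise second-order integral equation
\[
A(x)h''(x)=\int_{x_{0}}^{x}\bigl(B(s)h'(s)-F_{m}(s)\bigr)\,ds+p_{1}(x),
\]
from which the bootstrap is mechanical: regularity of the right-hand side reads off regularity of $h''$ directly, and one climbs $C^{0}\to C^{1}\to C^{2}$, feeding the gain back into $A$, $B$, $F_{m}$, and (via elliptic regularity) into $\overline{W}$. Your route can be made to work, but you would have to replicate this reduction or handle the mixed-regularity right-hand side carefully; as written, the bootstrap paragraph is a sketch, not a proof.
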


\begin{proof}
\textbf{Step 1:} We first observe that standard variations with respect to $u$
in $\Omega_{h}$ lead to the weak form of \eqref{dirichlet boundary}.
Since $h\in C^{1,1/2}((\alpha,\beta))$ and $h>0$ in $(\alpha,\beta)$, by  elliptic regularity (see \cite[Theorem 9.3]{agmon-douglis-nirenbergII}),  we have that
\[
 u\in C^{1,1/2}(\overline{\Omega}_{h}^{ a , b };\mathbb{R}^{2}) \quad\text{for every } \alpha<a<b <\beta\,.
\]
Fix $ \alpha<a<b <\beta$ and extend $u$ to a function  defined on $\Omega_{h}\cup (\lbrack a,b \rbrack\times\mathbb{R})$, still denoted by $u$, such that $u\in C^{1}(\lbrack a,b \rbrack\times\mathbb{R}{;\mathbb{R}^2})$. 
Let $\varphi\in C_{c}^{\infty}((\alpha,\beta))$ be such that
$\operatorname*{supp}\varphi\subset\lbrack a,b \rbrack$ and
\begin{equation}
\int_{\alpha}^{\beta}\varphi(x)\,dx=0\,. \label{varphi zero average}%
\end{equation}
Since $\operatorname*{Lip}h<L_{0}$ and $h$ is bounded away from zero on
$\lbrack a,b \rbrack$, we have that $(\alpha,\beta,h+\varepsilon
\varphi)\in\mathcal{A}_{s}$ and 
 $h(x)+\varepsilon\varphi(x)>0$ for all $x\in(\alpha,\beta)$, if
$|\varepsilon|$ is sufficiently small. In turn, $(\alpha,\beta,h+\varepsilon
\varphi,\left.  u\right\vert _{\Omega_{h+\varepsilon\varphi}})\in\mathcal{A}$.
Taking the derivative with respect to $\varepsilon$ of $\mathcal{F}^{0}%
(\alpha,\beta,h+\varepsilon\varphi,\left.  u\right\vert _{\Omega
_{h+\varepsilon\varphi}})$ at $\varepsilon=0$, we obtain%
\begin{align}
&  \gamma\int_{\alpha}^{\beta}\frac{h^{\prime}(x)\varphi^{\prime}%
(x)}{(1+(h^{\prime}(x))^{2})^{1/2}}dx+\nu_{0}\int_{\alpha}^{\beta}%
\frac{h^{\prime\prime}(x)\varphi^{\prime\prime}(x)}{(1+(h^{\prime}%
(x))^{2})^{5/2}}dx\nonumber\\&-\frac{5}{2}\nu_{0}\int_{\alpha}^{\beta}\frac{h^{\prime}%
(x)(h^{\prime\prime}(x))^{2}\varphi^{\prime}(x)}{(1+(h^{\prime}(x))^{2}%
)^{7/2}}dx\label{e-l1}+\int_{\alpha}^{\beta}W(Eu(x,h(x)))\varphi(x)\,dx\\
&  +\frac{1}{\tau}%
\int_{\mathbb{R}}(H(x)-H^{0}(x))\sqrt{1+((\check{h}^{0})^{\prime}(x))^{2}%
}\Big(\int_{-\infty}^{x}\varphi(s)\,ds\Big)dx=0\,.\nonumber
\end{align}
Since $\varphi$ belongs to $C_{c}^{\infty}(\mathbb{R})$  and satisfies \eqref{varphi zero average}, using integration  by
parts the integral over $\mathbb{R}$ becomes
\begin{equation}\label{3.10 bis}
-\frac{1}{\tau}\int_{\alpha}^{\beta}\Big(\int_{-\infty}^{x}(H(s)-H^{0}%
(s))\sqrt{1+((\check{h}^{0})^{\prime}(s))^{2}}\,ds\Big)\varphi(x)\,dx\,.
\end{equation}
This allows us to rewrite \eqref{e-l1} as%
\[
\int_{\alpha}^{\beta}( A h^{\prime\prime}\varphi^{\prime\prime}+ B h^{\prime
}\varphi^{\prime}+f\varphi)\,dx=0
\]
for all $\varphi\in C_{c}^{\infty}((\alpha,\beta))$ with $\operatorname*{supp}%
\varphi\subset\lbrack a,b \rbrack$ satisfying
\eqref{varphi zero average}, where
\begin{align}
 A(x)  &  :=\frac{\nu_{0}}{(1+(h^{\prime}(x))^{2})^{5/2}}\,,
\label{function a}\\
 B(x)  &  :=\gamma\frac{1}{(1+(h^{\prime}(x))^{2})^{1/2}}-\frac{5}{2}\nu_{0}%
\frac{(h^{\prime\prime}(x))^{2}}{(1+(h^{\prime})^{2})^{7/2}}%
\,,\label{function b}\\
f(x)  &  :=W(Eu(x,h(x)))-\frac{1}{\tau}\int_{\alpha}^{x}(H(s)-H^{0}%
(s))\sqrt{1+((\check{h}^{0})^{\prime}(s))^{2}}ds\,. \label{function f}%
\end{align}
By introducing a Lagrange multiplier $ m $ for \eqref{varphi zero average}, %
 we obtain
\[
\int_{ a}^{b } ( A h^{\prime\prime}\varphi^{\prime\prime
}+ B h^{\prime}\varphi^{\prime}+f\varphi)\,dx= m  \int_{ a}%
^{b }\varphi\,dx
\]
for all $\varphi\in C_{c}^{\infty}((  a, b))$.

Note that $ A\in C^{0,1/2}([\alpha,\beta])$, $ A\geq a_{0}$ for some constant
$a_{0}>0$, $ B\in L^{1}((\alpha,\beta))$, and $f\in C^{0}({[ a,b]})$
since $u\in C^{1}(\lbrack a,b\rbrack\times\mathbb{R})$.  Let us fix $x_{0}\in(a,b)$.
 Integrating by parts once we get%
\begin{equation}
\int_{a}^{b} ( A h^{\prime\prime}\varphi^{\prime\prime
}+( B h^{\prime}-F_{ m  })\varphi^{\prime})\,dx=0\,, \label{eul 1}%
\end{equation}
where
\begin{equation}
\,F_{ m  }(x):=\int_{x_{0}}^{x}(f(s)- m  )\,ds\,.
\label{function F}%
\end{equation}
For every $\psi\in C_{c}^{\infty}(( a, b))$ with
$\int_{a}^{b} \psi\,dx=0$, setting $\varphi(x):=\int%
_{a}^{x}\psi\,ds$, from \eqref{eul 1} we obtain%
\[
\int_{a}^{b} ( A h^{\prime\prime}\psi^{\prime}+( B h^{\prime
}-\,F_{ m  })\psi)\,dx=0\,.
\]
By the arbitrariness of $\psi$ we obtain
\begin{equation}
 A(x)h^{\prime\prime}(x)=\int_{x_{0}}^{x}( B(s)h^{\prime
}(s)-\,F_{ m  }(s))\,ds+{ p_{1}}  \label{eul 2}%
\end{equation}
for all $x\in\lbrack a, b]$, where { $ p_{1} $ is a polynomial of degree one}  associated with the constraint $\int_{a}^{b} %
\psi\,dx=0$. Observe that since $A$, $B$, and $\,F_{ m  }$ are independent
of $a$ and $b$, with $\alpha<a\le x_{0}\leq b
<\beta$, so is {$ p_{1} $}, hence \eqref{eul 2} holds for all $x\in (\alpha,\beta)$. Since $ A\in C^{0,1/2}([\alpha,\beta])$, $ A\geq
a_{0}$, $ B\in L^{1}((\alpha,\beta))$, and $F_{ m  }\in C^{1}%
( (\alpha,\beta))$, \eqref{eul 2} implies that $h^{\prime\prime}\in
 C^{0}((\alpha,\beta))$. In turn, this gives $ A\in C^{1}((\alpha,\beta))$ and $ B\in  C^{0}((\alpha,\beta))$. Hence, the
right-hand side of \eqref{eul 2} is $C^{1}((\alpha,\beta))$,
therefore $h^{\prime\prime}\in C^{1}((\alpha,\beta))$. In turn,
$ A\in C^{2}((\alpha,\beta))$ and $ B\in C^{1}((\alpha,\beta))$ by \eqref{function a} and \eqref{function b}, and so $h^{\prime\prime}\in C^{2}((\alpha,\beta))$ by
\eqref{eul 2}.

By elliptic regularity (\cite[Theorem 9.3]{agmon-douglis-nirenbergII}) it
follows that $u\in C^{3,1/2}(\overline{\Omega}_{h}^{a,b})$  for every $\alpha<a<b<\beta$,
 which gives \eqref{regularity u}. In turn, \eqref{dirichlet boundary} hold in the classical sense.

By \eqref{regularity u} and \eqref{function f} we have 
$f\in C^{0,1}((\alpha,\beta))\cap C^{1}((\alpha,\beta)\setminus\{\alpha^{0},\beta^{0}\})$. Hence, $\,F_{ m  }\in
C^{1,1}((\alpha,\beta))\cap C^{2}((\alpha,\beta)\setminus\{\alpha^{0},\beta^{0}\})$ by \eqref{function F}. Moreover, again by
\eqref{function a} and \eqref{function b}, $ A \in C^{3}((\alpha,\beta))$ and $ B \in C^{2}((\alpha,\beta))$ and,  by \eqref{eul 2},
$h^{\prime\prime}\in C^{2,1}((\alpha,\beta))\cap C^{3}((\alpha,\beta)\setminus\{\alpha^{0},\beta^{0}\})$, which proves \eqref{regularity h}. 

\textbf{Step 2:} We prove \eqref{equation h}. Fix $ \alpha<a<b<\beta$ and
let $\varphi\in C_{c}^{\infty}((\alpha,\beta))$ be such that
$\operatorname*{supp}\varphi\subset\lbrack a, b]$ and
\eqref{varphi zero average} holds. Define%
\begin{equation}
\overline{H}(x):=\int_{-\infty}^{x}(H(s)-H^{0}(s))\sqrt{1+((\check{h}^{0})^{\prime
}(s))^{2}}\,ds\,, \label{H bar}%
\end{equation}
By \eqref{J and J0}, \eqref{e-l1}, and \eqref{3.10 bis}, we have
\[
\gamma\int_{a}^{b} \frac{h^{\prime}\varphi^{\prime}}%
{J}dx+\nu_{0}\int_{a}^{b} \frac{h^{\prime\prime}%
\varphi^{\prime\prime}}{J^{5}}dx-\frac{5}{2}\nu_{0}\int_{a}^{b} %
\frac{h^{\prime}(h^{\prime\prime})^{2}\varphi^{\prime}}{J^{7}}dx+\int_{a}^{b}\overline{W}\varphi\,dx-\frac{1}{\tau}\int_{a}^{b}\overline{H}\varphi dx=0\,,
\]
 where $J$ is defined in \eqref{J and J0}. 
We integrate by parts once the integrals containing $\varphi^{\prime}$ and
twice the integral containing $\varphi^{\prime\prime}$ to obtain
\begin{align*}
-\gamma\int_{a}^{b} \Big(\frac{h^{\prime}}{J}\Big)^{\prime
}\varphi dx+\nu_{0}\int_{a}^{b} \Big(\frac{h^{\prime\prime
}}{J^{5}}\Big)^{\prime\prime}\varphi\,dx&+\frac{5}{2}\nu_{0}
\int_{a}^{b}\Big(\frac{h^{\prime}(h^{\prime\prime})^{2}}{J^{7}}\Big)^{\prime
}\varphi dx\\&+\int_{a}^{b} \overline{W}\varphi\,dx-\frac{1}{\tau
}\int_{a}^{b} \overline{H}\varphi dx=0\,,
\end{align*}
By the arbitrariness of $\varphi\in C_{c}^{\infty}(( a, b))$
satisfying \eqref{varphi zero average} we  obtain  
\begin{equation}
-\gamma\Big(\frac{h^{\prime}}{J}\Big)^{\prime}+\nu_{0}\Big(\frac
{h^{\prime\prime}}{J^{5}}\Big)^{\prime\prime}+\frac{5}{2}\nu_{0}\Big(\frac{h^{\prime
}(h^{\prime\prime})^{2}}{J^{7}}\Big)^{\prime}+\overline{W}-\frac{1}{\tau}\overline{H}= m   \label{equation fourth}%
\end{equation}
where $ m  $ is a Lagrange multiplier due to \eqref{varphi zero average}.
Using \eqref{H bar} and differentiating
\[
-\gamma\Big(\frac{h^{\prime}}{J}\Big)^{\prime\prime}+\nu_{0}\Big(\frac
{h^{\prime\prime}}{J^{5}}\Big)^{\prime\prime\prime}+\frac{5}{2}\nu_{0}\Big(\frac
{h^{\prime}(h^{\prime\prime})^{2}}{J^{7}}\Big)^{\prime\prime}+\overline{W}^{\prime
}-\frac{1}{\tau}(H-H^{0})J^{0}=0\,,
\]
 where $J_{0}$ is defined in \eqref{J and J0}.  Dividing by $J^{0}$ and differentiating once again  we obtain \eqref{equation h}.
\end{proof}

\begin{remark}
\label{remark lame}From the special form of $\mathbb{C}$, it follows that
\begin{align*}
\mu\Delta u+(\lambda+\mu)\nabla\operatorname{div}u  &  =0\quad\text{in }%
\Omega_{h}\,,\\
2\mu(Eu)\nu^{h}+\lambda(\operatorname{div}u)\nu^{h} &  =0\quad\text{on }\Gamma_{h}\,,
\end{align*}
where $\Gamma_{h}$ is the graph of $h$ in $(\alpha,\beta)$ and $\nu^{h}$ is the
outward  unit  normal.
\end{remark}

Given $(\alpha,\beta,h)\in\mathcal{A}_{s}$, $\alpha\leq a %
< b\leq\beta$, and $\eta>0$ we define%
\begin{equation}
\Omega_{h,\eta}^{ a,b}  :=\{(x,y)\in
\mathbb{R}^{2}:\, a <x< b\,,\,h(x)-\eta
<y<h(x)\}\,.\nonumber
\end{equation}

\begin{theorem}
\label{theorem regularity} Let $(\alpha,\beta,h)\in\mathcal{A}_{s}$, let $u\in\mathcal{A}_{e}(\alpha,\beta,h)$ be the minimizer of the functional $\mathcal{E}(\alpha,\beta,h,\cdot)$ defined in \eqref{elastic energy}, let $\delta>0$, $\eta>0$, and $M>1$.
Assume that there exist $\alpha < a<b< \beta$, 
with $ b-a>4\delta$, such that
\begin{align}
h(x)  &  \geq2\eta\,\quad\text{for all }x\in\lbrack a
, b]\,,\label{r01}\\
&  \int_{\alpha}^{\beta}|h^{\prime\prime}(x)|^{2}dx\leq M\,. \label{r02}%
\end{align}
Then  there exists a constant $C=C\left(
\delta,\eta,M\right)  >0$ (independent of $\alpha$, $\beta$, 
$a$, $b$, $h$, $h_{0}$, and $\tau$) such that%
\begin{equation}
\Vert u\Vert_{C^{1,1/2}(\overline{\Omega}_{h,\eta}^{ a%
+\delta,b-\delta})}\leq C\,. \label{eul 45}
\end{equation}
\end{theorem}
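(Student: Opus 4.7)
The plan is to derive \eqref{eul 45} from three ingredients: a global $H^1$ bound on $u$ obtained by energy comparison; uniform $C^{1,1/2}$ control of the portion of $\Gamma_h$ above $[a,b]$ provided by \eqref{r02}; and local Agmon--Douglis--Nirenberg Schauder estimates for the Lamé system with natural boundary conditions, exploiting the fact that the Dirichlet part of $\partial\Omega_h$ is far away.

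\textbf{Step 1 (Global $H^1$ bound).} Since $u$ minimizes $\mathcal{E}(\alpha,\beta,h,\cdot)$ over $\mathcal{A}_e(\alpha,\beta,h)$, I compare it with the affine competitor $v(x,y):=(e_0 x,0)\in \mathcal{A}_e(\alpha,\beta,h)$: the symmetric gradient of $v$ is constant with $|Ev|=|e_0|$, so $\mathcal{E}(u)\le \mathcal{E}(v)\le C_W e_0^2 A_0$. Combining the coercivity side of \eqref{W convex}, Lemma \ref{lemma korn}, and Poincaré's inequality anchored on $u(\cdot,0)=(e_0\,\cdot,0)$ (exactly as in the proof of \eqref{estimate in H1}), I obtain $\|u\|_{H^1(\Omega_h)}\le C_0$, where $C_0$ depends only on the structural parameters.

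\textbf{Step 2 (Boundary regularity).} From \eqref{r02} and the 1D Sobolev embedding $H^1\hookrightarrow C^{0,1/2}$, I obtain $|h'(x_1)-h'(x_2)|\le \sqrt{M}\,|x_1-x_2|^{1/2}$ for all $x_1,x_2\in(\alpha,\beta)$; together with $\operatorname{Lip} h\le L_0$, this bounds the $C^{1,1/2}$ norm of the portion $\Gamma_h^{a,b}$ of the top boundary uniformly in terms of $M$ and $L_0$.

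\textbf{Step 3 (Localized Schauder).} Standard first-variation arguments applied to $\mathcal{E}(\alpha,\beta,h,\cdot)$ show that $u$ weakly solves $-\operatorname{div}(\mathbb{C}Eu)=0$ in $\Omega_h$ with $\mathbb{C}Eu\,\nu^h=0$ on $\Gamma_h$. Since $h\ge 2\eta$ on $[a,b]$, the strip $\Omega_{h,\eta}^{a,b}$ lies at distance at least $\eta$ from the Dirichlet portion $\{y=0\}$, so the latter plays no role locally. Set $r:=\min(\delta,\eta)/4$. For any $z_0\in \overline{\Omega}_{h,\eta}^{a+\delta,b-\delta}$, either $B_r(z_0)\Subset\Omega_h$ (interior case, handled by standard interior estimates for constant-coefficient elliptic systems), or $z_0$ lies within distance $r$ of $\Gamma_h^{a,b}$, in which case I flatten $\Gamma_h$ near $z_0$ via the diffeomorphism $(x,y)\mapsto(x,h(x)-y)$, whose entries are $C^{0,1/2}$ with norm bounded by Step 2, and apply the boundary Schauder estimate \cite[Theorem 9.3]{agmon-douglis-nirenbergII}. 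Both cases yield
\[
\|u\|_{C^{1,1/2}(B_{r/2}(z_0)\cap\Omega_h)}\le C(\delta,\eta,M,L_0)\,\|u\|_{L^2(B_r(z_0)\cap\Omega_h)}\le C(\delta,\eta,M,L_0)\,C_0.
\]
A standard patching argument—combining the pointwise local bounds with the uniform $L^\infty$ bound just obtained, and using that for points $p,q$ not lying in a common ball $|p-q|^{1/2}$ is bounded below by $(r/2)^{1/2}$—upgrades this to the global estimate \eqref{eul 45}.

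\textbf{Main obstacle.} The delicate point is ensuring that $C$ in \eqref{eul 45} depends only on $\delta,\eta,M$ (and the fixed structural parameters) and not on $h$, $a,b$, or on the length $b-a$ itself. This is secured by the strictly local character of the Schauder estimates: the constants in \cite[Theorem 9.3]{agmon-douglis-nirenbergII} depend only on the $C^{1,1/2}$ seminorm of the straightened boundary (controlled uniformly by Step 2 in terms of $M$ and $L_0$) and on the local radius $r\sim\min(\delta,\eta)$; and the $L^2$ norm of $u$ on any local piece is bounded by the global $H^1$ bound of Step 1. The assumption $b-a>4\delta$ guarantees that the inner strip $\Omega_{h,\eta}^{a+\delta,b-\delta}$ has positive width and stays away from the lateral sections at $x=a$ and $x=b$, so that only top-boundary and interior balls—never lateral balls involving the unknown corner behavior of $\Omega_h$—enter the covering.
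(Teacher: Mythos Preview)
Your proof is correct and follows essentially the same approach as the paper. Both arguments obtain the global $H^1$ bound from energy comparison and Korn's inequality, extract the uniform $C^{1,1/2}$ regularity of $h'$ from \eqref{r02}, flatten the graph $\Gamma_h$ via a vertical shift (the paper uses $v(x,y)=u(x,y+h(x))$ globally, you flatten locally via $(x,y)\mapsto(x,h(x)-y)$), and invoke \cite[Theorem~9.3]{agmon-douglis-nirenbergII} on overlapping pieces of fixed size determined by $\delta$ and $\eta$; the only cosmetic difference is that the paper performs the change of variables once and then covers the resulting flat strip by rectangles $(x-2\delta,x+2\delta)\times(-2\eta,0)$, while you cover first and straighten each boundary ball separately.
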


\begin{proof}
In what follows $C$ denotes a
positive constant whose value changes from  formula  to  formula  and which depends only
on $\delta$, $\eta$, $M$ and the fixed parameters $\lambda$, $\mu$,
$\gamma$, $\gamma_{0}$, $\sigma_{0}$, $A_{0}$, $e_{0}$, $L_{0}$, and $\nu_{0}$
of the problem. 
 Let $\Omega_{-h}:=\{(x,y)\in\mathbb{R}^{2}:\,\alpha<x<\beta\,,\,-h(x)<y<0\}$ and let $v$ be the function defined by $v(x,y):=u(x,y+h(x))$ for every $(x,y)\in \Omega_{-h}$.
Then $v\in H^1(\Omega_{-h};\mathbb{R}^{2})$ and by \eqref{estimate in H1} we have
\begin{equation}
\Vert v \Vert_{H^{1}(\Omega_{-h})}\le C\,.
\label{estimate v in H1}%
\end{equation}
 It can be shown that $v$ is a weak
solution to the  boundary value problem%
\begin{equation}
\left\{
\begin{array}
[c]{ll}%
\operatorname{div}(\mathbb{A}\nabla v)=0 & \text{in }\Omega_{-h},\\
(\mathbb{A}\nabla v)e_{2}=0 & \text{on }(\alpha,\beta)\times\{0\}\,,
\end{array}
\right.  \label{equation A}%
\end{equation}
where%
\begin{align}
(\mathbb{A}\xi)_{11}  &  =2\mu(\xi_{11}-\xi_{12}h^{\prime})+\lambda(\xi
_{11}-\xi_{12}h^{\prime}+\xi_{22})\,,\nonumber\\
(\mathbb{A}\xi)_{12}  &  =-2\mu(\xi_{11}-\xi_{12}h^{\prime})h^{\prime}%
+\lambda(\xi_{11}-\xi_{12}h^{\prime}+\xi_{22})h^{\prime}+\mu\left(  \xi
_{12}+\xi_{21}-\xi_{22}h^{\prime}\right)  \,,\label{coeff A}\\
(\mathbb{A}\xi)_{21}  &  =\mu\left(  \xi_{21}+\xi_{12}-\xi_{22}h^{\prime
}\right)  \,,\nonumber\\
(\mathbb{A}\xi)_{22}  &  =-\mu\left(  \xi_{21}+\xi_{12}-\xi_{22}h^{\prime
}\right)  h^{\prime}+2\mu\xi_{22}+\lambda(\xi_{11}+\xi_{22}-\xi_{12}h^{\prime
})\,.\nonumber
\end{align}

Define $\Omega_{-h}^{ a,b}:=\{(x,y)\in\mathbb{R}%
^{2}:\, a<x< b\,,\,-h(x)<y<0\}$. Since $h\in
C^{1,1/2}([\alpha,\beta])$ in view of \eqref{r02}, we have that%
\begin{equation}
\Vert\mathbb{A}\Vert_{C^{0,1/2}(\overline{\Omega}_{-h}^{ a,b
})}\leq C\,. \label{coefficients}%
\end{equation}
Moreover, recalling that $\mathbb{C}$ satisfies the Legendre--Hadamard
condition and the latter is preserved by changes of variables, we deduce that
$\mathbb{A}$ satisfies the Legendre--Hadamard condition. Hence, we are in a
position to apply Theorem 9.3 in \cite{agmon-douglis-nirenbergII} in each
rectangle of the form $(x-2\delta,x+2\delta)\times(-2\eta,0)$ with
$ a +2\delta\leq x\leq b-2\delta$. Using \eqref{estimate v in H1} we  obtain that $v\in
C^{1,1/2}([x-\delta,x+\delta]\times\lbrack-\eta,0];\mathbb{R}^{2})$ with
$\Vert v\Vert_{C^{1,1/2}([x-\delta,x+\delta]\times\lbrack-\eta,0])}\leq
C$. This implies that $\Vert v\Vert_{C^{1,1/2}([ a+\delta
, b-\delta]\times\lbrack-\eta,0])}\leq C$ and,  in turn,
\eqref{eul 45}. 
\end{proof}

 Let us define
\begin{equation}
B_{\tau}(H,{H^{0}},\alpha,{\alpha^{0}}, \beta,{\beta^{0}}):=1+\frac{1}{\tau}\int_{\mathbb{R}}%
|H-H^{0}|\,dx+\frac{1}{\tau}|\alpha-{\alpha^{0}}|
+\frac{1}{\tau}|\beta-{\beta^{0}}|\,, \label{B tau alpha beta}%
\end{equation}
where $H$ and $H^{0}$ are defined in \eqref{H and H0}. 

\begin{theorem}
\label{theorem C3}Under the assumptions of Theorem
\ref{theorem euler-lagrange}, suppose in addition that there exist
$0<\eta_{0}<1$, $0<\eta_{1}<1$,  and $M>1$  such that%
\begin{gather}
2\eta_{0}\leq h^{\prime}(\alpha)\,,\quad h^{\prime}(\beta)\leq-2\eta_{0}
\,,\,\label{h' alpha and beta}\\
h(x)\geq 2\eta_{1}\,\quad\text{for all }x\in\lbrack\alpha+\delta_{0}%
,\beta-\delta_{0}]\,,\label{h bounded from zero}\\
\int_{\alpha}^{\beta}|h^{\prime\prime}(x)|^{2}dx\leq M\,, \label{h'' L2 norm}%
\end{gather}
where
\begin{equation}
\delta_{0}:= \eta_{0}^{2}/(4M)<1/4\,. \label{delta0}%
\end{equation}
Then $W(Eu(\cdot,h(\cdot)))\in L^{1}((\alpha,\beta))$, $h\in W^{4,1}%
((\alpha,\beta))$, and there exists a constant $ c_{0} = c_{0} (\eta_{0},\eta_{1},M)>1$ 
(independent of $\alpha$, $\beta$, $h$, $h_{0}$, and $\tau$) such that%
\begin{align}
\int_{\alpha}^{\beta}W(Eu(x,h(x)))\,dx & \leq  c_{0} B_{\tau}(H,{H^{0}},\alpha,{\alpha^{0}}, \beta,{\beta^{0}})  \,,
\label{trace L1}%
\\
\Vert h^{\prime\prime}\Vert_{L^{\infty}((\alpha,\beta))}  &  \leq  c_{0} B_{\tau}(H,{H^{0}},\alpha,{\alpha^{0}}, \beta,{\beta^{0}}) \,,\label{h'' bound}\\
\Vert h^{\prime\prime\prime}\Vert_{L^{\infty}((\alpha,\beta))}  &  \leq
 c_{0} B_{\tau}(H,{H^{0}},\alpha,{\alpha^{0}},  \beta,{\beta^{0}})^{2} ,\label{h''' bound better}\\
\Vert h^{\prime\prime\prime}\Vert_{L^{1}((\alpha,\beta))}  &  \leq  c_{0} B_{\tau}(H,{H^{0}},\alpha,{\alpha^{0}}, \beta,{\beta^{0}})   \,,\label{h''' bound L1}\\
\Vert h^{(\operatorname*{iv})}\Vert_{L^{1}((\alpha,\beta))}  &  \leq  c_{0} B_{\tau}(H,{H^{0}},\alpha,{\alpha^{0}},\beta,{\beta^{0}})^{2}. \label{h iv bound better}%
\end{align}
\end{theorem}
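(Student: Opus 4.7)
The engine of the proof is the Euler--Lagrange identity \eqref{equation fourth}, viewed as a fourth--order ODE for $h''/J^5$ driven by the elastic trace $\overline W$ and the discrete--time term $\overline H/\tau$. My strategy is (i) to bound $\overline W$ in $L^1$ by $c_0 B_\tau$, (ii) to use \eqref{equation fourth} and its differentiated form \eqref{equation h} to successively bootstrap from the hypothesis $\|h''\|_{L^2}^2\le M$ to the pointwise bounds \eqref{h'' bound}--\eqref{h iv bound better}. The preliminary geometric input comes from $\|h''\|_{L^2}^2\le M$ and Cauchy--Schwarz, which give $|h'(x)-h'(\alpha)|\le\sqrt{M(x-\alpha)}$; combined with \eqref{h' alpha and beta} and $\delta_0=\eta_0^2/(4M)$, this forces $h'(x)\ge 3\eta_0/2$ on $[\alpha,\alpha+\delta_0]$, hence $h(x)\ge (3\eta_0/2)(x-\alpha)$ there, and symmetrically on $[\beta-\delta_0,\beta]$. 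Together with \eqref{h bounded from zero}, $\Omega_h$ is a Lipschitz domain whose only boundary singularities are the corners at $(\alpha,0)$ and $(\beta,0)$, with apertures in the compact subinterval $[\arctan(2\eta_0),\arctan L_0]\subset(0,\pi/2)$.

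\textbf{Controlling $\overline W$.} On $[\alpha+\delta_0,\beta-\delta_0]$, a direct application of Theorem \ref{theorem regularity} (with $\eta=\eta_1$ and a $\delta$ chosen admissibly via the lower bound $\beta-\alpha\ge\sqrt{2A_0/L_0}$ from Lemma \ref{lemma endpoints}) yields a pointwise bound on $\overline W$. Near each contact point I would straighten the wedge by a bi-Lipschitz change of variables, reducing matters to a local elliptic system with $C^{0,1/2}$ coefficients and mixed Dirichlet--Neumann conditions on a corner whose aperture lies in a compact subinterval of $(0,\pi/2)$; quantitative $W^{2,p}$ corner regularity for the Lam\'e system in this regime, combined with the a priori $H^1$ bound \eqref{estimate in H1}, produces a trace estimate $\int_\alpha^{\alpha+\delta_0}|Eu(x,h(x))|^2\,dx\le C$, and symmetrically near $\beta$. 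Combining with the interior bound gives \eqref{trace L1} (the constant being absorbed into $c_0 B_\tau$ since $B_\tau\ge 1$).

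\textbf{Bootstrap to derivative bounds.} Integrating \eqref{equation fourth} over $(\alpha,\beta)$ expresses $m(\beta-\alpha)$ in terms of the boundary values of $h'/J$, $(h''/J^5)'$ and $h'(h'')^2/J^7$ at $\alpha,\beta$ plus $\int\overline W-\int\overline H/\tau$; the latter two are controlled by $c_0 B_\tau$ (using $|\overline H|\le C\int_{\mathbb R}|H-H^0|$), and the boundary values are supplied by the geometric setup of Step~1 together with the natural boundary conditions that the minimizer inherits from variations of $\alpha,\beta$ in $\mathcal{A}$ (the generalized Young's law). By Lemma \ref{lemma endpoints} we conclude $|m|\le c_0 B_\tau$. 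Writing \eqref{equation fourth} as $\nu_0(h''/J^5)''=R$ with $\|R\|_{L^1}\le c_0 B_\tau$ and integrating twice against these boundary data yields \eqref{h'' bound}. Feeding $\|h''\|_\infty\le c_0 B_\tau$ back into the differentiated equation \eqref{equation h} (valid on $(\alpha,\beta)\setminus\{\alpha^0,\beta^0\}$ by \eqref{regularity h}) then controls $h'''$ and $h^{(\mathrm{iv})}$ in $L^1$ by $c_0 B_\tau$ and in $L^\infty$ by $c_0 B_\tau^2$, producing \eqref{h''' bound L1}, \eqref{h''' bound better} and \eqref{h iv bound better}; the quadratic dependence on $B_\tau$ in the $L^\infty$ bounds reflects the nonlinear term $h'(h'')^2/J^7$, in which $h''$ now carries a factor of $B_\tau$.

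\textbf{Main obstacle.} The crux is the quantitative $L^1$ trace bound on $\overline W$ at the two contact points: $h$ is only $C^{1,1/2}$, so the corners are genuinely nonsmooth, and quantitative $W^{2,p}$ regularity for the Lam\'e system with mixed Dirichlet--Neumann conditions requires the corner aperture to stay in a compact subinterval of $(0,\pi/2)$, with constants depending only on $\eta_0$, $L_0$, and the structural parameters. This is exactly where the hypothesis $2\eta_0\le h'(\alpha)$, $h'(\beta)\le-2\eta_0$ together with $\operatorname{Lip} h\le L_0$ is essential: both tangential and perpendicular contact angles would create genuine singularities of $Eu$ at the corner and destroy the integrability of $\overline W$.
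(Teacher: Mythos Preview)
Your proposal contains a genuine circularity at the critical step. To bound $\int_\alpha^{\alpha+\delta_0}\overline W\,dx$ you invoke ``quantitative $W^{2,p}$ corner regularity for the Lam\'e system'' after a bi-Lipschitz straightening of the wedge. But with only $h\in C^{1,1/2}$ the straightening map is merely $C^{1,1/2}$, and the constants in any $W^{2,p}$ corner estimate for the transformed system will depend on higher derivatives of $h$ (through the coefficients of the transformed operator and the perturbation terms in the boundary condition). In the paper this dependence is made precise in Section~4 and in the proof of Theorem~\ref{theorem regularity at corner}: the fixed-point argument there needs $\|h^{(\mathrm{iv})}\|_{L^1}$, $\|h'''\|_{L^\infty}$, etc., with uniform bounds---exactly the quantities \eqref{h''' bound better}--\eqref{h iv bound better} you are trying to prove. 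So the corner regularity you need is downstream of Theorem~\ref{theorem C3}, not available as input to it.

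The same circularity appears in your treatment of the Lagrange multiplier $m$: you propose to integrate \eqref{equation fourth} over $(\alpha,\beta)$ and control the boundary terms via ``the generalized Young's law''. But the Young's law identities \eqref{equation x l1}--\eqref{equation x r1} are proved in Theorem~\ref{theorem endpoints}, which is again stated \emph{under the hypotheses of Theorem~\ref{theorem C3}} and uses its conclusions (in particular $h''\in C^0([\alpha,\beta])$, established in Step~4 of the proof).

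The paper breaks both circles by variational tricks that avoid any regularity of $u$ at the corners. For the trace bound, it constructs an explicit competitor $(\alpha_\varepsilon,\beta,h_\varepsilon,u_\varepsilon)$ in which the contact point is moved and $u_\varepsilon$ is just the restriction of an $H^1$ extension of $u$; minimality gives a one-sided derivative inequality, and Fatou's lemma on the elastic term yields $\int_\alpha^{\alpha+\delta_0/2}\overline W\,dx\le C B_\tau$ directly (Step~3 of the proof). For the multiplier, the paper pairs the integrated Euler--Lagrange equation \eqref{eul 5} against a compactly supported test function $\zeta$ with $\int\zeta=0$ and $\int\zeta(x)x^2\,dx=1$, which isolates $m$ without any boundary data. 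Once $|m|\le CB_\tau$ and $\|F_m\|_{L^\infty}\le CB_\tau$ are in hand, the bootstrap from \eqref{eul 5} and its derivative proceeds essentially as you outline.
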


 Unless otherwise indicated, in the proofs of the rest of the paper $C$ denotes a constant
depending only on the constants $\eta_{0}$, $\eta_{1}$, and $M$ of the previous theorem (and on the structural constants $e_{0}$, $\lambda$, $\mu$,
 and $L_{0}$, and possibly on the exponents considered in the statements). The value of $C$ can change from formula to formula.

In the proof of Theorem \ref{theorem C3} we use the following estimate.

\begin{lemma}\label{qwe1}
Let $\eta_{0}>0$, $M>0$, $\alpha<\beta$, and $h\in H^{2}((\alpha,\beta))\cap H_{0}^{1}((\alpha,\beta))$.
Assume that
\begin{gather}
2\eta_{0}\leq h^{\prime}(\alpha)\,,\quad h^{\prime}(\beta)\leq-2\eta_{0}
\,,\,\label{h' alpha and beta 11}\\
\int_{\alpha}^{\beta}|h^{\prime\prime}(x)|^{2}dx\leq M\,. \label{h'' L2 norm 11}%
\end{gather}
Then
\begin{equation}
\beta-\alpha\geq 16\eta_{0}^{2}/M\,.
\label{qwe2}
\end{equation}
\end{lemma}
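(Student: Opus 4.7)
The plan is to use the fact that $h' \in H^1((\alpha,\beta))$ is absolutely continuous (by \eqref{h'' L2 norm 11} and Sobolev embedding in one dimension), so that the fundamental theorem of calculus applies and $h'(\beta) - h'(\alpha) = \int_\alpha^\beta h''(x)\,dx$. The sign conditions in \eqref{h' alpha and beta 11} then force this integral to have absolute value at least $4\eta_0$.

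More precisely, first I would note that from \eqref{h' alpha and beta 11}
$$h'(\alpha) - h'(\beta) \geq 4\eta_0.$$
Next, I would apply Cauchy--Schwarz to the identity $h'(\beta) - h'(\alpha) = \int_\alpha^\beta h''(x)\,dx$, obtaining
$$4\eta_0 \leq |h'(\beta)-h'(\alpha)| \leq \left(\int_\alpha^\beta 1\,dx\right)^{1/2} \left(\int_\alpha^\beta |h''(x)|^2\,dx\right)^{1/2} \leq \sqrt{(\beta-\alpha)M},$$
where the last inequality uses \eqref{h'' L2 norm 11}. Squaring this estimate immediately yields \eqref{qwe2}.

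The argument is essentially one line and there is no serious obstacle; the only point worth checking is that the boundary values $h'(\alpha)$ and $h'(\beta)$ make sense pointwise, which follows from $h \in H^2((\alpha,\beta))$ and the Sobolev embedding $H^1 \hookrightarrow C^{0,1/2}$ in one dimension. No further regularity or structure of the problem is required, so this lemma is a purely elementary consequence of Cauchy--Schwarz applied to $h''$.
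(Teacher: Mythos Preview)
Your argument is correct and in fact simpler than the paper's. The paper reaches the same final inequality $4\eta_{0}\leq M^{1/2}(\beta-\alpha)^{1/2}$, but arrives at it via a longer route: it first locates points $x_{\alpha}<x_{\beta}$ in $(\alpha,\beta)$ with $\int_{\alpha}^{x_{\alpha}}|h''|\,ds=2\eta_{0}$ and $\int_{x_{\beta}}^{\beta}|h''|\,ds=2\eta_{0}$, using the $H_{0}^{1}$ assumption to guarantee that $h'$ must change sign (otherwise $h'>0$ throughout would contradict $h(\alpha)=h(\beta)=0$), and then concludes $\int_{\alpha}^{\beta}|h''|\,ds\geq 4\eta_{0}$ before applying H\"older. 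Your approach bypasses all of this by computing $h'(\beta)-h'(\alpha)=\int_{\alpha}^{\beta}h''\,dx$ directly and applying Cauchy--Schwarz to the signed integral. This is shorter and shows that the $H_{0}^{1}$ hypothesis is actually unnecessary for the lemma; only the endpoint slope bounds \eqref{h' alpha and beta 11} and the $L^{2}$ bound \eqref{h'' L2 norm 11} on $h''$ are used.
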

\begin{proof}
By the fundamental theorem of
calculus and by \eqref{h' alpha and beta 11} for every $x\in(\alpha,\beta)$ we have
\begin{equation}
h^{\prime}(x)=h^{\prime}(\alpha)+\int_{\alpha}^{x}h^{\prime\prime}(s)\,ds
\geq 2\eta_{0} - \int_{\alpha}^{x}|h^{\prime\prime}(s)|\,ds\,.
\label{qwe3}
\end{equation}
We claim that there exists $x_{\alpha}\in(\alpha,\beta)$ such that
\begin{equation}
\int_{\alpha}^{x_{\alpha}}|h^{\prime\prime}(s)|\,ds=2\eta_{0}\quad\text{and}\quad \int_{\alpha}^{x}|h^{\prime\prime}(s)|\,ds<2\eta_{0}
\quad \text{for }x\in(\alpha,x_{\alpha})\,.
\label{qwe4}
\end{equation}
If not, by \eqref{qwe3} we would have $h^{\prime}(x)>0$ for every $x\in(\alpha,\beta)$, 
which contradicts the assumption $h\in H_{0}^{1}((\alpha,\beta))$ and proves the claim.
By \eqref{qwe3} and \eqref{qwe4} we have $h^{\prime}(x)>0$ for every $x\in(\alpha,x_{\alpha})$.

In the same way we prove that there exists $x_{\beta}\in(\alpha,\beta)$ such that
\begin{equation}
\int_{x_{\beta}}^{\beta}|h^{\prime\prime}(s)|\,ds=2\eta_{0}\quad\text{and}\quad h^{\prime}(x)<0
\quad \text{for }x\in(x_{\beta},\beta)\,.
\label{qwe5}
\end{equation}
Since $x_{\alpha}\leq x_{\beta}$, from \eqref{qwe4} and \eqref{qwe5} we deduce that
\[
4\eta_{0}\leq\int_{\alpha}^{\beta}|h^{\prime\prime}(s)|\,ds\,.
\]
By H\"older's inequaliy we get
\[
4\eta_{0}\leq M^{1/2}(\beta-\alpha)^{1/2}\,,
\]
which gives \eqref{qwe2}.
\end{proof}

\begin{proof}[Proof of Theorem \ref{theorem C3}]
\textbf{Step 1: A Variation of }$\boldsymbol{(\alpha,\beta,h,u)}$.\ Extend $h$
to  a  function $ h\in H^{2}((\alpha-1,\beta))$ by setting $ h(x):=h^{\prime}(\alpha)(x-\alpha)$ for $x\in(\alpha-1,\alpha]$. Using
H\"{o}lder's inequality for $x\in(\alpha,\beta)$ we have%
\begin{equation}
h^{\prime}(x)\geq h^{\prime}(\alpha)-(x-\alpha)^{1/2}\left(  \int_{\alpha}%
^{x}|h^{\prime\prime}(s)|^{2}ds\right)  ^{1/2}\geq2\eta_{0}-(x-\alpha
)^{1/2}M^{1/2}\geq\eta_{0}
\label{qyp1}
\end{equation}
provided $x-\alpha\leq\eta_{0}^{2}/M$.


Then,  using also the fact that $ h(x):=h^{\prime}(\alpha)(x-\alpha)$ for
$x\in(\alpha-1,\alpha]$,  we have%
\begin{equation}
\eta_{0}\leq h^{\prime}(x)\leq L_{0}\quad\text{for
every }x\in\lbrack\alpha-\delta_{0},\alpha+2\delta_{0}]\,. \label{h' bounded}%
\end{equation}
Since $h(\alpha)=0$  and $\alpha+4\delta_{0}<\beta$ by Lemma \ref{qwe1},  we obtain%
\begin{align}
\eta_{0}(x-\alpha)  &  \leq h(x)\quad\text{for every }x\in\lbrack\alpha
,\alpha+4\delta_{0}]\,,\label{h bounded}\\
| h(x)|  &  \leq L_{0}|x-\alpha|\quad\text{for
every }x\in\lbrack\alpha-\delta_{0},\alpha+2\delta_{0}]\,.
\label{h hat bounded}%
\end{align}

Take $\varphi_{0}\in C^{\infty}(\mathbb{R})$
with $\varphi_{0}(0)=1$, $\varphi_{0}(x)\geq 1/2$ for every $x\in[-\delta_{0}/2, \delta_{0}/2]$, $\int_{0}^{\delta_{0}}\varphi_{0}\,dx=0$, 
and $\operatorname*{supp}\varphi_{0}\subset(-\delta_{0},\delta_{0})$. Define
\begin{equation}
\varphi(x):=\varphi_{0}(x-\alpha)\,,\quad x\in\mathbb{R}\,.
\label{test function varphi}%
\end{equation}
Let $\varepsilon_{0}:=\min\{1,\frac{1}{2}\delta_{0}\eta_{0}/\Vert\varphi_{0}%
\Vert_{C^{1}}\}$. Then for every $\varepsilon\in\mathbb{R}$ with
$|\varepsilon|\leq\varepsilon_{0}$ we have%
\begin{align}
 h^{\prime}(x)+\varepsilon\varphi^{\prime}(x)  &  \geq\eta_{0}/2\,\quad\text{for all }x\in\lbrack\alpha-\delta_{0},\alpha+
\delta_{0}]\,,\label{100}\\
 h(\alpha-\delta_{0})+\varepsilon\varphi(\alpha-\delta_{0})  &
<0< h(\alpha+\delta_{0})+\varepsilon\varphi(\alpha+\delta_{0}%
)\,.\nonumber
\end{align}
This implies that there exists a unique $\alpha_{\varepsilon}$ such that
\begin{equation}
\alpha_{\varepsilon}\in(\alpha-\delta_{0},\alpha+\delta_{0})
\quad\hbox{and}\quad h(\alpha_{\varepsilon})+\varepsilon\varphi(\alpha_{\varepsilon})=0\,.
\label{102}%
\end{equation}
Moreover, by the Implicit Function Theorem the function $\varepsilon
\mapsto\alpha_{\varepsilon}$ is of class $C^{1}$ and so we can differentiate
the previous identity to get
\begin{equation}
\frac{d\alpha_{\varepsilon}}{d\varepsilon}=-\frac{\varphi(\alpha_{\varepsilon
})}{ h^{\prime}(\alpha_{\varepsilon})+\varepsilon\varphi^{\prime}%
(\alpha_{\varepsilon})}\,\,. \label{derivative endpoint l}%
\end{equation}
By \eqref{100},%
\[
\left\vert \frac{d\alpha_{\varepsilon}}{d\varepsilon}\right\vert \leq
\frac{2\Vert\varphi_{0}\Vert_{C^{0}}}{\eta_{0}}\quad\text{for all }\varepsilon
\in\lbrack-\varepsilon_{0},\varepsilon_{0}]\,,
\]
and, since ${\alpha^{0}}=\alpha$, this implies that%
\begin{equation}
|\alpha_{\varepsilon}-\alpha|\leq\frac{2\Vert\varphi_{0}\Vert_{C^{0}}}{\eta_{0}
}|\varepsilon|\,. \label{alpha epsilon lips}%
\end{equation}

Since 
$\varphi_{0}(x)\geq 1/2$ for $|x|\leq\delta_{0}/2$, we have

$\varphi(x)\geq 1/2$ for $|x-\alpha|\leq\delta_{0}/2$. This, together with
\eqref{alpha epsilon lips} implies that $\varphi(\alpha_{\varepsilon}%
)\geq\frac{1}{2}$ for $|\varepsilon|\leq\varepsilon_{1}$, where $\varepsilon
_{1}:=\min\{\delta_{0}\eta_{0}/(4\Vert\varphi_{0}\Vert_{C^{0}}),\varepsilon_{0}\}$. In
turn, by \eqref{100} and \eqref{derivative endpoint l},%
\begin{equation}
\frac{d\alpha_{\varepsilon}}{d\varepsilon}<0\quad\text{for }|\varepsilon
|\leq\varepsilon_{1}\,.\label{derivative negative}%
\end{equation}

Observe that \eqref{100} implies that $ h+\varepsilon\varphi>0$ in
$(\alpha_{\varepsilon},\alpha+\delta_{0})$. On the other hand $ h+\varepsilon\varphi=h\geq0$ on $[\alpha+\delta_{0},\beta]$.

In order to satisfy  the area constraint  \eqref{area constraint} we fix a function $\psi_{0}\in
C^{\infty}(\mathbb{R})$ such that $\operatorname*{supp}\psi_{0}\subset
(\delta_{0},2\delta_{0})$ 
  and $\int_{\delta_{0}}^{2\delta_{0}}\psi
_{0}\,dx=1$, and we define $\psi(x):=\psi_{0}(x-\alpha)$, $x\in\mathbb{R}$.
Consider the function $h_{\varepsilon}\in H^{2}((\alpha_{\varepsilon}%
,\beta))\cap H_{0}^{1}((\alpha_{\varepsilon},\beta))$ defined by%
\begin{equation}
h_{\varepsilon}(x):= h(x)+\varepsilon\varphi(x)+\omega_{\varepsilon}%
\psi(x)\,, \label{h epsilon}%
\end{equation}
where $\omega_{\varepsilon}\in\mathbb{R}$ is the unique constant such that%
\begin{equation}
\int_{\alpha_{\varepsilon}}^{\beta}h_{\varepsilon}\,dx=A_{0}\,.
\label{average h epsilon}%
\end{equation}
 Since  $\int_{\alpha}^{\beta}%
\varphi\,dx=0$, from \eqref{area constraint} and \eqref{average h epsilon} it follows that 
\[
\int_{\alpha_{\varepsilon}}^{\beta}( h(x)+\varepsilon\varphi
(x)+\omega_{\varepsilon}\psi(x))\,dx=A_{0}=\int_{\alpha}^{\beta}%
(h(x)+\varepsilon\varphi(x))\,dx\,.
\]
 Hence, using the fact that $\int_{\alpha_{\varepsilon}}^{\beta}%
\psi\,dx=1$, we get
\[
\int_{\alpha}^{\beta}( h(x)+\varepsilon\varphi(x))\,dx+\int%
_{\alpha_{\varepsilon}}^{\alpha}( h(x)+\varepsilon\varphi(x))\,dx+\omega
_{\varepsilon} =\int_{\alpha}^{\beta}(h(x)+\varepsilon\varphi(x))\,dx\,,
\]
and so
\[
\frac{\omega_{\varepsilon}}{\varepsilon}=-\frac{1}{\varepsilon}%
\int_{\alpha_{\varepsilon}}^{\alpha}h(x)\,dx-\int_{\alpha_{\varepsilon}%
}^{\alpha}\varphi(x)\,dx\,.
\]
To estimate the right-hand side we use \eqref{alpha epsilon lips} to get
\[
\left\vert \int_{\alpha_{\varepsilon}}^{\alpha}\varphi(x)\,dx\right\vert
\leq\frac{2\Vert\varphi_{0}\Vert_{C^{0}}^{2}}{\eta_{0}}|\varepsilon|\,,
\]
while, using also  \eqref{h hat bounded},  we obtain%
\[
\left\vert \int_{\alpha_{\varepsilon}}^{\alpha} h(x)\,dx\right\vert
\leq\frac{4\Vert\varphi_{0}\Vert_{C^{0}}^{2}}{\eta_{0}^{2}}\varepsilon^{2}%
(L_{0}+2^{1/2}\delta_{0}^{1/2}M^{1/2})\,.
\]
Combining these inequalities we have%
\begin{equation}
\left\vert \frac{\omega_{\varepsilon}}{\varepsilon}\right\vert \leq
 C|\varepsilon|\,. \label{omega eps go to zero}%
\end{equation}

Let $\varepsilon_{2}:=\min\{\frac{\eta_{0}}{2\Vert\varphi_{0}\Vert_{C^{1}}%
+2 C\Vert\psi_{0}\Vert_{C^{1}}},\varepsilon_{1}\}$. We claim that for all
$|\varepsilon|\leq\varepsilon_{2}$  we have %
\begin{equation}
h_{\varepsilon}^{\prime}\geq\eta_{0}/2\quad\text{in }(\alpha
_{\varepsilon},\alpha+2\delta_{0})\,. \label{h epsilon prime large}%
\end{equation}
Fix $|\varepsilon|\leq\varepsilon_{2}$. Then by \eqref{h' bounded},%
\[
h_{\varepsilon}^{\prime}   = h^{\prime}+\varepsilon\varphi^{\prime
}+\omega_{\varepsilon}\psi^{\prime}\geq\eta_{0}-|\varepsilon|\Vert\varphi_{0}%
\Vert_{C^{1}}- C\varepsilon^{2}\Vert\psi_{0}\Vert_{C^{1}}  \geq\eta_{0}-|\varepsilon|(\Vert\varphi_{0}\Vert_{C^{1}}+ C\Vert\psi
_{0}\Vert_{C^{1}})\geq\eta_{0}/2\,,
\]
which proves the claim.

 Since $\operatorname*{supp}\psi\subset (\alpha+\delta_{0},\alpha+2\delta_{0})$ and $\alpha_{\varepsilon}\in(\alpha
-\delta_{0},\alpha+\delta_{0})$, we have $\psi(\alpha_{\varepsilon})=0$. By \eqref{102} and \eqref{h epsilon}, this  gives  $h_{\varepsilon}(\alpha_{\varepsilon})=0$,  hence  \eqref{h epsilon prime large}
implies that $h_{\varepsilon}>0$ in $(\alpha_{\varepsilon},\alpha+2\delta_{0}%
)$. Moreover, since $\operatorname*{supp}\varphi\subset(\alpha-\delta
_{0},\alpha+\delta_{0})$ and $\operatorname*{supp}\psi\subset(\alpha
+\delta_{0},\alpha+2\delta_{0})$ we have that
\begin{equation}
h_{\varepsilon}=h\quad\text{on }[\alpha+2\delta_{0},\beta]\,.
\label{h epsilon =h}%
\end{equation}
We conclude that $h_{\varepsilon}\geq0$ in $(\alpha_{\varepsilon},\beta)$. In
turn, also by \eqref{average h epsilon}, $(\alpha_{\varepsilon},\beta
,h_{\varepsilon})\in\mathcal{A}_{s}$.

 Let $U$ be the interior of the set $\overline\Omega_{h}\cup([\alpha-\delta_{0},\beta]\times[-1,0])$ and let $\hat{u}\colon\overline U\to\mathbb{R}^{2}$ be the function defined by
\[
\hat{u}(x,y):=\left\{\setstretch{1.1}
\begin{array}
[c]{ll}%
u(x,y) & \text{if }(x,y)\in \overline{\Omega}_{h}\,,\\
(e_{0}x,0) & \text{if }(x,y)\in[\alpha-\delta_{0},\beta]\times[-1,0]\,.
\end{array}
\right.
\]
Since the two definitions match on $[\alpha,\beta]\times\{0\}$, we have $\hat{u}\in H^1(U;\mathbb{R}^{2})$.
Recalling that $U$ has Lipschitz boundary, we can extend $\hat{u}$ to a function defined on $\mathbb{R}^{2}$, still denoted by $\hat{u}$, such that $\hat{u}\in H^1(\mathbb{R}^{2};\mathbb{R}^{2})$.

Hence, if we let $u_{\varepsilon}$ be the
restriction of $\hat{u}$ to $\Omega_{h_{\varepsilon}}$,  we have $u_{\varepsilon}(x,0)=(e_{0}x,0)$ for a.e.\ $x\in(\alpha
_{\varepsilon},\beta)$, which gives  $(\alpha
_{\varepsilon},\beta,h_{\varepsilon},u_{\varepsilon}%
)\in\mathcal{A}$.

\noindent\textbf{Step 2: Evaluation of derivatives.} We claim that
\begin{align}
\liminf_{\varepsilon\rightarrow0}&\frac{\mathcal{S}(\alpha_{\varepsilon}%
,\beta,h_{\varepsilon})-\mathcal{S}(\alpha,\beta,h)}{\varepsilon}  \nonumber\\&
\geq\gamma\int_{\alpha}^{\beta}\frac{h^{\prime}\varphi^{\prime}}%
{\sqrt{1+(h^{\prime})^{2}}}dx+\gamma\sqrt{1+(h^{\prime}(\alpha))^{2}}\frac
{1}{h^{\prime}(\alpha)}-\gamma_{0}\frac{1}{h^{\prime}(\alpha)}
\nonumber\\
&  \quad+\nu_{0}\int_{\alpha}^{\beta}\frac{h^{\prime\prime}\varphi
^{\prime\prime}}{(1+(h^{\prime})^{2})^{5/2}}dx-\frac{5}{2}\nu_{0}\int_{\alpha}^{\beta
}\frac{h^{\prime}(h^{\prime\prime})^{2}\varphi^{\prime}}{(1+(h^{\prime}%
)^{2})^{7/2}}dx\,.
\label{liminf S}
\end{align}
Since $(\alpha_{\varepsilon},\beta,h_{\varepsilon})\in\mathcal{A}_{s}$  and $\varphi(\alpha)=1$, by
\eqref{derivative endpoint l}, \eqref{h epsilon}, and \eqref{omega eps go to zero}  we have
\begin{align}
\liminf_{\varepsilon\rightarrow0}\frac{\mathcal{S}(\alpha_{\varepsilon}%
,\beta,h_{\varepsilon})-\mathcal{S}(\alpha,\beta,h)}{\varepsilon}  &
=\gamma\int_{\alpha}^{\beta}\frac{h^{\prime}\varphi^{\prime}}{\sqrt
{1+(h^{\prime})^{2}}}dx+\gamma\sqrt{1+(h^{\prime}(\alpha))^{2}}\frac
{1}{h^{\prime}(\alpha)}%
\label{quotient S}\\
&  \quad-\gamma_{0}\frac{1}{h^{\prime}(\alpha)}+\frac{\nu_{0}}{2}\liminf_{\varepsilon\rightarrow0}I_{\varepsilon}\,,\nonumber
\end{align}
where 
\[
I_{\varepsilon}:=\frac{1}{\varepsilon}\left[  \int_{\alpha_{\varepsilon}%
}^{\beta}\frac{(h_{\varepsilon}^{\prime\prime})^{2}}{(1+(h_{\varepsilon
}^{\prime})^{2})^{5/2}}dx-\int_{\alpha}^{\beta}\frac{(h^{\prime\prime
})^{2}}{(1+(h^{\prime})^{2})^{5/2}}dx\right]  .
\]
Write%
\begin{align*}
I_{\varepsilon}  &  =\frac{1}{\varepsilon}\left[  \int_{\alpha_{\varepsilon}%
}^{\beta}\frac{(h_{\varepsilon}^{\prime\prime})^{2}}{(1+(h_{\varepsilon
}^{\prime})^{2})^{5/2}}dx-\int_{\alpha_{\varepsilon}}^{\beta}%
\frac{(h^{\prime\prime})^{2}}{(1+(h^{\prime})^{2})^{5/2}}dx\right] \\
&  \quad+\frac{1}{\varepsilon}\int_{\alpha_{\varepsilon}}^{\alpha}%
\frac{(h^{\prime\prime})^{2}}{(1+(h^{\prime})^{2})^{5/2}}%
dx=:I_{\varepsilon,1}+I_{\varepsilon,2}\,.
\end{align*}
By \eqref{h epsilon} and \eqref{omega eps go to zero},%
\begin{equation}
\lim_{\varepsilon\rightarrow0}I_{\varepsilon,1}=2\int_{\alpha}^{\beta}%
\frac{h^{\prime\prime}\varphi^{\prime\prime}}{(1+(h^{\prime})^{2})^{5/2}%
}dx-5\int_{\alpha}^{\beta}\frac{h^{\prime}(h^{\prime\prime})^{2}%
\varphi^{\prime}}{(1+(h^{\prime})^{2})^{7/2}}dx\,. \label{101}%
\end{equation}
By \eqref{derivative negative}, $\alpha_{\varepsilon}<\alpha$\ for
$\varepsilon>0$ and $\alpha_{\varepsilon}>\alpha$ for $\varepsilon<0$,
provided $|\varepsilon|\leq\varepsilon_{2}$. This implies that
\[
\liminf_{\varepsilon\rightarrow0}I_{\varepsilon,2}\geq0\,.
\]
 From this inequality and  from \eqref{quotient S} and \eqref{101} we conclude that
\eqref{liminf S} holds.
 Since $\operatorname*{Lip}h\leq L_{0}$, from \eqref{h'' L2 norm}, \eqref{delta0}, and \eqref{qwe2},
we deduce that
\begin{align}
\liminf_{\varepsilon\rightarrow0}&\frac{\mathcal{S}(\alpha_{\varepsilon}%
,\beta,h_{\varepsilon})-\mathcal{S}(\alpha,\beta,h)}{\varepsilon}
\nonumber\\&\geq-(\gamma L_{0}\delta_0
+\nu_{0}M^{1/2}\delta_{0}^{1/2}+(5/2)\nu_{0}L_{0}M)\Vert\varphi_{0}\Vert_{C^{2}}-\gamma/(2\eta_0)\,.
\label{liminf S simpler}
\end{align}

For simplicity of notation we abbreviate
\[
\mathcal{T}^{0}(\cdot,\cdot,\cdot):=\mathcal{T}_{\tau}(\cdot,\cdot
,\cdot;\alpha^{0},\beta^{0},h^{0})
\]
To evaluate $\left.  \frac{d}{d\varepsilon}\mathcal{T}^{0}(\alpha
_{\varepsilon},\beta,h_{\varepsilon})\right\vert _{\varepsilon=0}$ we define
\begin{equation}
H_{\varepsilon}(x):=\int_{\alpha_{\varepsilon}}^{x}\check{h}_{\varepsilon
}(\rho)\,d\rho\,,\quad\Phi(x):=\int_{\alpha}^{x}\check{\varphi}(\rho
)\,d\rho\,, \label{primitive}%
\end{equation}
where $\check{h}_{\varepsilon}$ and $\check{\varphi}$ are the extensions of
$h_{\varepsilon}$ and $\varphi$ by zero outside $(\alpha_{\varepsilon},\beta)$
and $(\alpha,\beta)$, respectively.  Since $\int_{0}^{\delta_{0}}\varphi_{0}\,dx=0$ and $\operatorname*{supp}\varphi_{0}\subset(-\delta_{0},\delta_{0})$, we have $\int_{\alpha}^{\beta}\varphi\,dx=0$, hence 
\begin{equation}
\Phi(x)=0\quad\text{for every }x\notin(\alpha,\beta)\,. \label{primitive zero}%
\end{equation}

Observe that if $\alpha<x<\beta$ then $\alpha_{\varepsilon}<x<\beta$ for all
$|\varepsilon|$ sufficiently small, and so by \eqref{derivative endpoint l},
\eqref{omega eps go to zero}, and the fact that $h(\alpha)=0$,
\begin{equation}
\left.  \frac{d}{d\varepsilon}H_{\varepsilon}(x)\right\vert _{\varepsilon
=0}=\left.  \frac{d}{d\varepsilon}\int_{\alpha_{\varepsilon}}^{x}(\hat
{h}+\varepsilon\varphi+\omega_{\varepsilon}\psi)\,d\rho\right\vert
_{\varepsilon=0}=\int_{\alpha}^{x}\varphi\,d\rho=\Phi(x)\,.
\label{derivative H epsilon}%
\end{equation}
On the other hand, if $x<\alpha$, then $x<\alpha_{\varepsilon}$ for all
$|\varepsilon|$ sufficiently small. Hence, $H_{\varepsilon}(x)=0$ and so
$\left.  \frac{d}{d\varepsilon}H_{\varepsilon}(x)\right\vert _{\varepsilon
=0}=0$. Moreover, $H_{\varepsilon}(x)=A_{0}$ for $x\geq\beta$ by
\eqref{average h epsilon}, and so again $\left.  \frac{d}{d\varepsilon
}H_{\varepsilon}(x)\right\vert _{\varepsilon=0}=0$. By
\eqref{primitive zero} it follows that
\eqref{derivative H epsilon} holds for all $x\in\mathbb{R}\setminus
\{\alpha\}$. 

By the regularity of $h$ it follows from the Lebesgue
dominated convergence theorem, \eqref{tau energy}, \eqref{derivative endpoint l}, and
\eqref{derivative H epsilon} that%
\begin{align}
\left.  \frac{d}{d\varepsilon}\mathcal{T}^{0}(\alpha_{\varepsilon}%
,\beta,h_{\varepsilon})\right\vert _{\varepsilon=0}  &  =\frac{1}{\tau}%
\int_{\mathbb{R}}(H-H^{0})\Phi\sqrt{1+((\check{h}^{0})^{\prime})^{2}}%
dx-\frac{\sigma_{0}}{\tau}(\alpha-\alpha^{0})\frac{1}{h^{\prime}(\alpha
)}\label{der T}\\
&  \geq -(1+L_{0})\Vert\varphi_{0}\Vert_{C^{0}}2\delta_{0}\frac{1}{\tau}%
\int_{\mathbb{R}}|H-H^{0}|\,dx-\frac{\sigma_{0}}{\eta_{0}}\frac{1}{\tau}%
|\alpha-{\alpha^{0}}|\,,\nonumber
\end{align}
where in the second inequality we used  \eqref{h' alpha and beta}  and the fact that
$\operatorname*{supp}\varphi_{0}\subset(-\delta_{0},\delta_{0})$. Integrating
by parts, and using \eqref{H bar},  \eqref{primitive},  and \eqref{primitive zero}, we obtain%
\begin{equation}
\int_{\mathbb{R}}(H-H^{0})\Phi\sqrt{1+((\check{h}^{0})^{\prime})^{2}}%
dx=-\int_{\alpha}^{\beta}\overline{H}\varphi dx\,.
\label{der T a}%
\end{equation}
\noindent\textbf{Step 3: Proof of \eqref{trace L1}}. In what follows $C$
denotes a positive constant whose value changes from  formula  to  formula  and which
depends only on $\eta_{0}$, $\eta_{1}$,  $M$,  and the fixed parameters $\lambda$,
$\mu$, $\gamma$, $\gamma_{0}$, $\sigma_{0}$, $A_{0}$, $e_{0}$, $L_{0}$, and
$\nu_{0}$ of the problem.

By Step 1, $(\alpha_{\varepsilon},\beta,h_{\varepsilon},u_{\varepsilon}%
)\in\mathcal{A}$ and because $(\alpha,\beta,h,u)\in\mathcal{A}$ is a minimizer
of the total energy functional $\mathcal{F}^{0}$, we have that
\[
\mathcal{F}^{0}(\alpha_{\varepsilon},\beta,h_{\varepsilon},u_{\varepsilon
})-\mathcal{F}^{0}(\alpha,\beta,h,u)\geq0\,.
\]
Then
\[
\limsup_{\varepsilon\rightarrow0^{-}}\frac{\mathcal{F}^{0}(\alpha
_{\varepsilon},\beta,h_{\varepsilon},u_{\varepsilon})-\mathcal{F}^{0}%
(\alpha,\beta,h,u)}{\varepsilon}\leq0
\]
and so, by \eqref{total energy}, \eqref{liminf S simpler},  and \eqref{der T}) we have%
\begin{equation}
\limsup_{\varepsilon\rightarrow0^{-}}\frac{\mathcal{E}(\alpha_{\varepsilon
},\beta,h_{\varepsilon},u_{\varepsilon})-\mathcal{E}(\alpha,\beta
,h,u)}{\varepsilon}\leq C B_{\tau}(H,{H^{0}},\alpha,{\alpha^{0}},\beta,{\beta^{0}})\,. \label{e3}%
\end{equation}
By \eqref{alpha epsilon lips}, \eqref{derivative negative}, and the fact that
$\varepsilon<0$ we have that $\alpha<\alpha_{\varepsilon}<\alpha+\delta_{0}/2$ for
all $-\varepsilon_{1}<\varepsilon<0$. Since
\[
\mathcal{E}(\alpha_{\varepsilon},\beta,h_{\varepsilon},u_{\varepsilon}%
)=\int_{\alpha_{\varepsilon}}^{\beta}\Bigl(\int_{0}^{h_{\varepsilon}%
(x)}W(E\hat{u}(x,y))\,dy\Bigr)dx\,,
\]
$\operatorname*{supp}\varphi\subset(\alpha-\delta_{0},\alpha+\delta_{0})$, and
$\operatorname*{supp}\psi\subset(\alpha+\delta_{0},\alpha+2\delta_{0})$, we
have that $h_{\varepsilon}=h$ in $(\alpha+2\delta_{0},\beta)$, and so  for $-\varepsilon_{1}<\varepsilon<0$  we can
write%
\begin{align}
\frac{\mathcal{E}(\alpha_{\varepsilon},\beta,h_{\varepsilon},u_{\varepsilon
})-\mathcal{E}(\alpha,\beta,h,u)}{\varepsilon}  &  =-\frac{1}{\varepsilon}%
\int_{\alpha_{\varepsilon}}^{\alpha+2\delta_{0}}\Bigl(\int_{h_{\varepsilon}%
(x)}^{h(x)}W(E\hat{u}(x,y))\,dy\Bigr)dx\nonumber\\
 \quad-\frac{1}{\varepsilon}\int_{\alpha}^{\alpha_{\varepsilon}}%
\Bigl(\int_{0}^{h(x)}W(Eu(x,y))\,dy\Bigr)dx &  \geq -\frac{1}{\varepsilon}%
\int_{\alpha_{\varepsilon}}^{\alpha+2\delta_{0}}\Bigl(\int_{h_{\varepsilon}%
(x)}^{h(x)}W(E\hat{u}(x,y))\,dy\Bigr)dx\,.\label{e6}
\end{align}
By \eqref{h bounded}   we have that
$h(x)\geq\eta_{0}\,\delta_{0}/4$ for all $x\in\lbrack\alpha+\delta_{0}/4,\alpha+4\delta_{0}]$. Hence, we can apply Theorem \ref{theorem regularity}
to obtain that $W(Eu(x,h(x)))\leq C$ for all $x\in\lbrack\alpha+\delta_{0}/2
,\alpha+2\delta_{0}]$. By \eqref{h epsilon}, this implies that%
\begin{equation}
\lim_{\varepsilon\rightarrow0^{-}}\frac{1}{\varepsilon}\int_{\alpha+\delta_{0}/2}
^{\alpha+2\delta_{0}}\Bigl(\int_{h_{\varepsilon}(x)}^{h(x)}W(E\hat
{u}(x,y))\,dy\Bigr)dx=-\int_{\alpha+\delta_{0}/2}^{\alpha+2\delta_{0}}%
 W(Eu(x,h(x)))\varphi(x)\,dx\leq C. \label{e5}%
\end{equation}
Together with \eqref{e3} and \eqref{e6} this implies that
\begin{equation}
\limsup_{\varepsilon\rightarrow0^{-}}\Big(-\frac{1}{\varepsilon}\int%
_{\alpha_{\varepsilon}}^{\alpha+\delta_{0}/2}\Bigl(\int_{h_{\varepsilon}(x)}%
^{h(x)}W(E\hat{u}(x,y))\,dy\Bigr)dx\Big)\leq C B_{\tau}(H,{H^{0}},\alpha,{\alpha^{0}},\beta,{\beta^{0}})\,. \label{e7}%
\end{equation}

Using the facts that $\varphi\geq 1/2$ in $(\alpha,\alpha+\delta_{0}/2)$ and $\operatorname*{supp}\psi\subset(\alpha+\delta_{0},\alpha+2\delta_{0})$ 
we obtain that $h_{\varepsilon}=h+\varepsilon\varphi\leq h$ in $(\alpha,\alpha+\delta_{0}/2)$ for every $-\varepsilon_{1}<\varepsilon<0$.
Hence,
\[
-\frac{1}{\varepsilon}\int_{h_{\varepsilon}(x)}^{h(x)}W(E\hat{u}%
(x,y))\,dy\geq0
\]
and
\[
-\frac{1}{\varepsilon}\chi_{(\alpha_{\varepsilon},\alpha+\delta_{0}/2)}%
(x)\int_{h_{\varepsilon}(x)}^{h(x)}W(E\hat{u}(x,y))\,dy\rightarrow
W(Eu(x,h(x)))\varphi(x)
\]
for every $x\in(\alpha,\alpha+\delta_{0}/2)$. By Fatou's lemma, the fact that $\varphi\geq 1/2$ in $(\alpha,\alpha+\delta_{0}/2)$,  and \eqref{e7},%
\begin{align*}
\frac12\int_{\alpha}^{\alpha+\delta_{0}/2} W(Eu(x,h(x)))\,dx 
&  \leq\liminf_{\varepsilon\rightarrow0^{-}}\Big(-\frac{1}{\varepsilon}%
\int_{\alpha_{\varepsilon}}^{\alpha+\delta_{0}/2}\Bigl(\int_{h_{\varepsilon}%
(x)}^{h(x)}W(E\hat{u}(x,y))\,dy\Bigr)dx\Big)\\
&  \leq C B_{\tau}(H,{H^{0}},\alpha,{\alpha^{0}},\beta,{\beta^{0}})\,.
\end{align*}

A similar argument gives the corresponding estimate over the interval
$[\beta-\delta_{0}/2,\beta]$. 
 To prove the estimate over the interval 
$[\alpha + \delta_{0}/2,\beta-\delta_{0}/2]$, we apply Theorem \ref{theorem regularity}
with $a=\alpha +\delta_{0}/4$, $b=\beta-\delta_{0}/4$, and $\delta=\delta_{0}/4$. We observe that \eqref{r01} is satisfied with $\eta=\min\{\eta_{0}\delta_{0}/8,\eta_{1}\}$
thanks to \eqref{h bounded from zero}, \eqref{h bounded}, and a similar estimate near $\beta$. By \eqref{W convex} and \eqref{eul 45} we obtain
\[
\int_{\alpha+\delta_{0}/2}^{\beta-\delta_{0}/2} W(Eu(x,h(x)))\,dx \leq C \leq C B_{\tau}(H,{H^{0}},\alpha,{\alpha^{0}},\beta,{\beta^{0}})\,,
\]
where we used the fact that $1\leq B_{\tau}(H,{H^{0}},\alpha,{\alpha^{0}},\beta,{\beta^{0}})$. 

Combining the inequalities on $[\alpha,\alpha + \delta_{0}/2]$, $[\alpha + \delta_{0}/2,\beta-\delta_{0}/2]$, and $[\beta-\delta_{0}/2,\beta]$ we obtain
 \eqref{trace L1}.

\noindent\textbf{Step 4: Regularity of }$\boldsymbol{h}$. Observe that by
\eqref{area constraint}  and \eqref{h bounded from zero}, we have that
$\beta-\alpha-2\delta_{0}\leq A_{0}/(2\eta_{1})$, and so%
\begin{equation}
\beta-\alpha\leq A_{0}/(2\eta_{1})+2\delta_{0}\,. \label{beta-alpha bound}%
\end{equation}

 Let us fix $x_0\in (\alpha+\delta_{0},\beta-\delta_{o})$. By 
\eqref{eul 2} we have
\begin{equation}
 A(x)h^{\prime\prime}(x)=\int_{x_{0}}^{x}( B (s)h^{\prime
}(s)-\,F_{ m  }(s))\,ds+ m_{1}  \label{eul 5}%
\end{equation}
for all $x\in(\alpha,\beta)$, where $ A$, $ B$, $\,F_{ m  }$ are defined in
\eqref{function a}, \eqref{function b}, and \eqref{function F}.  In particular we have
\begin{equation}
F_{m}(x):=F(x)-m (x-x_{0})\,,\quad\text{where}\quad F(x):=\int_{x_{0}}^{x} f(s)\,ds\,.
\label{eul 56789}
\end{equation}
By
\eqref{trace L1} we have that $\,F_{ m  }\in W^{1,1}((\alpha,\beta))$.
Since $ A\in C^{0,1/2}([\alpha,\beta])$, $ A\geq\nu_{0}/(1+L_{0}%
^{2})^{5/2}$, $ B\in L^{1}((\alpha,\beta))$, and $\,F_{ m  }\in
 C^{0}([\alpha,\beta])$, \eqref{eul 5} implies that $h^{\prime\prime}\in
 C^{0}([\alpha,\beta])$. In turn, this gives $ A\in C^{1}([\alpha,\beta])$ and $ B\in
 C^{0}([\alpha,\beta])$  by
\eqref{function a} and \eqref{function b}. Hence, the right-hand side of \eqref{eul 5} is
$C^{1}([\alpha,\beta])$, therefore $h^{\prime\prime}\in C^{1}([\alpha,\beta
])$. In turn, $ A\in C^{2}([\alpha,\beta])$ and $ B\in C^{1}([\alpha,\beta])$, and so by \eqref{eul 5},
$h^{\prime\prime}\in W^{2,1}((\alpha,\beta))$. By differentiating
\eqref{eul 5} we get%
\begin{equation}
 A(x)h^{\prime\prime\prime}(x)=- A^{\prime}(x)h^{\prime\prime}(x)+ B(x)h^{\prime
}(x)-\,F_{ m  }(x)\,. \label{ode 1}%
\end{equation}
By \eqref{function a}, \eqref{function b}, and \eqref{beta-alpha bound} we
have
\begin{equation}
\Vert  A\Vert_{H^{1}((\alpha,\beta))}+\Vert  B\Vert_{L^{1}((\alpha,\beta))}\leq
C\,. \label{a+b}%
\end{equation}

To estimate $\,F_{ m  }$, we first obtain bounds for the function $f$
defined in \eqref{function f}. In view of \eqref{trace L1}  and \eqref{beta-alpha bound},
\begin{equation}
\Vert f\Vert_{L^{1}((\alpha,\beta))}\leq C  B_{\tau}(H,{H^{0}},\alpha,{\alpha^{0}},\beta,{\beta^{0}})\,.\, \label{f bound}%
\end{equation}

Hence, by \eqref{eul 56789},
\begin{equation}
\Vert F\Vert_{L^{\infty}((\alpha,\beta))}\leq
C B_{\tau}(H,{H^{0}},\alpha,{\alpha^{0}},\beta,{\beta^{0}})\,. \label{r3}%
\end{equation}
Next we estimate the constant $ m  $ in \eqref{eul 56789}. Let $\zeta
_{0}\in C_{c}^{\infty}((-\delta_{0}/2,\delta_{0}/2))$ be such that $\int_{-\delta_{0}
/2}^{\delta_{0}/2}\zeta_{0}(x)\,dx=0$ and $\int_{-\delta_{0}/2}^{\delta_{0}/2}\zeta
_{0}(x)x^{2}dx=1$, and let $\zeta(x):=\zeta_{0}(x- x_{0}
)$. Since $\alpha+\delta_{0}<x_{0}<\beta-\delta_{0}$, we have that
$\zeta\in
C_{c}^{\infty}((\alpha+\delta_{0}/2,\beta-\delta_{0}/2))$. Multiplying
\eqref{eul 5} by $\zeta$ and integrating over $(\alpha+\delta_{0}
/2,\beta-\delta_{0}/2)$ we obtain%
\[
\int_{\alpha+\delta_{0}/2}^{\beta-\delta_{0}/2} A(x)h^{\prime\prime
}(x)\zeta(x)\,dx=\int_{\alpha+\delta_{0}/2}^{\beta-\delta_{0}/2}%
\zeta(x)\int_{x_{0}}^{x}( B(s)h^{\prime
}(s)-\,F(s))\,ds\,dx- \frac{m}{{ 2}}\,,
\]
where we used the facts that $\int_{\alpha+\delta_{0}/2}^{\beta
-\delta_{0}/2}\zeta(x)\,dx=0$ and $\int_{\alpha+\delta_{0}/2}^{\beta
-\delta_{0}/2}\zeta(x)(x- x_{0})^{2}dx=1$. From
\eqref{h'' L2 norm},  \eqref{a+b}, \eqref{r3},  and H\"older's inequality,
it follows that%
\begin{equation}
| m  |\leq C B_{\tau}(H,{H^{0}},\alpha,{\alpha^{0}},\beta,{\beta^{0}})\,. \label{r4}%
\end{equation}

Together with \eqref{beta-alpha bound}, \eqref{eul 56789}, and \eqref{r3}, 
this gives 
\begin{align}
&\Vert\,F_{ m  }\Vert_{L^{\infty}((\alpha,\beta))}\leq C B_{\tau}(H,{H^{0}},\alpha,{\alpha^{0}},\beta,{\beta^{0}})\nonumber\\
&\quad\text{and}\quad
\Vert\,F_{ m  }\Vert_{L^{1}((\alpha,\beta))}\leq C B_{\tau}(H,{H^{0}},\alpha,{\alpha^{0}},\beta,{\beta^{0}})\,.
\label{eul 57913}
\end{align}
Using the fact that $ A\geq \nu_{0}/(1+L_{0}%
^{2})^{5/2}$, \eqref{h'' L2 norm},  \eqref{ode 1}, \eqref{a+b}, \eqref{eul 57913},  and H\"older's inequality, we obtain%
\begin{equation}
\Vert h^{\prime\prime\prime}\Vert_{L^{1}((\alpha,\beta))}\leq CB_{\tau
}(H,{H^{0}},\alpha,{\alpha^{0}},\beta,{\beta^{0}})\,. \label{h''' bound}%
\end{equation}
This proves \eqref{h''' bound L1}.

For every $x\in (\alpha,\beta)$ we have
\[
|h^{\prime\prime}(x)|\le (\beta-\alpha)^{-1}\Vert h^{\prime\prime}\Vert_{L^{1}((\alpha,\beta))}+\Vert h^{\prime\prime\prime}\Vert_{L^{1}((\alpha,\beta))}\,.
\]
By \eqref{interval lower estimate}, \eqref{h'' L2 norm}, and \eqref{h''' bound}, this inequality yields
\begin{equation}
\Vert h^{\prime\prime}\Vert_{L^{\infty}((\alpha,\beta))}\leq CB_{\tau}(H,{H^{0}},\alpha,{\alpha^{0}},\beta,{\beta^{0}})\,, \label{h'' infty bound}%
\end{equation}
and proves \eqref{h'' bound}.

By \eqref{function a},
\eqref{function b}, \eqref{beta-alpha bound}, \eqref{h''' bound}, and \eqref{h'' infty bound}  we have
\begin{equation}
 \Vert  A^{\prime}\Vert_{L^{\infty}((\alpha,\beta))} + \Vert A^{\prime\prime}\Vert_{L^{1}((\alpha,\beta))}+
\Vert B\Vert_{L^{\infty}((\alpha,\beta
))}+\Vert B^{\prime}\Vert_{L^{1}((\alpha,\beta
))}\leq C B_{\tau}(H,{H^{0}},\alpha,{\alpha^{0}},\beta,{\beta^{0}})\,. \label{eul 46}%
\end{equation}
 Using again the fact that $ A\geq\nu_{0}/(1+L_{0}%
^{2})^{5/2}$ and \eqref{ode 1}, from \eqref{eul 57913}, \eqref{h'' infty bound}, and \eqref{eul 46} we get
\[
\Vert h^{\prime\prime\prime}\Vert_{L^{\infty}((\alpha,\beta))}\leq CB_{\tau
}(H,{H^{0}},\alpha,{\alpha^{0}},\beta,{\beta^{0}})^{2}\,, \label{h''' bound Linfty}%
\]
which proves \eqref{h''' bound better}. 

Differentiating \eqref{ode 1} gives%
\begin{equation*}
 A(x)h^{(\operatorname*{iv})}(x)   =-2 A^{\prime}(x)h^{\prime\prime\prime
}(x)+( B(x)- A^{\prime\prime}(x))h^{\prime\prime}(x)
  + B^{\prime}(x)h^{\prime}(x)-f(x)+ m  \,.
\end{equation*}
Using the fact that $ A\geq \nu_{0}/(1+L_{0}^{2})^{5/2}$, \eqref{r4}, \eqref{beta-alpha bound}, \eqref{f bound}, \eqref{h''' bound}%
, \eqref{h'' infty bound}, and  \eqref{eul 46} we obtain
\begin{equation}
\Vert h^{(\operatorname*{iv})}\Vert_{L^{1}((\alpha,\beta))}\leq C B_{\tau}(H,{H^{0}},\alpha,{\alpha^{0}},\beta,{\beta^{0}})^{2}\,, \label{h iv bound}%
\end{equation}
which proves \eqref{h iv bound better}  and concludes the proof of the theorem. 
\end{proof}

\begin{remark}
\label{remark derivative S}
 Since $h\in C^{2}([\alpha,\beta])$ in  view of Step 4 in the previous proof, the limit
inferior of $I_{\varepsilon}$ in \eqref{quotient S} is actually a limit and, by \eqref{derivative endpoint l} and 
\eqref{101},  it
is given by%
\[
\lim_{\varepsilon\rightarrow0}I_{\varepsilon}=2\int_{\alpha}^{\beta}%
\frac{h^{\prime\prime}\varphi^{\prime\prime}}{(1+(h^{\prime})^{2})^{5/2}%
}dx-5\int_{\alpha}^{\beta}\frac{h^{\prime}(h^{\prime\prime})^{2}%
\varphi^{\prime}}{(1+(h^{\prime})^{2})^{7/2}}dx+\frac{(h^{\prime\prime}%
(\alpha))^{2}}{(1+(h^{\prime}(\alpha))^{2})^{5/2}}\frac{1}{h^{\prime}(\alpha
)}\,.
\]
Hence, \eqref{quotient S} becomes%
\begin{align}
&  \left.  \frac{d}{d\varepsilon}\mathcal{S}(\alpha_{\varepsilon}%
,\beta,h_{\varepsilon})\right\vert _{\varepsilon=0}=\gamma\int_{\alpha}%
^{\beta}\frac{h^{\prime}\varphi^{\prime}}{\sqrt{1+(h^{\prime})^{2}}}%
dx+\gamma\sqrt{1+(h^{\prime}(\alpha))^{2}}\frac{1}{h^{\prime}(\alpha)}%
-\gamma_{0}\frac{1}{h^{\prime}(\alpha)}\nonumber\\
&  \quad+\nu_{0}\int_{\alpha}^{\beta}\frac{h^{\prime\prime}\varphi
^{\prime\prime}}{(1+(h^{\prime})^{2})^{5/2}}dx-\frac{5}{2}\nu_{0}\int_{\alpha}^{\beta
}\frac{h^{\prime}(h^{\prime\prime})^{2}\varphi^{\prime}}{(1+(h^{\prime}%
)^{2})^{7/2}}dx+\frac{\nu_{0}}{2}\frac{(h^{\prime\prime}(\alpha))^{2}}{(1+(h^{\prime
}(\alpha))^{2})^{5/2}}\frac{1}{h^{\prime}(\alpha)}\,. \label{der S}%
\end{align}
\end{remark}

\section{Flattening the Boundary}

In this section we transform the intersection of a neighborhood of
$(\alpha,0)$ with $\Omega_{h}$ into the triangle 
\begin{equation}
A_{r}^{m}:=\{(x,y)\in\mathbb{R}^{2}:
\,0<x<r\,,\,0<y<mx\}\,, \label{triangle}%
\end{equation}
for some $r>0$ and $m>0$.  We  fix $0<\eta_{0}<1$ and $M> 1$ and assume that
\begin{gather}
\alpha<\beta,\quad h\in W^{4,1}((\alpha,\beta))\subset C^{3}([\alpha
,\beta])\,,\label{hypotheses h}\\
h>0\text{ in }(\alpha,\beta)\,,\quad h(\alpha)=h(\beta)=0\,,\,\label{hypotheses h 2}\\
h^{\prime}(\alpha)\geq2\eta_{0}\,,\quad h^{\prime}(\beta)\leq-2\eta_{0}\,,\quad \operatorname*{Lip}h\leq L_{0}%
\,,\label{hypotheses h 3}\\
\int_{\alpha}^{\beta}|h^{\prime\prime}(x)|^{2} dx  \leq M\,. \label{hypotheses h 4}%
\end{gather}
For simplicity, in this section we assume that $\alpha=0$ and we write
$h_{0}^{\prime}:=h^{\prime}(0)$, $h_{0}^{\prime\prime}:=h^{\prime\prime}(0)$,
and $h_{0}^{\prime\prime\prime}:=h^{\prime\prime\prime}(0)$.

Given $r>0$ we set $I_{r}:=(0,r)$ and $\Omega_{h}^{0,r}:=\Omega_{h}\cap
(I_{r}\times\mathbb{R})$. Assume that
\begin{equation}
0<r\leq\delta_{0}=\frac{\eta_{0}^{2}}{4M} < \frac{1}{4}\,. \label{r interval1}%
\end{equation}
 By Lemma \ref{qwe1} we have $64r\leq \beta-\alpha$. 
Define for $x\in I_{ r}$,%
\begin{equation}
\sigma(x):=\frac{h_{0}^{\prime}x}{h(x)} \label{function sigma 1}%
\end{equation}
and the diffeomorphisms $\Phi:I_{ r}\times\mathbb{R}\rightarrow I_{ r%
}\times\mathbb{R}$ and $\Psi:I_{ r}\times\mathbb{R}\rightarrow I_{ r%
}\times\mathbb{R}$ by%
\begin{equation}
\Phi(x,y):=(x,y/\sigma(x))\,\quad\text{and}\quad\Psi(x,y):=(x,\sigma(x)y)\,.
\label{Phi 1}%
\end{equation}
 Throughout tis section we set $m=h_{0}^{\prime}$.  Observe that
\begin{equation}
\Psi(\Omega_{h}^{0,r})=A_{r}^{ m}\,, \label{R 2r star}%
\end{equation}
where $A_{r}^{ m}$ is the triangle introduced in \eqref{triangle}. By \eqref{hypotheses h} and a direct computation, we have
that $\sigma\in C^{2}(\overline{I}_{r})$ and that%
\begin{align}
\sigma^{\prime}(x)  &  =\frac{h_{0}^{\prime}h(x)- h_{0}^{\prime}%
xh^{\prime}(x)}{(h(x))^{2}}\,,\label{sigma' 1}\\
\sigma^{\prime\prime}(x)  &  =-\frac{h_{0}^{\prime}xh^{\prime\prime
}(x)h(x)+2h_{0}^{\prime}h(x)h^{\prime}(x)-2h_{0}^{\prime}x(h^{\prime}(x))^{2}%
}{(h(x))^{3}}\,, \label{sigma'' 1}%
\end{align}
if $x\in I_{r}$ and
\begin{equation}
\sigma(\alpha)=1\,,\quad\sigma^{\prime}(\alpha)=-\frac{h_{0}^{\prime\prime}%
}{2h_{0}^{\prime}}\,,\quad\sigma^{\prime\prime}(\alpha)=\frac{(h_{0}%
^{\prime\prime})^{2}}{2(h_{0}^{\prime})^{2}}-\frac{h_{0}^{\prime\prime\prime}%
}{3h_{0}^{\prime}}\,. \label{sigma at alpha}%
\end{equation}
In turn,
\begin{equation}
\Phi\in C^{2}(\overline{A_{r}^{ m}};\mathbb{R}^{2})\quad\text{and\quad}\Psi\in
C^{2}(\overline{\Omega_{h}^{0,r}};\mathbb{R}^{2})\,. \label{Phi C2}%
\end{equation}

\begin{lemma}
\label{lemma sigma alpha}Under the assumptions \eqref{hypotheses h}%
--\eqref{hypotheses h 4}, let $r$ be as in \eqref{r interval1}. Then there
exists a constant $C=C(\eta_{0},M)>0$, independent of $r$, such that%
\begin{align}
\Vert\sigma-1\Vert_{L^{\infty}(I_{r})}  &  \leq\frac{r^{1/2}}{\eta_{0}}\Vert
h^{\prime\prime}\Vert_{L^{2}(I_{r})}\leq Cr\Vert h^{\prime\prime}%
\Vert_{L^{\infty}(I_{r})}\,,\label{sigma -1 alpha}\\
\Vert\sigma^{\prime}\Vert_{L^{\infty}(I_{r})}  &  \leq C|h_{0}^{\prime\prime
}|+Cr\big(|h_{0}^{\prime\prime\prime}|+\Vert h^{(\operatorname*{iv})}\Vert
_{L^{1}(I_{r})}\big)\,,\label{sigma' norm alpha}\\
\Vert\sigma^{\prime\prime}\Vert_{L^{\infty}(I_{r})}  &  \leq C\big(|h_{0}%
^{\prime\prime}|^{2}+|h_{0}^{\prime\prime\prime}|+\Vert h^{(\operatorname*{iv}%
)}\Vert_{L^{1}(I_{r})}\big)+Cr^{2}\big(|h_{0}^{\prime\prime\prime}|^{2}+\Vert
h^{(\operatorname*{iv})}\Vert_{L^{1}(I_{r})}^{2}\big)\,.
\label{sigma'' norm alpha}%
\end{align}
Moreover,%
\begin{align}
\sup_{(x,y)\in\Omega_{h}^{0,r}}|y\sigma^{\prime}(x)|  &  \leq Cr|h_{0}%
^{\prime\prime}|+Cr^{2}\big(|h_{0}^{\prime\prime\prime}|+\Vert
h^{(\operatorname*{iv})}\Vert_{L^{1}(I_{r})}\big)\,,\label{sigma' y 2 alpha}\\
\sup_{(x,y)\in\Omega_{h}^{0,r}}|y\sigma^{\prime\prime}(x)|  &  \leq
Cr\big(|h_{0}^{\prime\prime}|^{2}+|h_{0}^{\prime\prime\prime}|+\Vert
h^{(\operatorname*{iv})}\Vert_{L^{1}(I_{r})}\big)+Cr^{3}\big(|h_{0}^{\prime
\prime\prime}|^{2}+\Vert h^{(\operatorname*{iv})}\Vert_{L^{1}(I_{r})}%
^{2}\big)\,,\label{sigma'' y 2 alpha}\\
\sup_{(x,y)\in A_{r}^{m}}|y\sigma^{\prime}(x)|  &  \leq Cr|h_{0}%
^{\prime\prime}|+Cr^{2}\big(|h_{0}^{\prime\prime\prime}|+\Vert
h^{(\operatorname*{iv})}\Vert_{L^{1}(I_{r})}\big)\,,\label{sigma' y alpha}\\
\sup_{(x,y)\in A_{r}^{m}}|y\sigma^{\prime\prime}(x)|  &  \leq
Cr\big(|h_{0}^{\prime\prime}|^{2}+|h_{0}^{\prime\prime\prime}|+\Vert
h^{(\operatorname*{iv})}\Vert_{L^{1}(I_{r})}\big)+Cr^{3}\big(|h_{0}^{\prime
\prime\prime}|^{2}+\Vert h^{(\operatorname*{iv})}\Vert_{L^{1}(I_{r})}^{2}\big)\,.
\label{sigma'' y alpha}%
\end{align}
\end{lemma}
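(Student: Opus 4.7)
The strategy is to reduce all six estimates to elementary bounds on the auxiliary function $\tilde h(x):=h(x)/x$ (with $\tilde h(0):=h_0'$), since $\sigma = h_0'/\tilde h$ and $\tilde h$ is smooth across $0$ via the representation $\tilde h(x)=\int_0^1 h'(tx)\,dt$. Because $r\leq \eta_0^2/(4M)$, the fundamental-theorem argument already used in \eqref{qyp1} yields $h'(x)\geq \eta_0$ on $I_r$, and hence $\eta_0 x\leq h(x)\leq L_0 x$ and $\eta_0\leq \tilde h(x)\leq L_0$ on $I_r$.

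Differentiating under the integral sign, $\tilde h'(x)=\int_0^1 t\,h''(tx)\,dt$ and $\tilde h''(x)=\int_0^1 t^2 h'''(tx)\,dt$. Inserting the decompositions $h''(tx)=h_0''+\int_0^{tx} h'''$ and $h'''(tx)=h_0'''+\int_0^{tx} h^{(\mathrm{iv})}$ and using the rough pointwise bound $|h'''(s)|\leq |h_0'''|+\Vert h^{(\mathrm{iv})}\Vert_{L^1(I_r)}$ valid on $I_r$, I would arrive at
\[
|\tilde h'(x)|\leq C\bigl(|h_0''|+r(|h_0'''|+\Vert h^{(\mathrm{iv})}\Vert_{L^1(I_r)})\bigr),\qquad |\tilde h''(x)|\leq C\bigl(|h_0'''|+\Vert h^{(\mathrm{iv})}\Vert_{L^1(I_r)}\bigr).
\]

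The estimate \eqref{sigma -1 alpha} is then immediate from $\sigma(x)-1=(h_0'x-h(x))/h(x)$, the Taylor remainder $h(x)-h_0' x=\int_0^x (x-s)h''(s)\,ds$, the lower bound $h(x)\geq \eta_0 x$, and Cauchy--Schwarz. The bounds \eqref{sigma' norm alpha}--\eqref{sigma'' norm alpha} follow by substituting the estimates on $\tilde h, \tilde h', \tilde h''$ into the identities $\sigma'=-h_0'\,\tilde h'/\tilde h^2$ and $\sigma''=-h_0'\bigl(\tilde h''/\tilde h^2-2(\tilde h')^2/\tilde h^3\bigr)$ and expanding the square in the second.

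For \eqref{sigma' y 2 alpha}--\eqref{sigma'' y alpha}, I would observe that $(x,y)\in\Omega_h^{0,r}$ forces $y<h(x)\leq L_0 x$, while $(x,y)\in A_r^{m}$ forces $y<mx=h_0' x\leq L_0 x$; in either case $y\leq L_0 x\leq L_0 r$. Thus $|y\sigma^{(k)}(x)|\leq L_0 x\,|\sigma^{(k)}(x)|$, and using $x\leq r$ redistributes the powers of $r$ exactly as stated. No single step is essentially difficult; the only real obstacle is the bookkeeping of powers of $r$, in particular tracking how the $(\tilde h')^2$ term in $\sigma''$ produces the squared contributions $|h_0'''|^2$ and $\Vert h^{(\mathrm{iv})}\Vert_{L^1(I_r)}^2$ appearing in \eqref{sigma'' norm alpha} and \eqref{sigma'' y alpha}.
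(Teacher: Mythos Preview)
Your proposal is correct and the underlying ingredients are the same as in the paper: the lower bound $h(x)\geq\eta_0 x$ on $I_r$ from \eqref{qyp1}, Taylor expansions of $h$ and its derivatives at the origin to control the numerators, and the observation $y\leq L_0 r$ on both $\Omega_h^{0,r}$ and $A_r^m$ for the weighted estimates.

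The organizational difference is that the paper works directly with the explicit formulas \eqref{sigma' 1}--\eqref{sigma'' 1} for $\sigma'$ and $\sigma''$, Taylor-expanding the numerators $h_0'h(x)-h_0'xh'(x)$ and $h_0'xh''(x)h(x)+2h_0'h(x)h'(x)-2h_0'x(h'(x))^2$ separately. Your route via the auxiliary function $\tilde h(x)=h(x)/x=\int_0^1 h'(tx)\,dt$ and the chain-rule identities $\sigma'=-h_0'\tilde h'/\tilde h^2$, $\sigma''=-h_0'\bigl(\tilde h''/\tilde h^2-2(\tilde h')^2/\tilde h^3\bigr)$ is a cleaner repackaging: differentiating under the integral sign immediately regularizes the apparent singularity at $x=0$ and makes the bookkeeping of powers of $r$ more transparent, especially for $\sigma''$ where the paper has to handle a three-term numerator. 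Both approaches yield the same estimates with the same dependence on the data.
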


\begin{proof}
Using Taylor's formula with integral remainder for $x\in I_{r}$ we can write%
\[
\sigma(x)-1=\frac{h_{0}^{\prime}x-h(x)}{h(x)}=-\frac{\int_{0}^{x}%
h^{\prime\prime}(s)(x-s)\,ds}{h(x)}\,.
\]
If $x\in I_{r}$ by H\"{o}lder's inequality, \eqref{hypotheses h 3},
\eqref{hypotheses h 4},  and \eqref{r interval1}  we have
\[
h^{\prime}(x)=h_{0}^{\prime}+\int_{0}^{x}h^{\prime\prime}(s)\,ds\geq
2\eta_{0}-M^{1/2}x^{1/2}\geq\eta_{0}\,.
\]
Hence,
\begin{equation}
h(x)=\int_{0}^{x}h^{\prime}(s)\,ds\geq\eta_{0} x\,. \label{h bound from belo}%
\end{equation}
In turn, by H\"{o}lder's inequality,%
\[
|\sigma(x)-1|\leq\frac{x^{3/2}}{\eta_{0} x}\Vert h^{\prime\prime}\Vert
_{L^{2}(I_{r})}\leq\frac{r^{1/2}}{\eta_{0}}\Vert h^{\prime\prime}\Vert
_{L^{2}(I_{r})}%
\]
for every $x\in I_{r}$, which gives the estimate \eqref{sigma -1 alpha}.

To prove \eqref{sigma' norm alpha}, in view of \eqref{sigma' 1}, we use
Taylor's formula with integral remainder to get
\begin{align*}
h_{0}^{\prime}h(x)  &  -h_{0}^{\prime}xh^{\prime}(x)=-\frac{1}{2}h_{0}%
^{\prime}h_{0}^{\prime\prime}x^{2}-\frac{1}{3}h_{0}^{\prime}h_{0}%
^{\prime\prime\prime}x^{3}\\
&  -\frac{1}{2}h_{0}^{\prime}x\int_{0}^{x}h^{(\operatorname*{iv})}%
(s)(x-s)^{2}ds+\frac{1}{6}h_{0}^{\prime}\int_{0}^{x}h^{(\operatorname*{iv}%
)}(s)(x-s)^{3}ds\,.
\end{align*}
Hence,%
\[
|h_{0}^{\prime}h(x)-h_{0}^{\prime}xh^{\prime}(x)|\leq\frac{1}{2}h_{0}^{\prime
}|h_{0}^{\prime\prime}|x^{2}+\frac{1}{3}h_{0}^{\prime}|h_{0}^{\prime
\prime\prime}|x^{3}+\frac{2}{3}h_{0}^{\prime}x^{3}\int_{0}^{x}%
|h^{(\operatorname*{iv})}(s)|\,ds\,.
\]
Using \eqref{h bound from belo}, if follows from \eqref{sigma' 1} that%
\[
|\sigma^{\prime}(x)|\leq\frac{L_{0}}{\eta_{0}^{2}}\left(  \frac{1}{2}%
|h_{0}^{\prime\prime}|+\frac{1}{3}r|h_{0}^{\prime\prime\prime}|+\frac{2}%
{3}r\Vert h^{(\operatorname*{iv})}\Vert_{L^{1}(I_{r})}\right)  \,,
\]
which proves \eqref{sigma' norm alpha}.

To prove \eqref{sigma'' norm alpha}, in view of \eqref{sigma'' 1}, we use
Taylor's formula with integral remainder and the inequality $2ab\leq
a^{2}+b^{2}$ to estimate
\begin{align*}
&  |h_{0}^{\prime}xh^{\prime\prime}(x)h(x)+2h_{0}^{\prime}h(x)h^{\prime
}(x)-2h_{0}^{\prime}x(h^{\prime}(x))^{2}|\\
&  \leq C\big[|h_{0}^{\prime\prime}|^{2}x^{3}+|h_{0}^{\prime\prime\prime
}|x^{3}+\Vert h^{(\operatorname*{iv})}\Vert_{L^{1}(I_{r})}x^{3}+|h_{0}%
^{\prime\prime\prime}|^{2}x^{5}+\Vert h^{(\operatorname*{iv})}\Vert
_{L^{1}(I_{r})}^{2}x^{5}\big]\,,
\end{align*}
where $C=C(L_{0})>0$. Using \eqref{sigma'' 1} and \eqref{h bound from belo},
for $x\in I_{r}$ we get%
\[
|\sigma^{\prime\prime}(x)|\leq\frac{C}{\eta_{0}^{3}}\big[|h_{0}^{\prime\prime
}|^{2}+|h_{0}^{\prime\prime\prime}|+\Vert h^{(\operatorname*{iv})}\Vert
_{L^{1}(I_{r})}+r^{2}(|h_{0}^{\prime\prime\prime}|^{2}+\Vert
h^{(\operatorname*{iv})}\Vert_{L^{1}(I_{r})}^{2})\big]\,,
\]
which  proves  \eqref{sigma'' norm alpha}.

To prove \eqref{sigma' y 2 alpha} and \eqref{sigma'' y 2 alpha}, in view of
\eqref{sigma' norm alpha} and \eqref{sigma'' norm alpha}, it is enough to
observe that if $(x,y)\in\Omega_{h}^{0,r}$, then $0<y<L_{0}r$.

Finally, if $(x,y)\in A_{r}^{m}$, then $0<y< mr\leq L_{0}r$, where in the last inequality we used the fact that $m=h_{0}^{\prime}\le L_{0}$  by {\eqref{hypotheses h 3}}) Together
with \eqref{sigma' norm alpha} and \eqref{sigma'' norm alpha}, this inequality
proves \eqref{sigma' y alpha} and \eqref{sigma'' y alpha}. This concludes the proof.
\end{proof}

\begin{remark}
\label{remark sigma close to 1 a} By \eqref{hypotheses h 4}, \eqref{r interval1}, and
\eqref{sigma -1 alpha} we have that
\[
\Vert\sigma-1\Vert_{L^{\infty}(I_{r})}\leq1/2\,.
\]
\end{remark}

\begin{lemma}
\label{lemma change of variables}Under  the assumptions  \eqref{hypotheses h}%
--\eqref{hypotheses h 4}, let $r$ be as in \eqref{r interval1}, and let
${ 1\le p<\infty}$. If $f\in L^{p}(\Omega_{h}^{0,r})$ and $w\in W^{2,p}(\Omega_{h}%
^{0,r})$, then $f\circ\Phi\in L^{p}(A_{r}^{ m})$ and $w\circ\Phi\in W^{2,p}(A_{r}^{ m}%
)$. Moreover the following estimates hold: %
\begin{align*}
\Vert f\circ\Phi\Vert_{L^{p}(A_{r}^{ m})}  &  \leq C_{ p}\Vert f\Vert_{L^{p}(\Omega
_{h}^{0,r})}\\
\Vert\nabla(w\circ\Phi)\Vert_{L^{p}(A_{r}^{ m})}  &  \leq C_{ p}\big(1+\sup_{(x,y)\in
 A_{r}^{m}}y|\sigma^{\prime}(x)|\big)\Vert\nabla w\Vert_{L^{p}(\Omega_{h}^{0,r})}\\
\Vert\nabla^{2}(w\circ\Phi)\Vert_{L^{p}(A_{r}^{ m})}  &  \leq C_{ p}\big(1+\sup
_{(x,y)\in A_{r}^{m}}y^{2}(\sigma^{\prime}(x))^{2}\big)\Vert\nabla
^{2}w\Vert_{L^{p}(\Omega_{h}^{0,r})}\\
+C_{ p}\big(1+\sup_{x\in I_{r}}|\sigma^{\prime}(x)|&+\sup_{(x,y)\in A_{r}^{m}}y(\sigma^{\prime}%
(x))^{2}+\sup_{(x,y)\in A_{r}^{m}}y|\sigma^{\prime\prime}%
(x)|\big)\Vert\nabla w\Vert_{L^{p}(\Omega_{h}^{0,r})}\,,
\end{align*}  where the constant $C_{p}$ depends only on $p$.
\end{lemma}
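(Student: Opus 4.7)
The plan is to exploit the fact that $\Phi\colon A_{r}^{m}\to\Omega_{h}^{0,r}$ is a $C^{2}$ diffeomorphism by \eqref{Phi C2}, with Jacobian determinant equal to $1/\sigma(x)$. By Remark~\ref{remark sigma close to 1 a} the function $\sigma$ satisfies $1/2\le\sigma\le 3/2$ on $I_{r}$, so both the Jacobian and its reciprocal are bounded by fixed constants. The $L^{p}$-bound for $f\circ\Phi$ is then immediate from the change of variables $(x',y')=\Phi(x,y)$, since $\int_{A_{r}^{m}}|f\circ\Phi|^{p}\,dxdy=\int_{\Omega_{h}^{0,r}}|f(x',y')|^{p}\sigma(x')\,dx'dy'\le C\int_{\Omega_{h}^{0,r}}|f|^{p}\,dx'dy'$.

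For the derivative bounds, I would first establish the estimates for $w\in C^{2}(\overline{\Omega_{h}^{0,r}})$ and then extend them to $W^{2,p}$ by a standard density argument, using the fact that $\Omega_{h}^{0,r}$ is a Lipschitz domain. The chain rule, together with the explicit form of $\Phi$, yields
\[
\partial_{x}(w\circ\Phi)=(\partial_{x}w)\circ\Phi-\frac{y\sigma'(x)}{\sigma(x)^{2}}\,(\partial_{y}w)\circ\Phi,\qquad \partial_{y}(w\circ\Phi)=\frac{1}{\sigma(x)}\,(\partial_{y}w)\circ\Phi.
\]
Using $\sigma\ge 1/2$, this gives the pointwise bound $|\nabla(w\circ\Phi)(x,y)|\le C\bigl(1+y|\sigma'(x)|\bigr)\,|\nabla w|(\Phi(x,y))$, and the first gradient estimate follows after taking the $L^{p}$-norm, factoring out the supremum of $y|\sigma'(x)|$ over $A_{r}^{m}$, and applying the change of variables as above.

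Differentiating once more produces the second derivatives of $w\circ\Phi$ as a linear combination of terms $(\partial^{2}_{ij}w)\circ\Phi$ with coefficients in $\{1/\sigma^{2},\,y\sigma'/\sigma^{3},\,y^{2}(\sigma')^{2}/\sigma^{4}\}$, plus a remainder proportional to $(\partial_{y}w)\circ\Phi$ with coefficients built from $\sigma'/\sigma^{2}$, $y(\sigma')^{2}/\sigma^{3}$, and $y\sigma''/\sigma^{2}$. Using $\sigma\ge 1/2$ and the elementary inequality $y|\sigma'|\le 1+y^{2}(\sigma')^{2}$ to absorb the cross term into the coefficient $1+y^{2}(\sigma'(x))^{2}$, then taking the $L^{p}(A_{r}^{m})$-norm, extracting the suprema of the coefficients, and applying the change of variables, one recovers the second-order estimate in the form stated.

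The main difficulty is purely organizational: carefully grouping the several second-order chain-rule terms so that the pure $\nabla^{2}w$-contributions are captured by the coefficient $1+\sup_{A_{r}^{m}}y^{2}(\sigma')^{2}$ while the lower-order contributions are captured by $1+\sup_{I_{r}}|\sigma'|+\sup_{A_{r}^{m}}y(\sigma')^{2}+\sup_{A_{r}^{m}}y|\sigma''|$. No new analytic ingredient is required; the quantitative bounds on $\sigma$, $\sigma'$, and $\sigma''$ needed in subsequent applications of this lemma are precisely those already supplied by Lemma~\ref{lemma sigma alpha}.
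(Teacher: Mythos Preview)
Your proposal is correct and follows essentially the same approach as the paper: both arguments use the change of variables with Jacobian $1/\sigma$, the bound $1/2\le\sigma\le 3/2$ from Remark~\ref{remark sigma close to 1 a}, and the chain rule to obtain the pointwise estimates $|\nabla(w\circ\Phi)|\le C(1+y|\sigma'|)\,|\nabla w\circ\Phi|$ and the corresponding second-order bound, then integrate. The paper is slightly terser (it does not spell out the density step or the inequality $y|\sigma'|\le 1+y^{2}(\sigma')^{2}$), but the argument is the same.
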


\begin{proof}
 In this proof $C$ is an absolute constant, independent of all other parameters, whose value can change from formula to formula.  Since $\det\nabla\Phi(x,y)=\frac{1}{\sigma(x)}$, by a change of variables and
Remark \ref{remark sigma close to 1 a} we have%
\begin{equation}
\int_{A_{r}^{ m}}|f\circ\Phi|^{p}dxdy  =\int_{\Omega_{h}^{0,r}}\sigma|f|^{p}%
dxdy \leq\frac{3}{2}\int_{\Omega_{h}^{0,r}}|f|^{p}dxdy\,.
\label{f change of variables a}
\end{equation}
By \eqref{Phi 1} and Remark \ref{remark sigma close to 1 a}, it follows by the
chain rule that
\begin{equation}
|\nabla(w\circ\Phi)(x,y)|\leq C(1+y|\sigma^{\prime}(x)|)|\nabla w(\Phi
(x,y))|\,. \label{chain rule}%
\end{equation}
Hence, by \eqref{f change of variables a}, we have
\begin{equation*}
\Vert\nabla   (w\circ\Phi)\Vert_{L^{p}(A_{r}^{ m})}
  \leq C\big(1+\sup_{(x,y)\in A_{r}^{ m}}y|\sigma^{\prime}(x)|\big)\Vert\nabla
w\Vert_{L^{p}(\Omega_{h}^{0,r})}\,.
\end{equation*}
Similarly, again by Remark \ref{remark sigma close to 1 a} and the chain rule%
\begin{align*}
|\nabla^{2}(w\circ\Phi)(x,y)|  &  \leq C(1+y^{2}(\sigma^{\prime}%
(x))^{2})|\nabla^{2}w(\Phi(x,y))|\\
&  \quad+C(1+|\sigma^{\prime}(x)|+ y(\sigma^{\prime}(x))^{2}+y|\sigma^{\prime\prime}(x)|)|\nabla
w(\Phi(x,y))|\,.
\end{align*}
  Hence, by \eqref{f change of variables a} we obtain
\begin{align*}
\Vert\nabla^{2}(w\circ\Phi)\Vert_{L^{p}(A_{r}^{ m})}  &  \leq C\big(1+\sup
_{(x,y)\in A_{r}^{m}} y^2(\sigma^{\prime}(x))^{2}\big)\Vert\nabla
^{2}w\Vert_{L^{p}(\Omega_{h}^{0,r})}\\
 +C\big(1+&\sup_{x\in I_{r}}|\sigma^{\prime}%
(x)|+\sup_{(x,y)\in A_{r}^{m}}y(\sigma^{\prime}%
(x))^{2}+\sup_{(x,y)\in A_{r}^{m}}y|\sigma^{\prime\prime}%
(x)|\big)\Vert\nabla w\Vert_{L^{p}(\Omega_{h}^{0,r})}\,,
\end{align*}
which concludes the proof.
\end{proof}

\begin{remark}
\label{remark change of variables}Similarly, one can show that if $f\in
L^{p}(A_{r}^{ m})$ and $w\in W^{2,p}(A_{r}^{ m})$, then $f\circ\Psi\in L^{p}(\Omega
_{h}^{0,r})$ and $w\circ\Psi\in W^{2,p}(\Omega_{h}^{0,r})$ and the following
estimates hold%
\begin{align*}
\Vert f\circ\Psi\Vert_{L^{p}(\Omega_{h}^{0,r})}  &  \leq C_{ p}\Vert f\Vert
_{L^{p}(A_{r}^{ m})}\\
\Vert\nabla(w\circ\Psi)\Vert_{L^{p}(\Omega_{h}^{0,r})}  &  \leq C_{ p}\big(1+\sup
_{(x,y)\in\Omega_{h}^{0,r}}y|\sigma^{\prime}(x)|\big)\Vert\nabla w\Vert_{L^{p}(A_{r}^{ m})}\\
\Vert\nabla^{2}(w\circ\Psi)\Vert_{L^{p}(\Omega_{h}^{0,r})}  &  \leq
C_{ p}\big(1+\sup_{(x,y)\in\Omega_{h}^{0,r}}y^{2}(\sigma^{\prime}(x))^{2}%
\big)\Vert\nabla^{2}w\Vert_{L^{p}(A_{r}^{ m})}\\
 +C_{ p}\big(1+&\sup_{x\in I_{r}}|\sigma^{\prime}(x)|+\sup_{(x,y)\in\Omega_{h}^{0,r}}y(\sigma^{\prime}%
(x))^{2}+\sup_{(x,y)\in\Omega_{h}^{0,r}}y|\sigma^{\prime\prime}%
(x)|\big)\Vert\nabla w\Vert_{L^{p}(A_{r}^{ m})}\,,
\end{align*}
 where the constant $C_{p}$ depends only on $p$.
\end{remark}

 Given $r>0$ as in \eqref{r interval1}, let  $\Gamma_{h}^{0,r}:=\Gamma_{h}\cap
(I_{r}\times\mathbb{R})$, where $\Gamma_{h}$ is the graph of $h$, 
and let $\Gamma:=\{(x,mx):\,0<x<r\}\subset\partial A_{r}^{m}$. 
For $x\in I_{r}$ let $\nu^{h}(x):={(-h^{\prime}(x),1)}/{\sqrt{1+(h^{\prime}(x))^{2}}}$
be the outer unit normal to $\Omega_{h}$ on $\Gamma_{h}$, let 
$\nu^{0}:={(-h_{0}^{\prime},1)}/{\sqrt{1+(h_{0}^{\prime})^{2}}}$
be the outer unit normal to $A_{r}^{m}$ on $\Gamma$, and let $\omega_{i}\colon I_{r}\to \mathbb{R}$ be the functions defined by
\begin{align}
\omega_{1}(x)  &  := \nu_{1}^{0}-\nu_{1}^{h}(x)=-\frac{h_{0}^{\prime}}{\sqrt{1+|h_{0}^{\prime}|^{2}}%
}+\frac{h^{\prime}(x)}{\sqrt{1+|h^{\prime}(x)|^{2}}}\,,\nonumber\\
\omega_{2}(x)  &  := \nu_{2}^{0}-\nu_{2}^{h}(x)=\frac{1}{\sqrt{1+|h_{0}^{\prime}|^{2}}}-\frac{1}%
{\sqrt{1+|h^{\prime}(x)|^{2}}}\,,\nonumber\\
\omega_{3}(x)  &  :=-\sigma^{\prime}(x)h(x)\nu_{1}^{h}(x)=\sigma^{\prime}(x)h(x)\frac{h^{\prime}(x)}{\sqrt
{1+|h^{\prime}(x)|^{2}}}\,,\nonumber\\
\omega_{4}(x)  &  :=-(\sigma(x)-1)\nu_{1}^{h}(x)=(\sigma(x)-1)\frac{h^{\prime}(x)}{\sqrt{1+|h^{\prime
}(x)|^{2}}}\,,\label{functions psi}\\
\omega_{5}(x)  &  :=\sigma^{\prime}(x)h(x)\nu_{2}^{h}(x)=\sigma^{\prime}(x)h(x)\frac{1}{\sqrt{1+|h^{\prime}%
(x)|^{2}}}\,,\nonumber\\
\omega_{6}(x)  &  :=(\sigma(x)-1)\nu_{2}^{h}(x)=(\sigma(x)-1)\frac{1}{\sqrt{1+|h^{\prime}(x)|^{2}}%
}\,.\nonumber
\end{align}

\begin{lemma}\label{lemma normals} Under the assumptions \eqref{hypotheses h}%
--\eqref{hypotheses h 4}, let $r$ be as in \eqref{r interval1}, { let $1\le p<\infty$},
let $u\in W_{\operatorname*{loc}}^{2,{ p}}(\Omega_{h}^{0,r};\mathbb{R}^{2})
$,
with $u\in W^{2,{p}}(\overline{\Omega}_{h}^{\rho,r};\mathbb{R}^{2})$
for every $0<\rho<r$, let $m=h_{0}^{\prime}$, let $v\colon A_{r}^{m}\to \mathbb{R}^{2}$
be defined by $v(x,y)=u(x,y/\sigma(x))$, and let 
$g\in W^{1,{ p}}(\Omega_{h}^{0,r};\mathbb{R}^{2})$.
Assume that
\begin{equation}\label{qwe31}
(\mathbb{C}Eu )\nu^{h}=2\mu(Eu )\nu^{h}+\lambda
(\operatorname{div}u )\nu^{h}= g \quad\text{on
}\Gamma_{h}^{0,r}\,.%
\end{equation}
Then 
\begin{equation}\label{qwe30}
(\mathbb{C}Ev)\nu=2\mu(Ev)\nu+\lambda(\operatorname{div}v)\nu=
g\circ\Phi+ \hat{g}^{v}+\check{g}^{v}\quad\text{on }\Gamma_{r}^{ m}\,,
\end{equation}
where { $\hat{g}^{v}=(\hat{g}_1^{v},\hat{g}_2^{v})$ and $\check{g}^{v}=(\check{g}_1^{v},\check{g}_2^{v})$ are } defined by 
\begin{align}
\hat{g}_{1}^{v}  &  :=(2\mu\omega_{1}+\lambda\omega_{1})\partial_{x}v_{1}+\mu\omega_{2}\partial_{x}v_{2}\,,\label{g1hat}
\\
\hat{g}_{2}^{v} &  :=
\lambda\omega_{2}  \partial_{x}v_{1} 
+\mu\omega_{1}%
\partial_{x}v_{2}\,.\label{g2hat}
\\
\check{g}_{1}^{v}  &  :=(\mu\omega_{2}+2\mu\omega_{3}+\lambda\omega_{3}-\mu\omega_{6}) \partial_{y}%
v_{1}+(\lambda\omega_{1}+\lambda\omega_{4}-\mu\omega_{5})\partial_{y}v_{2}
\,,\label{g1check}
\\
\check{g}_{2}^{v}  &  :=(\mu\omega_{1}+\mu\omega_{4}-\lambda\omega_{5})\partial_{y}v_{1}
+(2\mu\omega_{2}+\lambda\omega_{2}+\mu\omega_{3}-2\mu\omega_{6}-\lambda\omega_{6})\partial_{y}v_{2} \,.\label{g2check}
\end{align}
\end{lemma}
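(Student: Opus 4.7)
The plan is to exploit the identity $v=u\circ\Phi$ and translate the boundary condition on $\Gamma_h^{0,r}$ into one on $\Gamma_r^m$ via the chain rule, tracking the corrections caused by the difference between $\nu^h$ and the constant normal $\nu^0$, and between the derivatives of $u$ and $v$.

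First, by \eqref{Phi 1} and a direct chain rule computation, $v = u\circ\Phi$ satisfies
\[
\partial_x v_i=\partial_x u_i - \frac{y\sigma'(x)}{\sigma^2(x)}\,\partial_y u_i,\qquad \partial_y v_i=\frac{1}{\sigma(x)}\,\partial_y u_i\,,
\]
which can be inverted to give $\partial_y u_i=\sigma\,\partial_y v_i$ and $\partial_x u_i=\partial_x v_i+(y\sigma'/\sigma)\,\partial_y v_i$. The key observation is that on the flattened boundary $\Gamma_r^m$ we have $y=mx$, and since $\sigma(x)=mx/h(x)$ by \eqref{function sigma 1}, it follows that $y/\sigma=h(x)$ and $y\sigma'/\sigma=h(x)\sigma'(x)$. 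Hence at a point $(x,mx)\in\Gamma_r^m$, corresponding to $(x,h(x))\in\Gamma_h^{0,r}$ under $\Phi$, we obtain the clean substitution $\partial_x u_i=\partial_x v_i+h\sigma'\partial_y v_i$ and $\partial_y u_i=\sigma\partial_y v_i$.

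Next I would expand the assumed boundary condition \eqref{qwe31} componentwise. For the first component, writing out $2\mu(Eu)\nu^h+\lambda(\operatorname{div}u)\nu^h$ yields
\[
(2\mu+\lambda)\partial_x u_1\,\nu_1^h+\mu\partial_y u_1\,\nu_2^h+\mu\partial_x u_2\,\nu_2^h+\lambda\partial_y u_2\,\nu_1^h=g_1\circ\Phi\,.
\]
Substituting the expressions for $\partial u_i$ in terms of $\partial v_i$ from the previous paragraph, and separately computing $((\mathbb{C}Ev)\nu^0)_1=(2\mu+\lambda)\partial_x v_1\,\nu_1^0+\mu\partial_y v_1\,\nu_2^0+\mu\partial_x v_2\,\nu_2^0+\lambda\partial_y v_2\,\nu_1^0$, one takes the difference. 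In this difference the derivatives of $v_i$ get grouped and the coefficients simplify using $\nu_i^0-\nu_i^h=\omega_i$ for $i=1,2$, together with the identities $-h\sigma'\nu_1^h=\omega_3$, $h\sigma'\nu_2^h=\omega_5$, $\nu_2^0-\sigma\nu_2^h=\omega_2-\omega_6$, and $\nu_1^0-\sigma\nu_1^h=\omega_1+\omega_4$, which come directly from the definitions \eqref{functions psi}. A short rearrangement produces precisely the coefficients $(2\mu\omega_1+\lambda\omega_1)$, $\mu\omega_2$, $(\mu\omega_2+2\mu\omega_3+\lambda\omega_3-\mu\omega_6)$, and $(\lambda\omega_1+\lambda\omega_4-\mu\omega_5)$ in front of $\partial_x v_1$, $\partial_x v_2$, $\partial_y v_1$, and $\partial_y v_2$ respectively, matching \eqref{g1hat}--\eqref{g1check}.

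The second component is handled in exactly the same way: expand $2\mu(Eu)_{2j}\nu_j^h+\lambda(\operatorname{div}u)\nu_2^h=g_2\circ\Phi$, substitute the chain-rule identities, and compare to $((\mathbb{C}Ev)\nu^0)_2$. The main obstacle is the pure bookkeeping of a large number of terms; the algebra itself is only linear, and all the apparently complicated coefficients assemble into combinations of the six $\omega_i$ because the only sources of mismatch between $(\mathbb{C}Eu)\nu^h$ at $(x,h(x))$ and $(\mathbb{C}Ev)\nu^0$ at $(x,mx)$ are (i) the difference $\nu^0-\nu^h$ (producing the $\omega_1,\omega_2$ terms constituting $\hat g^v$) and (ii) the chain-rule correction factors $h\sigma'$ and $\sigma-1$ (which, when multiplied by the appropriate components of $\nu^h$, yield $\omega_3,\omega_4,\omega_5,\omega_6$ and constitute $\check g^v$). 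This yields \eqref{qwe30} with the claimed coefficients.
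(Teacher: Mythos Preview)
Your proposal is correct and follows essentially the same approach as the paper: both compute the chain-rule relation between $\nabla u$ at $(x,h(x))$ and $\nabla v$ at $(x,mx)$, expand $(\mathbb{C}Eu)\nu^h$ in terms of $\partial v_i$, and then isolate the difference from $(\mathbb{C}Ev)\nu^0$ by adding and subtracting the $\nu^0$ terms, identifying the resulting coefficients with the $\omega_i$ of \eqref{functions psi}. The only organizational difference is that the paper starts from $u(x,y)=v(x,\sigma(x)y)$ and writes $\partial u$ directly in terms of $\partial v$, whereas you first compute $\partial v$ in terms of $\partial u$ and then invert; the algebra and the key identities $\nu_1^0-\sigma\nu_1^h=\omega_1+\omega_4$, $\nu_2^0-\sigma\nu_2^h=\omega_2-\omega_6$, $-h\sigma'\nu_1^h=\omega_3$, $h\sigma'\nu_2^h=\omega_5$ are the same.
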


\begin{remark}\label{remqqq1}
If $u=0$ on $\partial\Omega_{h}^{0,r}\setminus\Omega_{h}^{0,r}$, then $\hat{g}^{v}=0$ on $(0,r)\times\{0\}$ and $\check{g}^{v}=0$ on $\{r\}\times(0,mr)$.
\end{remark}

\begin{proof}[Proof of Lemma \ref{lemma normals}]%
 Since
$u (x,y)=v(x,\sigma(x)y)$, 
the  partial derivatives  of $u$  are%
\[
\partial_{x}u (x,y)=\partial_{x}v(x,\sigma(x)y)+\partial_{y}%
v(x,\sigma(x)y)\sigma^{\prime}(x)y\quad\text{and}\quad\partial_{y}u (x,y)=\partial
_{y}v(x,\sigma(x)y)\sigma(x)\,.
\]
So at $(x,h(x))$ 
 the  first component  of $2\mu(Eu )\nu^{h}+\lambda(\operatorname{div}u )\nu^{h}$
is 
\[
2\mu\left(  \partial_{x}v_{1}+\partial_{y}v_{1}\sigma^{\prime}y\right)
\nu_{1}^{h}+\mu(\partial_{x}v_{2}+\partial_{y}v_{2}\sigma^{\prime}%
y+\partial_{y}v_{1}\sigma)\nu_{2}^{h}+\lambda\left(  \partial_{x}%
v_{1}+\partial_{y}v_{1}\sigma^{\prime}y+\partial_{y}v_{2}\sigma\right)
\nu_{1}^{h}\,,%
\]
 while the  second component  is 
\[
\mu\left(  \partial_{x}v_{2}+\partial_{y}v_{2}\sigma^{\prime}y+\partial
_{y}v_{1}\sigma\right)  
\nu_{1}^{h}+2\mu\partial_{y}v_{2}\sigma\nu_{2}^{h}+\lambda\left(  
\partial_{x}v_{1}+\partial_{y}v_{1}\sigma^{\prime}y+\partial_{y}v_{2}\sigma\right)  \nu_{2}^{h}%
\]
where $v_{1}$ and $v_{2}$ are computed at $(x,\sigma(x)h(x))=(x, h_{0}^{\prime}x)$ and $\sigma$ and
$\sigma^{\prime}$ and $\nu^{h}$ at $x$.  By adding and subtracting some 
terms containing $\nu_{1}^{0}$ and $\nu_{2}^{0}$, the  first component can be written as%
\begin{align*}
& 2\mu\partial_{x}v_{1} \nu_{1}^{0}+\mu(\partial_{x}v_{2}+\partial_{y}v_{1}%
) \nu_{2}^{0}+\lambda\left(  \partial_{x}v_{1}+\partial_{y}v_{2}\right)   \nu_{1}^{0}\\
&  -2\mu\partial_{x}v_{1}( \nu_{1}^{0}- \nu_{1}^{h})-\mu(\partial_{x}v_{2}%
+\partial_{y}v_{1})( \nu_{2}^{0}-\nu_{2}^{h})-\lambda\left(  \partial_{x}%
v_{1}+\partial_{y}v_{2}\right)  ( \nu_{1}^{0}-\nu_{1}^{h})\\
&  +2\mu\partial_{y}v_{1}\sigma^{\prime}y\nu_{1}^{h}+\mu(\partial_{y}%
v_{2}\sigma^{\prime}y+\partial_{y}v_{1}(\sigma-1))\nu_{2}^{h}+\lambda\left(
\partial_{y}v_{1}\sigma^{\prime}y+\partial_{y}v_{2}(\sigma-1)\right)  \nu
_{1}^{h}%
\end{align*}
and the second component as%
\begin{align*}
&  \mu\left(  \partial_{x}v_{2}+\partial_{y}v_{1}\right)   \nu_{1}^{0}%
+2\mu\partial_{y}v_{2} \nu_{2}^{0}+\lambda\left(  \partial_{x}v_{1}+\partial
_{y}v_{2}\right)   \nu_{2}^{0}\\
&  -\mu\left(  \partial_{x}v_{2}+\partial_{y}v_{1}\right)  ( \nu_{1}^{0}-\nu
_{1}^{h})-2\mu\partial_{y}v_{2}( \nu_{2}^{0}-\nu_{2}^{h})-\lambda\left(
\partial_{x}v_{1}+\partial_{y}v_{2}\right)  ( \nu_{2}^{0}-\nu_{2}^{h})\\
&  +\mu\left(  \partial_{y}v_{2}\sigma^{\prime}y+\partial_{y}v_{1}%
(\sigma-1)\right)  \nu_{1}^{h}+2\mu\partial_{y}v_{2}(\sigma-1)\nu_{2}^{h}
+\lambda\left(  \partial_{y}v_{1}\sigma^{\prime}y+\partial_{y}v_{2}%
(\sigma-1)\right)  \nu_{2}^{h}%
\end{align*}
Hence, using \eqref{qwe31} 
we obtain \eqref{qwe30}
with 
{ 
\begin{align*}
\hat{g}_{1}^{v}  &  :=2\mu\partial_{x}v_{1}( \nu_{1}^{0}-\nu_{1}^{h})+\mu\partial
_{x}v_{2}( \nu_{2}^{0}-\nu_{2}^{h})+ \lambda \partial
_{x}v_{1} ( \nu_{1}^{0}-\nu_{1}^{h})\\
\hat{g}_{2}^{v}  &  :=\mu\partial_{x}v_{2}
( \nu_{1}^{0}-\nu_{1}^{h})+ \lambda
 \partial_{x}v_{1} ( \nu_{2}^{0}-\nu_{2}^{h})
\end{align*}
and 
\begin{align*}
\check{g}_{1}^{v}  &  :=\mu\partial_{y}v_{1}( \nu_{2}^{0}-\nu_{2}^{h})+ \lambda\partial_{y}v_{2}  ( \nu_{1}^{0}-\nu_{1}^{h})- 2\mu\partial_{y}v_{1}\sigma^{\prime}h  \nu_{1}^{h}\\
&  \quad
- \mu(\partial_{y}v_{2}\sigma^{\prime}h+\partial_{y}v_{1}(\sigma-1))\nu_{2}^{h}
- \lambda\left(  \partial_{y}v_{1}\sigma^{\prime}h+\partial_{y}v_{2}%
(\sigma-1)\right)  \nu_{1}^{h}\,,
\\
\check{g}_{2}^{v}  &  :=\mu\partial_{y}v_{1}
( \nu_{1}^{0}-\nu_{1}^{h})+ 2\mu\partial_{y}v_{2}( \nu_{2}^{0}-\nu_{2}^{h})+ \lambda
\partial_{y}v_{2}  ( \nu_{2}^{0}-\nu_{2}^{h})\\
&  \quad- \mu\left(  \partial_{y}v_{2}\sigma^{\prime}h+\partial_{y}v_{1}%
(\sigma-1)\right)  \nu_{1}^{h}- 2\mu\partial_{y}v_{2}(\sigma-1)\nu_{2}^{h}
- \lambda\left(  \partial_{y}v_{1}\sigma^{\prime}h+\partial_{y}v_{2}%
(\sigma-1)\right)  \nu_{2}^{h} \,.
\end{align*}
}
 Using \eqref{functions psi} we obtain \eqref{g1hat}, \eqref{g2hat}, \eqref{g1check}, and \eqref{g2check}.
\end{proof}

For technical reasons we need a precise estimate
of the $L^{\infty}$  norms of  the functions $\omega_{i}$ defined in
\eqref{functions psi} and of their derivatives.

\begin{lemma}
\label{lemma fractional a}Under  the assumptions  \eqref{hypotheses h}--\eqref{hypotheses h 4},
let $r$ be as in \eqref{r interval1}, and let $\omega_{i}$, $i=1,\ldots,6$ be
defined as in \eqref{functions psi}. Then there exists a constant
$C=C(\eta_{0},M)>0$ such that
\begin{align}
\Vert\omega_{i}\Vert_{ L^{\infty}(I_{r})}  &  \leq Cr\Vert h^{\prime\prime}%
\Vert_{L^{\infty}(I_{r})}+Cr^{2}(\Vert h^{\prime\prime\prime}\Vert_{L^{\infty
}(I_{r})}+\Vert h^{(\operatorname*{iv})}\Vert_{L^{1}(I_{r})})
\label{omega i estimate a}%
\\
\Vert\omega_{i}^{\prime}\Vert_{ L^{\infty}(I_{r})}  &  \leq C\Vert h^{\prime\prime
}\Vert_{L^{\infty}(I_{r})}+Cr\big(\Vert h^{\prime\prime}\Vert_{L^{\infty}(I_{r}%
)}^{2}+\Vert h^{\prime\prime\prime}\Vert_{L^{\infty}(I_{r})}%
+\Vert h^{(\operatorname*{iv})}\Vert_{L^{1}(I_{r})}\big)
\label{omega' i estimate a}\\
&  \quad+Cr^{3}\big(\Vert
h^{\prime\prime\prime}\Vert_{L^{\infty}(I_{r})}^{2}+\Vert
h^{(\operatorname*{iv})}\Vert_{L^{1}(I_{r})}^{2}\big)\nonumber
\end{align}
for $i=1,\ldots,6$.
\end{lemma}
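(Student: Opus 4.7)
The plan is to split the six functions into two groups and treat them separately. The functions $\omega_1$ and $\omega_2$ depend only on $h'$, whereas $\omega_3$, $\omega_4$, $\omega_5$, $\omega_6$ involve $\sigma-1$ and $\sigma'h$, so they can be controlled via the estimates of Lemma \ref{lemma sigma alpha}. I would first note that, since $h_0' \geq 2\eta_0$ and $\operatorname*{Lip}h\leq L_0$, the denominators $\sqrt{1+|h'(x)|^2}$ and $\sqrt{1+|h_0'|^2}$ are bounded above and below by constants depending only on $\eta_0$ and $L_0$, so all factors $1/J$, $h'/J$, etc.\ are smooth with bounded derivatives in terms of $h'$, $h''$.

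For the $L^\infty$ bounds of $\omega_1$ and $\omega_2$, I would use that both are differences of a fixed smooth function of $h'$ evaluated at $h'(x)$ and at $h_0'$, and apply the mean value theorem, so that $|\omega_i(x)|\leq C|h'(x)-h_0'|\leq C\int_0^x|h''(s)|\,ds\leq Cr\Vert h''\Vert_{L^\infty(I_r)}$, which is already of the required form. For $\omega_4$ and $\omega_6$, I would use \eqref{sigma -1 alpha} directly, obtaining $|\omega_i(x)|\leq C\Vert\sigma-1\Vert_{L^\infty(I_r)}\leq Cr\Vert h''\Vert_{L^\infty(I_r)}$. For $\omega_3$ and $\omega_5$, I would combine \eqref{sigma' norm alpha} with $|h(x)|\leq L_0 r$, using $|h_0''|\leq \Vert h''\Vert_{L^\infty(I_r)}$ and $|h_0'''|\leq \Vert h'''\Vert_{L^\infty(I_r)}$ to bound
\[
|\sigma'(x)h(x)|\leq L_0 r\Vert\sigma'\Vert_{L^\infty(I_r)}\leq Cr\Vert h''\Vert_{L^\infty(I_r)}+Cr^2\big(\Vert h'''\Vert_{L^\infty(I_r)}+\Vert h^{(\mathrm{iv})}\Vert_{L^1(I_r)}\big),
\]
which gives \eqref{omega i estimate a}.

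For the derivatives, I would differentiate each $\omega_i$ explicitly. For $\omega_1$ and $\omega_2$, the chain rule yields $\omega_i'=\rho_i(h')h''$ with $\rho_i$ smooth and bounded, so $|\omega_i'|\leq C\Vert h''\Vert_{L^\infty(I_r)}$. For $\omega_4$ and $\omega_6$, the product rule produces two types of terms: $\sigma'$ times a smooth function of $h'$ and $(\sigma-1)$ times a smooth function of $h'$ multiplied by $h''$; applying \eqref{sigma -1 alpha} and \eqref{sigma' norm alpha} yields $|\omega_i'|\leq C|h_0''|+Cr(|h_0'''|+\Vert h^{(\mathrm{iv})}\Vert_{L^1(I_r)})+Cr\Vert h''\Vert_{L^\infty(I_r)}^2$, which is absorbed in the right-hand side of \eqref{omega' i estimate a}. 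The most delicate case is $\omega_3'$ and $\omega_5'$, where the product rule produces a term $\sigma''h$; here I would use \eqref{sigma'' norm alpha} together with $|h(x)|\leq L_0 r$ to get
\[
|\sigma''(x)h(x)|\leq Cr\big(|h_0''|^2+|h_0'''|+\Vert h^{(\mathrm{iv})}\Vert_{L^1(I_r)}\big)+Cr^3\big(|h_0'''|^2+\Vert h^{(\mathrm{iv})}\Vert_{L^1(I_r)}^2\big),
\]
while the remaining product-rule terms $\sigma' h'\cdot(\mbox{smooth})$ and $\sigma' h\cdot h''\cdot(\mbox{smooth})$ are handled by \eqref{sigma' norm alpha}, giving contributions of size $C|h_0''|+Cr(|h_0'''|+\Vert h^{(\mathrm{iv})}\Vert_{L^1(I_r)})$ and $Cr\Vert h''\Vert_{L^\infty(I_r)}^2+Cr^2\Vert h''\Vert_{L^\infty(I_r)}(\Vert h'''\Vert_{L^\infty(I_r)}+\Vert h^{(\mathrm{iv})}\Vert_{L^1(I_r)})$ respectively, all compatible with \eqref{omega' i estimate a}.

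The main potential obstacle is bookkeeping: one must be careful to replace all pointwise values $|h_0''|$, $|h_0'''|$ by the corresponding $L^\infty$ norms on $I_r$ (which is legitimate since $0\in\overline{I_r}$), and to check that every cross-term that arises from differentiating products of three factors indeed fits one of the powers of $r$ in the target inequality. No new ideas beyond Lemma \ref{lemma sigma alpha} are required; the proof is a direct, if somewhat tedious, computation.
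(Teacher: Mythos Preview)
Your proposal is correct and follows essentially the same approach as the paper: both treat $\omega_1,\omega_2$ via the Lipschitz property of smooth functions of $h'$ and the mean value theorem, and handle $\omega_3,\ldots,\omega_6$ by combining the bounds on $\sigma-1$, $\sigma'$, $\sigma''$ from Lemma~\ref{lemma sigma alpha} with $|h(x)|\leq L_0 r$ and the product rule. The paper makes explicit the Young-type inequality $2r^{2}\Vert h''\Vert(\Vert h'''\Vert+\Vert h^{(\mathrm{iv})}\Vert)\leq 2r\Vert h''\Vert^{2}+r^{3}(\Vert h'''\Vert^{2}+\Vert h^{(\mathrm{iv})}\Vert^{2})$ that you allude to when saying the cross-terms are ``compatible with \eqref{omega' i estimate a}'', but this is exactly what you have in mind.
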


\begin{proof}
Define%
\begin{equation}
f_{1}(t)=\frac{t}{\sqrt{1+t^{2}}}\,\quad\text{and}\quad f_{2}(t)=\frac
{1}{\sqrt{1+t^{2}}}\quad\text{for }t\in\mathbb{R}\,. \label{2999}%
\end{equation}
Observe that $\omega_{1}(x)=f_{1}(h^{\prime}(x))-f_{1}(h_{0}^{\prime})$ and
$\omega_{2}(x)=f_{2}(h_{0}^{\prime})-f_{2}(h^{\prime}(x))$. Since%
\begin{equation}
f_{1}^{\prime}(t)=\frac{1}{(1+t^{2})^{3/2}}\quad\text{and}\,\quad
f_{2}^{\prime}(t)=-\frac{t}{(1+t^{2})^{3/2}}\,, \label{3000}%
\end{equation}
the functions $f_{1}$ and $f_{2}$ are Lipschitz continuous with Lipschitz
constant one. Hence,
\[
|\omega_{i}(x)|\leq|h^{\prime}(x)-h_{0}^{\prime}|\quad\text{for }i=1,2\,,
\]
and the estimate \eqref{omega i estimate a} follows by the mean value theorem.
On the other hand,%
\[
|\omega_{1}^{\prime}(x)|=|f_{1}^{\prime}(h^{\prime}(x))h^{\prime\prime
}(x)|\leq|h^{\prime\prime}(x)|\quad\text{and}\quad|\omega_{2}^{\prime
}(x)|=|f_{2}^{\prime}(h^{\prime}(x))h^{\prime\prime}(x)|\leq|h^{\prime\prime
}(x)|\,,
\]
which gives \eqref{omega' i estimate a} for $i=1$ and $2$.

 By \eqref{hypotheses h 2} and \eqref{hypotheses h 3} we have  $h(x)\leq L_{0}x$ for $x\in (\alpha,\beta)$,  hence  $|\omega
_{3}(x)|\leq L_{0}r|\sigma^{\prime}(x)|$  for $x\in I_{r}$, and \eqref{omega i estimate a} is a
consequence of \eqref{sigma' norm alpha}, while
\begin{align*}
|\omega_{3}^{\prime}(x)|  &  \leq|\sigma^{\prime\prime}(x)h(x)f_{1}(h^{\prime
}(x))|+|\sigma^{\prime}(x)h^{\prime}(x)f_{1}(h^{\prime}(x))|+|\sigma^{\prime
}(x)h(x)f_{1}^{\prime}(h^{\prime}(x))h^{\prime\prime}(x)|\\
&  \leq L_{0}r|\sigma^{\prime\prime}(x)|+L_{0}|\sigma^{\prime}(x)|+L_{0}%
r|\sigma^{\prime}(x)h^{\prime\prime}(x)|\,.
\end{align*}
Using \eqref{sigma' norm alpha}, \eqref{sigma'' norm alpha}, and the
inequality
\begin{align*}
&  2r^{2}\Vert h^{\prime\prime}\Vert_{L^{\infty}(I_{r})}(\Vert h^{\prime
\prime\prime}\Vert_{L^{\infty}(I_{r})}+\Vert h^{(\operatorname*{iv})}%
\Vert_{L^{1}(I_{r})})\\
&  \leq2r\Vert h^{\prime\prime}\Vert_{L^{\infty}(I_{r})}^{2}+r^{3}(\Vert
h^{\prime\prime\prime}\Vert_{L^{\infty}(I_{r})}^{2}+\Vert
h^{(\operatorname*{iv})}\Vert_{L^{1}(I_{r})}^{2})\,,
\end{align*}
we obtain \eqref{omega' i estimate a}. The estimates for $\omega_{5}$ are similar.

We have $|\omega_{4}(x)|\leq|(\sigma(x)-1)f_{1}(h^{\prime}(x))|\leq
|\sigma(x)-1|$, and so \eqref{omega i estimate a} follows from
\eqref{sigma -1 alpha}. On the other hand,
\begin{align*}
|\omega_{4}^{\prime}(x)|  &  \leq|\sigma^{\prime}(x)f_{1}(h^{\prime
}(x))|+|(\sigma(x)-1)f_{1}^{\prime}(h^{\prime}(x))h^{\prime\prime}(x)|\\
&  \leq|\sigma^{\prime}(x)|+|(\sigma(x)-1)h^{\prime\prime}(x)|\,,
\end{align*}
so that \eqref{omega' i estimate a} is a consequence of \eqref{sigma -1 alpha}
and \eqref{sigma' norm alpha}. A similar estimate holds for $\omega_{6}$.
\end{proof}

\section{Regularity:  Preliminary  Results}
 In this section  we study the regularity of solutions to the Lam\'{e} system in the
triangle $A_{r}^{ m}$  introduced in \eqref{triangle}. The exponent in the
regularity theorem will depend on the complex solutions $z$ of the equation%
\begin{equation}
\sin^{2}(z\omega)=K_{1}-Kz^{2}\sin^{2}\omega\,, \label{equation angle}%
\end{equation}
where $\omega=\arctan m$ is the angle of the triangle $ A_{r}^{m}$ at the vertex
$(0,0)$, and%
\begin{equation}
K:=\frac{\lambda+\mu}{\lambda+3\mu}<1<K_{1}:=\frac{(\lambda+2\mu)^{2}%
}{(\lambda+\mu)(\lambda+3\mu)}\,, \label{constants K}%
\end{equation}
where $\lambda$ and $\mu$ are the Lam\'{e} coefficients. In particular, we
will use the results of the following lemma.

\begin{lemma}
\label{lemma no real}There exists a constant $\xi_{0}=\xi_{0}(\lambda,\mu
)\in(1/2,1)$ such  that for every $0<\omega\le\pi/2$  the equation \eqref{equation angle} has no complex solutions
$z$ with $\operatorname*{Re}z\in(0,\xi_{0})$.
\end{lemma}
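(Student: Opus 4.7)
The plan is to parameterize solutions as $z=\xi+i\eta$ with $\xi,\eta\in\mathbb{R}$ and analyze \eqref{equation angle} by splitting it into real and imaginary parts. Using the identity $\sin^2(A+iB) = \tfrac12(1-\cos(2A)\cosh(2B)) + \tfrac{i}{2}\sin(2A)\sinh(2B)$ together with $\operatorname{Im}(z^2)=2\xi\eta$, the imaginary part of \eqref{equation angle} reads
\[
\sin(2\xi\omega)\sinh(2\eta\omega) = -4K\xi\eta\sin^2\omega\,,
\]
while the real part gives a single scalar equation. The two-step strategy is to use this imaginary-part identity to eliminate non-real roots in the strip $0<\operatorname{Re} z <1$, and then a monotonicity plus boundary-value calculation to eliminate real roots in a neighborhood of $[0,1/2]$.

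For the non-real case, by the symmetry $z\mapsto\bar z$ I may assume $\eta>0$. Then the right-hand side of the displayed identity is strictly negative (since $\xi,\eta,K,\sin^2\omega$ are all positive). On the other hand, for any $\xi\in(0,1)$ and $\omega\in(0,\pi/2]$ one has $0<2\xi\omega<\pi$, so $\sin(2\xi\omega)>0$ and $\sinh(2\eta\omega)>0$, making the left-hand side strictly positive. This contradiction excludes every non-real solution with $\operatorname{Re}z\in(0,1)$; in particular, any eventual $\xi_0<1$ automatically handles the non-real branch.

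For real roots $z=\xi>0$, the equation reduces to $F(\xi,\omega):=\sin^2(\xi\omega)+K\xi^2\sin^2\omega-K_1=0$. Direct differentiation yields $\partial_\xi F = \omega\sin(2\xi\omega)+2K\xi\sin^2\omega\geq 0$ and $\partial_\omega F = \xi\sin(2\xi\omega)+K\xi^2\sin(2\omega)\geq 0$ on $[0,1]\times(0,\pi/2]$, because each sine argument lies in $[0,\pi]$. Hence $F(\xi,\omega)\leq F(1/2,\pi/2)$ for every $(\xi,\omega)\in(0,1/2]\times(0,\pi/2]$. Plugging in $K=(\lambda+\mu)/(\lambda+3\mu)$ and $K_1=(\lambda+2\mu)^2/[(\lambda+\mu)(\lambda+3\mu)]$ and simplifying over the common denominator $4(\lambda+\mu)(\lambda+3\mu)$, the numerator collapses to $-(\lambda+3\mu)^2$, yielding
\[
F(1/2,\pi/2)=\tfrac12+\tfrac{K}{4}-K_1=-\frac{\lambda+3\mu}{4(\lambda+\mu)}\,,
\]
which is strictly negative under $\mu>0$, $\lambda+\mu>0$. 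Since $\partial_\xi F$ is bounded on $[0,1]\times(0,\pi/2]$ by a constant $C=C(\lambda,\mu)$ (because $K<1$ and $\omega\leq\pi/2$), the strict negative bound $F(1/2,\omega)\leq-\tfrac{\lambda+3\mu}{4(\lambda+\mu)}$ propagates: choosing $\xi_0:=\tfrac12+\tfrac{1}{2C}\cdot\tfrac{\lambda+3\mu}{4(\lambda+\mu)}$ (capped below $1$) gives $F(\xi,\omega)<0$ for all $(\xi,\omega)\in(0,\xi_0]\times(0,\pi/2]$, excluding real roots as well.

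The main obstacle I anticipate is the clean algebraic identity $F(1/2,\pi/2)=-(\lambda+3\mu)/[4(\lambda+\mu)]$: it is the key cancellation that produces a strictly negative value uniformly in the admissible Lamé parameters, and without such a cancellation one could not conclude that the real root bound strictly exceeds $1/2$. Once this identity is verified, the rest of the argument is routine calculus (sign analysis of partial derivatives on a convex rectangle and a one-variable Taylor estimate) combined with the soft imaginary-part argument from the second paragraph.
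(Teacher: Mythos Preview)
Your proof is correct and follows the same overall skeleton as the paper's (split into non-real and real solutions, and for the real case show strict negativity at $\xi=1/2$ uniformly in $\omega$, then propagate past $1/2$ by a Lipschitz bound in $\xi$), but the two executions differ in useful ways. For the non-real case you give a short self-contained sign argument on the imaginary part, whereas the paper simply invokes \cite[Theorem~2.2]{nicaise1992lame}; your version makes the lemma independent of that reference. For the real case the paper shows $\sin^2(\omega/2)+K(\sin^2\omega)/4<K_1-\varepsilon_0$ via the trigonometric identity $\sin^2(\omega/2)\le 1-(\sin^2\omega)/4$ together with the soft bounds $K<1<K_1$, while you exploit the additional observation that $F(1/2,\cdot)$ is nondecreasing in $\omega$ on $(0,\pi/2]$ to reduce to the single evaluation $F(1/2,\pi/2)=1/2+K/4-K_1=-(\lambda+3\mu)/[4(\lambda+\mu)]$. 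Your route gives an explicit lower bound for $\xi_0$ directly in terms of $\lambda,\mu$, at the cost of the algebraic check of that identity; the paper's route needs only $K<1<K_1$ and the trig inequality, and so is slightly more robust to the precise form of $K,K_1$.
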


\begin{proof}
We follow the proof of \cite[Theorem 2.2]{nicaise1992lame}. If
$\operatorname*{Im}z\neq0$, then using the fact that $0<\omega\leq\frac{\pi
}{2}$, the argument in the first case of that proof shows that there are no
solutions $z$ with $\operatorname*{Re}z\in(0,1]$. If $\operatorname*{Im}z=0$,
then \eqref{equation angle} reduces to
\begin{equation}
\sin^{2}(\omega\operatorname*{Re}z)=K_{1}-K(\operatorname*{Re}z)^{2}\sin
^{2}\omega\,.
\label{impossible}
\end{equation}
By \eqref{constants K} there exists $\varepsilon_{0}>0$ such that
$K<1<K_{1}-\varepsilon_{0}$. By a trigonometric computation we find that
 $\sin^{2}(\omega/2)\leq 1-(\sin^{2}\omega)/4$, hence  
\[
\sin^{2}(\omega/2)<K_{1}-\varepsilon_{0}-K(\sin^{2}\omega)/4\,.
\]
 Since for every $0<\omega\leq\frac{\pi}{2}$ the function $\xi\mapsto\sin^{2}(\omega\xi)+K\xi^{2}\sin^{2}\omega$ is Lipschitz continuous on $[0,1]$ with Lipschitz constant $\pi+2$,  there exists a constant
$\xi_{0}\in(1/2,1)$, depending only on $\varepsilon_{0}$, 
such that
\[
\sin^{2}(\omega\xi_{0})<K_{1}-K\xi_{0}^{2}\sin^{2}\omega
\]
 for every $0<\omega\leq\frac{\pi}{2}$. 
Since $\xi\mapsto\sin^{2}(\omega\xi)$ is increasing and $\xi\mapsto K_{1}%
-K\xi^{2}\sin^{2}\omega$ is decreasing on $[0,\xi_{0}]$, it follows that
\[
\sin^{2}(\omega\operatorname*{Re}z)<K_{1}-K(\operatorname*{Re}z)^{2}\sin
^{2}\omega
\]
for all $z$ with $\operatorname*{Re}z\in\lbrack0,\xi_{0}]$.  This shows that
\eqref{impossible} is impossible and concludes the proof. 
\end{proof}

Let $p_{0}$ be such that $2-\frac{2}{p_{0}}=\xi_{0}$. Then
\begin{equation}
\frac{4}{3}<p_{0}=\frac{2}{2-\xi_{0}}<2. \label{p0}%
\end{equation}
 We recall that for every $0<m\le L_{0}$ and $r>0$ the triangle $A_{r}^{m}$ is defined in \eqref{triangle}.
 Since the conjugate exponent of $p_{0}$ satisfies $p_{0}^{\prime}<4$, by the
Sobolev embedding theorem we have $H_{0}^{1}(A_{r}^{ m};
\mathbb{R}^2)\hookrightarrow
L^{p_{0}^{\prime}}(A_{r}^{ m};\mathbb{R}^2)$. Hence, by duality $L^{p_{0}}(A_{r}^{ m};\mathbb{R}^2)\hookrightarrow
H^{-1}(A_{r}^{ m};\mathbb{R}^2)$. Similarly, 
given $g\in W^{1,p_{0}}(A_{r}^{ m};\mathbb{R}^2)$, its trace on
\begin{equation}
\Gamma_{r}^{m}=\{(x,mx):\,0<x<r\} \label{Gamma r}\,,%
\end{equation}
still denoted by $g$, belongs to $W^{1-1/p_{0},p_{0}}(\Gamma_{r}^{m};\mathbb{R}^2)$
and,  by the embedding theorem for fractional Sobolev
spaces, we have $g\in L^{2}(\Gamma_{r}^{ m} ;\mathbb{R}^2)
\hookrightarrow 
H^{-1/2}(\Gamma_{r}^{ m}
;\mathbb{R}^2)$. Therefore, given $f\in
L^{p_{0}}(A_{r}^{ m};\mathbb{R}^{2})$ and $g\in  W^{1,p_{0}}(A
_{r}^{ m};\mathbb{R}^{2})$ there exists a unique weak solution $w\in H^{1}%
(A_{r}^{ m};\mathbb{R}^{2})$ to the problem%
\begin{equation}
\left\{
 {\setstretch{1.1}
\begin{array}
[c]{ll}%
-\operatorname{div}\mathbb{C}Ew
=f & \text{in }A_{r}^{ m}\,,
\\
(\mathbb{C}Ew)\nu^{ m}=g & \text{on }\Gamma_{r}^{ m}\,,\\
w=0 & \text{on }\partial A_{r}^{ m}\setminus\Gamma_{r}^{ m}\,, 
\end{array}
}
\right.
\label{eul13}%
\end{equation}
 where $\nu^{m}:=(-m,1)/{\sqrt{1+m^{2}}}$ is the outer unit normal to $A_{r}^{m}$ on $\Gamma_{r}^{m}$.

In the next theorem we will use \cite[Theorems 2.1 and
2.2]{nicaise1992lame} (see also \cite[Theorem I]%
{grisvard1989singularites}).

\begin{theorem}
\label{theorem grisvard 1}Let $p_{0}$ be as in \eqref{p0}, let $r>0$,  let $0<\eta_{0}\leq m\leq L_{0}$, let  $A_{r}^{m}$ and $\Gamma_{r}^{m}$ be defined by \eqref{triangle} and 
\eqref{Gamma r}, 
let
$f\in L^{p_{0}}(A_{r}^{ m};\mathbb{R}^{2})$, let $g\in  W^{1,p_{0}}%
( A_{r}^{m};\mathbb{R}^{2})$,  with $g=0$ on 
one of the sides of the triangle $A_{r}^{m}$ different from $\Gamma_{r}^{m}$,  and let
$w\in H^{1}(A_{r}^{ m};\mathbb{R}^{2})$ be the unique weak solution to the problem
\eqref{eul13}. Then $w$ belongs to $W^{2,p_{0}}(A_{r}^{ m}%
;\mathbb{R}^{2})$ and
\begin{equation}\Vert\nabla^{2}w\Vert_{L^{p_{0}}(A_{r}^{ m})}   \leq\kappa\big(\Vert
f\Vert_{L^{p_{0}}(A_{r}^{ m})}+ \Vert\nabla g\Vert_{L^{p_{0}}(A_{r}^{m})}\big)\,,
\label{g14}%
\end{equation}
for  a constant $\kappa>0$  depending on $\lambda$, $\mu
$,  $\eta_{0}$, and $L_{0}$, but independent of $r$, $m$, $f$,  and $g$.
\end{theorem}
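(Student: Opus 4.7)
The argument has three stages: a scaling reduction to a unit-sized triangle, a corner-by-corner verification of the Nicaise--Grisvard regularity criteria on $A_{1}^{m}$, and a compactness argument to make the constant independent of $m$.

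\emph{Step 1 (Scaling).} I would first reduce to $r=1$. Setting $\tilde w(x,y):=w(rx,ry)$, the function $\tilde w \in H^{1}(A_{1}^{m};\mathbb{R}^{2})$ solves the analogous Lam\'e problem on $A_{1}^{m}$ with data $\tilde f(x,y)=r^{2}f(rx,ry)$ and $\tilde g(x,y)=r\,g(rx,ry)$, while the homogeneous Dirichlet boundary condition is preserved. A direct computation of the relevant $L^{p_{0}}$ norms shows that $\|\tilde f\|_{L^{p_0}(A_1^m)}$, $\|\nabla\tilde g\|_{L^{p_0}(A_1^m)}$, and $\|\nabla^{2}\tilde w\|_{L^{p_0}(A_1^m)}$ all scale by the same factor $r^{\,2-2/p_0}$, so it suffices to prove \eqref{g14} on the unit-sized triangle $A_{1}^{m}$.

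\emph{Step 2 (Corner analysis).} On $A_{1}^{m}$ I would invoke the Nicaise--Grisvard regularity theory for the Lam\'e system on polygons with mixed Dirichlet--Neumann boundary conditions (Theorems 2.1 and 2.2 of \cite{nicaise1992lame}, building on \cite[Theorem I]{grisvard1989singularites}). That theory yields $w \in W^{2,p_0}(A_{1}^{m};\mathbb{R}^{2})$ together with the desired estimate once one checks, at each of the three vertices of $A_{1}^{m}$, that the associated transcendental characteristic equation admits no complex root $z$ with $\mathrm{Re}\,z\in(0,\,2-2/p_0)=(0,\xi_{0})$. At the origin the angle is $\omega=\arctan m\in(0,\pi/2]$, the two adjacent sides carry mixed Dirichlet--Neumann conditions, the characteristic equation is exactly \eqref{equation angle}, and Lemma~\ref{lemma no real} excludes the critical roots. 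At the vertex $(1,m)$ the interior angle is $\pi/2-\arctan m\in(0,\pi/2)$, again with mixed conditions on the adjacent sides, and Lemma~\ref{lemma no real} applies once more (the hypothesis that $g$ vanishes on one of the Dirichlet sides supplies the compatibility at the corner where the Neumann and Dirichlet data meet, and explains why $\|\nabla g\|_{L^{p_{0}}}$ rather than a fractional norm of the trace $g|_{\Gamma_{r}^{m}}$ appears on the right-hand side of \eqref{g14}). At the remaining corner $(1,0)$ both adjacent sides carry the homogeneous Dirichlet condition and the angle is $\pi/2$; for the pure-Dirichlet Lam\'e problem at a convex corner the corresponding characteristic equation has no root with real part in $(0,1)\supset(0,\xi_{0})$, so no obstruction arises. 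Strong ellipticity (Legendre--Hadamard) is guaranteed by $\mu>0$ and $\lambda+\mu>0$, as required by Nicaise's theorems.

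\emph{Step 3 ($m$-uniformity).} Finally, I would extract a constant independent of $m\in[\eta_{0},L_{0}]$. For such $m$ the three vertex angles vary continuously in compact subsets of $(0,\pi)$ bounded away from $0$ and $\pi$, and the side lengths of $A_{1}^{m}$ remain bounded above and below. Since the constant delivered by Nicaise's theorem depends continuously on this geometric data and on the (fixed) Lam\'e coefficients, a standard compactness argument yields a single constant depending only on $\lambda,\mu,\eta_{0},L_{0}$. Equivalently, one can map $A_{1}^{m}$ onto the reference triangle $A_{1}^{1}$ via $(x,y)\mapsto(x,y/m)$, converting \eqref{eul13} into a family of uniformly elliptic systems on a fixed domain whose coefficients depend continuously on $m\in[\eta_{0},L_{0}]$; a single application of the regularity theorem then produces an $m$-uniform estimate. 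Combined with Step 1 this proves \eqref{g14}.

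\textbf{Main obstacle.} The genuinely delicate point is the corner analysis at the singular vertex $(0,0)$, which has been prepared in advance by Lemma~\ref{lemma no real}. The remaining technical subtlety is the $m$-uniformity of the constant: the Nicaise--Grisvard results are stated on a fixed polygon, so one must track how the constant behaves as the opening angle varies, and argue by continuous dependence (or via a change of variables to a reference triangle) that it degenerates only as $m\downarrow 0$ or $m\uparrow\infty$, not on the compact range $[\eta_{0},L_{0}]$.
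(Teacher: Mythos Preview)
Your proposal is correct and follows essentially the same three-stage approach as the paper: scale to $r=1$, apply the corner regularity theory (the paper cites \cite[Theorem~3.1]{rossle2000corner} together with Lemma~\ref{lemma no real}, then extracts the $m$-dependent estimate via the Closed Graph Theorem and a lifting of $g$ using \cite[Theorem~1.5.2.8]{grisvard2011elliptic}), and finally argue by compactness over $m\in[\eta_0,L_0]$. The only place where the paper goes further than your sketch is Step~3: rather than asserting continuous dependence of the constant on the geometry, it carries out a concrete perturbation argument---for each $m_0$ it maps $A_1^m$ to $A_1^{m_0}$ via $(x,y)\mapsto(x,\tfrac{m_0}{m}y)$, writes the transformed problem as the Lam\'e system on $A_1^{m_0}$ plus explicit remainder terms of order $|m-m_0|$ applied to $\nabla^2 z$, absorbs these remainders on the left for $|m-m_0|$ small enough to obtain the bound with constant $2\kappa_{m_0}$, and then covers $[\eta_0,L_0]$ by finitely many such neighbourhoods. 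This is precisely the mechanism behind the change-of-variables alternative you describe, made rigorous.
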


\begin{proof}  By a rescaling argument we see that it is enough to prove the result for $r=1$.

\textbf{Step 1:} Assume $g=0$. By \cite[Theorem 3.1]{rossle2000corner}, Lemma
\ref{lemma no real}, and \eqref{p0}, we have that $w\in W^{2,p_{0}}%
(A_{1}^{ m};\mathbb{R}^{2})$. Let $X^{ m}$ be the space of functions $z\in W^{2,p_{0}}(A_{1}^{ m};\mathbb{R}^{2})$  such that $z=0$
on $\partial A_{1}^{ m}\setminus\Gamma_{1}^{ m}$ and $(\mathbb{C}Ew)\nu=0$ on
$\Gamma_{1}^{ m}$ endowed with the norm
 of $W^{2,p_{0}}%
(A_{1}^{ m};\mathbb{R}^2)$. 
Consider the continuous linear operator $\mathcal{L}:X^{ m}\rightarrow L^{p_{0}%
}(A_{1}^{ m};\mathbb{R}^{2})$ defined by $\mathcal{L}(z):=-\operatorname{div}%
\mathbb{C}Ez$. By what we just proved (see \eqref{eul13}), $\mathcal{L}$ is invertible, and so by
the Closed Graph Theorem we obtain that there exists a constant $C>0$,
depending on $\lambda$, $\mu$, and $m$, such that%
\begin{equation}
\Vert w\Vert_{W^{2,p_{0}}(A_{1}^{ m})}\leq C\Vert
f\Vert_{L^{p_{0}}(A_{1}^{ m})} \label{g11}%
\end{equation}
for every solution $w$ to \eqref{eul13}  with $g=0$.

\textbf{Step 2: } In the general case $g\in  W^{1,p_{0}}(A_{1}^{m};\mathbb{R}^{2})$, 
 with $g=0$ on 
one of the sides of the triangle $A_{1}^{m}$ different from $\Gamma_{1}^{m}$, 
recalling that $1<p_{0}<2$, we can reason as in
\cite[Lemma 3.12]{fonseca2007equilibrium} and using
\cite[Theorem 1.5.2.8]{grisvard2011elliptic} we can find $w^{1}\in W^{2,p_{0}}(A_{1}^{ m}%
;\mathbb{R}^{2})$, with  $(\mathbb{C}Ew^{1})\nu^{ m}=g$ on $\Gamma_{1}^{ m}$ and
$w^{1}=0$ on $\partial A_{1}^{ m}$,  such that %
\begin{equation}
\Vert w^{1}\Vert_{W^{2,p_{0}}(A_{1}^{ m})}\leq C_{1}\Vert g\Vert_{W^{1-1/p_{0},p_{0}%
}(\Gamma_{1}^{ m})}  \leq C_{2}\Vert \nabla g\Vert_{L^{p_{0}%
}(A_{1}^{ m})} \label{g12}%
\end{equation}
for some  constants $C_{1},\,C_{2}>0$  depending on $\lambda$, $\mu$, and $m$, where in 
the last inequality we used the trace estimate and Poincar\'e's inequality. Then the
function $v:=w-w^{1}$ is a weak solution to \eqref{eul13}  in $A_{1}^{m}$  with
$f$ replaced by $f+\operatorname{div}\mathbb{C}Ew^{1}$ and $g$ replaced by
zero. Hence, by the previous step $v\in W^{2,p_{0}}(A_{1}^{ m};\mathbb{R}^{2})$. Moreover, using
\eqref{g11} for $v$ and \eqref{g12} for $w_{1}$ we obtain a constant
$\kappa_{m}>0$, depending on
$\lambda$, $\mu$, and $m$, but independent of $f$ and $g$, such that
\begin{equation}
\Vert\nabla^{2}w\Vert_{L^{p_{0}}(A_{1}^{m})}    \leq\kappa_{m}\big(\Vert
f\Vert_{L^{p_{0}}(A_{1}^{m})}+ \Vert\nabla g\Vert_{L^{p_{0}}(A_{1}^{m})}\big)\,.
\label{g14bis}%
\end{equation}

\textbf{Step 3: }Let $m_{0}\in[\eta_{0},L_{0}]$. We want to prove that there exists $\varepsilon_{0}>0$ such that, if $m\in[\eta_{0},L_{0}]$ and $|m-m_{0}|<\varepsilon_{0}$, then $w$ satisfies the estimate
\begin{equation}
\Vert\nabla^{2}w\Vert_{L^{p_{0}}(A_{1}^{m})}  \leq2\kappa_{m_{0}}\big(\Vert
f\Vert_{L^{p_{0}}(A_{1}^{m})}+\Vert\nabla g\Vert_{L^{p_{0}}(A_{1}^{m})}\big)\,,
\label{g14ter}%
\end{equation}
with the constant $\kappa_{m_{0}}$ corresponding to $m_{0}$.

Since $w\in W^{2,p_{0}}(A_{1}^{m};\mathbb{R}^{2})$ satisfies
\eqref{eul13}, a direct computation (using Remark \ref{remark lame}) shows that the function $z\in W^{2,p_{0}}(A_{1}^{m_{0}};\mathbb{R}^{2})$ defined by $z(x,y):=w(x,\frac{m}{m_{0}}y)$ satisfies
\begin{gather}
-\operatorname{div}\mathbb{C}Ez
=\tilde{f}+f^{z}\quad\text{in }A_{1}^{m_{0}}\,,
\label{eul11bis}\\
(\mathbb{C}Ez)\nu^{ m_{0}}=\tilde{g}+\hat{g}^{z}+\check{g}^{z}\quad\text{on }\Gamma_{1}^{m_{0}}\,,\label{eul12bis}\\
z=0\quad\text{on }\partial A_{1}^{ m_{0}}\setminus\Gamma_{1}^{m_{0}}\,, \label{eul13bis}%
\end{gather}
with $\tilde{f}, f^{z}\in L^{p_{0}}(A_{1}^{m_{0}};\mathbb{R}^{2})$ and
 $\tilde{g}, \hat{g}^{z}, \check{g}^{z}\in W^{1,p_{0}}(A_{1}^{m_{0}};\mathbb{R}^{2})$
 defined by 
 \begin{align}
 &\tilde{f}(x,y):=f(x,\tfrac{m}{m_{0}}y)
 \quad\text{and} \quad\tilde{g}(x,y):=g(x,\tfrac{m}{m_{0}}y)\,,\label{wry1}
 \\
 &f_{1}^{z}:=\mu\big((\tfrac{m_{0}}{m})^{2}-1\big)\partial_{yy}^{2}z_{1}+
 (\lambda+\mu)
 (\tfrac{m_{0}}{m}-1)\partial_{xy}^{2}z_{2}\,,
 \label{wry2}
 \\
 &f_{2}^{z}:=(\lambda+2\mu)\big((\tfrac{m_{0}}{m})^{2}-1\big)\partial_{yy}^{2}z_{2}+
 (\lambda+\mu)
 (\tfrac{m_{0}}{m}-1)\partial_{xy}^{2}z_{1}\,,
 \label{wry3}
 \\
 &\hat{g}_{1}^{z}:=(2\mu+\lambda)\big(\tfrac{m}{\sqrt{1+m^{2}}}-
 \tfrac{m_{0}}{\sqrt{1+m_{0}^{2}}}\big)\partial_{x}z_{1}
 -\mu\big(\tfrac{1}{\sqrt{1+m^{2}}}-
 \tfrac{1}{\sqrt{1+m_{0}^{2}}}\big)\partial_{x}z_{2}
\,,\label{wry4}
 \\
 &\check{g}_{1}^{z}:=
 -\mu\big(\tfrac{1}{\sqrt{1+m^{2}}}-
 \tfrac{1}{\sqrt{1+m_{0}^{2}}}\big)\partial_{y}z_{1}
 +\lambda\big(\tfrac{m}{\sqrt{1+m^{2}}}-
 \tfrac{m_{0}}{\sqrt{1+m_{0}^{2}}}\big)
 \partial_{y}z_{2}
 \nonumber
 \\
 &\qquad - \tfrac{\mu}{\sqrt{1+m^{2}}}(\tfrac{m_{0}}{m}-1)\partial_{y}z_{1}+\tfrac{\lambda m}{\sqrt{1+m^{2}}}(\tfrac{m_{0}}{m}-1)\partial_{y}z_{2}\,,
 \label{wry5}
 \\
&\hat{g}_{2}^{z}:=\mu\big(\tfrac{m}{\sqrt{1+m^{2}}}-
 \tfrac{m_{0}}{\sqrt{1+m_{0}^{2}}}\big)
 \partial_{x}z_{2}
 + \lambda\big(\tfrac{ m }{\sqrt{1+m^{2}}}-
 \tfrac{ m_{0} }{\sqrt{1+m_{0}^{2}}}\big)
 \partial_{x}z_{1}\,,\label{wry6}
 \\
&\check{g}_{2}^{z}:=\mu\big(\tfrac{m}{\sqrt{1+m^{2}}}-
 \tfrac{m_{0}}{\sqrt{1+m_{0}^{2}}}\big)
 \partial_{y}z_{1}
 -2\mu\big(\tfrac{1}{\sqrt{1+m^{2}}}-
 \tfrac{1}{\sqrt{1+m_{0}^{2}}}\big)\partial_{y}z_{2}-\tfrac{2\mu}{\sqrt{1+m^{2}}}(\tfrac{m_{0}}{m}-1)\partial_yz_2
 \nonumber
 \\
  &\qquad-\lambda\big(\tfrac{1}{\sqrt{1+m^{2}}}-
 \tfrac{1}{\sqrt{1+m_{0}^{2}}}\big)
\partial_{y}z_{2}
 + \tfrac{\mu m}{\sqrt{1+m^{2}}}(\tfrac{m_{0}}{m}-1)\partial_{y}z_{1}
  -  \tfrac{\lambda}{\sqrt{1+m^{2}}}(\tfrac{m_{0}}{m}-1)\partial_{y}z_{2}\,.\label{wry7}
 \end{align}
 Since $z\in W^{2,p_{0}}(A_{1}^{m_{0}};\mathbb{R}^{2})$ and $z=0$ on $\partial A_{1}^{ m_{0}}\setminus\Gamma_{1}^{m_{0}}$, we have that $\partial_{x}z=0$ a.e.\ on $[0,1]\times\{0\}$ and 
 $\partial_{y}z=0$ a.e.\ on $\{1\}\times[0,m_{0}]$, hence
 $\hat{g}^{z}=0$ a.e.\ on $[0,1]\times\{0\}$ and 
 $\check{g}^{z}=0$ a.e.\ on $\{1\}\times[0,m_{0}]$. Moreover $\tilde{g}$ vanishes on one of the sides of $A_{1}^{m_{0}}$ different from $\Gamma_{1}^{m_{0}}$. We may assume that $\tilde{g}=0$ a.e.\ on $[0,1]\times\{0\}$, the other case being analogous.

 Let $\hat{z}$ and $\check{z}$ be the unique solutions of the problems
 \[
 \left\{
 {\setstretch{1.1}
\begin{array}
[c]{ll}%
-\operatorname{div}\mathbb{C}E\hat{z}
=\tilde{f}+f^{z}&\text{in }A_{1}^{m_{0}}\,,
\\
(\mathbb{C}E\hat{z})\nu^{ m_{0}}=\tilde{g}+\hat{g}^{z}&\text{on }\Gamma_{1}^{m_{0}}\,,\\
\hat{z}=0 &\text{on }\partial A_{1}^{ m_{0}}\setminus\Gamma_{1}^{m_{0}}\,, 
\end{array}
}
\right.
\quad\text{ and }\quad
\left\{
 {\setstretch{1.1}
\begin{array}
[c]{ll}%
-\operatorname{div}\mathbb{C}E\check{z}
=0 & \text{in }A_{1}^{m_{0}}\,,
\\
(\mathbb{C}E\check{z})\nu^{ m_{0}}=\check{g}^{z} & \text{on }\Gamma_{1}^{m_{0}}\,,
\\
\check{z}=0 & \text{on }\partial A_{1}^{ m_{0}}\setminus\Gamma_{1}^{m_{0}}\,.
\end{array}
}
\right.%
\]
By linearity we have $z=\hat{z}+\check{z}$. Since $\tilde{g}+\hat{g}^{z}$ and $\check{g}^{z}$ vanish on one of the sides of $A_{1}^{m_{0}}$ different from $\Gamma_{1}^{m_{0}}$, by \eqref{g14bis} we have
\begin{align*}
\Vert\nabla^{2}\hat{z}\Vert_{L^{p_{0}}(A_{1}^{m_{0}})}  &  \leq\kappa_{m_{0}}\big(\Vert
\tilde{f}+f^{z}\Vert_{L^{p_{0}}(A_{1}^{m_{0}})}+\Vert\nabla \tilde{g}+\nabla\hat{g}^{z}\Vert_{L^{p_{0}}(A_{1}^{m_{0}})}\big)\,,
\\
\Vert\nabla^{2}\check{z}\Vert_{L^{p_{0}}(A_{1}^{m_{0}})}  &  \leq\kappa_{m_{0}}\Vert\nabla \check{g}^{z}\Vert_{L^{p_{0}}(A_{1}^{m_{0}})}\,.
\end{align*}
hence
\begin{equation}
\Vert\nabla^{2}z\Vert_{L^{p_{0}}(A_{1}^{m_{0}})}    \leq\kappa_{m_{0}}\big(\Vert
\tilde{f}+f^{z}\Vert_{L^{p_{0}}(A_{1}^{m_{0}})}+\Vert\nabla \tilde{g}+\hat{g}^{z}\Vert_{L^{p_{0}}(A_{1}^{m_{0}})}+\Vert\nabla \check{g}^{z}\Vert_{L^{p_{0}}(A_{1}^{m_{0}})}\big)\,.
\label{wry11}
\end{equation}

Let us fix $\omega>0$. Since $z\in W^{2,p_{0}}( A_1^{m_0} ;\mathbb{R}^2)$, by \eqref{wry1}, \eqref{wry2}, \eqref{wry3}, \eqref{wry4}, \eqref{wry5}, \eqref{wry6}, and \eqref{wry7} there exists $\varepsilon_{\omega}>0$ such that, if $m\in[\eta_{0},L_{0}]$ and $|m-m_{0}|<\varepsilon_{\omega}$, then
\begin{gather}
\Vert \nabla^{2}w\Vert
_{L^{p_{0}}(A_{1}^{m})}
\leq (1+\omega)\Vert \nabla z^{2} \Vert_{L^{p_{0}}(A_{1}^{m_{0}})}
\,,\label{qrt1}
\\
\Vert \tilde{f}\Vert
_{L^{p_{0}}(A_{1}^{m_{0}})}+\Vert \nabla\tilde{g}\Vert
_{L^{p_{0}}(A_{1}^{m_{0}})}
\leq (1+\omega)\big(\Vert f\Vert_{L^{p_{0}}(A_{1}^{m})}+\Vert \nabla g\Vert_{L^{p_{0}}(A_{1}^{m})}\big)
\,,\label{qrt2}
\\
\Vert f^{z}\Vert
_{L^{p_{0}}(A_{1}^{m_{0}})}+\Vert \nabla\hat{g}^{z}\Vert_{L^{p_{0}}(A_{1}^{m_{0}})}+
\Vert \nabla\check{g}^{z}\Vert_{L^{p_{0}}(A_{1}^{m_{0}})}
\leq\omega
\Vert \nabla^{2} z \Vert
_{L^{p_{0}}(A_{1}^{m_{0}})}
\,.\label{qrt9}
\end{gather}

From \eqref{wry11}, \eqref{qrt2}, and \eqref{qrt9} we obtain
\begin{equation}
\Vert\nabla^{2}z\Vert_{L^{p_{0}}(A_{1}^{m_{0}})}   \leq(1+\omega)\kappa_{m_{0}}\big(\Vert
f\Vert_{L^{p_{0}}(A_{1}^{m})}+\Vert\nabla g\Vert_{L^{p_{0}}(A_{1}^{m})}\big)
+\omega \kappa_{m_{0}} \Vert \nabla^{2}z\Vert_{L^{p_{0}}(A_{1}^{m_{0}})}
\,.
\label{wry111}
\end{equation}
We now choose $\omega<1/3$ such that $\omega \kappa_{m_{0}}<1/9$. If $m\in[\eta_{0},L_{0}]$ and $|m-m_{0}|<\varepsilon_{\omega}$, then \eqref{wry111} gives
\[
\tfrac{8}{9}\Vert\nabla^{2}z\Vert_{L^{p_{0}}(A_{1}^{m_{0}})}   \leq\tfrac{4}{3}\kappa_{m_{0}}\big(\Vert
f\Vert_{L^{p_{0}}(A_{1}^{m})}+\Vert\nabla g\Vert_{L^{p_{0}}(A_{1}^{m})}\big)\,.
\]
Using \eqref{qrt1} we obtain
\[
\tfrac{2}{3}\Vert\nabla^{2}w\Vert_{L^{p_{0}}(A_{1}^{m})}   \leq\tfrac{4}{3}\kappa_{m_{0}}\big(\Vert
f\Vert_{L^{p_{0}}(A_{1}^{m})}+\Vert\nabla g\Vert_{L^{p_{0}}(A_{1}^{m})}\big)\,,
\]
which implies \eqref{g14ter}.

\textbf{Step 4: }By compactness we can cover $[\eta_{0},L_{0}]$ by a finite number of open intervals $(m_{i}-\varepsilon_{i},m_{i}+\varepsilon_{i})$, with $m_{i}\in[\eta_{0},L_{0}]$, such that \eqref{g14ter} holds with $m_{0}$ and $\varepsilon_{0}$ replaced by $m_{i}$ and $\varepsilon_{i}$. Then \eqref{g14} holds with $\kappa:=2\max_{i}\kappa_{m_{i}}$.
\end{proof}

\section{Regularity at Corners}

In this section we obtain  a  precise estimate on the $W^{2,p_{0}}$ norm of the
equilibrium solution $u$ in a neighborhood of the corners.

Next we use a fixed point theorem to prove that $u$
belongs to $W^{2,p_{0}}$ near $(\alpha,0)$.

\begin{theorem}
\label{theorem regularity at corner}Under the assumptions of Theorem
\ref{theorem C3}, we have that $u\in W^{2,p_{0}}(\Omega_{h})$.
\end{theorem}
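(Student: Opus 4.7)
The plan is to combine interior regularity with a contraction-mapping argument near each corner. On every compact subset of $\overline{\Omega}_{h}$ away from the two corner points $(\alpha,0)$ and $(\beta,0)$, Theorem \ref{theorem regularity} already provides all the regularity we need, so the only task is to show $W^{2,p_{0}}$-regularity in a one-sided neighborhood of each corner. I treat $(\alpha,0)$ after translating $\alpha$ to $0$; the case $(\beta,0)$ is entirely symmetric.

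Fix $r$ as in \eqref{r interval1}, to be shrunk later, and apply the diffeomorphism $\Psi:\Omega_{h}^{0,r}\to A_{r}^{m}$ of Section~4 with $m=h'(0)$. Set $v:=u\circ\Phi\in H^{1}(A_{r}^{m};\mathbb{R}^{2})$. A direct computation using the chain rule (as in Remark \ref{remark lame} and Lemma \ref{lemma normals}) shows that $v$ is a weak solution in $A_{r}^{m}$ of a system of the form
\begin{align*}
-\operatorname{div}(\mathbb{C}Ev) &= f^{v} &&\text{in } A_{r}^{m},\\
(\mathbb{C}Ev)\nu^{m} &= \hat g^{v}+\check g^{v} &&\text{on } \Gamma_{r}^{m},
\end{align*}
with $v(x,0)=(e_{0}x,0)$ on the horizontal side and smooth data on $\{r\}\times(0,mr)$ inherited from $u$ via interior regularity. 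Here $f^{v}$ collects the contributions of the non-identity part of $\nabla\Phi$ to the Lam\'{e} operator, while $\hat g^{v}$ and $\check g^{v}$ are the correction terms from Lemma \ref{lemma normals} (with the original Neumann datum $g=0$). Each of these quantities is linear in the first and second derivatives of $v$, with coefficients built from $\sigma-1$, $\sigma'$, $y\sigma'$, $y\sigma''$, and the functions $\omega_{i}$ of \eqref{functions psi}. By Lemmas \ref{lemma sigma alpha} and \ref{lemma fractional a}, together with the bounds on $\|h''\|_{L^{\infty}}$, $\|h'''\|_{L^{\infty}}$, and $\|h^{(\operatorname*{iv})}\|_{L^{1}}$ supplied by Theorem \ref{theorem C3}, all these coefficients are dominated by an explicit quantity tending to $0$ as $r\to 0^{+}$.

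The heart of the proof is a contraction-mapping argument in $W^{2,p_{0}}(A_{r}^{m};\mathbb{R}^{2})$. After subtracting a fixed $W^{2,p_{0}}$-lifting of the Dirichlet data on $\partial A_{r}^{m}\setminus\Gamma_{r}^{m}$ (the affine datum on $y=0$ admits an obvious extension, and the trace on $\{r\}\times(0,mr)$ is smooth by interior regularity), we may assume a zero Dirichlet datum on those two sides, at the cost of modifying $f^{v}$ and $\hat g^{v}+\check g^{v}$ by fixed $L^{p_{0}}$ and $W^{1,p_{0}}$ functions. Define $T$ on $W^{2,p_{0}}(A_{r}^{m};\mathbb{R}^{2})$ (restricted to functions vanishing on $\partial A_{r}^{m}\setminus\Gamma_{r}^{m}$) by sending $v^{*}$ to the unique solution of \eqref{eul13} with right-hand side $f^{v^{*}}$ and Neumann datum $\hat g^{v^{*}}+\check g^{v^{*}}$. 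Theorem \ref{theorem grisvard 1} ensures that $T$ is well-defined and yields
\[
\|T(v_{1}^{*})-T(v_{2}^{*})\|_{W^{2,p_{0}}(A_{r}^{m})}\le C(r)\,\|v_{1}^{*}-v_{2}^{*}\|_{W^{2,p_{0}}(A_{r}^{m})}
\]
with $C(r)\to 0$ as $r\to 0^{+}$. Hence $T$ is a strict contraction for $r$ small; its unique fixed point coincides with the lifted $v$, and Remark \ref{remark change of variables} transports the resulting $W^{2,p_{0}}$-bound back to $u\in W^{2,p_{0}}(\Omega_{h}^{0,r})$. Patching this with the analogous result at $(\beta,0)$ and with the interior regularity gives $u\in W^{2,p_{0}}(\Omega_{h})$. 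The principal obstacle is the bookkeeping needed to establish $C(r)\to 0$: several of the relevant coefficients (for instance $\sigma'$) are not themselves small in $L^{\infty}$; they only become small when paired with a factor of $y$ that vanishes at the corner, which is exactly what the weighted estimates \eqref{sigma' y alpha}--\eqref{sigma'' y alpha} are designed to furnish.
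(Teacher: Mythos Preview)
Your strategy is essentially the paper's: straighten a neighborhood of the corner to the triangle $A_r^m$, run a contraction in $W^{2,p_0}$, and use Lemmas \ref{lemma sigma alpha} and \ref{lemma fractional a} (together with Poincar\'{e} on $\nabla z$, which supplies the missing factor of $r$ on the first-order terms) to make the perturbation small for small $r$. The only substantive difference is how you reduce to zero Dirichlet data on the two non-Neumann sides of the triangle. You propose subtracting a $W^{2,p_0}$-lifting of the traces; the paper instead sets $\tilde u:=(u-w^0)\varphi_r$ with a cutoff $\varphi_r$ supported in $\{x<7r/8\}$ (see \eqref{u tilde a}), so that $\tilde u$ is identically zero near the vertical side. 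The cutoff generates an extra fixed forcing $f$ and Neumann datum $g$ (formulas \eqref{f tilde v}--\eqref{g tilde a}), but these vanish on \emph{both} non-$\Gamma$ sides of $\Omega_h^{0,r}$, so after the change of variables $g\circ\Phi$ automatically meets the one-side-vanishing hypothesis of Theorem \ref{theorem grisvard 1}. In your lifting route this hypothesis is not automatic: the fixed Neumann contribution produced by the lifting has no reason to vanish on either side, so before Theorem \ref{theorem grisvard 1} can be applied to that piece you would need to construct the lifting more carefully (for instance supported away from the vertex) or split it further. This is fixable, but it is precisely the bookkeeping the cutoff avoids. One minor correction: the away-from-corners regularity you invoke is \eqref{regularity u} in Theorem \ref{theorem euler-lagrange}; Theorem \ref{theorem regularity} yields only $C^{1,1/2}$.
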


To prove this theorem we  need  some auxiliary results. We begin with
Poincar\'{e}'s inequality. 

\begin{lemma}
[Poincar\'{e}'s inequality]\label{lemma poincare corner}Let $0<r<\beta-\alpha$
and let $p\geq1$. Then%
\begin{equation}
\Vert  v \Vert_{L^{p}(\Omega_{h}^{\alpha,\alpha+r})}\leq L_{0}r\Vert\nabla
 v\Vert_{L^{p}(\Omega_{h}^{\alpha,\alpha+r})}%
\label{qwe41}
\end{equation}
for every $ v\in W^{1,p}(\Omega_{h}^{\alpha,\alpha+r})$ such that
$ v(x,0)=0$ for $x\in (\alpha,\alpha+r)$ (in the sense of traces).
\end{lemma}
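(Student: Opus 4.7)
The plan is standard and should be short: exploit the Lipschitz bound on $h$ to control the vertical extent of $\Omega_h^{\alpha,\alpha+r}$, and then apply the one-dimensional Poincar\'e inequality along each vertical segment, using the fact that $v$ vanishes on the bottom edge.

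First I would observe that since $h(\alpha)=0$ and $\operatorname{Lip} h \leq L_0$, we have $h(x) \leq L_0(x-\alpha) \leq L_0 r$ for every $x \in (\alpha, \alpha+r)$. This gives the uniform vertical size that will produce the constant $L_0 r$ in the final estimate.

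Next, by a standard density argument it suffices to prove the inequality for $v \in C^1(\overline{\Omega_h^{\alpha,\alpha+r}})$ with $v(x,0)=0$. For such $v$, for every $(x,y) \in \Omega_h^{\alpha,\alpha+r}$ I write $v(x,y) = \int_0^y \partial_s v(x,s)\,ds$. H\"older's inequality (trivially for $p=1$) yields
\[
|v(x,y)|^p \leq y^{p-1} \int_0^y |\partial_s v(x,s)|^p \, ds \leq y^{p-1} \int_0^{h(x)} |\partial_y v(x,s)|^p \, ds.
\]
Integrating in $y$ over $(0,h(x))$ and using $h(x) \leq L_0 r$ gives
\[
\int_0^{h(x)} |v(x,y)|^p \, dy \leq \frac{h(x)^p}{p} \int_0^{h(x)} |\partial_y v(x,s)|^p \, ds \leq \frac{(L_0 r)^p}{p} \int_0^{h(x)} |\partial_y v(x,s)|^p \, ds.
\]

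Finally, integrating in $x$ over $(\alpha,\alpha+r)$, taking the $p$-th root, and using $p^{1/p} \geq 1$ for $p \geq 1$, I obtain
\[
\|v\|_{L^p(\Omega_h^{\alpha,\alpha+r})} \leq \frac{L_0 r}{p^{1/p}} \|\partial_y v\|_{L^p(\Omega_h^{\alpha,\alpha+r})} \leq L_0 r \|\nabla v\|_{L^p(\Omega_h^{\alpha,\alpha+r})},
\]
which is \eqref{qwe41}. There is no real obstacle here: the only slightly delicate point is the trace condition, which is handled by approximation, since $\Omega_h^{\alpha,\alpha+r}$ has Lipschitz boundary in view of $\operatorname{Lip} h \leq L_0$ and $h(\alpha)=0$.
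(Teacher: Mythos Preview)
Your proof is correct and follows essentially the same approach as the paper: both reduce to smooth functions by density, use the fundamental theorem of calculus along vertical segments together with H\"older's inequality, and invoke the bound $h(x)\leq L_0 r$ coming from $h(\alpha)=0$ and $\operatorname{Lip} h\leq L_0$. Your version is in fact slightly sharper, since integrating $y^{p-1}$ yields the extra factor $1/p$, whereas the paper bounds the height by $L_0 r$ before integrating and obtains the constant $L_0 r$ directly.
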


\begin{proof}
By density we can assume that $ v\in C^{\infty}( %
\mathbb{R}^{2})$. For $(x,y)\in\Omega_{h}^{\alpha,\alpha+r}$, by the
fundamental theorem of calculus
 and  H\"{o}lder's inequality  we have 
\[
| v(x,y)|
\leq(L_{0}r)^{1/p^{\prime}}\Big(  \int_{0}^{h(x)}%
|\partial_{y} v(x,t)|^{p}dt\Big)^{1/p},
\]
where we used the fact that $h(x)\leq L_{0}r$. Raising both sides to power $p$
and integrating over $\Omega_{h}^{\alpha,\alpha+r}$ gives%
\[
\int_{\Omega_{h}^{\alpha,\alpha+r}}| v(x,y)|^{p}dxdy   \leq
(L_{0}r)^{1+p/p^{\prime}}\int_{\Omega_{h}^{\alpha,\alpha+r}}|\partial
_{y} v(x,y)|^{p}dxdy\,,
\]
which  yields \eqref{qwe41} and  concludes the proof.
\end{proof}

\begin{remark}
\label{remark poincare corner} Let
$r>0$, let $0<\eta_{0}\leq m\leq L_{0}$, and let  $A_{r}^{ m}$ be the triangle defined in \eqref{triangle}.
With a proof similar to the one of Lemma \ref{lemma poincare corner}, one can show that%
\[
\Vert  v \Vert_{L^{p}(A_{r}^{ m})}\leq \max\{L_{0},1/\eta_{0}\}
r\Vert\nabla  v\Vert_{L^{p}(A_{r}^{ m})}%
\]
for every $ v\in W^{1,p}(A_{r}^{ m})$ such that $v(x,0)=0$ for $x\in(0,r)$ or  
$v(r,y)=0$ for $y\in(0,mr)$  (in the
sense of traces).
\end{remark}

We turn to the proof of Theorem \ref{theorem regularity at corner}.

\begin{proof}
[Proof of Theorem \ref{theorem regularity at corner}]In view of Theorem
\ref{theorem euler-lagrange}, it is enough to prove that there exists $r>0$
sufficiently small such that%
\[
u\in W^{2, p_{0}}(\Omega_{h}^{\alpha,\alpha+r}\cup\Omega_{h}^{\beta-r,\beta})\,,
\]
where $\Omega_{h}^{c,d}$ is defined in \eqref{Omega alpha beta}. We will only
show that $u\in W^{2, p_{0}}(\Omega_{h}^{\alpha,\alpha+r})$, since the other
endpoint can be treated in a similar way. By a translation, without loss of
generality, we may assume that $\alpha=0$. We modify $u$ far from $(0,0)$ to
obtain a new function $\tilde{u}$
vanishing away from
$(0,0)$. We then use the transformation \eqref{Phi 1} to map $\Omega_{h}%
^{0,r}$ into the triangle $ A_{r}^{m}$ defined in \eqref{triangle} with
$m=h^{\prime}(0)$. A fixed point argument will  allow us to show that the
resulting system has a solution in $W^{2,p_{0}}( A_{r}^{m})$, and due to uniqueness, 
this solution is $\tilde{u}\circ\Phi$ itself. To apply the Banach fixed point
theorem, we will use Lemmas \ref{lemma sigma alpha} and
\ref{lemma fractional a}.

\noindent\textbf{Step 1: Localization. } Let
\begin{equation}
w^{0}(x,y):=(e_{0}x,0)\,,\quad(x,y)\in\mathbb{R}^{2}.\label{function w0 a}%
\end{equation}
For every $r>0$ let $\varphi_{r}\in C^{\infty
}(\mathbb{R}^{2})$ be a function such that $0\leq\varphi_{r}\leq1$, $\varphi_{r}(x,y)=1$ for $x\leq 5r/8$, $\varphi_{r}(x,y)=0$ for $x\geq 7r/8$, $\Vert
\nabla\varphi_{r}\Vert_{L^{\infty}(\mathbb{R}^{2})}\leq C/r$ and $\Vert
\nabla^{2}\varphi_{r}\Vert_{L^{\infty}(\mathbb{R}^{2})}\leq C/r^{2}$, where
$C>0$ is a constant independent of $r$. 
In $\Omega_{h}$ we define 
\begin{equation}
\tilde{u}:=(u-w^{0})\varphi_{r}\,. \label{u tilde a}%
\end{equation}
 If $0<r<\beta$ we have $\tilde{u}(x,0)=0$ for $0\leq x\le r$ and $\tilde{u}(r,y)=0$ for $0\leq y\leq h(r)$. In other words $\tilde{u}=0$ on $\partial\Omega_{h}^{0,r}\setminus\Gamma_{h}^{0,r}$, where $\Gamma_{h}^{0,r}=\{(x,h(x)): 0< x< r\}$. Moreover,  we have
\[
\nabla\tilde{u}=\varphi_{r}\nabla(u-w^{0})+(u-w^{0})\otimes\nabla\varphi
_{r}\,.
\]
By Poincar\'{e}'s inequality (see Lemma \ref{lemma poincare corner}) we obtain
\begin{equation}
\Vert\nabla\tilde{u}\Vert_{L^{2}(\Omega_{h}^{0, r})}\leq C\Vert\nabla
(u-w^{0})\Vert_{L^{2}(\Omega_{h}^{0, r})}\,. \label{gradient u tilde a}%
\end{equation}

By direct computation, it follows from Remark \ref{remark lame} that
\begin{equation}
\left\{
{\setstretch{1.3}
\begin{array}
[c]{ll}%
-\operatorname{div}
\mathbb{C}E\tilde{u}=-\mu\Delta\tilde{u}-(\lambda
+\mu)\nabla\operatorname{div}\tilde{u}= f & \text{in
}\Omega_{h}^{0,r}\,,\\
(\mathbb{C}E\tilde{u})\nu^{h}=2\mu(E\tilde{u})\nu^{h}+\lambda(\operatorname{div}%
\tilde{u})\nu^{h}= g  & \text{on }\Gamma_{h}^{0,r}\,,\\
\tilde{u}=0 & \text{on }\partial\Omega_{h}^{0,r}\setminus\Gamma_{h}^{0,r} \,,
\end{array}
}
\right.  \label{qwe50}
\end{equation}
where $\nu^{h}(x):=(-h^{\prime}(x),1)/\sqrt{1+(h^{\prime}(x))^{2}}$
denotes the outer unit normal to $\Omega_{h}$ on $\Gamma_{h}$,  and%
\begin{align}
 f &  :=-\mu\big( 2 (\nabla u-\nabla w^{0})\nabla\varphi_{r}
+(u-w^{0})\Delta\varphi_{r}\big)\label{f tilde v}\\
&  \quad-(\lambda+\mu)\big((\operatorname{div}u-\operatorname{div}w^{0}%
)\nabla\varphi_{r}+(\nabla u-\nabla w^{0})^{ T}
\nabla\varphi_{r}+\nabla
^{2}\varphi_{r}(u-w^{0})\big)\,,\nonumber\\
 g  &  :=\mu\big((u-w^{0})\otimes\nabla\varphi_{r}%
+\nabla\varphi_{r}\otimes(u-w^{0})\big)\nu^{h}+\lambda\operatorname*{trace}((u-w^{0})\otimes\nabla\varphi_{r}%
)\nu^{h}\,.\label{g tilde a}
\end{align}
 Note that $f\in L^{2}(\Omega_{h}^{0,r};\mathbb{R}^{2})$.
Since $h\in C^{3}([0,\beta])$ by Theorem \ref{theorem C3}, we can consider $\nu^{h}$ as a $C^{2}$ function in the strip
$[0,\beta]\times\mathbb{R}$. This shows that $g\in H^{1}(\Omega_{h}^{0,r};\mathbb{R}^{2})$. Since $u(x,0)-w^{0}(x,0)=0$ for a.e. $x\in[0,r]$ and $\varphi_{r}(r,y)=0$ for every $y\in\mathbb{R}$, we conclude that the trace of $g$ vanishes on $\partial\Omega_{h}^{0,r}\setminus\Gamma_{h}^{0,r}$. We also observe that for every $f\in L^{2}(\Omega_{h}^{0,r};\mathbb{R}^{2})$ and every $g\in H^{1}(\Omega_{h}^{0,r};\mathbb{R}^{2})$ problem 
\eqref{qwe50} has a unique weak solution in $H^1(\Omega_{h}^{0,r};\mathbb{R}^{2})$.

\noindent\textbf{Step 2: Straightening the boundary. }Let  $r$ be as in \eqref{r interval1}
and let 
\begin{equation}
v:=\tilde{u}\circ\Phi\in H^{1}( A_{r}^{m};\mathbb{R}^{2})\,, \label{function v}%
\end{equation}
where $ A_{r}^{m}$ and $\Phi$ are defined in \eqref{triangle} and \eqref{Phi 1}, with $m=h_{0}^{\prime}$. 
 Recalling  Remark \ref{remark lame} and the fact that
$\tilde{u}(x,y)=v(x,\sigma(x)y)$, it
follows by direct computation  and by Lemma \ref{lemma normals}  that $v$ satisfies the boundary value
problem
\begin{equation}
\left\{
{\setstretch{1.1}
\begin{array}
[c]{ll}%
-\operatorname{div}\mathbb{C}Ev=-\mu\Delta v-(\lambda+\mu)\nabla
\operatorname{div}v= f\circ\Phi+f^{v} &
\text{in } A_{r}^{m}\,,\\
(\mathbb{C}Ev)\nu^{0}=2\mu(Ev)\nu^{0}+\lambda(\operatorname{div}v)
\nu^{0}= g\circ\Phi+\hat{g}^{v}+\check{g}^{v} & \text{on }\Gamma_{r}^{m}\,,\\
v=0 & \text{on }\partial A_{r}^{m}\setminus\Gamma_{r}^{m}\,,
\end{array}
}
\right.  \label{eul 8}%
\end{equation}
where $\nu^{0}:=(-h_0^{\prime},1)/\sqrt{1+(h_0^{\prime}%
)^{2}}$ is the outer unit normal to  $A_{r}^{m}$ on $\Gamma_{r}^{m}:=\{(x,h_0^{\prime}x):{0<x<r}\}$, $f^{v}%
=(f_{1}^{v},f_{2}^{v})\in H^{-1}( A_{r}^{m};\mathbb{R}^{2})$ is defined by%
\begin{align}
f_{1}^{v}  &  :=\mu\bigg[\left(  \sigma^{2}-1+y^{2}\frac{(\sigma^{\prime}%
)^{2}}{\sigma^{2}}\right)  \partial_{yy}^{2}v_{1}+2y\frac{\sigma^{\prime}%
}{\sigma}\partial_{xy}^{2}v_{1}\bigg]\nonumber\\
&  \quad+(\lambda+\mu)\bigg[y^{2}\frac{(\sigma^{\prime})^{2}}{\sigma^{2}%
}\partial_{yy}^{2}v_{1}+2y\frac{\sigma^{\prime}}{\sigma}\partial_{xy}^{2}%
v_{1}+(\sigma-1)\partial_{xy}^{2}v_{2}\,+y\sigma^{\prime}\partial_{yy}%
^{2}v_{2}\,\bigg]\,,\label{f1}\\
&  \quad+\mu y\frac{\sigma^{\prime\prime}}{\sigma}\partial
_{y}v_{1}+(\lambda+\mu)\bigg[y\frac{\sigma^{\prime\prime}}{\sigma}\partial
_{y}v_{1}+\sigma^{\prime}\partial_{y}v_{2}\bigg]\,,\nonumber\\
f_{2}^{v}  &  :=\mu\bigg[\left(  \sigma^{2}-1+y^{2}\frac{(\sigma^{\prime}%
)^{2}}{\sigma^{2}}\right)  \partial_{yy}^{2}v_{2}+2y\frac{\sigma^{\prime}%
}{\sigma}\partial_{xy}^{2}v_{2}\bigg]\nonumber\\
&  \quad+(\lambda+\mu)[(\sigma-1)\partial_{xy}^{2}v_{1}+\sigma^{\prime
}y\partial_{yy}^{2}v_{1}+(\sigma^{2}-1)\partial_{yy}^{2}v_{2}] \label{f2}%
\\
&  \quad+\mu y\frac{\sigma^{\prime\prime}}{\sigma}\partial
_{y}v_{2}+(\lambda+\mu)\sigma^{\prime}\partial_{y}v_{1}\,,\nonumber
\end{align}
 while $\hat{g}^{v}=(\hat{g}_{1}^{v},\hat{g}_{2}^{v})\in W^{1,p_{0}}(A_{r}^{m};\mathbb{R}^{2})$ 
and 
$\check{g}^{v}=(\check{g}_{1}^{v},\check{g}_{2}^{v})\in W^{1,p_{0}}(A_{r}^{m};\mathbb{R}^{2})$ are
defined by \eqref{g1hat}--\eqref{g2check}.

\noindent\textbf{Step 3: Fixed Point Argument. } To prove the $W^{2,p_{0}}$
regularity of $v$ we use a fixed point argument in  the space
\[
X_{r}^{ m}:=\{z\in W^{2,p_{0}}(A_{r}^{ m};\mathbb{R}^{2}):\,z=0\text{ on }\partial A_{r}^{ m}\setminus \Gamma_{r}^{ m}\}\,,
\]
endowed with the norm 
\begin{equation}
\Vert z\Vert_{X_{r}^{ m}}:=\Vert\nabla^{2}%
z\Vert_{L^{p_{0}}(A_{r}^{ m})}\,. \label{norm Z}%
\end{equation}

Let us first prove that $\Vert\cdot\Vert_{X_{r}^{m}}$ is a norm equivalent to $\Vert\cdot\Vert_{W^{2,p_{0}}(A_{r}^{m},\mathbb{R}^{2})}$. By Poincar\'e's inequality (see Remark \ref{remark poincare corner}) there exists a constant $C>0$, depending only on $\eta_{0}$ and $L_{0}$, such that
\begin{equation}
\Vert\varphi\Vert_{L^{p_{0}}(A_{r}^{m})}\leq Cr\Vert\nabla\varphi\Vert_{L^{p_{0}}(A_{r}^{m})}
\label{qty32}
\end{equation}
for every $\varphi\in W^{1,p_{0}}(A_{r}^{m},\mathbb{R}^{2})$ such that $\varphi=0$ on one of the sides of the triangle $A_{r}^{m}$ different from $\Gamma_{r}^{m}$. 
If $z\in X_{r}^{m}$, then $z$, $\partial_{x}z$, and $\partial_{y}z$ satisfy this property, hence
\begin{align*}
\Vert z \Vert_{L^{p_{0}}(A_{r}^{m})}&\leq Cr\Vert\nabla z\Vert_{L^{p_{0}}(A_{r}^{m})}\,,
\\
\Vert \partial_{x}z \Vert_{L^{p_{0}}(A_{r}^{m})}&\leq Cr\Vert\nabla \partial_{x}z\Vert_{L^{p_{0}}(A_{r}^{m})}\,,
\\
\Vert \partial_{y}z \Vert_{L^{p_{0}}(A_{r}^{m})}&\leq Cr\Vert\nabla \partial_{y}z\Vert_{L^{p_{0}}(A_{r}^{m})}\,.
\end{align*}
These inequalities imply that there exists a constant $K_{r}>0$ such that
\[
\Vert z \Vert_{W^{2,p_{0}}(A_{r}^{m})}\leq K_{r}\Vert\nabla^{2} z\Vert_{L^{p_{0}}(A_{r}^{m})}
\]
for every $z\in X_{r}^{m}$, concluding the proof of the equivalence between the norms $\Vert\cdot\Vert_{X_{r}^{m}}$ and $\Vert\cdot\Vert_{W^{2,p_{0}}(A_{r}^{m},\mathbb{R}^{2})}$. This shows, in particular, that $X_{r}^{m}$  is a Banach space.

Let $r$ be as in \eqref{r interval1}.
 Given $z\in W^{2,p_{0}%
}(A_{r}^{ m};\mathbb{R}^{2})$, let $w:=F_{r}^{ m}(z)$ be the solution to the boundary value
problem
\begin{equation}
\left\{
{\setstretch{1.1}
\begin{array}
[c]{ll}%
-\operatorname{div}(\mathbb{C}Ew)= f\circ\Phi+ f^{z}
 & \text{in }A_{r}^{ m}\,,\\
(\mathbb{C}Ew)\nu^{ 0}= g\circ\Phi + \hat{g}^{z} +\check{g}^{z} & \text{on
}\Gamma_{r}^{ m}\,,\\
w=0 & \text{on }\partial A_{r}^{m}\setminus \Gamma_{r}^{m}\,,
\end{array}
}
\right.  \label{eul24}%
\end{equation}
let $ w_{1}$ be the solution to the boundary value problem
\[
\left\{
{\setstretch{1.1}
\begin{array}
[c]{ll}%
-\operatorname{div}(\mathbb{C}E w_{1})= f\circ\Phi & \text{in }A_{r}^{ m}\,,\\
(\mathbb{C}E w_{1})\nu^{ 0}= g\circ\Phi & \text{on }\Gamma_{r}^{ m}\,,\\
 w_{1}=0 & \text{on }\partial A_{r}^{m}\setminus \Gamma_{r}^{m}\,,
\end{array}
}
\right.
\]
and let $ w_{2}:=G_{r}^{ m}(z)$ be the solution to the boundary value problem
\[
\left\{
{\setstretch{1.1}
\begin{array}
[c]{ll}%
-\operatorname{div}(\mathbb{C}E w_{2})= f^{z} & \text{in }A_{r}^{ m}\,,\\
(\mathbb{C}E w_{2})\nu^{ 0}= \hat{g}^{z}+\check{g}^{z} & \text{on }\Gamma_{r}^{ m}\,,\\
 w_{2}=0 & \text{on }\partial A_{r}^{m}\setminus \Gamma_{r}^{m}\,.
\end{array}
}
\right.
\]
By linearity, we have $F_{r}^{ m}(z)= w_{1}+ G_{r}^{ m}(z)$.

We claim that  for $r>0$ sufficiently small the linear map  $G_{r}^{ m}:X_{r}^{ m}\rightarrow X_{r}^{ m}$ is a contraction. By linearity it suffices to show that%
\begin{equation}
\Vert G_{r}^{ m}(z)\Vert_{X_{r}^{ m}}\leq1/2\quad\text{for all }z\in X_{r}^{ m}\text{ with
}\Vert z\Vert_{X_{r}^{ m}}\leq1\,. \label{L2}%
\end{equation}
Fix $z\in X_{r}^{ m}$ with $\Vert z\Vert_{X_{r}^{ m}}\leq1$. By  linearity, using  Theorem
\ref{theorem grisvard 1} we  obtain  that $ w_{2}=G_{r}^{ m}(z)\in W^{2,p_{0}%
}(A_{r}^{ m};\mathbb{R}^{2})$ with 
\begin{equation}
\Vert\nabla^{2} w_{2}\Vert_{L^{p_{0}}(A_{r}^{ m})}  \leq\kappa\big(\Vert f%
^{z}\Vert_{L^{p_{0}}(A_{r}^{ m})}+
 \Vert\nabla \hat{g}^{z}\Vert_{L^{p_{0}}(A_{r}^{m})} + \Vert\nabla \check{g}^{z}\Vert_{L^{p_{0}}(A_{r}^{m})}\big)\,, \label{L4}%
\end{equation}
 where $\kappa$ depends only on $\lambda$, $\mu$, $\eta_{0}$, and $L_{0}$; in particular it does not depend on $r$, $h$, $f$, $g$, and $z$. 
Therefore, \eqref{L2} follows provided we can prove that%
\begin{equation}
\kappa\big(\Vert f^{z}\Vert_{L^{p_{0}}(A_{r}^{ m})}+
 \Vert\nabla \hat{g}^{z}\Vert_{L^{p_{0}}(A_{r}^{m})} + \Vert\nabla \check{g}^{z}\Vert_{L^{p_{0}}(A_{r}^{m})}\big) \leq1/2\,. \label{L6}%
\end{equation}

\noindent\textbf{Step 4: Proof of \eqref{L6}.} In this step we prove \eqref{L6}.  Below  $ C_{h} $ denotes a constant 
independent of $r\in (0,\eta_{0}^{2}/(4M))$ and $m\in[2\eta_{0},L_{0}]$, but depending on $h$, whose value can change from formula to formula, while $C_{\lambda,\mu}$ denotes a constant depending only on the Lam\'e coefficients. 
 We begin with $\Vert f^{z}\Vert_{L^{p_{0}}%
(A_{r}^{ m})}$. For $0<r<\eta_{0}^{2}/(4M)$, by Lemma \ref{lemma sigma alpha}, Remark \ref{remark sigma close to 1 a},
\eqref{f1},  and \eqref{f2},
in $A_{r}^{ m}$ we have the pointwise estimate%
\begin{align}
&| f^{z}|\leq  C_{\lambda,\mu}\Big(\Vert\sigma-1\Vert_{L^{\infty}(I_{ r})}%
+\sup_{(x,y)\in A_{r}^{m}}|y\sigma^{\prime}(x)|+\sup_{(x,y)\in A_{r}^{m}}|y\sigma^{\prime}%
(x)|^{2}\Big)|\nabla^{2}z|\nonumber
\\
&\qquad  +C_{\lambda,\mu}\Big(\Vert\sigma^{\prime}\Vert_{L^{\infty}(I_{r})} + 
\sup_{(x,y)\in  A_{r}^{m}}|y\sigma^{\prime\prime}(x)|\Big)|\nabla z|\leq  C_{h}  r |\nabla^{2}z| + C_{h} |\nabla z|\,, 
\label{f v hat estimate}%
\end{align}
where  in the last inequality we used \eqref{sigma -1 alpha}, \eqref{sigma' y 2 alpha}, and \eqref{sigma'' y alpha}. In turn, using Poincar\'e's inequality given in Remark \ref{remark poincare corner} together with the inequality $\Vert z\Vert_{X_{r}^{m}}\leq1$, with norm defined by \eqref{norm Z}, we obtain 
\begin{equation}
\Vert f^{z}\Vert_{L^{p_{0}}(A_{r}^{ m})}  \leq C_{h}r\Vert\nabla^{2}z\Vert_{L^{p_{0}}(A_{r}^{ m})}+ C\Vert\nabla z\Vert_{L^{p_{0}}(A_{r}^{ m})}
\leq  C_{h}r \Vert\nabla^{2}z\Vert_{L^{p_{0}}(A_{r}^{m})}\leq C_{h}r \,.
\label{f hat estimate a}%
\end{equation}

We now estimate $\Vert\nabla \hat{g}^{z}\Vert_{L^{p_0}(A_{r}^{m})}$. We observe that by
\eqref{g1hat} and \eqref{g2hat} we can write each component of $\nabla \hat{g}^{z}$ as linear
combinations of products of the functions
$\omega_{i}$ and second order partial
derivatives of the component of $z$, as well as products of the derivatives
$\omega_{i}^{\prime}$ and first order partial
derivatives of the components of $z$. Therefore, we obtain
that
\begin{align}
\Vert \nabla \hat{g}^{z}\Vert_{L^{p_{0}}(A_{r}^{m})}
&\leq C_{\lambda,\mu}\Big(\sum_{i=1}^{6}
\Vert\omega_{i}\Vert_{L^{\infty}(I_{r})}%
\Vert\nabla^{2} z\Vert_{L^{p_{0}}(A_{r}^{m})} +
\sum_{i=1}^{6}\Vert\omega_{i}^{\prime}\Vert_{L^{\infty}(I_{r})}%
\Vert\nabla z\Vert_{L^{p_{0}}(A_{r}^{m})}\Big)
 \nonumber\\&\leq C_{h} r
\,,
\label{qwe64}
\end{align}
where in the last inequality we used the fact that $\Vert\nabla^{2} z\Vert_{L^{p_{0}}(A_{r}^{m})}=\Vert
z\Vert_{X_{r}^{m}}\leq1$ and the estimates \eqref{omega i estimate a} and \eqref{omega' i estimate a}, together with Poincar\'e's inequality for $\partial_{x}z$ and $\partial_{y}z$, which vanish on one of the sides of $A_{r}^{m}$ different from $\Gamma_{r}^{m}$ (see Remark \ref{remark poincare corner}). Similarly, we prove that
\begin{equation}
\Vert \nabla \check{g}^{z}\Vert_{L^{p_{0}}(A_{r}^{m})}
\leq C_{h} r
\,.
\label{qwe65}
\end{equation}
From \eqref{f hat estimate a}, \eqref{qwe64}, and \eqref{qwe65}
it follows that
\begin{equation}
\kappa\big(\Vert f^{z}\Vert_{L^{p_{0}}( A_{r}^{m})}+
 \Vert\nabla \hat{g}^{z}\Vert_{L^{p_{0}}( A_{r}^{m})}
 +\Vert\nabla \check{g}^{z}\Vert_{L^{p_{0}}( A_{r}^{m})}\big) 
  \leq C_{h} r\,. \label{L6*}%
\end{equation}
 Therefore \eqref{L6} is satisfied if $r\in (0,\eta_{0}^{2}/{ (4M))}$ is sufficiently small. This shows that $G_{r}^{m}$ is a contraction in the Banach space $X_{r}^{m}$. Consequently $F_{r}^{m}$ is a contraction.

\noindent\textbf{Step 5: Conclusion. }By the Banach fixed point theorem
applied to $ F_{r}^{m}$, there exists $ z_{0} \in X_{r}^{ m}$ such that $z_{0}= F_{r}^{m}(z_{0})$.  By \eqref{eul24} the function $z_{0}$ solves \eqref{eul 8}. By \eqref{Phi C2} the function $u_{0}:=z_{0}\circ\Psi$ belongs to $W^{2,p_{0}}(\Omega_{h}^{0,r};\mathbb{R}^{2})$ and, by direct computation, it solves \eqref{qwe50}. Since $W^{2,p_{0}}(\Omega_{h}^{0,r};\mathbb{R}^{2})\subset H^{1}(\Omega_{h}^{0,r};\mathbb{R}^{2})$ and problem \eqref{qwe50} has a unique weak solution in $H^{1}(\Omega_{h}^{0,r};\mathbb{R}^{2})$, we conclude that $u_{0}=\tilde{u}$, hence $\tilde{u}\in W^{2,p_{0}}(\Omega_{h}^{0,r};\mathbb{R}^{2})$. Recalling that $\tilde{u}=u-w^{0}$ in $\Omega_{h}^{0,r/2}$, we obtain that 
$u\in W^{2,p_{0}}(\Omega_{h}^{0,r/2};\mathbb{R}^{2})$,  which concludes the proof.
\end{proof}

\begin{theorem}
\label{theorem endpoints}Under the hypotheses of Theorem \ref{theorem C3}, we
have%
\begin{equation}
h^{\prime\prime}(\alpha)=h^{\prime\prime}(\beta)=0\,,
\label{h'' zero at endpoints}%
\end{equation}
and
\begin{align}
\sigma_{0}\frac{\alpha-\alpha^{0}}{\tau}  &  =\frac{\gamma}{J(\alpha)}%
-\gamma_{0}+\nu_{0}\frac{h^{\prime}(\alpha)}{J(\alpha)^{2}}\Big(\frac
{h^{\prime\prime}}{J^{3}}\Big)^{\prime}(\alpha)\,,\label{equation x l1}\\
\sigma_{0}\frac{\beta-\beta^{0}}{\tau}  &  =-\frac{\gamma}{J(\beta)}%
+\gamma_{0}-\nu_{0}\frac{h^{\prime}(\beta)}{J(\beta)^{2}}\Big(\frac
{h^{\prime\prime}}{J^{3}}\Big)^{\prime}(\beta)\,, \label{equation x r1}%
\end{align}
 where $J$ is defined in \eqref{J and J0}. 
\end{theorem}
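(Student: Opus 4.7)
The plan is to extract both identities from the boundary terms of a first variation of the total energy $\mathcal{F}^{0}$ along the family $(\alpha_{\varepsilon},\beta,h_{\varepsilon},u_{\varepsilon})$ constructed in Step 1 of the proof of Theorem~\ref{theorem C3}, taken with a test function $\varphi(x)=\varphi_{0}(x-\alpha)$ supported in $(\alpha-\delta_{0},\alpha+\delta_{0})$, subject to $\varphi_{0}(0)=1$ and $\int_{0}^{\delta_{0}}\varphi_{0}\,dx=0$. A key observation is that, within the admissible class for $\varphi_{0}$, the value $\varphi_{0}'(0)=\varphi'(\alpha)$ can be prescribed freely in some open interval about zero. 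Since the variation is two-sided for small $|\varepsilon|$, the minimality of $(\alpha,\beta,h,u)$ forces $\left.\frac{d}{d\varepsilon}\mathcal{F}^{0}\right|_{\varepsilon=0}=0$.

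I would compute the derivatives term by term: the surface contribution is given by Remark~\ref{remark derivative S} (an actual equality here because $h\in C^{2}([\alpha,\beta])$ by Theorem~\ref{theorem C3}); the discretization term is \eqref{der T} rewritten via \eqref{der T a}; and the elastic contribution simplifies to $\int_{\alpha}^{\beta}\overline{W}\varphi\,dx$, since the ``corner'' contribution at $\alpha$ is of order $o(\varepsilon)$ thanks to $h(\alpha)=0$, the Lipschitz bound on $h$, and the integrability of $\overline{W}$ ensured by \eqref{trace L1}. Using $h\in W^{4,1}((\alpha,\beta))\subset C^{3}([\alpha,\beta])$ from Theorem~\ref{theorem C3}, I would then integrate by parts all occurrences of $\varphi'$ and $\varphi''$ in the surface-energy variation. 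The interior integrand after integration by parts coincides with the left-hand side of \eqref{equation fourth}, which equals the constant Lagrange multiplier $m$; since $\int\varphi\,dx=0$, the interior integral vanishes.

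What remains is an identity involving only boundary data at $\alpha$, of the schematic form $-\nu_{0}h''(\alpha)J(\alpha)^{-5}\varphi'(\alpha)+(\text{terms independent of }\varphi'(\alpha))=0$. Because $\varphi'(\alpha)$ ranges over an open interval as $\varphi_{0}$ varies within its admissible class, the coefficient of $\varphi'(\alpha)$ must vanish, giving $h''(\alpha)=0$. Substituting $h''(\alpha)=0$ into the remaining terms, multiplying through by $h'(\alpha)$, and simplifying using $\gamma J(\alpha)-\gamma(h'(\alpha))^{2}/J(\alpha)=\gamma/J(\alpha)$ together with the identity $(h''/J^{5})'(\alpha)=h'''(\alpha)/J(\alpha)^{5}=J(\alpha)^{-2}(h''/J^{3})'(\alpha)$ (valid precisely because $h''(\alpha)=0$) yields \eqref{equation x l1}; the assertions at $\beta$ follow by the symmetric construction with a test function supported near $\beta$. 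The main obstacle is the rigorous first variation of $\mathcal{E}$ near the moving corner with only $u\in W^{2,p_{0}}$ regularity from Theorem~\ref{theorem regularity at corner}: one must verify that the sliver contribution between $\alpha$ and $\alpha_{\varepsilon}$ is $o(\varepsilon)$, which follows from the Sobolev embedding applied to $\nabla u$ combined with the estimate $h(x)=O(x-\alpha)$ as $x\to\alpha^{+}$.
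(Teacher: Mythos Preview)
Your proposal is correct and follows essentially the same approach as the paper: the paper likewise extends $u$ using the $W^{2,p_{0}}$ regularity from Theorem~\ref{theorem regularity at corner} (via Lemma~\ref{lemma extension}), proves $\left.\frac{d}{d\varepsilon}\mathcal{E}\right|_{\varepsilon=0}=\int_{\alpha}^{\beta}\overline{W}\varphi\,dx$ by controlling the sliver integral through the Sobolev embedding $\nabla\hat{u}\in L^{p_{0}^{*}}$ with $p_{0}^{*}>4$, combines this with \eqref{der S} and \eqref{der T}--\eqref{der T a}, integrates by parts, invokes \eqref{equation fourth} to kill the interior integral, and then lets $\varphi'(\alpha)$ vary to force $h''(\alpha)=0$ before reading off \eqref{equation x l1}. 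One minor clarification: your first justification for the corner term via ``integrability of $\overline{W}$ from \eqref{trace L1}'' is not the right ingredient (that controls the trace on $\Gamma_{h}$, not the bulk sliver); the correct tool is exactly what you identify in your final paragraph, namely $\nabla u\in L^{4}$ from Sobolev embedding together with $h(x)=O(x-\alpha)$.
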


 We begin with a preliminary lemma.

\begin{lemma}
\label{lemma extension}
Under the hypotheses of Theorem \ref{theorem C3}, let $1\leq p<\infty$ and let $v\in W^{2,p}(\Omega_{h};\mathbb{R}^2)$ be such that $v(x,0)=0$ for a.e.\ $x\in(\alpha,\beta)$. Then there exists
$\hat{v}\in W^{2,p}\mathbb{R}^2;\mathbb{R}^2)$
such that $\hat{v}=v$ in $\Omega_{h}$ and 
$\hat{v}(x,0)=0$ for a.e.\ $x\in \mathbb{R}$.
\end{lemma}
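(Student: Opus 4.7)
The strategy is to extend $v$ in two successive steps: first across the flat substrate boundary $\{y=0\}$ by odd reflection (using $v(x,0)=0$), and then from the resulting symmetric ``lens'' domain to all of $\mathbb{R}^{2}$ as a Sobolev extension from a bounded Lipschitz domain, finally antisymmetrizing the resulting extension in $y$ to enforce the vanishing trace on the whole $x$-axis.

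First I would set
\[
\Omega_{h}^{\ast} := \{(x,y)\in\mathbb{R}^{2} : \alpha < x < \beta,\ |y| < h(x)\}
\]
and define
\[
\tilde v(x,y) := \begin{cases} v(x,y) & \text{if } 0\leq y<h(x),\\ -v(x,-y) & \text{if } -h(x)<y<0.\end{cases}
\]
Because $v\in W^{2,p}(\Omega_{h};\mathbb{R}^{2})$ has zero trace on $(\alpha,\beta)\times\{0\}$, the traces of $v$ and of $\partial_{x}v$ vanish there, so the weak derivatives of $\tilde v$ up to order two can be computed by integrating by parts separately in $\{y>0\}$ and $\{y<0\}$: the boundary contributions at $y=0$ cancel and one obtains $\tilde v\in W^{2,p}(\Omega_{h}^{\ast};\mathbb{R}^{2})$ with $\tilde v=v$ on $\Omega_{h}$.

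Next I would verify that $\Omega_{h}^{\ast}$ is a bounded Lipschitz domain. Its boundary consists of the two graphs $y=\pm h(x)$ on $[\alpha,\beta]$, joined at the corner points $(\alpha,0)$ and $(\beta,0)$. The interior angle at $(\alpha,0)$ equals $2\arctan h'(\alpha)$, which by the hypotheses $h'(\alpha)\geq 2\eta_{0}$ and $\operatorname*{Lip}h\leq L_{0}$ of Theorem~\ref{theorem C3} lies strictly inside $(0,\pi)$; analogously at $(\beta,0)$. Hence $\Omega_{h}^{\ast}$ is locally, near each corner, the epigraph (in the $x$-direction) of a Lipschitz function of $y$, while away from the corners its boundary is of class $W^{4,1}\subset C^{3}$ by Theorem~\ref{theorem C3}. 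I would then invoke Stein's extension theorem for Sobolev spaces on bounded Lipschitz domains to produce $\hat v_{0}\in W^{2,p}(\mathbb{R}^{2};\mathbb{R}^{2})$ with $\hat v_{0}=\tilde v$ on $\Omega_{h}^{\ast}$.

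Finally I would antisymmetrize in $y$ by setting
\[
\hat v(x,y) := \tfrac{1}{2}\bigl(\hat v_{0}(x,y)-\hat v_{0}(x,-y)\bigr),\qquad (x,y)\in\mathbb{R}^{2}.
\]
Then $\hat v\in W^{2,p}(\mathbb{R}^{2};\mathbb{R}^{2})$ is odd in $y$, so $\hat v(x,0)=0$ for a.e.\ $x\in\mathbb{R}$. For $(x,y)\in\Omega_{h}$ one has $y>0$ and $(x,-y)\in\Omega_{h}^{\ast}$, and by the odd-reflection definition $\hat v_{0}(x,-y)=\tilde v(x,-y)=-v(x,y)$, whence $\hat v(x,y)=v(x,y)$. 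The main technical point is the Lipschitz character of $\Omega_{h}^{\ast}$ at the two corners; this is exactly where the strict one-sided slope bounds $h'(\alpha)\geq 2\eta_{0}$ and $h'(\beta)\leq -2\eta_{0}$ from Theorem~\ref{theorem C3} play a decisive role, for if $h'(\alpha)$ or $|h'(\beta)|$ were allowed to vanish, $\Omega_{h}^{\ast}$ would develop a cusp and a $W^{2,p}$ extension via this reflection scheme would fail.
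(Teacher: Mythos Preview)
Your proof is correct and takes a genuinely different route from the paper. The paper extends in the opposite order: it first uses a generic $W^{2,p}$ extension from the Lipschitz domain $\Omega_{h}$ to the vertical strip $(\alpha,\beta)\times\mathbb{R}$, then performs an explicit higher-order horizontal reflection $\hat v(x,y)=3\hat v(2\alpha-x,y)-2\hat v(3\alpha-2x,y)$ across the lines $x=\alpha$ and $x=\beta$, and finally cuts off in $x$ and extends by zero; since every step acts only in the $x$-direction after the first, the trace condition $\hat v(x,0)=0$ is preserved automatically. Your approach instead exploits the zero Dirichlet condition on $\{y=0\}$ directly, via odd reflection to the symmetric lens $\Omega_{h}^{\ast}$, then applies Stein's extension as a black box and antisymmetrizes. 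Both arguments hinge on the same geometric input---that $h'(\alpha)\ge 2\eta_{0}$ and $h'(\beta)\le -2\eta_{0}$ make the relevant domain Lipschitz at the corners---but your version makes the role of the boundary condition more transparent and avoids the explicit reflection formula, at the price of invoking Stein's theorem rather than the elementary strip construction. One small remark: in your odd-reflection step you should also note that $\partial_{y}\tilde v$ has matching traces from both sides (both equal $\partial_{y}v(x,0)$, by evenness of $\partial_{y}\tilde v$), which is needed for $\tilde v\in W^{2,p}(\Omega_{h}^{\ast})$; you mention only the vanishing of $v$ and $\partial_{x}v$.
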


\begin{proof}
Since $\Omega_{h}$ is a
domain  with Lipschitz boundary, we can extend $v$ to a function $\hat{v}\in
W^{2,p_{0}}((\alpha,\beta)\times\mathbb{R};\mathbb{R}^{2})$ (see \cite[Theorem 13.17]%
{leoni-book2}). Let $\tilde{\alpha}:=\alpha-\frac{\beta-\alpha}{2}$ and $\tilde{\beta}:=\beta+\frac{\beta-\alpha}{2}$. For $x\in[\tilde{\alpha},\alpha]$ and $y\in\mathbb{R}$ we set
\[\hat{v}(x,y):=3\hat{v}(2\alpha-x,y)-2\hat{v}(3\alpha-2x,y)\,.\]
Since $\hat{v}$, $\partial_{x}\hat{v}$, and $\partial_{y}\hat{v}$ have the same trace on both sides of the line $x=\alpha$, the function $\hat{v}$ belongs to $W^{2,p_{0}}((\tilde{\alpha},\beta)\times\mathbb{R};\mathbb{R}^{2})$. Similarly, for $x\in[\beta,\tilde{\beta}]$ and $y\in\mathbb{R}$, we set
\[\hat{v}(x,y):=3\hat{v}(2\beta-x,y)-2\hat{v}(3\beta-2x,y)\]
and we obtain that $\hat{v}$ belongs to $W^{2,p_{0}}((\tilde{\alpha},\tilde{\beta})\times\mathbb{R};\mathbb{R}^{2})$. By construction, we have 
$\hat{v}(x,0)=0$ for a.e.\ $x\in(\tilde{\alpha},\tilde{\beta})$. Using a suitable cut-off function, we can modify $\hat{v}$ near the lines $x=\tilde{\alpha}$ and $x=\tilde{\beta}$ so that the modified function vanishes near these lines. The conclusion can be obtained by setting $\hat{v}=0$ outside the strip $(\tilde{\alpha},\tilde{\beta})\times\mathbb{R}$.
\end{proof}

\begin{proof}[ Proof of Theorem \ref{theorem endpoints}]
Since $u\in W^{2,p_{0}}(\Omega_{h};\mathbb{R}^{2})$ by Theorem
\ref{theorem regularity at corner}, by Lemma \ref{lemma extension}  we can extend $u$ to a function $\hat{u}\in
 W_{\operatorname*{loc}}^{2,p_{0}}(\mathbb{R}^{2};\mathbb{R}^{2})$  such that $u(x,0)=(e_{0}x,0)$ for a.e.\ $x\in\mathbb{R}$. We take $\varphi$, $\alpha_{\varepsilon}$, and $h_{\varepsilon
}$ as in \eqref{test function varphi}, \eqref{102}, and \eqref{h epsilon}, respectively, and we define $u_{\varepsilon}$ as
restriction of $\hat{u}$ to $\Omega_{h_{\varepsilon}}$. Due to our choice of the extension, we have $u_{\varepsilon}(x,0)=(e_{0}x,0)$ for a.e.\ $x\in(\alpha
_{\varepsilon},\beta)$.

\noindent\textbf{Step 1:} We claim that
\begin{equation}
\left.  \frac{d}{d\varepsilon}\mathcal{E}(\alpha_{\varepsilon},\beta
,h_{\varepsilon},u_{\varepsilon})\right\vert _{\varepsilon=0}=\int_{\alpha
}^{\beta}W(Eu(x,h(x)))\varphi(x)\,dx\,. \label{der E1}%
\end{equation}
By \eqref{e6}  there exists $\varepsilon_{1}>0$ such that 
\begin{align*}
\frac{\mathcal{E}(\alpha_{\varepsilon},\beta,h_{\varepsilon},u_{\varepsilon
})-\mathcal{E}(\alpha,\beta,h,u)}{\varepsilon}  &  =-\frac{1}{\varepsilon}%
\int_{\alpha_{\varepsilon}}^{\alpha+2\delta_{0}}\Bigl(\int_{h_{\varepsilon}%
(x)}^{h(x)}W(E\hat{u}(x,y))\,dy\Bigr)dx\nonumber\\
 &\quad-\frac{1}{\varepsilon}\int_{\alpha}^{\alpha_{\varepsilon}}%
\Bigl(\int_{0}^{h(x)}W(Eu(x,y))\,dy\Bigr)dx
=:I_{\varepsilon}+II_{\varepsilon
}\,
\end{align*}
for all $-\varepsilon_{1}<\varepsilon<0$. Since $\hat{u}\in W^{2,p_{0}%
}(\mathbb{R}^{2};\mathbb{R}^{2})$, with $p_{0}>\frac{4}{3}$, by the
Sobolev--Gagliardo--Nirenberg embedding theorem, we have that $\nabla\hat
{u}\in L^{p_{0}^{\ast}}(\mathbb{R}^{2};\mathbb{R}^{2\times2})$, where
$p_{0}^{\ast}>4$. Hence, by H\"{o}lder's inequality and \eqref{W convex},%
\begin{align*}
\int_{\alpha}^{\alpha_{\varepsilon}}\int_{0}^{h(x)}W(Eu(x,y))\,dydx  &  \leq
C_{W}\int_{\alpha}^{\alpha_{\varepsilon}}\int_{0}^{h(x)}|\nabla u(x,y)|^{2}%
dydx\\
&  \leq C_{W}\Big(\int_{\alpha}^{\alpha_{\varepsilon}}\int_{0}^{h(x)}|\nabla
u(x,y)|^{4}dydx\Big)^{1/2}\Big(\int_{\alpha}^{\alpha_{\varepsilon}%
}h(x)\,dx\Big)^{1/2}\\
&  \leq C\Big(\int_{\alpha}^{\alpha_{\varepsilon}}\int_{0}^{h(x)}|\nabla
u(x,y)|^{4}dydx\Big)^{1/2}|\varepsilon|
\end{align*}
where we used the fact that $|h(x)|\le L_0|\alpha_\varepsilon-\alpha|$ and
\eqref{alpha epsilon lips}. This shows that $II_{\varepsilon}\rightarrow0$ as
$\varepsilon\rightarrow0^{-}$.

Since the function $\zeta:=E\hat u$ belongs to $W^{1,p_{0}}_{\operatorname*{loc}}(\mathbb{R}^{2};\mathbb{R}^{2})$, we have that
$W\circ\zeta\in W_{\operatorname*{loc}}^{1,1}(\mathbb{R}^{2})$. Indeed, using
the fact that $W(\xi)=\mu|\xi|^{2}+(\lambda+\mu)(\operatorname*{tr}\xi)^{2}$,
for every bounded Lebesgue measurable set $E\subset\mathbb{R}^{2}$, by
H\"{o}lder's inequality, we have
\begin{equation}
\int_{E}|\nabla(W\circ\zeta)|\,dxdy\leq C\int_{E}|\zeta||\nabla\zeta
|\,dxdy\leq C\Vert\zeta\Vert_{L^{p_{0}^{\prime}}(E)}\Vert\nabla\zeta
\Vert_{L^{p_{0}}(E)}.\label{77777}%
\end{equation}
Since $p_{0}>\frac{4}{3}$, it follows that $p_{0}^{\prime}=\frac{p_{0}}%
{p_{0}-1}<p_{0}^{\ast}=\frac{2p_{0}}{2-p_{0}}$, and thus the right-hand side
of \eqref{77777} is finite. 

By considering the representative of $\zeta$ that is locally absolutely
continuous on a.e. line parallel to the axes, we have that
\[
W(\zeta(x,y))=W(\zeta(x,h(x)))-\int_{y}^{h(x)}\partial_{y}(W\circ
\zeta)(x,s)\,ds
\]
for a.e. $x\in(\alpha_{\varepsilon},\alpha+2\delta_{0})$. Hence,%
\begin{align*}
 I_{\varepsilon}=-\frac{1}{\varepsilon}\int_{\alpha_{\varepsilon}}^{\alpha+2\delta_{0}}%
\int_{h_{\varepsilon}(x)}^{h(x)}&W(\zeta(x,y))\,dydx  =-\frac{1}{\varepsilon
}\int_{\alpha_{\varepsilon}}^{\alpha+2\delta_{0}}\int_{h_{\varepsilon}%
(x)}^{h(x)}W(\zeta(x,h(x)))\,dydx\\
& +\frac{1}{\varepsilon}\int_{\alpha_{\varepsilon}}^{\alpha+2\delta_{0}%
}\int_{h_{\varepsilon}(x)}^{h(x)}\int_{y}^{h(x)}\partial_{y}(W\circ
\zeta)(x,s)\,dsdydx=:A_{\varepsilon}+B_{\varepsilon}.
\end{align*}
By \eqref{h epsilon} and \eqref{omega eps go to zero}, we have%
\begin{equation}
A_{\varepsilon}=\int_{\alpha_{\varepsilon}}^{\alpha+2\delta_{0}}%
W(\zeta(x,h(x)))\left(  \varphi(x)+\frac{\omega_{\varepsilon}}{\varepsilon
}\psi(x)\right)  \,dx\rightarrow\int_{\alpha}^{\alpha+2\delta_{0}}%
W(\zeta(x,h(x)))\varphi(x)\,dx\,.\label{cc1}
\end{equation}
On the other hand, by Fubini's theorem%
\[
B_{\varepsilon}=\frac{1}{\varepsilon}\int_{\alpha_{\varepsilon}}%
^{\alpha+2\delta_{0}}\int_{h_{\varepsilon}(x)}^{h(x)}(s-h_{\varepsilon
}(x))\partial_{y}(W\circ\zeta)(x,s)\,dsdx\,,
\]
and so,%
\begin{align}
&|B_{\varepsilon}|   \leq C\left\Vert \varphi+\frac{\omega_{\varepsilon}%
}{\varepsilon}\psi\right\Vert _{\infty}\int_{\alpha_{\varepsilon}}%
^{\alpha+2\delta_{0}}\int_{h_{\varepsilon}(x)}^{h(x)}|\partial_{y}(W\circ
\zeta)(x,s)|\,dsdx\label{cc2}\\
& \leq C\left\Vert \varphi+\frac{\omega_{\varepsilon}}{\varepsilon}%
\psi\right\Vert _{\infty}\Vert\zeta\Vert_{L^{p_{0}^{\prime}}((\alpha
,\beta)\times(0,L_{0}(\beta-\alpha)))}\left(  \int_{\alpha_{\varepsilon}%
}^{\alpha+2\delta_{0}}\int_{h_{\varepsilon}(x)}^{h(x)}|\partial_{y}%
\zeta(x,s)|^{p_{0}}dsdx\right)  ^{1/p_{0}}\rightarrow0\nonumber
\end{align}
as $\varepsilon\rightarrow0^{-}$, where in the last inequality we used
\eqref{77777}.

It follows from \eqref{cc1} and \eqref{cc2} that
$$I_\varepsilon\to \int_{\alpha
}^{\beta}W(Eu(x,h(x)))\varphi(x)\,dx\,,$$ 
which proves  \eqref{der E1} when $\varepsilon\to 0^-$.

On the other hand, if $0<\varepsilon<\varepsilon_{1}$, then by \eqref{102} and 
\eqref{derivative negative}  we have  $\alpha-\delta_{0}<\alpha_{\varepsilon}<\alpha$, and since
$\operatorname*{supp}\psi\subset(\alpha+
\delta_{0},\alpha+2\delta_{0})$
we write%
\begin{align*}
\frac{\mathcal{E}(\alpha_{\varepsilon},\beta,h_{\varepsilon},u_{\varepsilon
})-\mathcal{E}(\alpha,\beta,h,u)}{\varepsilon}  &  =\frac{1}{\varepsilon}%
\int_{\alpha}^{\alpha+2\delta_{0}}\Bigl(\int_{h(x)}^{h_{\varepsilon}(x)}%
W(E\hat{u}(x,y))\,dy\Bigr)dx\\
&  \quad+\frac{1}{\varepsilon}\int_{\alpha_{\varepsilon}}^{\alpha}%
\Bigl(\int_{0}^{ h(x)+\varepsilon\varphi(x)}W(E\hat{u}%
(x,y))\,dy\Bigr)dx\\&=:III_{\varepsilon}+IV_{\varepsilon}\,.
\end{align*}
The proof that $IV_{\varepsilon}\rightarrow0$ as $\varepsilon\rightarrow0^{+}$
can be done as in the proof of $II_{\varepsilon}$. The term $III_{\varepsilon
}$ can be treated similarly to $I_{\varepsilon}$.

\noindent\textbf{Step 2:} By  \eqref{der T}, \eqref{der S},
and \eqref{der E1}, we have that the derivative $\frac{d}{d\varepsilon
}\mathcal{F}^{0}(\alpha_{\varepsilon},\beta,h_{\varepsilon},u_{\varepsilon})$
exists at $\varepsilon=0$, and by minimality,
\begin{equation}
\left.  \frac{d}{d\varepsilon}\mathcal{F}^{0}(\alpha_{\varepsilon}%
,\beta,h_{\varepsilon},u_{\varepsilon})\right\vert _{\varepsilon=0}=0\,.
\label{derivative Fi zero1}%
\end{equation}
Hence,  using also \eqref{der T a},  we  get %
\begin{align*}
\gamma\int_{\alpha}^{\beta}  &  \frac{h^{\prime}\varphi^{\prime}}{J}%
dx+\gamma\frac{J(\alpha)}{h^{\prime}(\alpha)}-\gamma_{0}\frac{1}{h^{\prime
}(\alpha)}+\nu_{0}\int_{\alpha}^{\beta}\frac{h^{\prime\prime}\varphi
^{\prime\prime}}{J^{5}}dx-\frac{5}{2}\nu_{0}\int_{\alpha}^{\beta}\frac{h^{\prime
}(h^{\prime\prime})^{2}\varphi^{\prime}}{J^{7}}dx\\
&  \,+\frac{\nu_{0}}{2}\frac{(h^{\prime\prime}(\alpha))^{2}}{J(\alpha)^{5}}\frac
{1}{h^{\prime}(\alpha)}+\int_{\alpha}^{\beta}\overline{W}\varphi\,dx-\frac{1}{\tau
}\int_{\alpha}^{\beta}\overline{H}\varphi dx\,-\sigma_{0}\frac{\alpha-\alpha^{0}%
}{\tau}\frac{1}{h^{\prime}(\alpha)}=0\,,
\end{align*}
 where $\overline{W}$, $\overline{H}$, and $J$ are defined in \eqref{H bar} and \eqref{J and J0}. 
We integrate by parts once the integrals containing $\varphi^{\prime}$ and
twice the integral containing $\varphi^{\prime\prime}$ to obtain
\begin{align*}
-\gamma\int_{\alpha}^{\beta}  &  \Big(\frac{h^{\prime}}{J}\Big)^{\prime
}\varphi dx+\nu_{0}\int_{\alpha}^{\beta}\Big(\frac{h^{\prime\prime}}{J^{5}%
}\Big)^{\prime\prime}\varphi\,dx+\frac{5}{2}\nu_{0}\int_{\alpha}^{\beta}\Big(\frac
{h^{\prime}(h^{\prime\prime})^{2}}{J^{7}}\Big)^{\prime}\varphi dx\\
&  +\int_{\alpha}^{\beta}\overline{W}\varphi\,dx-\frac{1}{\tau}\int_{\alpha}%
^{\beta}\overline{H}\varphi dx-\gamma\frac{h^{\prime}(\alpha)}{J(\alpha)}%
+\gamma\frac{J(\alpha)}{h^{\prime}(\alpha)}-\gamma_{0}\frac{1}{h^{\prime
}(\alpha)}
 -\nu_{0}\frac{h^{\prime\prime}(\alpha)}{J(\alpha)^5}\varphi^{\prime}%
(\alpha)
  \\
&  +\nu_{0}\Big(\frac{h^{\prime\prime}}{J^{5}}\Big)^{\prime}(\alpha)+\frac{5}{2}\nu_{0}\frac{h^{\prime}(\alpha)(h^{\prime\prime}(\alpha))^{2}}%
{J(\alpha)^{7}}+\frac{\nu_{0}}{2}\frac{(h^{\prime\prime}(\alpha))^{2}}{J(\alpha)^{5}%
}\frac{1}{h^{\prime}(\alpha)}-\sigma_{0}\frac{\alpha-\alpha^{0}}{\tau}%
\frac{1}{h^{\prime}(\alpha)}\,,
\end{align*}
where we used the facts that $\operatorname*{supp}\varphi\subset(\alpha
-\delta_{0},\alpha+\delta_{0})$  and  $\varphi(\alpha)=1$.  By \eqref{equation fourth}, and the
fact that $ m  \int_{\alpha}^{\beta}\varphi\,dx=0$, we have
\begin{align*}
&  -\gamma\frac{h^{\prime}(\alpha)}{J(\alpha)}+\gamma\frac{J(\alpha
)}{h^{\prime}(\alpha)}-\gamma_{0}\frac{1}{h^{\prime}(\alpha)}
-\nu_{0}\frac{h^{\prime\prime}(\alpha)}{J(\alpha)^5}\varphi^{\prime}%
(\alpha)
+\nu_{0}\Big(\frac{h^{\prime\prime}}{J^{5}}\Big)^{\prime}(\alpha)\\
&  +\frac{5}{2}\nu_{0}\frac{h^{\prime}(\alpha)(h^{\prime\prime}(\alpha))^{2}}%
{J(\alpha)^{7}}+\frac{\nu_{0}}{2}\frac{(h^{\prime\prime}(\alpha))^{2}}{J(\alpha)^{5}%
}\frac{1}{h^{\prime}(\alpha)}-\sigma_{0}\frac{\alpha-\alpha^{0}}{\tau}%
\frac{1}{h^{\prime}(\alpha)}=0\,.
\end{align*}
Since $\varphi_{0}^{\prime}(0)$ can be chosen arbitrarily, by
\eqref{test function varphi} so can $\varphi^{\prime}(\alpha)$. Hence, by
dividing the previous equation by $\varphi^{\prime}(\alpha)>0$ and letting
$\varphi^{\prime}(\alpha)\rightarrow\infty$ we get%
\begin{equation}
h^{\prime\prime}(\alpha)=0. \label{second derivatives zero1}%
\end{equation}
Using \eqref{second derivatives zero1} and multiplying the previous equation
by $h^{\prime}(\alpha)$ we obtain
\begin{align*}
\sigma_{0}\frac{\alpha-\alpha^{0}}{\tau}  &  =\gamma\frac{(J(\alpha
))^{2}-(h^{\prime}(\alpha))^{2}}{J(\alpha)}-\gamma_{0}+\nu_{0} h^{\prime
}(\alpha)\Big(\frac{h^{\prime\prime}}{J^{5}}\Big)^{\prime}(\alpha)\\
&  =\frac{\gamma}{J(\alpha)}-\gamma_{0}+\nu_{0}\frac{h^{\prime}(\alpha
)}{J(\alpha)^{2}}\Big(\frac{h^{\prime\prime}}{J^{3}}\Big)^{\prime}(\alpha)\,.
\end{align*}
In a similar way we can prove that $h^{\prime\prime}(\beta)=0$ and that%
\[
\sigma_{0}\frac{\beta-\beta^{0}}{\tau}=-\frac{\gamma}{J(\beta)}+\gamma
_{0}-\nu_{0} h^{\prime}(\beta)\Big(\frac{h^{\prime\prime}}{J^{5}}%
\Big)^{\prime}(\beta)=-\frac{\gamma}{J(\beta)}+\gamma_{0}-\nu_{0}
\frac{h^{\prime}(\beta)}{J(\beta)^{2}}\Big(\frac{h^{\prime\prime}}{J^{3}%
}\Big)^{\prime}(\beta)\,.
\]
We omit the details. This concludes the proof.
\end{proof}

We now estimate the $L^{p_{0}}$ norm of $\nabla^{2}u$ near $\alpha$.  To simplify the exposition we assume that $\alpha=0$. Given $r>0$, we set $I_{r}:=(0,r)$, $\Omega_{h}^{0,r}:=\Omega_{h}\cap
(I_{r}\times\mathbb{R})$, and $\Gamma_{h}^{0,r}:=\Gamma_{h}\cap
(I_{r}\times\mathbb{R})=\{(x,h(x)):0< x< r\}$.

\begin{theorem}
\label{theorem apriori estimates}Under the assumptions of Theorem
\ref{theorem C3}, let $0<r<\delta_{0}$. Then there exist two constants
$0< c_{1}=c_{1}(\eta_{0},\eta_{1},M)<1$ and $c_{2}=c_{2}(\eta_{0},\eta_{1},M)>0$,
independent of $r$, such that if%
\begin{equation}
r^{2}(\Vert h^{\prime\prime\prime}\Vert_{L^{\infty}( I_{r})}+\Vert
h^{(\operatorname*{iv})}\Vert_{L^{1}( I_{r})})\leq c_{1}\,,
\label{r hypothesis}%
\end{equation}
then
\begin{align}
\Vert\nabla^{2}u\Vert_{L^{p_{0}}(\Omega_{h}^{0,r/2})}  &  \leq
c_{2}+\frac{c_{2}}{r}\Vert\nabla u\Vert_{L^{p_{0}}(\Omega_{h}^{0,r})}\,,\label{second derivative estimate alpha}\\
\Vert\nabla u\Vert_{L^{p_{0}/(2-p_{0})}(\Gamma_{h}^{0,r/2})}  &
\leq c_{2}+\frac{c_{2}}{r}\Vert\nabla u\Vert_{L^{p_{0}}(\Omega_{h}^{0,r})}\,. 
\label{trace estimate alpha}%
\end{align}
\end{theorem}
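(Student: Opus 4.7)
The strategy is a quantitative version of the argument of Theorem \ref{theorem regularity at corner}: localize $u$ near the corner $(0,0)$, straighten the boundary by the map $\Phi$ to reduce the problem to the triangle $A_r^m$ with $m=h'(0)$, and apply the elliptic estimate of Theorem \ref{theorem grisvard 1}. The novelty is that we must carry an explicit $r$-dependence through the estimates, absorbing the lower-order ``error'' terms to the left-hand side using the smallness hypothesis \eqref{r hypothesis}, and isolating the cutoff-generated contributions that produce the factor $c_2/r$ multiplying $\|\nabla u\|_{L^{p_{0}}(\Omega_{h}^{0,r})}$.

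\textbf{Step 1: Localization.} Take a smooth cutoff $\varphi_r$ with $\varphi_r\equiv 1$ on $[0,5r/8]\times\mathbb{R}$, $\varphi_r\equiv 0$ on $[7r/8,r]\times\mathbb{R}$, and $\|\nabla^k\varphi_r\|_\infty\le C r^{-k}$ for $k=1,2$. Setting $\tilde u:=(u-w^{0})\varphi_r$ with $w^{0}(x,y)=(e_0x,0)$, the function $\tilde u$ satisfies \eqref{qwe50} with right-hand sides $f,g$ as in \eqref{f tilde v}--\eqref{g tilde a}. These source terms are linear combinations of $(\nabla u-\nabla w^{0})$ times $\nabla\varphi_r$ and $(u-w^{0})$ times $\nabla^{2}\varphi_r$; hence by Poincar\'e's inequality (Lemma \ref{lemma poincare corner}, applicable since $u-w^{0}=0$ on $y=0$),
\begin{equation*}
\|f\|_{L^{p_0}(\Omega_h^{0,r})}+\|\nabla g\|_{L^{p_0}(\Omega_h^{0,r})}\le \frac{C}{r}\|\nabla u-\nabla w^{0}\|_{L^{p_0}(\Omega_h^{0,r})}\le C+\frac{C}{r}\|\nabla u\|_{L^{p_0}(\Omega_h^{0,r})},
\end{equation*}
where the ``$C$'' absorbs $\|\nabla w^{0}\|_{L^{p_0}}\le C|e_0|\,r^{2/p_0}$. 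These are exactly the two terms on the right of \eqref{second derivative estimate alpha}.

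\textbf{Step 2: Straightening and applying the Grisvard-type estimate.} Setting $v:=\tilde u\circ\Phi\in H^{1}(A_r^m;\mathbb{R}^2)$, the function $v$ solves \eqref{eul 8} with source $f\circ\Phi+f^{v}$ and flux $g\circ\Phi+\hat g^{v}+\check g^{v}$, and by Step 4 of the proof of Theorem \ref{theorem regularity at corner} we already know $v\in W^{2,p_0}(A_r^m;\mathbb{R}^2)$. Noting that $g\circ\Phi$, $\hat g^v$, and $\check g^v$ vanish on $\{r\}\times(0,mr)$, Theorem \ref{theorem grisvard 1} gives
\begin{equation*}
\|\nabla^2 v\|_{L^{p_0}(A_r^m)}\le \kappa\bigl(\|f\circ\Phi+f^{v}\|_{L^{p_0}}+\|\nabla(g\circ\Phi+\hat g^{v}+\check g^{v})\|_{L^{p_0}}\bigr).
\end{equation*}
The crucial point is that here $m=h'(0)\in[2\eta_0,L_0]$, so $\kappa$ depends only on $\lambda,\mu,\eta_0,L_0$, independently of $r$.

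\textbf{Step 3: Absorption.} We estimate $f^{v},\hat g^{v},\check g^{v}$ as in Step 4 of Theorem \ref{theorem regularity at corner}, using Lemmas \ref{lemma sigma alpha} and \ref{lemma fractional a} and the key observation $h''(0)=0$ from Theorem \ref{theorem endpoints}. This zero boundary value lets us bound $\|h''\|_{L^\infty(I_r)}\le r\|h'''\|_{L^\infty(I_r)}$, so that the hypothesis \eqref{r hypothesis} yields
\begin{equation*}
\|\sigma-1\|_\infty+\sup_{A_r^m}|y\sigma'|+r\|\sigma'\|_\infty+\sup_{A_r^m}|y\sigma''|+\sum_{i=1}^{6}\bigl(\|\omega_i\|_\infty+r\|\omega_i'\|_\infty\bigr)\le C c_1.
\end{equation*}
Combining with Poincar\'e's inequality on $A_r^m$ (Remark \ref{remark poincare corner}) applied to $\partial_x v$ and $\partial_y v$ (which vanish on appropriate sides), we obtain
\begin{equation*}
\|f^{v}\|_{L^{p_0}}+\|\nabla\hat g^{v}\|_{L^{p_0}}+\|\nabla\check g^{v}\|_{L^{p_0}}\le C c_1\,\|\nabla^2 v\|_{L^{p_0}(A_r^m)}.
\end{equation*}
Choosing $c_1$ small enough that $\kappa C c_1\le 1/2$, we absorb this into the left-hand side. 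Combining with Lemma \ref{lemma change of variables} to bound $\|f\circ\Phi\|_{L^{p_0}}$ and $\|\nabla(g\circ\Phi)\|_{L^{p_0}}$ by the corresponding norms of $f,g$ on $\Omega_h^{0,r}$ (the constants here are uniform by Remark \ref{remark sigma close to 1 a} and \eqref{r hypothesis}), Step 1 yields
\begin{equation*}
\|\nabla^2 v\|_{L^{p_0}(A_r^m)}\le C+\frac{C}{r}\|\nabla u\|_{L^{p_0}(\Omega_h^{0,r})}.
\end{equation*}

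\textbf{Step 4: Return to $u$ and trace estimate.} Since $\tilde u=v\circ\Psi$ and, by $\varphi_r\equiv 1$ on $[0,5r/8]$, $\tilde u=u-w^{0}$ on $\Omega_h^{0,r/2}$ (where $\nabla^2 w^{0}=0$), the estimate of Remark \ref{remark change of variables} produces \eqref{second derivative estimate alpha} after possibly enlarging $c_2$. For \eqref{trace estimate alpha}, note that $\nabla u\in W^{1,p_0}(\Omega_h^{0,r/2};\mathbb{R}^{2\times 2})$ with norm controlled by the right-hand side of \eqref{second derivative estimate alpha}; by the trace theorem followed by the Sobolev embedding $W^{1-1/p_0,p_0}\hookrightarrow L^{p_0/(2-p_0)}$ on the $1$-dimensional curve $\Gamma_h^{0,r/2}$ (applied after rescaling $(x,y)\mapsto(x/r,y/r)$ so that the trace constant becomes universal and the geometric factors scale correctly), we obtain \eqref{trace estimate alpha}.

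\textbf{Main obstacle.} The delicate point is ensuring that \emph{every} constant that enters the absorption in Step 3 is independent of $r$ and $h$ separately, and depends only on $\eta_0,\eta_1,M$. This requires simultaneously exploiting the boundary condition $h''(0)=0$ from Theorem \ref{theorem endpoints} (to upgrade $\|h''\|_\infty$ to $r\|h'''\|_\infty$) and the bounds from Lemmas \ref{lemma sigma alpha}, \ref{lemma fractional a}, so that the powers of $r$ line up with the hypothesis \eqref{r hypothesis}. The scaling argument for the trace embedding with $r$-independent constant must also be tracked with care.
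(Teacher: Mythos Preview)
Your proof is correct and follows the paper's strategy essentially verbatim for Steps~1--3: the localization, straightening via $\Phi$, application of Theorem~\ref{theorem grisvard 1}, and the absorption argument using $h''(0)=0$ from Theorem~\ref{theorem endpoints} together with the smallness hypothesis \eqref{r hypothesis} are exactly what the paper does. One minor correction: $\hat g^v$ vanishes on $(0,r)\times\{0\}$, not on $\{r\}\times(0,mr)$ (see Remark~\ref{remqqq1}); the application of Theorem~\ref{theorem grisvard 1} therefore implicitly splits the boundary datum into the parts $g\circ\Phi+\hat g^v$ (vanishing on the bottom) and $\check g^v$ (vanishing on the right), but this does not affect the final estimate.

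The only substantive divergence is in Step~4 for the trace estimate. You work directly with $\nabla u$ on $\Omega_h^{0,r/2}$, rescale to a unit-size domain, and invoke a trace theorem plus the Sobolev embedding $W^{1-1/p_0,p_0}\hookrightarrow L^{p_0/(2-p_0)}$ with constants uniform over the rescaled family. This is valid, since after rescaling the domains are uniformly Lipschitz (bounded above by $L_0x$ and below by $\eta_0 x$), and the scaling of the two norms matches. The paper instead stays with $v$ on the triangle $A_r^m$, extends $v$ explicitly to all of $\mathbb{R}^2$ via Lemma~\ref{lemma extension1} and an odd reflection across $\{y=0\}$, and then applies a \emph{scale-invariant} trace inequality on $\mathbb{R}^2$ (namely $\|\nabla\bar v\|_{L^{p_0/(2-p_0)}(\Gamma_\infty^m)}\le C\|\nabla^2\bar v\|_{L^{p_0}(\mathbb{R}^2)}$ with no lower-order term), before transforming back to $\Gamma_h^{0,r/2}$ via $\Psi$. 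The paper's route makes the $r$-independence of the constant completely manifest and avoids any appeal to uniformity of trace constants over a family of Lipschitz domains; your route is shorter but relies on that uniformity implicitly.
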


We begin with a preliminary lemma.

\begin{lemma}
\label{lemma extension1}Let $ 0<\eta_{0}\leq m \leq L_{0}$, $1\le p<\infty$,
 $\mathbb{R}_{+}^{2}:=\mathbb{R}\times(0,+\infty)$, 
\begin{equation}\label{try01}
A_{\infty}^{ m}:=\{(x,y)\in\mathbb{R}^{2}:\,x>0\,,\,0<y< mx \}
\end{equation}
and let $v\in W^{2,p}(A_{\infty}^{ m})$ be such that $v(x,0)=0$ for $x>0$. Then the
function%
\[
\hat{v}(x,y):=\left\{
{\setstretch{1.3}
\begin{array}
[c]{ll}%
3v(2 y/m -x,y)-2v(3 y/m -2x,y) & \text{if }%
(x,y)\in\mathbb{R}_{+}^{2}\setminus A_{\infty}^{ m}\,,\\
v(x,y) & \text{if }(x,y)\in A_{\infty}^{ m}\,,
\end{array}
}
\right.
\]
belongs to $W^{2,p}(\mathbb{R}_{+}^{2})$ with $\hat{v}(x,0)=0$ for all
$x\in\mathbb{R}$. Moreover,%
\begin{equation}
\Vert\nabla^{2}\hat{v}\Vert_{L^{ p}(\mathbb{R}_{+}^{2})}\leq C\Vert
\nabla^{2}v\Vert_{L^{ p}(A_{\infty}^{ m})}%
\label{qup3}
\end{equation}
for some constant $C>0$ depending on  $\eta_{0}$, $L_{0}$,  and $p$.
\end{lemma}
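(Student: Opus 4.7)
The formula defining $\hat v$ is a higher-order Hestenes-type reflection across the slanted edge $\Gamma^m:=\{(x,mx):x>0\}$ of $A_\infty^m$, arranged so that both the trace of $\hat v$ and its first-order derivatives match from the two sides of $\Gamma^m$. The plan is to make this structural feature explicit via an affine change of variables that straightens $\Gamma^m$, verify the two compatibility conditions, check the vanishing on the $x$-axis, and finally read off the estimate \eqref{qup3} from the chain rule together with a routine change of variables.

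First I would introduce the affine change of variables $(\xi,\eta):=(y/m-x,\,y)$, which is a linear bijection of $\mathbb{R}^{2}$ onto itself with constant Jacobian, sending $A_\infty^{m}$ onto $\{\xi<0,\,\eta>0\}$ and $\mathbb{R}^{2}_{+}$ onto $\{\eta>0\}$. Setting $V(\xi,\eta):=v(\eta/m-\xi,\eta)$, the formula for $\hat v$ translates into
\[
\hat V(\xi,\eta)=
\begin{cases}
V(\xi,\eta) & \text{if }\xi<0,\\
3V(-\xi,\eta)-2V(-2\xi,\eta) & \text{if }\xi>0,
\end{cases}
\]
which is the standard reflection used to produce $W^{2,p}$-extensions across a hyperplane. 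The reflection coefficients $(a_{1},a_{2})=(3,-2)$ and the dilation factors $(b_{1},b_{2})=(1,2)$ satisfy $a_{1}+a_{2}=1$ and $a_{1}b_{1}+a_{2}b_{2}=-1$: the first identity forces the two definitions of $\hat V$ to agree on $\{\xi=0\}$, while the second, combined with the sign flip produced by the reflections, forces the traces of $\partial_{\xi}\hat V$ from $\xi>0$ and $\xi<0$ to coincide; the matching of $\partial_{\eta}\hat V$ is automatic since tangential derivatives commute with the trace. A standard gluing argument for Sobolev functions (as used, e.g., in the proof of \cite[Theorem 13.17]{leoni-book2}) then gives $\hat V\in W^{2,p}(\{\eta>0\};\mathbb{R}^{2})$, and transporting back via the change of variables yields $\hat v\in W^{2,p}(\mathbb{R}^{2}_{+};\mathbb{R}^{2})$.

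Second, the boundary condition $\hat v(x,0)=0$ is checked directly: for $x\ge 0$ the formula reduces to $v(x,0)=0$, which holds by hypothesis; for $x<0$ it gives $3v(-x,0)-2v(-2x,0)$, and both summands vanish because $-x,-2x>0$. For the estimate \eqref{qup3}, the chain rule expresses $\nabla^{2}\hat v$ on $\mathbb{R}^{2}_{+}\setminus A_\infty^{m}$ as a fixed linear combination of the entries of $\nabla^{2}v$ evaluated at the reflected points $(2y/m-x,y)$ and $(3y/m-2x,y)$, with coefficients polynomial in $1/m$ alone. Since the two affine maps $(x,y)\mapsto(2y/m-x,y)$ and $(x,y)\mapsto(3y/m-2x,y)$ send $\mathbb{R}^{2}_{+}\setminus A_\infty^{m}$ into $\overline{A_\infty^{m}}$ with constant Jacobian $-1$ and $-2$ respectively, a change of variables bounds the $L^{p}$ integrals of these contributions over $\mathbb{R}^{2}_{+}\setminus A_\infty^{m}$ by $L^{p}$ integrals of $\nabla^{2}v$ over $A_\infty^{m}$, giving \eqref{qup3} with a constant depending only on $m$ and $p$; since $m\in[\eta_{0},L_{0}]$, this translates into a constant depending only on $\eta_{0}$, $L_{0}$, and $p$.

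The only delicate step is the algebraic verification of the two matching identities $a_{1}+a_{2}=1$ and $a_{1}b_{1}+a_{2}b_{2}=-1$, which is precisely what pins down the specific coefficients $3$ and $-2$ in the formula; everything else is mechanical bookkeeping, and I do not foresee any genuine obstacle.
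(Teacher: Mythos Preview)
Your proposal is correct and follows essentially the same approach as the paper: verify that $\hat v$ and its first partial derivatives have matching traces on both sides of the line $y=mx$, conclude $\hat v\in W^{2,p}(\mathbb{R}^2_+)$, check the vanishing on $y=0$ by direct computation, and obtain the Hessian estimate by the chain rule together with a change of variables. The paper's proof is much terser (it simply asserts these steps), while you make the reflection structure explicit by the affine change of variables $(\xi,\eta)=(y/m-x,y)$ that reduces everything to the classical Hestenes extension across a coordinate hyperplane; this is a nice clarification but not a different argument.
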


\begin{proof}  Since $\hat{v}$, $\partial_{x}\hat{v}$, and $\partial_{y}\hat{v}$ have the same trace on both sides of the line $y=mx$, the function $\hat{v}$ belongs to $W^{2,p_{0}}(\mathbb{R}_{+}^{2};\mathbb{R}^{2})$. 
By direct computation we check  that 
$\hat{v}(x,0)=0$ for  a.e.\ $x<0$.
 The estimate \eqref{qup3} can be obtained by computing the second derivatives of $\hat{v}$. 
\end{proof}

\medskip

\begin{proof}[ Proof of Theorem \ref{theorem apriori estimates}]
\textbf{Step 1:} We proceed as in the proof of Theorem
\ref{theorem regularity at corner}. We recall that we are taking $\alpha=0$.
Let $m=h^{\prime}(0)$, let $r$ be as in \eqref{r interval1}, let $A_{r}^{m}$
be the triangle  defined in \eqref{triangle}, and let  $v$ be defined as in
\eqref{function v}. In view of Theorem \ref{theorem regularity at corner}, we
have that $v\in W^{2,p_{0}}(A_{r}^{ m};\mathbb{R}^{2})$, and so, by \eqref{eul 8} and
Theorem \ref{theorem grisvard 1}, we have that
\begin{align*}
\Vert\nabla^{2}v\Vert_{L^{p_{0}}(A_{r}^{ m})}&\leq\kappa\big(\Vert f\circ\Phi\Vert_{L^{p_{0}}(A_{r}^{m})}+
\Vert f^{v}\Vert_{L^{p_{0}}(A_{r}^{m})}
+\Vert \nabla(g\circ\Phi)\Vert_{L^{p_{0}}(A_{r}^{m})}\\&\quad+\Vert \hat{g}^{v}\Vert_{L^{p_{0}}(A_{r}^{m})}+\Vert \check{g}^{v}\Vert_{L^{p_{0}}(A_{r}^{m})}\big)\,.
\end{align*}
 Since $v=0$ near the line $x=r$ by \eqref{u tilde a}, we can extend it to $A_{\infty}^{m}$ by setting $v=0$ in $A_{\infty}^{m}\setminus A_{r}^{m}$, and the extended function belongs to $W^{2,p_{0}}(A_{\infty}^{m};\mathbb{R}^{2})$ and satisfies the estimate
\begin{align}
\Vert\nabla^{2}v\Vert_{L^{p_{0}}(A_{\infty}^{m})}
&\leq\kappa\big(\Vert f\circ\Phi\Vert_{L^{p_{0}}(A_{r}^{m})}+
\Vert f^{v}\Vert_{L^{p_{0}}(A_{r}^{m})}
\nonumber
\\&\quad+\Vert \nabla(g\circ\Phi)\Vert_{L^{p_{0}}(A_{r}^{m})}+\Vert \hat{g}^{v}\Vert_{L^{p_{0}}(A_{r}^{m})}+\Vert \check{g}^{v}\Vert_{L^{p_{0}}(A_{r}^{m})}\big)\,.
\label{hessian v alpha}%
\end{align}

We underline that { $\kappa$} is
independent of $r\in (0,\delta_{0})$, $m\in[2\eta_{0},L_{0}]$, and $h$ satisfying \eqref{hypotheses h}--\eqref{hypotheses h 4}. 
Since $h^{\prime\prime}(\alpha)=0$ by Theorem \ref{theorem endpoints}, it
follows by the mean value theorem that%
\begin{equation}
\Vert h^{\prime\prime}\Vert_{L^{\infty}(I_{ r})}\leq r\Vert h^{\prime
\prime\prime}\Vert_{L^{\infty}(I_{ r})}\,. 
\label{h'' bound a}%
\end{equation}
Hence, also by \eqref{f v hat estimate}, \eqref{sigma -1 alpha},  \eqref{sigma' norm alpha}, 
\eqref{sigma' y alpha},  and  \eqref{sigma'' y alpha}  we have
\begin{align*}
&  \Vert f^{v}\Vert_{L^{p_{0}}(A_{r}^{ m})}\leq C  r^{2}(\Vert
h^{\prime\prime\prime}\Vert_{L^{\infty}(I_{ r})}+\Vert h^{(\operatorname*{iv}%
)}\Vert_{L^{1}(I_{ r})})\Vert\nabla^{2}v\Vert_{L^{p_{0}}(A_{r}^{ m})}\\
&  \quad+ C  r^{4}(\Vert h^{\prime\prime\prime}\Vert_{L^{\infty}%
(I_{ r})}^{2}+\Vert h^{(\operatorname*{iv})}\Vert_{L^{1}(I_{ r})}^{2}%
)\Vert\nabla^{2}v\Vert_{L^{p_{0}}(A_{r}^{ m})}
\\
& \quad +  C  r(\Vert h^{\prime
\prime\prime}\Vert_{L^{\infty}(I_{ r})}+\Vert h^{(\operatorname*{iv})}%
\Vert_{L^{1}(I_{ r})})\Vert\nabla v\Vert_{L^{p_{0}}(A_{r}^{ m})}%
\\
&  \quad+ C  r^{3}(\Vert h^{\prime\prime\prime}\Vert_{L^{\infty}%
(I_{ r})}^{2}+\Vert h^{(\operatorname*{iv})}\Vert_{L^{1}(I_{ r})}^{2}%
)\Vert\nabla v\Vert_{L^{p_{0}}(A_{r}^{ m})}\,.
\end{align*}
Since { $\partial_{x}v$ and $\partial_{y}v$} vanish on one of the sides of $A_{r}^{m}$ different from $\Gamma_{r}^{m}$,
by Poincar\'e's inequality (see Remark \ref{remark poincare corner}) we obtain 
\begin{align}
&  \Vert f^{v}\Vert_{L^{p_{0}}(A_{r}^{ m})}\leq C  r^{2}(\Vert
h^{\prime\prime\prime}\Vert_{L^{\infty}(I_{ r})}+\Vert h^{(\operatorname*{iv}%
)}\Vert_{L^{1}(I_{ r})})\Vert\nabla^{2}v\Vert_{L^{p_{0}}(A_{r}^{ m})}\nonumber\\
&  \quad+ C  r^{4}(\Vert h^{\prime\prime\prime}\Vert_{L^{\infty}%
(I_{ r})}^{2}+\Vert h^{(\operatorname*{iv})}\Vert_{L^{1}(I_{ r})}^{2}%
)\Vert\nabla^{2}v\Vert_{L^{p_{0}}(A_{r}^{ m})}\,.
\label{tyu1}
\end{align}

By \eqref{qwe64}, \eqref{omega i estimate a}, and \eqref{omega' i estimate a} we have
\begin{align}
\Vert\nabla\hat{g}^{v}\Vert_{L^{p_{0}}(A_{r}^{m})} & \leq   C r^{2}\big(\Vert h^{\prime\prime\prime}\Vert_{L^{\infty}%
(I_{r})}+\Vert h^{(\operatorname*{iv})}\Vert_{L^{1}(I_{r})}\big)\Vert\nabla
^{2}v\Vert_{L^{p_{0}}(A_{r}^{m})}\nonumber\\
&\quad +  C r(\Vert
h^{\prime\prime\prime}\Vert_{L^{\infty}(I_{r})}+\Vert h^{(\operatorname*{iv}%
)}\Vert_{L^{1}(I_{r})})\Vert\nabla v\Vert_{L^{p_{0}}(A_{r}^{m})}\nonumber\\
&  \quad+ C r^{3}(\Vert h^{\prime\prime\prime}\Vert_{L^{\infty}%
(I_{r})}^{2}+\Vert h^{(\operatorname*{iv})}\Vert_{L^{1}(I_{r})}^{2}%
)\Vert\nabla v\Vert_{L^{p_{0}}(A_{r}^{m})}\,.\nonumber
\end{align}
Using again Poincar\'e's inequality (see Remark \ref{remark poincare corner}) we obtain
\begin{align}
\Vert\nabla\hat{g}^{v}\Vert_{L^{p_{0}}(A_{r}^{m})} & \leq   C r^{2}\big(\Vert h^{\prime\prime\prime}\Vert_{L^{\infty}%
(I_{r})}+\Vert h^{(\operatorname*{iv})}\Vert_{L^{1}(I_{r})}\big)\Vert\nabla
^{2}v\Vert_{L^{p_{0}}(A_{r}^{m})}\nonumber\\
&  \quad+ C r^{4}(\Vert h^{\prime\prime\prime}\Vert_{L^{\infty}%
(I_{r})}^{2}+\Vert h^{(\operatorname*{iv})}\Vert_{L^{1}(I_{r})}^{2}%
)\Vert\nabla^{2} v\Vert_{L^{p_{0}}(A_{r}^{m})}
\label{tyu2}
\end{align}
In the same way we prove that
\begin{align}
\Vert\nabla\check{g}^{v}\Vert_{L^{p_{0}}(A_{r}^{m})} & \leq   C r^{2}\big(\Vert h^{\prime\prime\prime}\Vert_{L^{\infty}%
(I_{r})}+\Vert h^{(\operatorname*{iv})}\Vert_{L^{1}(I_{r})}\big)\Vert\nabla
^{2}v\Vert_{L^{p_{0}}(A_{r}^{m})}\nonumber\\
&  \quad+ C r^{4}(\Vert h^{\prime\prime\prime}\Vert_{L^{\infty}%
(I_{r})}^{2}+\Vert h^{(\operatorname*{iv})}\Vert_{L^{1}(I_{r})}^{2}%
)\Vert\nabla^{2} v\Vert_{L^{p_{0}}(A_{r}^{m})}
\label{tyu3}
\end{align}

 By 
\eqref{f tilde v}, it follows from Lemma \ref{lemma change of variables}  that %
\begin{align}
\Vert f\circ\Phi\Vert_{L^{p_{0}}(A_{r}^{ m})}  &  \leq 
 C \Vert f\Vert_{L^{p_{0}}(\Omega_{h}^{0, r})}
\nonumber\\
&\leq
  C  \Vert\nabla^{2}\varphi_{r}\Vert_{C^{0}(\mathbb{R}^{2})}\Vert
u-w^{0}\Vert_{L^{p_{0}}(\Omega_{h}^{0, r})}\nonumber
\\&\quad+  C  \Vert\nabla\varphi_{r}\Vert_{C^{0}(\mathbb{R}^{2})}%
\Vert\nabla u-\nabla w^{0}\Vert_{L^{p_{0}}(\Omega_{h}^{0, r})}%
\label{f u tilde estimate}\\
&  \leq\frac{  C  }{r}\Vert\nabla u-\nabla w^{0}\Vert_{L^{p_{0}}%
(\Omega_{h}^{0, r})}\,,\nonumber
\end{align}
where we used the Poincar\'{e} inequality (see Lemma
\ref{lemma poincare corner}) and the inequalities $\Vert\nabla\varphi_{r}%
\Vert_{C^{0}(\mathbb{R}^{2})}\leq C/r$, and $\Vert\nabla^{2}\varphi_{r}%
\Vert_{C^{0}(\mathbb{R}^{2})}\leq C/r^{2}$.

By \eqref{g tilde a}, it follows from  Lemmas \ref{lemma sigma alpha} and  \ref{lemma change of variables} that
\begin{align}
\Vert\nabla(g\circ\Phi)\Vert_{L^{p_{0}}(A_{r}^{m})}  &\leq  C \big(1+\sup_{(x,y)\in
A_{r}^{m}}y|\sigma^{\prime}(x)|\big) \Vert\nabla g\Vert_{L^{p_{0}}(\Omega_{h}^{0,r})}
\nonumber\\
&\leq
 C \big(1+\sup_{(x,y)\in
A_{r}^{m}}y|\sigma^{\prime}(x)|\big)
\bigl[ \Vert\nabla^{2}\varphi_{r}\Vert_{C^{0}(\mathbb{R}^{2})}\Vert
u-w^{0}\Vert_{L^{p_{0}}(\Omega_{h}^{0,r})}\nonumber\\&\quad+ \Vert\nabla\varphi_{r}\Vert_{C^{0}(\mathbb{R}^{2})}%
\Vert\nabla u-\nabla w^{0}\Vert_{L^{p_{0}}(\Omega_{h}^{0,r})}\nonumber\\
&\quad + \Vert\nabla\varphi_{r}\Vert_{C^{0}(\mathbb{R}^{2})}%
\Vert
u-w^{0}\Vert_{L^{p_{0}}(\Omega_{h}^{0,r})}
\Vert h^{\prime\prime}\Vert_{L^{\infty}(I_{r})}\bigr]
\label{rty5}\\
&\leq \frac{C}{r}\big[1+ r^2(\Vert
h^{\prime\prime\prime}\Vert_{L^{\infty}(I_{ r})}+\Vert h^{(\operatorname*{iv})}\Vert_{L^{1}(I_{ r})}))
+r^4(\Vert
h^{\prime\prime\prime}\Vert_{L^{\infty}(I_{ r})}^2\nonumber\\&\quad+\Vert h^{(\operatorname*{iv})}\Vert_{L^{1}(I_{ r})})^2\big]
\Vert\nabla u-\nabla w^{0}\Vert_{L^{p_{0}}%
(\Omega_{h}^{0, r})}\,,\nonumber
\end{align}
where we used the Poincar\'{e} inequality (see Lemma
\ref{lemma poincare corner}), the inequalities $\Vert\nabla\varphi_{r}%
\Vert_{C^{0}(\mathbb{R}^{2})}\leq C/r$ and $\Vert\nabla^{2}\varphi_{r}%
\Vert_{C^{0}(\mathbb{R}^{2})}\leq C/r^{2}$, and \eqref{h'' bound a}.

Combining inequalities \eqref{hessian v alpha},  and 
\eqref{tyu1}--\eqref{rty5}, we finally
obtain%
\begin{align}
\Vert&\nabla^{2}v\Vert_{L^{p_{0}}(A_{r}^{ m})}    \leq C  r^{2}(\Vert
h^{\prime\prime\prime}\Vert_{L^{\infty}(I_{ r})}+\Vert h^{(\operatorname*{iv}%
)}\Vert_{L^{1}(I_{ r})})\Vert\nabla^{2}v\Vert_{L^{p_{0}}(A_{r}^{ m})}\nonumber\\
& + C  r^{4}(\Vert h^{\prime\prime\prime}\Vert_{L^{\infty}(I_{ r}%
)}^{2}+\Vert h^{(\operatorname*{iv})}\Vert_{L^{1}(I_{ r})}^{2})\Vert\nabla
^{2}v\Vert_{L^{p_{0}}(A_{r}^{ m})}\label{rty10}\\
& +\frac{C}{r}\big[1+ r^2(\Vert
h^{\prime\prime\prime}\Vert_{L^{\infty}(I_{ r})}+\Vert h^{(\operatorname*{iv})}\Vert_{L^{1}(I_{ r})}))\nonumber\\&
+r^4(\Vert
h^{\prime\prime\prime}\Vert_{L^{\infty}(I_{ r})}^2+\Vert h^{(\operatorname*{iv})}\Vert_{L^{1}(I_{ r})})^2\big]
\Vert\nabla u-\nabla w^{0}\Vert_{L^{p_{0}}(\Omega
_{h}^{0,r})}\,.\nonumber
\end{align}

Let us fix $c_1=c_1(\eta_{0},\eta_{1},M)>0$ such that $c_1<1$ and $Cc_1<1/4$, where $C=C(\eta_{0},\eta_{1},M)$ is the constant in \eqref{rty10}.
Suppose that \eqref{r hypothesis} holds. Then $r^{4}\Vert
h^{\prime\prime\prime}\Vert_{L^{\infty}(I_{r})}^{2}\leq r^{2}\Vert
h^{\prime\prime\prime}\Vert_{L^{\infty}(I_{r})}$ and 
$r^{4}\Vert
h^{(\operatorname*{iv})}\Vert_{L^{1}(I_{r})}^{2}\leq r^{2}\Vert
h^{(\operatorname*{iv})}\Vert_{L^{1}(I_{r})}$, hence \eqref{rty10} and the inequality $|\Omega_{h}^{0,r}|\leq L_{0}r^{2}/2$ give

\[
\frac{1}{2}\Vert\nabla^{2}v\Vert_{L^{p_{0}}(A_{r}^{ m})}\leq
\frac{ C}{r}\Vert\nabla u-\nabla w^{0}\Vert_{L^{p_{0}}(\Omega
_{h}^{0, r})}\leq 
\frac{C}{r}\Vert
\nabla u\Vert_{L^{p_{0}}(\Omega_{h}^{ 0,r})}
+
\frac{C}{r}r^{2/p_{0}}
\,,
\]
 with new constants $C$ depending only on $\eta_{0}$, $\eta_{1}$, and $M$.
Since $p_{0}<2$ by \eqref{p0}, we obtain
\begin{equation}\label{try2}
\Vert\nabla^{2}v\Vert_{L^{p_{0}}(A_{r}^{ m})}\leq
\frac{C}{r}\Vert
\nabla u\Vert_{L^{p_{0}}(\Omega_{h}^{ 0,r})}+C
\end{equation}

In turn, by Remark \ref{remark change of variables},  \eqref{sigma' norm alpha}, \eqref{sigma' y 2 alpha}, \eqref{sigma'' y alpha},   \eqref{function v},  \eqref{r hypothesis}, \eqref{h'' bound a},  and  Remark \ref{remark poincare corner} applied to $\partial_xv$ and  $\partial_yv$,  the previous inequality  gives 
\[
\Vert\nabla^{2}\tilde{u}\Vert_{L^{p}(\Omega_{h}^{0,r})}   \leq C\Vert
\nabla^{2}v\Vert_{L^{p}(A_{r}^{ m})}  \leq \frac{C}{r}\Vert
\nabla u\Vert_{L^{p_{0}}(\Omega_{h}^{ 0,r})}+C\,.
\]
Since $\tilde{u}=u-w^{0}$ in $\Omega_{h}^{0, r/2}$ and $w^{0}$ is linear, 
inequality \eqref{second derivative estimate alpha}  follows.

\textbf{Step 2: }It follows from \eqref{u tilde a} and \eqref{function v} that
the function $v \in W^{2,p_{0}}(A_{r}^{m};\mathbb R^{2})$ is zero outside $A_{7r/8}^{ m}$. Hence, by extending $v$ to be
zero, we can assume that $v\in W^{2,p_{0}}(A_{\infty}^{ m};\mathbb{R}^{2})$, where  $A_{\infty}^{m}$ is defined in \eqref{try01} with $m=h^{\prime}(0)$. By Lemma \ref{lemma extension1} we have that the
function%
\[
\hat{v}(x,y):=\left\{
{\setstretch{1.3}
\begin{array}
[c]{ll}%
3v(2 y/m -x,y)-2v(3 y/m -2x,y) & \text{if }%
(x,y)\in\mathbb{R}_{+}^{2}\setminus A_{\infty}^{ m}\,,\\
v(x,y) & \text{if }(x,y)\in A_{\infty}^{ m}\,,
\end{array}
}
\right.
\]
belongs to $W^{2,p_0}(\mathbb{R}_{+}^{2})$ with $\hat{v}(x,0)=0$ for all
$x\in\mathbb{R}$. Moreover,%
\[
\Vert\nabla^{2}\hat{v}\Vert_{L^{p_{0}}(\mathbb{R}_{+}^{2})}\leq C\Vert
\nabla^{2}v\Vert_{L^{p_{0}}(A_{\infty}^{ m})}\,.%
\]
Next we extend
$\hat{v}$ across the $x$-axis by setting%
\[
\bar{v}(x,y):=\left\{
{\setstretch{1.3}
\begin{array}
[c]{ll}%
3\hat{v}(x,-y)-2\hat{v}(x,-2y) & \text{if }(x,y)\in\mathbb{R}^{2}%
\setminus\mathbb{R}_{+}^{2},\\
\hat{v}(x,y) & \text{if }(x,y)\in\mathbb{R}_{+}^{2}\,.
\end{array}
}
\right.
\]
Then $\bar{v}\in W^{2,p_0}(\mathbb{R}^{2})$ with%
\[
\Vert\nabla^{2}\bar{v}\Vert_{L^{p_{0}}(\mathbb{R}^{2})}\leq C\Vert\nabla
^{2}\hat{v}\Vert_{L^{p_{0}}(\mathbb{R}_{+}^{2})}\leq C\Vert\nabla^{2}%
v\Vert_{L^{p_{0}}(A_{r}^{ m})}.
\]
It follows from \cite[Theorem 18.24]{leoni-book2}  that the trace of
$\nabla\bar{v}$ along the  half- line $ \Gamma_{\infty}^{m}:=\{(x,mx): x\ge 0\}$ belongs to $L^{p_{0}%
/(2-p_{0})}(\Gamma_{\infty}^{ m};\mathbb{R}^{2\times2})$ with%
\begin{equation}
\Vert\nabla\bar{v}\Vert_{L^{p_{0}/(2-p_{0})}(\Gamma_{\infty}^{ m})}\leq
C\Vert\nabla^{2}\bar{v}\Vert_{L^{p_{0}}(\mathbb{R}^{2})}\leq C\Vert\nabla
^{2}v\Vert_{L^{p_{0}}(A_{r}^{ m})}\,. \label{trace inequality v bar}%
\end{equation}
Since $\varphi_{r}=1$ on $A_{r/2}^{ m}$, it follows from \eqref{u tilde a} and
\eqref{function v} that $\bar{v}=(u-w^{0})\circ\Phi$ in $A_{r{/2}}^{ m}$. Hence,
$u-w^{0}= \bar{v}\circ\Psi$  in  $\Omega_{h}^{0, r/2}$. By \eqref{Phi 1} and the chain
rule,
\[
|\nabla(u-w^{0})(x,y)|\leq C(1+|\sigma^{\prime}(x)y|)|\nabla\bar{v}%
(x,\sigma(x)y)|\,.
\]
Taking $y=h(x)$ and using \eqref{function sigma 1} gives
\[
|\nabla(u-w^{0})(x,h(x))|\leq C\sup_{(x,y)\in\Omega_{h}^{0, r/2}}(1+|\sigma^{\prime
}(x)y|)|\nabla\bar{v}(x, m x)|
\]
 for a.e.\ $0<x<r{/2}$. 
Hence, by \eqref{trace inequality v bar},%
\begin{align*}
\Vert\nabla(u-w^{0})\Vert_{L^{p_{0}/(2-p_{0})}(\Gamma_{h}^{0, r/2})}  &  \leq
C\sup_{(x,y)\in\Omega_{h}^{0, r/2}}(1+|\sigma^{\prime}(x)y|)\Vert\nabla\bar{v}%
\Vert_{L^{p_{0}/(2-p_{0})}(\Gamma_{\infty}^{ m})}\\
&  \leq C\sup_{(x,y)\in\Omega_{h}^{0,r}}(1+|\sigma^{\prime}(x)y|)\Vert\nabla^{2}%
v\Vert_{L^{p_{0}}(A_{r}^{ m})}\,.
\end{align*}
Using \eqref{sigma' y 2 alpha}, \eqref{h'' zero at endpoints}, \eqref{r hypothesis}, and 
\eqref{h'' bound a},%
\[
\Vert\nabla(u-w^{0})\Vert_{L^{p_{0}/(2-p_{0})}(\Gamma_{h}^{0, r/2})}\leq
C\Vert\nabla^{2}v\Vert_{L^{p_{0}}(A_{r}^{ m})}\leq C+\frac{C}{r}\Vert\nabla
u\Vert_{L^{p_{0}}(\Omega_{h}^{0, r})}\,,
\]
where the last inequality follows from \eqref{try2}. Since { by (\ref{function w0 a}),}
\[
\Vert\nabla w^{0}\Vert_{L^{p_{0}/(2-p_{0})}(\Gamma_{h}^{0, r/2})}\leq |e_{0}|{ ((1+L_{0}^2)^{1/2}r/8)^{(2-p_{0})/p_{0}}}\leq C\,,
\]
inequality \eqref{trace estimate alpha} follows.
\end{proof}

\begin{corollary}
\label{corollary Btau alpha}Under the hypotheses of Theorem \ref{theorem C3},
let $1\leq p\leq p_{0}/(2-p_{0})$ and let
\begin{equation}
r_{1}:=\min\Big\{  \frac{c_{1}^{1/2}}{2c_{0}}\frac{1}{B_{\tau
}(H,{H^{0}},\alpha,{\alpha^{0}},\beta,{\beta^{0}})},\frac{\delta_{0}}{2},\frac{\eta_{1}}{2}\Big\}  \,,
\label{r one over B tau}%
\end{equation}
where $B_{\tau}(H,{H^{0}},\alpha,{\alpha^{0}},\beta,{\beta^{0}})$ is defined in
\eqref{B tau alpha beta}, $c_{0}>1$ is the constant in the statement of Theorem
\ref{theorem C3}, and $0< c_{1}<1$ is the constant in \eqref{r hypothesis}. Then
there exist two constants $c_{3}=c_{3}(\eta_{0},\eta_{1},M)>0$ and $c_{4}%
=c_{4}(\eta_{0},\eta_{1},M,p)>0$ such that
\begin{align}
\Vert\nabla^{2}u\Vert_{L^{p_{0}}(\Omega_{h}^{\alpha,\alpha+r_{1}})}  &  \leq
c_{3}B_{\tau}(H,{H^{0}},\alpha,{\alpha^{0}},\beta,{\beta^{0}})^{2-2/p_{0}%
}\,,\label{second derivative estimate alpha tau}\\
\Vert\nabla u\Vert_{L^{p}(\Gamma_{h}^{\alpha,\alpha+r_{1}})}  &  \leq
c_{4}B_{\tau}(H,{H^{0}},\alpha,{\alpha^{0}},\beta,{\beta^{0}})^{1-1/p}\,.
\label{trace estimate alpha tau}%
\end{align}
\end{corollary}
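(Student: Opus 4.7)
\textbf{Proof proposal for Corollary \ref{corollary Btau alpha}.} The plan is to show that the choice of $r_1$ in \eqref{r one over B tau} makes $r=2r_1$ an admissible radius for Theorem \ref{theorem apriori estimates}, and then to convert the $\|\nabla u\|_{L^{p_0}}$ bulk term appearing on the right-hand side of that theorem into a bound that depends on $B_\tau$ with the correct exponent. The regularity bounds of Theorem \ref{theorem C3} give the polynomial $B_\tau$-dependence on the left, while the $H^1$-estimate \eqref{estimate in H1} and H\"older's inequality give the $r_1$-dependence (and hence the $B_\tau$-dependence) on the right, once we interpolate from $L^2$ down to $L^{p_0}$.

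First I would verify admissibility: by \eqref{r one over B tau} we have $2r_1\leq\delta_0$ and $2r_1\leq\eta_1$, so \eqref{r interval1} holds with $r=2r_1$. Next, by Theorem \ref{theorem C3} (estimates \eqref{h''' bound better} and \eqref{h iv bound better}),
\[
\|h'''\|_{L^\infty(I_{2r_1})}+\|h^{(\mathrm{iv})}\|_{L^1(I_{2r_1})}\leq 2c_0 B_\tau(H,H^0,\alpha,\alpha^0,\beta,\beta^0)^2.
\]
Combined with the definition of $r_1$, this yields
\[
(2r_1)^2\bigl(\|h'''\|_{L^\infty(I_{2r_1})}+\|h^{(\mathrm{iv})}\|_{L^1(I_{2r_1})}\bigr)\leq 4\cdot\frac{c_1}{4c_0^2 B_\tau^2}\cdot 2c_0 B_\tau^2=\frac{2c_1}{c_0}\leq c_1,
\]
so \eqref{r hypothesis} is satisfied with $r=2r_1$, and Theorem \ref{theorem apriori estimates} applies.

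The key remaining step is to bound $\|\nabla u\|_{L^{p_0}(\Omega_h^{0,2r_1})}$. Since $p_0<2$ by \eqref{p0}, H\"older's inequality and \eqref{estimate in H1} give
\[
\|\nabla u\|_{L^{p_0}(\Omega_h^{0,2r_1})}\leq |\Omega_h^{0,2r_1}|^{(2-p_0)/(2p_0)}\|\nabla u\|_{L^2(\Omega_h)}\leq C r_1^{(2-p_0)/p_0},
\]
using $|\Omega_h^{0,2r_1}|\leq 2L_0 r_1^2$. Substituting into \eqref{second derivative estimate alpha} and \eqref{trace estimate alpha} (both applied with $r=2r_1$) yields
\[
\|\nabla^2 u\|_{L^{p_0}(\Omega_h^{\alpha,\alpha+r_1})}+\|\nabla u\|_{L^{p_0/(2-p_0)}(\Gamma_h^{\alpha,\alpha+r_1})}\leq c_2+\frac{C}{r_1}\cdot r_1^{(2-p_0)/p_0}= c_2+C r_1^{-2(p_0-1)/p_0}.
\]
Since $r_1\geq c/B_\tau$ by \eqref{r one over B tau} and $B_\tau\geq 1$, we get $r_1^{-2(p_0-1)/p_0}\leq C B_\tau^{2-2/p_0}$, which proves \eqref{second derivative estimate alpha tau} and gives \eqref{trace estimate alpha tau} for the borderline exponent $p=p_0/(2-p_0)$.

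For general $1\leq p\leq p_0/(2-p_0)$, I would simply apply H\"older's inequality on the short curve $\Gamma_h^{\alpha,\alpha+r_1}$, whose length is at most $(1+L_0^2)^{1/2}r_1\leq C/B_\tau$: setting $q=p_0/(2-p_0)$,
\[
\|\nabla u\|_{L^{p}(\Gamma_h^{\alpha,\alpha+r_1})}\leq |\Gamma_h^{\alpha,\alpha+r_1}|^{1/p-1/q}\|\nabla u\|_{L^q(\Gamma_h^{\alpha,\alpha+r_1})}\leq C B_\tau^{-(1/p-1/q)}\cdot C B_\tau^{1-1/q}=CB_\tau^{1-1/p},
\]
which is \eqref{trace estimate alpha tau}. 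The only delicate bookkeeping here is tracking the exponents; the main analytic obstacle has already been resolved in Theorem \ref{theorem apriori estimates}, so this proof is essentially a calibration of constants to match the $B_\tau$-powers predicted by the scaling of $r_1$.
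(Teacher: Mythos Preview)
Your proposal is correct and follows essentially the same approach as the paper: verify hypothesis \eqref{r hypothesis} for $r=2r_1$ using the bounds from Theorem \ref{theorem C3}, interpolate $\|\nabla u\|_{L^{p_0}}$ down from the $L^2$ estimate \eqref{estimate in H1} via H\"older (picking up the factor $r_1^{2/p_0-1}$), plug into Theorem \ref{theorem apriori estimates}, and convert powers of $r_1^{-1}$ into powers of $B_\tau$. One small bookkeeping slip: your inequality $\tfrac{2c_1}{c_0}\leq c_1$ needs $c_0\geq 2$, not merely $c_0>1$ as stated; this is harmless since $c_0$ is an upper-bound constant and may be enlarged.
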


\begin{proof}
By H\"{o}lder's inequality with exponent $q=2/p_{0}$, and Lemma
\ref{lemma korn}, for every $0<r\leq \beta-\alpha$,%
\begin{equation}
\Vert\nabla u\Vert_{L^{p_{0}}(\Omega_{h}^{\alpha,\alpha+r})}\leq
C(r^{2})^{1/(q^{\prime}p_{0})}\Vert\nabla u\Vert_{L^{2}(\Omega_{h}%
^{\alpha,\beta})}\leq Cr^{2/p_{0}-1}\,, \label{holder p0 2}%
\end{equation}
where we used the fact that $q^{\prime}=2/(2-p_{0})$. 

By \eqref{h''' bound better}, \eqref{h iv bound better}, and
\eqref{r one over B tau}, recalling that $2c_{0}>1$, we have 
\[
r_{1}^{2}(\Vert h^{\prime\prime\prime}\Vert_{L^{\infty}((\alpha,\beta))}+\Vert
h^{(\operatorname*{iv})}\Vert_{L^{1}((\alpha,\beta))})\leq2c_{0}r_{1}%
^{2}B_{\tau}(H,{H^{0}},\alpha,{\alpha^{0}},\beta,{\beta^{0}})^{2}\leq c_{1}\,.
\]
Hence, we can apply Theorem \ref{theorem apriori estimates} to obtain
\eqref{second derivative estimate alpha} and \eqref{trace estimate alpha}. In
turn, by \eqref{holder p0 2},%
\begin{align*}
\Vert\nabla^{2}u\Vert_{L^{p_{0}}(\Omega_{h}^{\alpha,\alpha+r_{1}})}  &  \leq
c_{2}+\frac{c_{2}}{r_{1}}\Vert\nabla u\Vert_{L^{p_{0}}(\Omega_{h}%
^{\alpha,\alpha+2r_{1}})}\\
&  \leq c_{2}+c_{2} C\frac{1}{r_{1}^{2-2/p_{0}}}\leq c_{3}B_{\tau}%
(H,{H^{0}},\alpha,{\alpha^{0}},\beta,{\beta^{0}})^{2-2/p_{0}},
\end{align*}
 where in the last inequality we used \eqref{r one over B tau} and the inequality $1\leq B_{\tau}%
(H,{H^{0}},\alpha,{\alpha^{0}},\beta,{\beta^{0}})$. 

If $p<p_{0}/(2-p_{0})$, we use H\"{o}lder's inequality with exponent
$p_{1}=p_{0}/[p(2-p_{0})]$, \eqref{trace estimate alpha}, \eqref{holder p0 2},
and the fact that $r_{1}\leq1$ to get%
\begin{align*}
\Vert\nabla u\Vert_{L^{p}(\Gamma_{h}^{\alpha,\alpha+r_{1}})}  &  \leq
Cr_{1}^{1/(pp_{1}^{\prime})}\Vert\nabla u\Vert_{L^{p_{0}/(2-p_{0})}(\Gamma
_{h}^{\alpha,\alpha+r_{1}})}\\&=Cr_{1}^{1+1/p-2/p_{0}}\Vert\nabla u\Vert
_{L^{p_{0}/(2-p_{0})}(\Gamma_{h}^{\alpha,\alpha+r_{1}})}\\
&  \leq C+Cr_{1}^{1/p-2/p_{0}}\Vert\nabla u\Vert_{L^{p_{0}}(\Omega_{h}%
^{\alpha,\alpha+2r_{1}})}\leq C+Cr_{1}^{1/p-1}\\
&  \leq CB_{\tau}(H,{H^{0}},\alpha,{\alpha^{0}},\beta,{\beta^{0}})^{1-1/p},
\end{align*}
 where in the last inequality we used \eqref{r one over B tau} and the inequality $1\leq B_{\tau}%
(H,{H^{0}},\alpha,{\alpha^{0}},\beta,{\beta^{0}})$. 
The same inequality holds if $ p=p_{0}/(2-p_{0})$.
\end{proof}

\section{Global Regularity}

In this section we obtain global estimates for $\nabla^{2}u$ in the entire
domain $\Omega_{h}$.
 Given $r>0$ and $x_{0}\in\mathbb{R}$, we set $I_{r}(x_{0}):=(x_{0}-r,x_{0}+r)$.  Under the assumptions of  Theorem
\ref{theorem C3}, let us fix $r$ and $x_{0}$ such that  
\begin{equation}
0<2 r\leq \min\{\delta_{0},\eta_{1}\}
\quad\text{and}\quad
\alpha+8 r/\eta_{0}\leq x_{0}\leq\beta-8 r/\eta_{0}\,.\label{x0 interval}%
\end{equation}
If $x\in I_{4 r}(x_{0})$,  the inequality $\eta_{0}<1$ implies that 
\begin{equation}
\alpha+4r/\eta_{0}<\alpha+8 r/\eta_{0}-4 r< x<\beta-8 r/\eta_{0}+ 4 r< \beta-4r/\eta_{0}\,.\label{x0 interval*}%
\end{equation}
 Using  the fact that  $2r\leq \delta_{0}<\eta_{0}$, by \eqref{h bounded}  we have $h(x)\geq\eta_{0}(x-\alpha
) { \ge} 4 r$ for $\alpha+4 r/\eta_{0}\leq x\leq\alpha+\delta_{0}$;
in the same way we prove that 
$h(x){ \ge}4r$ for $\beta-\delta_{0}\leq x\leq\beta-4 r/\eta_{0}$. Using \eqref{h bounded from zero} we have also 
$h(x)\geq2\eta_{1}{ \ge}4 r$ for
$\alpha+\delta_{0}\leq x\leq\beta-\delta_{0}$.
Hence, by \eqref{x0 interval*}, we
have
\begin{equation}
h(x)\geq 4 r\label{h geq  8Lr}%
\end{equation}
for every $x\in I_{4 r}(x_{0})$ and $r$ and $x_{0}$ as in \eqref{x0 interval}. 

We recall that
\begin{align*}
\Omega_{-h}&:=\{(x,y)\in\mathbb{R}^{2}:\,\alpha<x<\beta\,,\,-h(x)<y<0\}\,,\\ 
\Omega_{h,r}^{ a,b}  & :=\{(x,y)\in
\mathbb{R}^{2}:\, a<x< b\,,\,h(x)- r
<y<h(x)\}\,,\\
\Gamma_{h}^{a,b}  & :=\{(x,h(x)):\,a<x<b\}\,.
\end{align*}

\begin{theorem}
\label{theorem regularity away}Under the assumptions of Theorem
\ref{theorem C3}, let $r$ and $x_{0}$ be as in
\eqref{x0 interval}. Then for every $2\leq q<\infty$ there exist constants $c_{5}=c_{5}(\eta_{0},\eta_{1},M)>0$, and
$c_{6}=c_{6}(\eta_{0},\eta_{1},M,q)>0$, independent of  $h$,  $r$ and $x_{0}$, such that
if%
\begin{equation}
\Vert h^{\prime\prime}\Vert_{ L^{\infty}(I_{4r}(x_{0}))}\leq\frac{1%
}{r}\,, \label{smallness of r 2}%
\end{equation}
then $u\in H^{2}(\Omega_{h,2r}^{ x_{0}-2r, x_{0}+2r})$ and we have%
\begin{align}
\Vert\nabla^{2}u\Vert_{L^{2}(\Omega_{h,2r}^{x_{0}-2 r,x_{0}+2r})}  &  \leq
\frac{c_{5}}{r}\Vert\nabla u\Vert_{L^{2}(\Omega_{h,4r}^{x_{0}-4 r,x_{0}+4r}%
)}\,,\label{second derivative estimate}\\
\Vert\nabla u\Vert_{L^{q}(\Gamma_{h}^{x_{0}-2 r,x_{0}+2 r})}  &  \leq\frac{c_{6}%
}{r^{1-1/q}}\Vert\nabla u\Vert_{L^{2}(\Omega_{h,4r}^{x_{0}-4 r,x_{0}+4r})}\,.
\label{trace estimate away}%
\end{align}
\end{theorem}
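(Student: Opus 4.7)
My plan is to reduce to a unit-scale problem by rescaling both the spatial variables and the unknown. The hypothesis \eqref{smallness of r 2} is precisely what makes the rescaled profile enjoy a second-derivative bound independent of $r$, so that standard $H^2$ regularity for the Lam\'e system (with natural boundary condition on one curved side) applies on the rescaled domain with constants depending only on the structural parameters. Undoing the rescaling then produces the factor $1/r$ in \eqref{second derivative estimate} and the factor $r^{-(1-1/q)}$ in \eqref{trace estimate away} automatically from the Jacobian.

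\textbf{Rescaling.} Define
\[
\tilde u(s,t):=\tfrac{1}{r}\big[u(x_{0}+rs,\,h(x_{0})+rt)-u(x_{0},h(x_{0}))\big],\qquad \tilde h(s):=\tfrac{1}{r}\big[h(x_{0}+rs)-h(x_{0})\big],
\]
so that $\tilde h'(s)=h'(x_{0}+rs)$ and $\tilde h''(s)=rh''(x_{0}+rs)$. Hence $\|\tilde h'\|_{L^{\infty}(-4,4)}\le L_{0}$ and, by \eqref{smallness of r 2}, $\|\tilde h''\|_{L^{\infty}(-4,4)}\le 1$. Since the Lam\'e system is scale-invariant, $\tilde u$ satisfies $\operatorname{div}(\mathbb{C}E\tilde u)=0$ on the rescaled curved strip
\[
\tilde\Omega_{4}:=\{(s,t):-4<s<4,\ \tilde h(s)-4<t<\tilde h(s)\},
\]
with natural boundary condition $(\mathbb{C}E\tilde u)\tilde\nu=0$ on $\tilde\Gamma_{4}:=\{(s,\tilde h(s)):-4<s<4\}$. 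The inequality \eqref{h geq 8Lr} guarantees that $\tilde\Omega_{4}$ stays at unit distance from the image of the bottom $\{y=0\}$, and \eqref{x0 interval} places it at unit distance from the images of the corners $(\alpha,0)$ and $(\beta,0)$.

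\textbf{Unit-scale $H^{2}$ estimate.} On the rescaled domain I would flatten the top boundary by setting $\tilde v(s,t):=\tilde u(s,\,t+\tilde h(s))$ on the rectangle $R_{4}:=(-4,4)\times(-4,0)$. As in the proof of Theorem \ref{theorem regularity}, $\tilde v$ solves $\operatorname{div}(\mathbb{A}\nabla\tilde v)=0$ in $R_{4}$ with $(\mathbb{A}\nabla\tilde v)e_{2}=0$ on $(-4,4)\times\{0\}$, where $\mathbb{A}$ has the explicit form \eqref{coeff A} built from $\tilde h'$; in particular $\|\mathbb{A}\|_{C^{0,1}(R_{4})}$ is controlled by $\|\tilde h'\|_{L^{\infty}}+\|\tilde h''\|_{L^{\infty}}\le L_{0}+1$. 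A Nirenberg difference-quotient argument in the tangential direction $s$, tested against a cutoff supported in $R_{4}$ and equal to $1$ on $R_{2}:=(-2,2)\times(-2,0)$, produces $\|\partial_{s}\nabla\tilde v\|_{L^{2}(R_{2})}\le C\|\nabla\tilde v\|_{L^{2}(R_{4})}$; the missing second derivative $\partial_{tt}\tilde v$ is then recovered algebraically from the equation, since the coefficient of $\partial_{tt}\tilde v$ is a strongly elliptic block (diagonal in the Lam\'e case). Undoing the flattening gives
\[
\|\nabla^{2}\tilde u\|_{L^{2}(\tilde\Omega_{2})}\le C\|\nabla\tilde u\|_{L^{2}(\tilde\Omega_{4})}
\]
with $C=C(\eta_{0},\eta_{1},M)$.

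\textbf{Rescaling back and trace.} The change of variables identities $\|\nabla^{2}u\|_{L^{2}(\Omega_{h,2r}^{x_{0}-2r,x_{0}+2r})}=\|\nabla^{2}\tilde u\|_{L^{2}(\tilde\Omega_{2})}$ and $\|\nabla u\|_{L^{2}(\Omega_{h,4r}^{x_{0}-4r,x_{0}+4r})}=r\|\nabla\tilde u\|_{L^{2}(\tilde\Omega_{4})}$ immediately yield \eqref{second derivative estimate}. For the trace estimate, $\tilde u\in H^{2}(\tilde\Omega_{2})$ together with the trace theorem and the Sobolev embedding $H^{1/2}(\tilde\Gamma_{2})\hookrightarrow L^{q}(\tilde\Gamma_{2})$ (valid for every $q\in[2,\infty)$ since $\tilde\Gamma_{2}$ is one-dimensional) gives $\|\nabla\tilde u\|_{L^{q}(\tilde\Gamma_{2})}\le C_{q}\|\nabla\tilde u\|_{L^{2}(\tilde\Omega_{4})}$. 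Applying $\|\nabla u\|_{L^{q}(\Gamma_{h}^{x_{0}-2r,x_{0}+2r})}^{q}=r\|\nabla\tilde u\|_{L^{q}(\tilde\Gamma_{2})}^{q}$ produces the factor $r^{1/q}\cdot r^{-1}=r^{-(1-1/q)}$, which is exactly \eqref{trace estimate away}. The main obstacle is the unit-scale $H^{2}$ estimate: even though the coefficients $\mathbb{A}$ are smooth, one must check that the constant in the Nirenberg estimate depends only on $\|\tilde h'\|_{L^{\infty}}$ and $\|\tilde h''\|_{L^{\infty}}$, and not on higher derivatives of $\tilde h$ — this is ensured by the Lipschitz control of $\mathbb{A}$ coming from \eqref{smallness of r 2}.
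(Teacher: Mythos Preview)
Your proposal is correct and follows essentially the same route as the paper's proof: both flatten the graph and rescale to a unit rectangle (you rescale then flatten, the paper flattens then rescales, which is cosmetically different but equivalent), use the hypothesis \eqref{smallness of r 2} to obtain a $C^{0,1}$ (equivalently $C^1$) bound on the transformed coefficients $\mathbb{A}$ that is independent of $r$, invoke standard interior/up-to-the-flat-boundary $H^2$ regularity on $R_4$, and then scale back. For the trace estimate the paper is slightly more explicit, passing through the intermediate bounds $\|\nabla v\|_{L^2(J_{2r})}\le Cr^{-1/2}\|\nabla v\|_{L^2(R_{4r})}$ and $|\nabla v|_{H^{1/2}(J_{2r})}\le Cr^{-1}\|\nabla v\|_{L^2(R_{4r})}$ together with the embedding $H^s(J)\hookrightarrow L^q(J)$ for $s=(q-2)/(2q)$, but your combination of the unit-scale $H^1\to H^{1/2}$ trace and the one-dimensional embedding $H^{1/2}\hookrightarrow L^q$ achieves the same conclusion.
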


\begin{proof}
Without loss of generality we assume that $x_{0}=0$. 
 For every $\rho>0$  define $ R_{\rho} %
:=(- \rho , \rho )\times(- \rho ,0)$  and $J_{ \rho }:=(- \rho , \rho )\times\{0\}$.

\textbf{Step 1:} Let 
\begin{equation}
 v(x,y):=u(x,y+h(x))\,. 
\label{function v 1}%
\end{equation}
In view of \eqref{h geq  8Lr}  we have $R_{4 r}%
\subset\Omega_{-h}$. Hence, \eqref{equation A} gives
\[
\left\{
{\setstretch{1.3}
\begin{array}
[c]{ll}%
\operatorname{div}(\mathbb{A}\nabla v)=0 & \text{in } R_{4r} ,\\
(\mathbb{A}\nabla v)e_{2}=0 & \text{on }J_{4 r}\,.
\end{array}
}
\right.
\]
Define $v_{r}(x,y):=v(rx,ry)$, $(x,y)\in { R_{ 4}}$. Then $v_{r}$ satisfies the
boundary value problem
\[
\left\{
{\setstretch{1.3}
\begin{array}
[c]{ll}%
\operatorname{div}(\mathbb{A}_{r}\nabla v_{r})=0 & \text{in } R_{4} ,\\
(\mathbb{A}_{r}\nabla v_{r})e_{2}=0 & \text{on }J_{4}\,,
\end{array}
}
\right.
\]
where $\mathbb{A}_{r}(x):=\mathbb{A}(rx)$. Using using \eqref{coeff A} and
\eqref{smallness of r 2} and the fact that $\operatorname*{Lip}h\leq L_{0}$,
we have%
\[
\Vert\mathbb{A}_{r}\Vert_{C^{1}({ \overline{R}_{2}})}\leq  C\,.%
\]
Standard elliptic
regularity { (\cite[Proof of Theorem 20.4]{wloka-book})} gives%
\[
\Vert\nabla^{2}v_{r}\Vert_{L^{2}( R_{2})}\leq  C\Vert\nabla v_{r}%
\Vert_{L^{2}( R_{4} )}\,.%
\]
In turn, we obtain%
\begin{equation}
\Vert\nabla^{2}v\Vert_{L^{2}( R_{2r} )}\leq\frac{ C}{r}\Vert\nabla
v\Vert_{L^{2}( R_{4r} )}\,. \label{50000}%
\end{equation}
Since $u(x,y)=v(x,y-h(x))$  and 
\begin{align}
\partial_{x}u  &  =\partial_{x}v-\partial_{y}vh^{\prime}\,,\quad\partial
_{y}u=\partial_{y}v\label{50001}\\
\partial_{xx}^{2}u  &  =\partial_{x}^{2}v-2\partial_{xy}^{2}vh^{\prime
}+\partial_{yy}^{2}v(h^{\prime})^{2}-\partial_{y}vh^{\prime\prime}%
\,,\quad\partial_{xy}^{2}u=\partial_{xy}v-\partial_{yy}^{2}vh^{\prime
}\,,\nonumber\\
\partial_{yy}^{2}u  &  =\partial_{yy}^{2}v\,,\nonumber
\end{align}
we have%
\begin{align*}
\Vert\nabla^{2}u\Vert_{L^{2}(\Omega_{h,2r}^{-2r,2r})}  &  \leq  C\big(\Vert
\nabla^{2}v\Vert_{L^{2}( R_{2r} )}+\Vert h^{\prime\prime}\Vert_{ L^{\infty}%
(I_{4r})}\Vert\nabla v\Vert_{L^{2}( R_{2r} )}\big)\\
&  \leq\frac{ C}{r}\Vert\nabla v\Vert_{L^{2}( R_{4r} )}\leq\frac{ C}%
{r}\Vert\nabla u\Vert_{L^{2}(\Omega_{h,4r}^{- 4r,4r})}\,,
\end{align*}
 which proves \eqref{second derivative estimate}. 

\textbf{Step 2: }In this step we prove \eqref{trace estimate away}. We begin
by proving%
\begin{align}
\Vert\nabla v\Vert_{L^{2}(J_{2 r})}  &  \leq\frac{C}{r^{1/2}}\Vert\nabla
v\Vert_{L^{2}( R_{4r} )}\,,\label{L2 trace}\\
|\nabla v|_{H^{1/2}(J_{2 r})}  &  \leq\frac{C}{r}\Vert\nabla v\Vert
_{L^{2}( R_{4r} )}\,. \label{H1/2 seminorm}%
\end{align}
Define $z=\nabla v$. Then by standard trace theory \cite[Theorem 2.5.3]{necas-book} and a rescaling argument,
we have
\begin{align*}
\Vert z\Vert_{L^{2}(J_{2 r})}^{2}  &  \leq\frac{C}{r}\Vert z\Vert_{L^{2}( R_{2r}%
)}^{2}+Cr\Vert\nabla z\Vert_{L^{2}( R_{2r} )}^{2}\,,\\
|z|_{H^{1/2}(J_{2 r})}^{2}  &  \leq C\Vert\nabla z\Vert_{L^{2}( R_{2r})}^{2}\,.
\end{align*}
Combining the previous  inequalities  with \eqref{50000} gives \eqref{L2 trace}
and \eqref{H1/2 seminorm}.

Next, fix $2<q<\infty$. We claim that%
\begin{equation}
\Vert\nabla v\Vert_{L^{q}(J_{2r})}\leq\frac{C}{r^{1-1/q}}\Vert\nabla
v\Vert_{L^{2}( R_{4r} )}\,. \label{Lq trace}%
\end{equation}
Let $s=\frac{q-2}{2q}<\frac{1}{2}$. Then $s$ is subcritical so, by
\cite[Corollary 2.3]{leoni-book-fractional},%
\[
\Vert\nabla v\Vert_{L^{q}(J_{2r})}=\Vert\nabla v\Vert_{L^{2_{s}^{\ast}}(J_{2r}%
)}\leq\frac{C}{r^{1-1/q}}\Vert\nabla v\Vert_{L^{1}(J_{2r})}+C|\nabla
v|_{H^{s}(J_{2r})}\,,
\]
where $2_{s}^{\ast}=\frac{2}{1-2s}=q$ is Sobolev critical exponent. On the
other hand, by \cite[Lemma 2.6]{leoni-book-fractional},%
\[
|\nabla v|_{H^{s}(J_{2r})}\leq Cr^{1/2-s}|\nabla v|_{H^{1/2}(J_{2r})}%
=Cr^{1/q}|\nabla v|_{H^{1/2}(J_{2r})}\,.
\]
Combining the last two inequalities and using H\"{o}lder's inequality and
\eqref{L2 trace} and \eqref{H1/2 seminorm}, we deduce%
\begin{align*}
\Vert\nabla v\Vert_{L^{q}(J_{2r})}  &  \leq\frac{C}{r^{1/2-1/q}}\Vert\nabla
v\Vert_{L^{2}(J_{2r})}+Cr^{1/q}|\nabla v|_{H^{1/2}(J_{2r})}\\
&  \leq\frac{C}{r^{1-1/q}}\Vert\nabla v\Vert_{L^{2}( R_{4r} )}\,.
\end{align*}
By changing variables using \eqref{50001}, we obtain
\eqref{trace estimate away}.
\end{proof}

\begin{corollary}
\label{corollary regularity away}Under the assumptions of Theorem
\ref{theorem C3},  let $r$ be such that 
\begin{equation}
0<2 r\leq \min\{\delta_{0},\eta_{1}\}\,, \label{r interval 5}%
\end{equation}
and let
\begin{equation}
\alpha_{r} :=\alpha+9 r/\eta_{0}\quad\text{and}\quad\beta_{r} %
:=\beta-9 r/\eta_{0}\,. \label{alpha tilde beta tilde}%
\end{equation}
Then there exists  a  constant  
$c_{7}=c_{7}(\eta_{0},\eta_{1},M)>0$ such that, if%
\begin{equation}
\Vert h^{\prime\prime}\Vert_{ L^{\infty}(I_{4 r}(x_{0}))}\leq\frac{1}{r}
\label{smallness of r 4}%
\end{equation}
for every $x_{0}\in[\alpha_{r},\beta_{r}]$, then $u\in H^{2}(\Omega
_{h}^{\alpha_{r} ,\beta_{r} })$ and%
\begin{equation}
\Vert\nabla^{2}u\Vert_{L^{2}(\Omega_{h}^{\alpha_{r} ,\beta_{r} })}%
\leq\frac{c_{7}}{r}\Vert\nabla u\Vert_{L^{2}(\Omega_{h})\,.}
\label{second derivative estimate 13}%
\end{equation}

\end{corollary}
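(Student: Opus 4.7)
The plan is to decompose $\Omega_h^{\alpha_r,\beta_r}$ as $T \cup B$, where $T := \Omega_{h,2r}^{\alpha_r,\beta_r}$ is the top strip of height $2r$ just below $\Gamma_h$, and $B := \{(x,y) : \alpha_r < x < \beta_r,\ 0 < y < h(x)-2r\}$ is the bottom region (non-empty because $h\ge 4r$ on $[\alpha_r,\beta_r]$ by arguments analogous to those leading to \eqref{h geq  8Lr}). I will control $T$ by applying Theorem \ref{theorem regularity away} to a finite cover. Since $h$ is $L_0$-Lipschitz, every $(x_0,y_0)\in B$ has distance at least $2r/\sqrt{1+L_0^2}$ to $\Gamma_h$, so $B$ stays uniformly away from the non-smooth part of $\partial\Omega_h$. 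There I will use standard interior $H^2$ regularity for the Lam\'e system, together with flat-boundary $H^2$ regularity near the Dirichlet boundary $\{y=0\}$.

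For the top strip, I would choose points $x_1,\dots,x_N$ in $[\alpha_r,\beta_r]$ with uniform spacing $r$, so that $\{(x_i-r,x_i+r)\}$ covers $[\alpha_r,\beta_r]$, the enlarged intervals $\{(x_i-4r,x_i+4r)\}$ have uniformly bounded overlap, and $N\le C/r$ (using the bound \eqref{beta-alpha bound} on $\beta-\alpha$). Since $9/\eta_0>8/\eta_0$, each $x_i\in[\alpha_r,\beta_r]$ satisfies \eqref{x0 interval}, while \eqref{smallness of r 4} supplies the remaining hypothesis of Theorem \ref{theorem regularity away}. Squaring \eqref{second derivative estimate} at each $x_i$, summing over $i$, and using the bounded overlap of the enlarged strips $\Omega_{h,4r}^{x_i-4r,x_i+4r}\subset\Omega_h$ then yields
\[\Vert \nabla^2 u \Vert_{L^2(T)}^2 \le \sum_{i=1}^N \frac{c_5^2}{r^2}\Vert \nabla u \Vert_{L^2(\Omega_{h,4r}^{x_i-4r,x_i+4r})}^2 \le \frac{C}{r^2}\Vert \nabla u \Vert_{L^2(\Omega_h)}^2.\]

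For the bottom region I would set $\tilde u := u - w^0$ with $w^0(x,y) := (e_0 x,0)$, so that $-\operatorname{div}\mathbb{C}E\tilde u = 0$ in $\Omega_h$ and $\tilde u(x,0) = 0$. I would then cover $B$ by a family of open squares $Q_i$ of side $\rho := r/(2+2L_0)$ centered at points of $B$, of cardinality $O(1/r^2)$ and bounded overlap; this choice of $\rho$, combined with $y_0\le h(x_0)-2r$ and $|h(x)-h(x_0)|\le L_0|x-x_0|$, ensures that the doubled squares $Q_i^+$ lie inside $\Omega_h \cup \{y\le 0\}$. On each $Q_i$ I would apply standard $H^2$ regularity for the Lam\'e system: interior regularity when $Q_i^+\subset\Omega_h$, and flat-boundary regularity with homogeneous Dirichlet data when $Q_i^+$ meets $\{y=0\}$. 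In both cases the estimate $\Vert \nabla^2 \tilde u \Vert_{L^2(Q_i)} \le (C/r)\Vert \nabla \tilde u \Vert_{L^2(Q_i^+)}$ holds; squaring, summing, and using $\nabla^2 u = \nabla^2\tilde u$ then give
\[\Vert \nabla^2 u \Vert_{L^2(B)}^2 \le \frac{C}{r^2}\Vert \nabla \tilde u \Vert_{L^2(\Omega_h)}^2 \le \frac{C}{r^2}\bigl(\Vert \nabla u \Vert_{L^2(\Omega_h)}^2 + e_0^2 A_0\bigr).\]

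Combining the estimates on $T$ and $B$ yields \eqref{second derivative estimate 13}, up to the additive contribution $Ce_0^2 A_0 / r^2$; I would absorb this into the main term using a lower bound $\Vert \nabla u \Vert_{L^2(\Omega_h)} \ge c_*(e_0,A_0,L_0) > 0$ inherited from the nontrivial Dirichlet trace $u(x,0)=(e_0 x,0)$ via a standard trace plus Korn argument. The principal technical obstacle is the construction of the cover of $B$ with uniformly bounded overlap and enlargements $Q_i^+$ fitting inside $\Omega_h \cup \{y\le 0\}$, with all geometric constants depending only on $\eta_0$, $\eta_1$, $M$, and $L_0$; once the cover is in place, the underlying elliptic estimates are classical (Nirenberg's difference-quotient method for the interior regularity, and reflection across the flat boundary $\{y=0\}$ for the boundary regularity).
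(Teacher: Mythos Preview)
Your approach is correct and essentially matches the paper's proof, which also covers the top strip via Theorem~\ref{theorem regularity away} and the remainder via interior and flat-Dirichlet-boundary $H^{2}$ estimates with bounded-overlap coverings; the paper uses three subregions---a bottom strip $\{0<y<2r\}$, a middle region $\{r<y<h(x)-r\}$, and the top strip $\{h(x)-2r<y<h(x)\}$---where you use two, but this is purely organizational, and its lattice cover of the middle region with $\rho=r/(3+3L_{0})$ is the same idea as your cover of $B$. The only real difference is that the paper applies the flat-boundary estimate \eqref{second derivative estimate 5} directly to $u$ rather than to $u-w^{0}$, thereby sidestepping the additive $e_{0}^{2}A_{0}/r^{2}$ term and your lower-bound absorption step; since the Dirichlet datum is affine both routes are valid.
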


\begin{proof}
Assume \eqref{smallness of r 4} and let
\begin{align*}
&  \Omega_{1}:=\{(x,y):\alpha_{r} <x<\beta_{r} ,\ 0<y<2 r\}\,,\\
&  \Omega_{2}:=\{(x,y):\alpha_{r} <x<\beta_{r} ,\  r<y<h(x)- r\}\,,\\
&  \Omega_{3}:=\{(x,y):\alpha_{r} <x<\beta_{r} ,\ h(x)- 2 r<y<h(x)\}\,.
\end{align*}
Since $\Omega_{h}^{\alpha_{r} ,\beta_{r} }=\Omega_{1}\cup\Omega_{2}%
\cup\Omega_{3}$, it is enough to prove that $u\in H^{2}(\Omega_{i})$ and that
\begin{equation}
\Vert\nabla^{2}u\Vert_{L^{2}(\Omega_{i})}\leq\frac{ C}{r}\Vert\nabla
u\Vert_{L^{2}(\Omega_{h})}\,. \label{second derivative estimate 3}%
\end{equation}

For every $(x_{0},y_{0})$ in $\mathbb{R}^{2}$ and every $\rho>0$ let $Q_{\rho
}(x_{0},y_{0})$ be the cube with center $(x_{0},y_{0})$ and sides with length
$2\rho$  parallel to the coordinate axes. We set $Q_{\rho}^{+}(x_{0}%
,y_{0}):=\{(x,y)\in Q_{\rho}(x_{0},y_{0}):y>0\}$. We take for granted that every solution of the
Lam\'{e} system in a cube of the form $Q_{2\rho}(x_{0},y_{0})$ belongs to
$H^{2}(Q_{\rho}(x_{0},y_{0}))$ and satisfies the inequality
\begin{equation}
\int_{Q_{\rho}(x_{0},y_{0})}|\nabla^{2}u|^{2}\,dxdy\leq\frac{C}{\rho^{2}}%
\int_{Q_{2\rho}(x_{0},y_{0})}|\nabla u|^{2}\,dxdy\,,
\label{second derivative estimate 4}%
\end{equation}
(see \cite[Theorem 20.1]{wloka-book}).
Moreover, every
solution in a rectangle of the form $Q_{2\rho}^{+}(x_{0},0)$ with homogeneous
Dirichlet boundary condition on $I_{2\rho}(x_{0})\times\{0\}$ belongs to
$H^{2}(Q_{\rho}^{+}(x_{0},0))$ and satisfies the inequality
\begin{equation}
\int_{Q_{\rho}^{+}(x_{0},0)}|\nabla^{2}u|^{2}\,dxdy\leq\frac{ C}{\rho^{2}%
}\int_{Q_{2\rho}^{+}(x_{0},0)}|\nabla u|^{2}\,dxdy\,.
\label{second derivative estimate 5}%
\end{equation}
In both cases the dependence of the estimate on $\rho$ follows from a standard
dimensional argument.

To prove the estimate for $\Omega_{1}$,  for every integer $i$ we define
$
x_{i}:=2ir$
 and we consider the set $Z_{1}$ of integers $i$ such that $\alpha_{r}<x_{i}<\beta_{r}$. Since $x_{i}$ 
satisfies \eqref{x0 interval}, by \eqref{h geq 8Lr}  we have 
$h(x)\geq4 r$ for every  $i\in Z_{1}$ and  $x\in I_{4 r}(x_{ i})$. 
It follows
that $Q_{4r}^{+}(x_{ i},0)\subset\Omega_{h}$. 
Therefore $u\in H^{2}(Q_{2r}%
^{+}(x_{i},0))$ and by \eqref{second derivative estimate 5},
\[
\int_{Q_{2r}^{+}(x_{i},0)}|\nabla^{2}u|^{2}
\,dxdy\leq\frac{ C}{4r^{2}}%
\int_{Q_{4r}^{+}(x_{i},0)}|\nabla u|^{2}\,dxdy\,.
\]
Using the inclusion $\Omega_{1}\subset\bigcup_{i\in Z_{1}}Q_{2r}^{+}%
(x_{ i},0)$, we obtain $u\in H^{2}(\Omega_{1})$ and
\[
\int_{\Omega_{1}}|\nabla^{2}u|^{2}\,dxdy\leq\frac{ C}{4r^{2}}\sum
_{i\in Z_{1}}\int_{Q_{4r}^{+}(x_{ i},0)}|\nabla u|^{2}\,dxdy\,.
\]
Since each rectangle $Q_{4r}^{+}(x_{ i},0)$ intersects at most $7$ rectangles
of the form $Q_{4r}^{+}(x_{\hat{\imath}},0)$, from the previous inequality
and from the inclusion $Q_{4r}(x_{2i},0)^{+}\subset\Omega_{h}$ we obtain
\begin{equation}
\int_{\Omega_{1}}|\nabla^{2}u|^{2}\,dxdy\leq\frac{ C}{4r^{2}}\int%
_{\Omega_{h}}|\nabla u|^{2}\,dxdy\,. \label{second derivative estimate 7}%
\end{equation}

To prove the estimate for $\Omega_{2}$,  we set $\rho:=r/(3+3L_{0})$ 
and we consider the set $Z_{2}$ of all pairs  of integers  $(i,j)$ such that $Q_{\rho}( i\rho %
, j\rho )\cap\Omega_{2}\neq\emptyset$.
 We claim that  $Q_{2\rho}( i\rho , j\rho )\subset\Omega_{h}$ for
every $(i,j)\in Z_{ 2}$.   Indeed, if $(i,j)\in Z_{ 2}$, then there exists $(x_{0},y_{0})\in Q_{\rho}(i\rho,j\rho)\cap\Omega_{2}$. Hence $\alpha_{r}<x_{0}<\beta_{r}$ and $r<y_{0}<h(x_{0})-r$. If $(x,y)\in Q_{2\rho}(i\rho,j\rho)$, we have $|x-x_{0}|<3\rho$ and $|y-y_{0}|<3\rho$. Recalling the Lipschitz estimate for $h$, this implies that $0<r-3\rho<y_{0}-3\rho< y < y_{0}+3\rho<h(x_{0})-r +3\rho\leq h(x)+L_{0}|x-x_{0}|-r+3\rho< h(x)+3L_{0}\rho-r+3\rho=h(x)$, which gives $(x,y)\in\Omega_{h}$ and concludes the proof of the inclusion $Q_{2\rho}(i\rho,j\rho)\subset\Omega_{h}$ for $(i,j)\in Z_{2}$. 
Therefore $u\in H^{2}(Q_{\rho}( i\rho , j\rho ))$ and by
\eqref{second derivative estimate 4},
\[
\int_{Q_{\rho}( i\rho , j\rho )}|\nabla^{2}u|^{2}\,dxdy\leq\frac{C}{r^{2}}%
\int_{Q_{2\rho}( i\rho , j\rho )}|\nabla u|^{2}\,dxdy\,.
\]
Using the inclusion $\Omega_{2}\subset\bigcup_{(i,j)\in Z_{2}}Q_{\rho}( i\rho , j\rho )$
we obtain $u\in H^{2}(\Omega_{2})$ and
\[
\int_{\Omega_{2}}|\nabla^{2}u|^{2}\,dxdy\leq\frac{C}{r^{2}}\sum_{(i,j)\in
Z_{2}}\int_{Q_{2\rho}( i\rho , j\rho )}|\nabla u|^{2}\,dxdy\,.
\]
Since each rectangle $Q_{2\rho}( i\rho , j\rho )$ intersects at most $49$ rectangles
of the form $Q_{2\rho}(\hat{\imath}\rho,\hat{\jmath}\rho)$, from the previous
inequality and from the inclusion $Q_{2\rho}( i\rho , j\rho )\subset\Omega_{h}$ we
obtain
\begin{equation}
\int_{\Omega_{2}}|\nabla^{2}u|^{2}\,dxdy\leq\frac{ C}{r^{2}}\int_{\Omega_{h}%
}|\nabla u|^{2}\,dxdy\,. \label{second derivative estimate 9}%
\end{equation}

To prove the estimate for $\Omega_{3}$, we consider 
 $x_{i}$ and $Z_{1}$ as for  $\Omega_{1}$.
 By Theorem \ref{theorem regularity away} we obtain that $u\in H^{2}(\Omega_{h,2r}^{ x_{i}%
-2r,x_{i}+2r})$ and that
\[
\int_{\Omega_{h,2r}^{ x_{i}-2r,x_{2i}+2r}}|\nabla^{2}u|^{2}\,dxdy\leq
\frac{c_{5}^{2}}{ r^{2}}\int_{\Omega_{h,4r}^{ x_{i}-4r,x_{i}+4r}}|\nabla
u|^{2}\,dxdy\,.
\]
Using the inclusion $\Omega_{3}\subset\bigcup_{i \in Z_{1}}%
\Omega_{h,2r}^{ x_{i}-2r,x_{i}+2r}$ we obtain $u\in H^{2}(\Omega_{3})$ and
\begin{equation}
\int_{\Omega_{3}}|\nabla^{2}u|^{2}\,dxdy\leq\frac{c_{5}^{2}}{ r^{2}}%
\sum_{i\in Z_{1}}\int_{\Omega_{h,4r}^{ x_{i}-4r,x_{i}+4r}%
}|\nabla u|^{2}\,dxdy\,. \label{7777}%
\end{equation}
 
Since each set $\Omega_{h,4r}^{x_{i}-4r,x_{i}+4r}$ intersects at most $7$ sets
of the form $\Omega_{h,4r}^{x_{\hat{\imath}}-4r,x_{\hat{\imath}}+4r}$, from the previous inequality
and from the inclusion $\Omega_{h,4r}^{x_{i}-4r,x_{i}+4r}\subset\Omega_{h}$, we obtain
\begin{equation}
\int_{\Omega_{3}}|\nabla^{2}u|^{2}\,dxdy\leq\frac{7c_{5}^{2}}{r^{2}}%
\int_{\Omega_{h}}|\nabla u|^{2}\,dxdy\,, \label{second derivative estimate 10}%
\end{equation}
which concludes the proof.
\end{proof}

\bigskip

\begin{theorem}
\label{theorem trace gradient}Under the hypotheses of Theorem \ref{theorem C3}%
, let $2\leq p\leq p_{0}/(2-p_{0})$. Then there exists a constant $c_{8}%
=c_{8}(\eta_{0},\eta_{1},M,p)>0$ such that%
\[
\Vert\nabla u\Vert_{L^{p}(\Gamma_{h})}\leq c_{8}B_{\tau}(H,{H^{0}},\alpha
,{\alpha^{0}},\beta,{\beta^{0}})^{1-1/p}.
\]
\end{theorem}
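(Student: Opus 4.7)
The plan is to decompose $\Gamma_{h}$ into three pieces and bound each separately. Near the two contact points $(\alpha,0)$ and $(\beta,0)$ I would use the trace estimate of Corollary \ref{corollary Btau alpha}, which is already tailored to that situation; on the central portion I would run a covering argument based on the local trace estimate \eqref{trace estimate away} of Theorem \ref{theorem regularity away}. Concretely, let $r_{1}$ be as in \eqref{r one over B tau} and set $r:=\eta_{0}r_{1}/9$, so that with the notation of Corollary \ref{corollary regularity away} one has $\alpha_{r}=\alpha+r_{1}$ and $\beta_{r}=\beta-r_{1}$, and hence
\[
\Gamma_{h}=\Gamma_{h}^{\alpha,\alpha+r_{1}}\cup\Gamma_{h}^{\alpha_{r},\beta_{r}}\cup\Gamma_{h}^{\beta-r_{1},\beta}.
\]
Two applications of \eqref{trace estimate alpha tau}, one at each endpoint, immediately give
\[
\Vert\nabla u\Vert_{L^{p}(\Gamma_{h}^{\alpha,\alpha+r_{1}})}+\Vert\nabla u\Vert_{L^{p}(\Gamma_{h}^{\beta-r_{1},\beta})}\leq 2c_{4}B_{\tau}^{1-1/p},
\]
so only the middle piece requires work.

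For $\Gamma_{h}^{\alpha_{r},\beta_{r}}$ I would first verify that Theorem \ref{theorem regularity away} applies at every $x_{0}\in[\alpha_{r},\beta_{r}]$ with this choice of $r$. The bound \eqref{h'' bound} gives $\Vert h^{\prime\prime}\Vert_{L^{\infty}((\alpha,\beta))}\leq c_{0}B_{\tau}$, while by construction $c_{0}rB_{\tau}\leq \eta_{0}c_{1}^{1/2}/18<1$, so the smallness condition \eqref{smallness of r 2} holds on every $I_{4r}(x_{0})$. The geometric constraints \eqref{x0 interval} and $2r\leq\min\{\delta_{0},\eta_{1}\}$ hold automatically for $B_{\tau}$ large, and if $B_{\tau}$ is bounded then $r_{1}$ (and hence $r$) is bounded away from $0$ and the middle estimate reduces to a single application of \eqref{trace estimate away}. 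I would then cover $[\alpha_{r},\beta_{r}]$ by $N\leq C/r$ intervals $I_{2r}(x_{i})$ with bounded overlap and apply \eqref{trace estimate away} with $q=p$ on each, obtaining
\[
\Vert\nabla u\Vert_{L^{p}(\Gamma_{h}^{\alpha_{r},\beta_{r}})}^{p}\leq \frac{c_{6}^{p}}{r^{p-1}}\sum_{i=1}^{N}\Vert\nabla u\Vert_{L^{2}(\Omega_{h,4r}^{x_{i}-4r,x_{i}+4r})}^{p}.
\]

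To close the estimate I would invoke the $\tau$-independent $H^{1}$-bound \eqref{estimate in H1}. Setting $a_{i}:=\Vert\nabla u\Vert_{L^{2}(\Omega_{h,4r}^{x_{i}-4r,x_{i}+4r})}$, each $a_{i}$ is bounded by a constant, and the finite-overlap property of the covering yields $\sum_{i}a_{i}^{2}\leq C\Vert\nabla u\Vert_{L^{2}(\Omega_{h})}^{2}\leq C$. Using the elementary inequality $\sum_{i}a_{i}^{p}\leq (\max_{i}a_{i})^{p-2}\sum_{i}a_{i}^{2}$, valid because $p\geq 2$, I would conclude that the right-hand side above is bounded by $Cr^{-(p-1)}\leq CB_{\tau}^{p-1}$, i.e., $\Vert\nabla u\Vert_{L^{p}(\Gamma_{h}^{\alpha_{r},\beta_{r}})}\leq CB_{\tau}^{1-1/p}$. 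Summing the three contributions gives the stated inequality.

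The main obstacle is the coordination of scales: $r$ must be at most $\sim 1/B_{\tau}$ for the flat-strip elliptic estimate to kick in, but the $p$-th power trace inequality then degenerates like $r^{-(p-1)}$, while the number of patches in the covering is $\sim 1/r\sim B_{\tau}$. The key observation that prevents an additional loss of a factor $B_{\tau}$ is that the $\ell^{p}$ sum may be collapsed to an $\ell^{2}$ sum via the uniform $H^{1}$-bound of Theorem \ref{theorem existence}; without this step one would only obtain the weaker estimate $\Vert\nabla u\Vert_{L^{p}(\Gamma_{h})}\lesssim B_{\tau}$.
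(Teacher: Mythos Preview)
Your proposal is correct and follows essentially the same strategy as the paper: split $\Gamma_h$ into corner pieces handled by Corollary~\ref{corollary Btau alpha} and a central piece handled by a covering argument based on Theorem~\ref{theorem regularity away}, then collapse the $\ell^p$ sum to an $\ell^2$ sum using the uniform $H^1$-bound \eqref{estimate in H1}. The only cosmetic differences are the precise choice of $r$ (the paper takes $r=\eta_0 r_1/4$ rather than $\eta_0 r_1/9$) and the form of the $\ell^p$--$\ell^2$ comparison: the paper uses $\sum_i a_i^{p}\le\bigl(\sum_i a_i^{2}\bigr)^{p/2}$ directly, which avoids the separate appeal to $\max_i a_i\le C$; also note that the constraint $2r\le\min\{\delta_0,\eta_1\}$ holds for \emph{all} $B_\tau$ (not just large ones) since $r_1\le\min\{\delta_0/2,\eta_1/2\}$ by definition.
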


\begin{proof}
By \eqref{trace estimate alpha tau},%
\begin{equation}
\int_{\alpha}^{\alpha+r_{1}}|\nabla u(x,h(x))|^{p}\sqrt{1+(h^{\prime}(x))^{2}%
}dx\leq c_{4}^{p}B_{\tau}(H,{H^{0}},\alpha,{\alpha^{0}},\beta,{\beta^{0}})^{p-1}.
\label{re 45}%
\end{equation}
A similar estimate holds in $(\beta-r_{1},\beta)$. It remains to estimate
$\nabla u$ over $(\alpha+r_{1},\beta-r_{1})$. Let $ r:=\eta_{0} r_{1}/4$ 
 and for every  integer  $i$ we define
$x_{i}:= ir$.
Let  $Z$ be the set of integers $i$ such that $\alpha+r_{1}<x_i<\beta-r_{1}$. Since 
$(\alpha+r_{1},\beta-r_{1})\subset\bigcup_{i\in Z}%
I_{ r}(x_{i})$, we have 
\begin{equation}
\int_{\alpha+r_{1}}^{\beta-r_{1}}|\nabla u(x,h(x))|^{p}\sqrt{1+(h^{\prime
}(x))^{2}}dx\,\leq\sum_{i\in Z}\int_{I_{ r}(x_{i})}|\nabla
u(x,h(x))|^{p}\sqrt{1+(h^{\prime}(x))^{2}}dx\,. \label{re 46}%
\end{equation}
 Recalling that $r<r_{1}<\delta_{0}$, we see that \eqref{x0 interval} is satisfied with $r$ replaced by $r/2$. Moreover, 
from \eqref{h'' bound} and  the definitions of $r_{1}$ and $r$, we obtain 
\[
 r\Vert h^{\prime\prime}\Vert_{ L^{\infty}((\alpha,\beta))}\leq c_{0} r%
B_{\tau}(H,{H^{0}},\alpha,{\alpha^{0}},\beta,{\beta^{0}})\leq  c_{1}^{1/2}\eta_{0}/8<1\,.
\]
Hence we can apply Theorem \ref{theorem regularity away}, with $r$ replaced by $r/2$,  to obtain \eqref{trace estimate away} in each
interval $I_{ r }(x_{i})$.

By \eqref{trace estimate away},%
\[
\Vert\nabla u\Vert_{L^{p}(\Gamma_{h}^{x_{i}- r ,x_{i}+ r })}\leq
\frac{c_{6}}{ r ^{1-1/p}}\Vert\nabla u\Vert_{L^{2}(\Omega_{h,2 r }%
^{x_{i}-2 r ,x_{i}+2 r })}\,.
\]
Combining this inequality with \eqref{re 46} gives%
\begin{align*}
\int_{\alpha+r_{1}}^{\beta-r_{1}}|\nabla u(x,h(x))|^{p}\sqrt{1+(h^{\prime
}(x))^{2}}dx\,  &  \leq\frac{C}{ r ^{p-1}}\sum_{i\in Z}\Vert\nabla
u\Vert_{L^{2}(\Omega_{h,2 r }^{x_{i}-2 r ,x_{i}+2 r })}^{p}\\
&  \leq\frac{C}{ r ^{p-1}}\left(  \sum_{i\in Z}\Vert\nabla
u\Vert_{L^{2}(\Omega_{h,2 r }^{x_{i}-2 r ,x_{i}+2 r })}^{2}\right)
^{p/2}\,,
\end{align*}
where we used the fact that $p\geq2$.
Since each set $\Omega_{h,2 r }^{x_{i}-2 r ,x_{i}+2 r }$ intersects at most $7$ sets
of the form $\Omega_{h,2 r }^{x_{\hat{\imath}}-2 r ,x_{\hat{\imath}}+2r{2}}$, from the previous inequality
and from the inclusion $\Omega_{h,2 r }^{x_{i}-2 r ,x_{i}+2 r }\subset\Omega_{h}$  we obtain
\begin{equation}\label{tyu1*}
\int_{\alpha+r_{1}}^{\beta-r_{1}}|\nabla u(x,h(x))|^{p}\sqrt{1+(h^{\prime
}(x))^{2}}dx\leq\frac{C}{ r ^{p-1}}\Vert\nabla u\Vert_{L^{2}(\Omega_{h}%
)}^{p}\leq \frac{C}{r_{1}^{p-1}}\,,
\end{equation}
 where in the last inequality we used \eqref{estimate in H1} and the equality $ r :=\eta_{0} r_{1}/4$. By \eqref{r one over B tau} we have 
\begin{align*}
\frac{1}{r_{1}}&=\max\Big\{  \frac{2c_{0}}{c_{1}^{1/2}}B_{\tau
}(H,{H^{0}},\alpha,{\alpha^{0}},\beta,{\beta^{0}}),\frac{ 2}{\delta_{0} },\frac{2}{\eta_{1}}\Big\}\\&\leq\max\Big\{  \frac{2c_{0}}{c_{1}^{1/2}},\frac{ 2}{\delta_{0} },\frac{2}{\eta_{1}}\Big\}B_{\tau
}(H,{H^{0}},\alpha,{\alpha^{0}},\beta,{\beta^{0}})
\,,
\end{align*}
where in the last inequality we used the fact that $1\leq B_{\tau
}(H,{H^{0}},\alpha,{\alpha^{0}},\beta,{\beta^{0}})$.
Therefore, \eqref{tyu1*} yields 
\[
\int_{\alpha+r_{1}}^{\beta-r_{1}}|\nabla u(x,h(x))|^{p}\sqrt{1+(h^{\prime
}(x))^{2}}dx\,\leq CB_{\tau}(H,{H^{0}},\alpha,{\alpha^{0}},\beta,{\beta^{0}}%
)^{p-1}.
\]
Summing this inequality to \eqref{re 45} and to the corresponding inequality
in $(\beta-r_{1},\beta)$ gives%
\[
\int_{\alpha}^{\beta}|\nabla u(x,h(x))|^{p}\sqrt{1+(h^{\prime}(x))^{2}%
}dx\,\leq CB_{\tau}(H,{H^{0}},\alpha,{\alpha^{0}},\beta,{\beta^{0}})^{p-1}\,,
\]
 which concludes the proof. 
\end{proof}

\begin{theorem}
\label{theorem h iv Lp}Under the hypotheses of Theorem \ref{theorem C3}, let
$1<p\leq\frac{p_{0}}{4-2p_{0}}$. Then we have $h^{(\operatorname*{iv})}\in
L^{p_{0}/(4-2p_{0})}((\alpha,\beta))$ and there exists a constant $c_{9}%
=c_{9}(\eta_{0},\eta_{1},M,p)>0$ such that%
\begin{equation}
\Vert h^{(\operatorname*{iv})}\Vert_{L^{p}((\alpha,\beta))}^{p}\leq
c_{9}B_{\tau}(H,{H^{0}},\alpha,{\alpha^{0}},\beta,{\beta^{0}})^{q},
\label{h iv Lp estimate}%
\end{equation}
where $B_{\tau}(H,{H^{0}},\alpha,{\alpha^{0}},\beta,{\beta^{0}})$ is defined in
\eqref{B tau alpha beta} and
\[
q:=\max\{p,5p/2-1,3p-2,2p-1\}\,.
\]
\end{theorem}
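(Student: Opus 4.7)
\emph{Plan of proof.} The starting point is the pointwise identity obtained by differentiating \eqref{ode 1}:
\[
A(x)\,h^{(\operatorname*{iv})}(x) = -A''(x)h''(x) - 2A'(x)h'''(x) + B'(x)h'(x) + B(x)h''(x) - f(x) + m\,.
\]
Since $A\geq\nu_{0}/(1+L_{0}^{2})^{5/2}$, this yields the pointwise bound $|h^{(\operatorname*{iv})}|\leq C(|A''||h''|+|A'||h'''|+|B'||h'|+|B||h''|+|f|+|m|)$. Direct differentiation of \eqref{function a}--\eqref{function b} gives $|A'|\leq C|h''|$, $|A''|\leq C(|h''|^{2}+|h'''|)$, $|B|\leq C(1+|h''|^{2})$, and $|B'|\leq C(|h''|+|h''||h'''|+|h''|^{3})$. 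Expanding and absorbing constants, all contributions to $\int|h^{(\operatorname*{iv})}|^{p}\,dx$ reduce to four families of integrals: $\int|m|^{p}$, $\int|f|^{p}$, $\int|h''|^{kp}$ for $k=1,3$, and the mixed integral $\int|h''|^{p}|h'''|^{p}$. The plan is to control each one and show that the four exponents in $q$ correspond exactly to these four sources.

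The exponents $p$ and $2p-1$ come from $m$ and $f$. By \eqref{r4} and \eqref{beta-alpha bound}, $\int|m|^{p}\leq CB_{\tau}^{p}$. For $f$ as in \eqref{function f}, the non-elastic part is bounded pointwise by $CB_{\tau}$ via \eqref{B tau alpha beta} and contributes $CB_{\tau}^{p}$. For the elastic part, the hypothesis $p\leq p_{0}/(4-2p_{0})$ is exactly $2p\leq p_{0}/(2-p_{0})$, placing $2p$ in the admissible range of Theorem \ref{theorem trace gradient}; combined with $W(\xi)\leq C_{W}|\xi|^{2}$ this gives
\[
\int_{\alpha}^{\beta}|W(Eu(x,h(x)))|^{p}\,dx\leq C\int_{\alpha}^{\beta}|\nabla u|^{2p}\,dx\leq CB_{\tau}^{2p-1}\,.
\]
The exponent $3p-2$ is produced by the interpolation
\[
\int_{\alpha}^{\beta}|h''|^{3p}\,dx\leq\|h''\|_{L^{\infty}}^{3p-2}\int_{\alpha}^{\beta}|h''|^{2}\,dx\leq CB_{\tau}^{3p-2}\,,
\]
which crucially combines the $B_{\tau}$-\emph{independent} bound \eqref{h'' L2 norm} with \eqref{h'' bound}; the lower-order integrals $\int|h''|^{p}$ are dominated by this.

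The delicate term is the mixed integral $\int|h''|^{p}|h'''|^{p}\,dx$, which is responsible for the exponent $5p/2-1$. The plan is to split into two ranges of $p$. For $p\geq 2$ we interpolate
\[
\int_{\alpha}^{\beta}|h''|^{p}|h'''|^{p}\,dx\leq\|h''\|_{L^{\infty}}^{p-2}\|h'''\|_{L^{\infty}}^{p}\int_{\alpha}^{\beta}|h''|^{2}\,dx\leq CB_{\tau}^{3p-2}\,.
\]
For $1<p<2$ we apply H\"older with conjugate exponents $2/p$ and $2/(2-p)$:
\[
\int_{\alpha}^{\beta}|h''|^{p}|h'''|^{p}\,dx\leq\Big(\int_{\alpha}^{\beta}|h''|^{2}\Big)^{p/2}\Big(\int_{\alpha}^{\beta}|h'''|^{2p/(2-p)}\Big)^{(2-p)/2}\,,
\]
and estimate the inner integral by $\|h'''\|_{L^{\infty}}^{2p/(2-p)-1}\|h'''\|_{L^{1}}\leq CB_{\tau}^{4p/(2-p)-1}$ via \eqref{h''' bound better}--\eqref{h''' bound L1}; a short arithmetic simplification produces $CB_{\tau}^{5p/2-1}$. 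Summing all four contributions gives $\|h^{(\operatorname*{iv})}\|_{L^{p}}^{p}\leq c_{9}B_{\tau}^{q}$ with $q=\max\{p,2p-1,3p-2,5p/2-1\}$.

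The main technical point that requires care is precisely this case split for $\int|h''|^{p}|h'''|^{p}\,dx$: the naive estimate $\|h''\|_{L^{\infty}}^{p}\int|h'''|^{p}$ produces the strictly larger exponent $3p-1$, and only by exploiting the $B_{\tau}$-\emph{independent} $L^{2}$-bound on $h''$ (in tandem with the $L^{\infty}$ and $L^{1}$ bounds on $h'''$ from Theorem \ref{theorem C3}) does one recover the sharper exponents in the statement. All the $B$ and $B'$ contributions are then absorbed into the same four members of the maximum.
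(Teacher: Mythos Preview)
Your proof is correct and follows essentially the same route as the paper's. The paper starts directly from \eqref{equation fourth} to obtain the pointwise bound $|h^{(\operatorname*{iv})}|^{p}\leq C(|h''|^{p}+|h'''|^{p}|h''|^{p}+|h''|^{3p}+\overline{W}^{p}+\tau^{-p}|\overline{H}|^{p}+|m|^{p})$, whereas you reach the same decomposition via the $A,B,f$-formulation from the proof of Theorem~\ref{theorem C3}; the subsequent term-by-term estimates---the case split at $p=2$ for $\int|h''|^{p}|h'''|^{p}$, the interpolation $\|h''\|_{L^{\infty}}^{3p-2}\int|h''|^{2}$ for the cubic term, and the appeal to Theorem~\ref{theorem trace gradient} with exponent $2p$ for the elastic trace---are identical in both arguments.
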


\begin{proof}
By \eqref{equation fourth},
\[
-\gamma\Big(\frac{h^{\prime}}{J}\Big)^{\prime}+\nu_{0}\Big(\frac
{h^{\prime\prime}}{J^{5}}\Big)^{\prime\prime}+\frac{5}{2}\nu_{0}\Big(\frac{h^{\prime
}(h^{\prime\prime})^{2}}{J^{7}}\Big)^{\prime}+\overline{W}-\frac{1}{\tau}\bar
{H}= m  %
\]
in $(\alpha,\beta)$, where we recall that $\overline{W}(x):=W(Eu(x,h(x)))$ and
\[
\overline{H}(x):=\int_{-\infty}^{x}(H(s)-H^{0}(s))\sqrt{1+((\check{h}^{0})^{\prime
}(s))^{2}}\,ds\,.
\]
Hence,
\begin{equation}
|h^{(\operatorname*{iv})}|^{p}\leq  C |h^{\prime\prime}|^{p}+
C|h^{\prime\prime\prime}|^{p}%
|h^{\prime\prime}|^{p}+C|h^{\prime\prime}|^{3p}+C\overline{W}^{p}+\frac{C}{\tau
^{p}}|\overline{H}|^{p}+C| m  |^{p} \label{re 42a}%
\end{equation}
 for some constant $C>0$ depending on $L_{0}$.

 By \eqref{h'' bound} and \eqref{beta-alpha bound} we have
\begin{equation}\label{re 42aaa}
\int_{\alpha}^{\beta}|h^{\prime\prime}|^pdx\leq
(\beta-\alpha)
\Vert h^{\prime\prime}\Vert_{L^{\infty}(\alpha,\beta)}^{p}\leq CB_{\tau}(H,{H^{0}},\alpha,{\alpha^{0}},\beta,{\beta^{0}})^{p}.
\end{equation}

If $1<p<2$,  by H\"{o}lder's inequality with
exponents $\frac{2}{p}$ and $\frac{2}{2-p}$,%
\begin{align*}
\int_{\alpha}^{\beta}|h^{\prime\prime\prime}|^{p}|h^{\prime\prime}|^{p}dx  
&\leq\Big(  \int_{\alpha}^{\beta}\!| h^{\prime\prime}|^{2}dx\Big)
^{p/2}\Big(  \int_{\alpha}^{\beta}\!| h^{\prime\prime\prime}|^{2p/(2-p)}%
dx\Big)  ^{(2-p)/2} \!\!\!\!
 \\&\leq M^{ p/2}\Vert h^{\prime\prime\prime}\Vert_{L^{\infty}((\alpha,\beta
))}^{(3p-2)/2}\Vert h^{\prime\prime\prime}\Vert_{L^{1}((\alpha,\beta
))}^{(2-p)/2},
\end{align*}
where  in the last inequality  we used \eqref{h'' L2 norm} and  the fact that $\big(2p/(2-p)-1\big)(2-p)/2=(3p-2)/2$. Using \eqref{h''' bound better} and \eqref{h''' bound L1}, it
follows from the previous inequality that%
\begin{equation}
\int_{\alpha}^{\beta}|h^{\prime\prime\prime}|^{p}|h^{\prime\prime}|^{p}dx 
\leq CB_{\tau}(H,{H^{0}},\alpha,{\alpha^{0}},\beta,{\beta^{0}})^{5p/2-1},\label{re 43}
\end{equation}
 where we used the fact that $3p-2+(2-p)/2=5p/2-1$. 
If $p\geq 2$, by \eqref{h'' L2 norm},  \eqref{h'' bound}, and \eqref{h''' bound better} we have
\begin{align}
\int_{\alpha}^{\beta}|h^{\prime\prime\prime}|^{p}|h^{\prime\prime}|^{p}dx  &
\leq \Vert h^{\prime\prime\prime}\Vert_{L^{\infty}((\alpha,\beta))}^{p}\Vert h^{\prime\prime}
\Vert_{L^{\infty}((\alpha,\beta))}^{p-2}%
\int_{\alpha}^{\beta}|h^{\prime\prime}|^{2}dx
\nonumber
\\
&  { \le}C(B_{\tau}(H,{H^{0}},\alpha,{\alpha^{0}},\beta,{\beta^{0}}))^{3p-2}. \label{re 43*}
\end{align}

On the other hand, by  \eqref{h'' L2 norm} and  \eqref{h'' bound},
\begin{equation}
\int_{\alpha}^{\beta}|h^{\prime\prime}|^{3p}dx   \leq\Vert h^{\prime\prime
}\Vert_{L^{\infty}((\alpha,\beta))}^{3p-2}\int_{\alpha}^{\beta}| h^{\prime
\prime}|^{2}dx
  \leq C(B_{\tau}(H,{H^{0}},\alpha,{\alpha^{0}},\beta,{\beta^{0}}))^{3p-2}%
M\,.\label{re 44}
\end{equation}

By the previous theorem%
\begin{align*}
\int_{\alpha}^{\beta}\overline{W}^{p}dx  &  \leq\int_{\alpha}^{\beta}%
(W(Eu(x,h(x))))^{p}\sqrt{1+(h^{\prime}(x))^{2}}dx\\
&  \leq C\int_{\alpha}^{\beta}|\nabla u(x,h(x))|^{2p}\sqrt{1+(h^{\prime
}(x))^{2}}dx\leq CB_{\tau}(H,{H^{0}},\alpha,{\alpha^{0}},\beta,{\beta^{0}})^{2p-1}.
\end{align*}

Since $\frac{1}{\tau}|\overline{H}|\leq  C B_{\tau}(H,{H^{0}},\alpha,{\alpha^{0}}%
,\beta,{\beta^{0}})$  and $| m  |\leq CB_{\tau}(H,{H^{0}},\alpha,\alpha
_{0},\beta,{\beta^{0}})$ by \eqref{B tau alpha beta}   and \eqref{r4}, respectively,
combining these two inequalities with \eqref{re 42a}--
\eqref{re 44} yields \eqref{h iv Lp estimate}.
\end{proof}

\begin{remark}
\label{remark q}Note that $5p/2-1\leq2$ for $p\leq\frac{6}{5}$, $3p-2\leq2$
for $p\leq\frac{4}{3}$, and $2p-1\leq2$ for $p\leq3/2$. Hence, by taking
$1<p\leq\min\{6/5,p_{0}/(4-2p_{0})\}$ we have that $q\leq2$. Moreover, as in
the proof of \eqref{integral on interval}, we have that $H-H^{0}=0$ for
$x\notin(\min\{\alpha,\alpha^{0}\},\max\{\beta,\beta^{0}\})$. Hence, by
H\"{o}lder's inequality%
\begin{align*}
\left(  \int_{\mathbb{R}}|H-H^{0}|\,dx\right)  ^{2}  &  =\left(  \int%
_{\min\{\alpha,\alpha^{0}\}}^{\max\{\beta,\beta^{0}\}}|H-H^{0}|\,dx\right)
^{2}\\
&  \leq(\max\{\beta,\beta^{0}\}-\min\{\alpha,\alpha^{0}\})\int_{\min
\{\alpha,\alpha^{0}\}}^{\max\{\beta,\beta^{0}\}}|H-H^{0}|^{2}\,dx\,.
\end{align*}
In turn, by \eqref{B tau alpha beta} for $q\leq2$,%
\begin{align}
B_{\tau}(H,{H^{0}},\alpha,{\alpha^{0}},\beta,{\beta^{0}})^{q}  &  \leq4+4\frac
{(\max\{\beta,\beta^{0}\}-\min\{\alpha,\alpha^{0}\})}{\tau^{2}}\int%
_{\mathbb{R}}|H-H^{0}|^{2}dx\label{B tau alpha square}\\
&  \quad+\frac{4}{\tau^{2}}|\alpha-{\alpha^{0}}|^{2}+\frac{4}{\tau^{2}}%
|\beta-{\beta^{0}}|^{2}.\nonumber
\end{align}
\end{remark}

\section{Discrete Time Approximation}

For a given function $f=f(x,t)$ we  set 
\[
\dot{f}(x,t):=\frac{\partial f}{\partial t}(x,t)\quad\text{and\quad}f^{\prime
}(x,t):=\frac{\partial f}{\partial x}(x,t)\,.
\]
Fix $(\alpha_{0},\beta_{0},h_{0},u_{0})\in\mathcal{A}$ with
$\operatorname*{Lip}h_{0}<L_{0}$ and define
\begin{equation}
H_{0}(x):=\int_{-\infty}^{x}\check{h}_{0}(\rho)\,d\rho\,, \label{H0}%
\end{equation}
where $\check{h}_{0}$ is the extension of $h_{0}$ by zero outside the interval
$(\alpha_{0},\beta_{0})$. For every $k\in\mathbb{N}$ we set $\tau_{k}%
:=\frac{1}{k}$ and $t_{k}^{i}:=i\tau_{k}$, $i\in\mathbb{N}\cup\{0\}$. We
define $(\alpha_{k}^{i},\beta_{k}^{i},h_{k}^{i},u_{k}^{i})\in\mathcal{A}$
inductively with respect to $i$ as follows:
\begin{equation}\label{hkzero}
(\alpha_{k}^{0},\beta_{k}^{0},h_{k}^{0},u_{k}^{0}):=(\alpha_{0},\beta
_{0},h_{0},u_{0})\,,
\end{equation}  
and given $(\alpha_{k}^{i-1},\beta_{k}^{i-1},h_{k}^{i-1},u_{k}^{i-1}%
)\in\mathcal{A}$ we let $(\alpha_{k}^{i},\beta_{k}^{i},h_{k}^{i},u_{k}^{i}%
)\in\mathcal{A}$ be a minimizer of the functional
\begin{equation}
\mathcal{F}_{k}^{i-1}(\alpha,\beta,h,u):=\mathcal{S}(\alpha,\beta
,h)+\mathcal{E}(\alpha,\beta,h,u)+\mathcal{T}_{\tau_{k}}(\alpha,\beta
,h;\alpha_{k}^{i-1},\beta_{k}^{i-1},h_{k}^{i-1})\,, \label{functional Fi}%
\end{equation}
whose existence is guaranteed by Theorem \ref{theorem existence}. We introduce
the linear interpolations (in time) for $\alpha$, $\beta$, $h$ given by%
\begin{align}
\alpha_{k}(t)  &  :=\alpha_{k}^{i-1}+\frac{t-t_{k}^{i-1}}{\tau_{k}}(\alpha
_{k}^{i}-\alpha_{k}^{i-1})\,,\label{alpha inter}\\
\beta_{k}(t)  &  :=\beta_{k}^{i-1}+\frac{t-t_{k}^{i-1}}{\tau_{k}}(\beta
_{k}^{i}-\beta_{k}^{i-1})\,,\label{beta inter}\\
h_{k}(t,x)  &  :=\check{h}_{k}^{i-1}(x)+\frac{t-t_{k}^{i-1}}{\tau_{k}}%
(\check{h}_{k}^{i}(x)-\check{h}_{k}^{i-1}(x))\,, \label{h inter}%
\end{align} 
for $t\in\lbrack t_{k}^{i-1},t_{k}^{i}],\,i\in\mathbb{N}$, and $x\in
\mathbb{R}$, where $\check{h}_{k}^{i}$ is the extension of $h_{k}^{i}$ by zero
outside the interval $[\alpha_{k}^{i},\beta_{k}^{i}]$.

We also introduce the piecewise constant interpolations (in time) for $\alpha$, $\beta$, $h$
given by%
\begin{equation}
\hat{\alpha}_{k}(t):=\alpha_{k}^{i}\,,\quad\hat{\beta}_{k}(t):=\beta_{k}%
^{i}\,,\quad\hat{h}_{k}(t,x):=\check{h}_{k}^{i}(x) \label{h inter pc}%
\end{equation}
for $t\in(t_{k}^{i-1},t_{k}^{i}],\,i\in\mathbb{N}$, and $x\in\mathbb{R}$. For
$t=0$ we set $\hat{\alpha}_{k}(0):=\alpha_{k}^{0}=\alpha_{0}$, $\hat{\beta
}_{k}(0):=\beta_{k}^{0}=\beta_{0}$, and $\hat{h}_{k}(0,x):=h_{k}^{0}%
(x)=h_{0}(x)$. Observe that,  since $(\alpha_{k}^{i},\beta_{k}^{i},h_{k}^{i}%
)\in\mathcal{A}_{s}$ for every $i$ and $k$, we have $\hat{h}_{k}(t,\cdot)\in
H^{2}((\hat{\alpha}_{k}(t),\hat{\beta}_{k}(t)))$  for every $t\geq0$ and, by Lemma \ref{lemma endpoints},
\begin{equation}\label{tyu87}
\hat{\beta}_{k}(t)-\hat{\alpha}_{k}(t)\geq\sqrt{\frac{2A_{0}}{L_{0}}}.%
\end{equation}

Note that since $\operatorname*{Lip}\check{h}_{k}^{i-1}\leq L_{0}$ and
$\operatorname*{Lip}\check{h}_{k}^{i}\leq L_{0}$, we have $\operatorname*{Lip}%
h_{k}(t,\cdot)\leq L_{0}$ and $\operatorname*{Lip}\hat{h}_{k}(t,\cdot)\leq
L_{0}$ for every $t\in\lbrack0,\infty)$. Moreover by \eqref{area constraint},%
\begin{equation}
\int_{\mathbb{R}}h_{k}(t,x)\,dx=\int_{\mathbb{R}}\hat{h}_{k}(t,x)\,dx=A_{0}\,
\label{average hk}%
\end{equation}
for every $t\in\lbrack0,\infty)$ and all $k$.

\begin{lemma}
\label{lemma energy bound}There exists a constant $M_{0}>0$ such that%
\[
\mathcal{S}(\alpha_{k}^{i},\beta_{k}^{i},h_{k}^{i})+\mathcal{E}(\alpha_{k}%
^{i},\beta_{k}^{i},h_{k}^{i},u_{k}^{i})+\sum_{j=1}^{i}\mathcal{T}_{\tau_{k}%
}(\alpha_{k}^{j},\beta_{k}^{j},h_{k}^{j};\alpha_{k}^{j-1},\beta_{k}%
^{j-1},h_{k}^{j-1})\leq M_{0}%
\]
for every $i$ and $k$.
\end{lemma}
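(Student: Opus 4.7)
The plan is to prove this by the standard minimizing-movements energy-decrease/telescoping argument. Define
\[
M_0 := \mathcal{S}(\alpha_0,\beta_0,h_0)+\mathcal{E}(\alpha_0,\beta_0,h_0,u_0),
\]
which is finite because $(\alpha_0,\beta_0,h_0,u_0)\in\mathcal{A}$.

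First I would exploit the minimality of $(\alpha_k^j,\beta_k^j,h_k^j,u_k^j)$ for $\mathcal{F}_k^{j-1}$ by using the previous iterate $(\alpha_k^{j-1},\beta_k^{j-1},h_k^{j-1},u_k^{j-1})\in\mathcal{A}$ as a competitor. Since the ``transport'' functional satisfies $\mathcal{T}_{\tau_k}(\alpha_k^{j-1},\beta_k^{j-1},h_k^{j-1};\alpha_k^{j-1},\beta_k^{j-1},h_k^{j-1})=0$ (all three quadratic differences in \eqref{tau energy} vanish since $H=H^0$ identically, $\alpha=\alpha^0$, $\beta=\beta^0$), minimality gives
\begin{align*}
\mathcal{S}(\alpha_k^j,\beta_k^j,h_k^j)&+\mathcal{E}(\alpha_k^j,\beta_k^j,h_k^j,u_k^j)+\mathcal{T}_{\tau_k}(\alpha_k^j,\beta_k^j,h_k^j;\alpha_k^{j-1},\beta_k^{j-1},h_k^{j-1})\\
&\leq \mathcal{S}(\alpha_k^{j-1},\beta_k^{j-1},h_k^{j-1})+\mathcal{E}(\alpha_k^{j-1},\beta_k^{j-1},h_k^{j-1},u_k^{j-1}).
\end{align*}

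Next, I would sum this inequality over $j=1,\dots,i$. The $\mathcal{S}+\mathcal{E}$ contributions on the right at index $j$ cancel with the corresponding $\mathcal{S}+\mathcal{E}$ contributions on the left at index $j-1$, producing a telescoping cascade. What survives on the left is the sum $\sum_{j=1}^i \mathcal{T}_{\tau_k}(\alpha_k^j,\beta_k^j,h_k^j;\alpha_k^{j-1},\beta_k^{j-1},h_k^{j-1})$ together with $\mathcal{S}(\alpha_k^i,\beta_k^i,h_k^i)+\mathcal{E}(\alpha_k^i,\beta_k^i,h_k^i,u_k^i)$, while what survives on the right is $\mathcal{S}(\alpha_k^0,\beta_k^0,h_k^0)+\mathcal{E}(\alpha_k^0,\beta_k^0,h_k^0,u_k^0)$, which equals $M_0$ by the initialization \eqref{hkzero}. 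The desired bound follows.

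There is no essential obstacle here; the only point requiring a line of justification is that every competitor used is actually admissible (so that the minimality inequality applies), and that each term in the cascade is finite so the telescoping is legitimate. Admissibility of the competitor $(\alpha_k^{j-1},\beta_k^{j-1},h_k^{j-1},u_k^{j-1})$ for the problem at step $j$ is automatic because it was either the initial datum or the minimizer produced at step $j-1$, and Theorem~\ref{theorem existence} guarantees its membership in $\mathcal{A}$. Finiteness of all quantities is ensured by \eqref{surface energy nonnegative}, the nonnegativity of $\mathcal{E}$ and $\mathcal{T}_{\tau_k}$, and the a priori $H^1$ bound \eqref{estimate in H1} at each step.
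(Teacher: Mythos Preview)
Your proof is correct and follows exactly the same approach as the paper: compare the minimizer at step $j$ with the previous iterate as competitor (where $\mathcal{T}_{\tau_k}$ vanishes), obtain the one-step energy inequality, and telescope over $j=1,\dots,i$ to arrive at $M_0=\mathcal{S}(\alpha_0,\beta_0,h_0)+\mathcal{E}(\alpha_0,\beta_0,h_0,u_0)$. Your additional remarks on admissibility and finiteness are fine and slightly more explicit than the paper's version.
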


\begin{proof}
Fix $i\in\mathbb{N}$ and let $1\leq j\leq i$. Since $(\alpha_{k}^{j},\beta
_{k}^{j},h_{k}^{j},u_{k}^{j})\in\mathcal{A}$ is a minimizer of the functional
$\mathcal{F}_{k}^{j-1}$ defined in \eqref{functional Fi}, we have%
\begin{align*}
\mathcal{S}  &  (\alpha_{k}^{j},\beta_{k}^{j},h_{k}^{j})+\mathcal{E}%
(\alpha_{k}^{j},\beta_{k}^{j},h_{k}^{j},u_{k}^{j})+\mathcal{T}_{\tau_{k}%
}(\alpha_{k}^{j},\beta_{k}^{j},h_{k}^{j};\alpha_{k}^{j-1},\beta_{k}%
^{j-1},h_{k}^{j-1})\\
&  \leq\mathcal{S}(\alpha_{k}^{j-1},\beta_{k}^{j-1},h_{k}^{j-1})+\mathcal{E}%
(\alpha_{k}^{j-1},\beta_{k}^{j-1},h_{k}^{j-1},u_{k}^{j-1})\,.
\end{align*}
Summing both sides of this inequality over $j=1,\ldots,i$, we obtain%
\begin{align*}
\mathcal{S}  &  (\alpha_{k}^{i},\beta_{k}^{i},h_{k}^{i})+\mathcal{E}%
(\alpha_{k}^{i},\beta_{k}^{i},h_{k}^{i},u_{k}^{i})+\sum_{j=1}^{i}%
\mathcal{T}_{\tau_{k}}(\alpha_{k}^{j},\beta_{k}^{j},h_{k}^{j};\alpha_{k}%
^{j-1},\beta_{k}^{j-1},h_{k}^{j-1})\\
&  \leq\mathcal{S}(\alpha_{0},\beta_{0},h_{0})+\mathcal{E}(\alpha_{0}%
,\beta_{0},h_{0},u_{0})=:M_{0}\,,
\end{align*}
and this concludes the proof.
\end{proof}

\begin{proposition}
\label{proposition alpha}There exists a constant $M_{1}>0$ such that%
\begin{equation}
\int_{0}^{\infty}(\dot{\alpha}_{k}(t))^{2}dt\leq M_{1}\quad\text{and}\quad
\int_{0}^{\infty}(\dot{\beta}_{k}(t))^{2}dt\leq M_{1}
\label{alha dot k bounds}%
\end{equation}
for every $k$. In particular,%
\begin{equation}
|\alpha_{k}(t_{2})-\alpha_{k}(t_{1})|\leq M_{1}^{1/2}|t_{2}-t_{1}|^{1/2}%
,\quad|\beta_{k}(t_{2})-\beta_{k}(t_{1})|\leq M_{1}^{1/2}|t_{2}-t_{1}|^{1/2}
\label{alpha k holder}%
\end{equation}
for every $t_{1,}t_{2}\in\lbrack0,\infty)$, and%
\begin{equation}
\alpha^{0}-(tM_{1})^{1/2}\leq\alpha_{k}(t)\leq\beta_{k}(t)\leq\beta
^{0}+(tM_{1})^{1/2} \label{support}%
\end{equation}
for every $t\in\lbrack0,\infty)$.
\end{proposition}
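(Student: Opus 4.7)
The plan is to derive both estimates from Lemma \ref{lemma energy bound}, exploiting the fact that the dissipation functional $\mathcal{T}_{\tau_{k}}$ contains the squared increments of $\alpha$ and $\beta$ divided by $\tau_{k}$. This gives the usual ``discrete $H^{1}$'' bound familiar from minimizing movements, which then yields the $L^{2}$-bound on the time derivatives of the piecewise affine interpolants, the H\"older continuity, and the linear-in-time support growth by elementary arguments.

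First, I would recall from \eqref{tau energy} that
\[
\mathcal{T}_{\tau_{k}}(\alpha_{k}^{j},\beta_{k}^{j},h_{k}^{j};\alpha_{k}^{j-1},\beta_{k}^{j-1},h_{k}^{j-1})\geq\frac{\sigma_{0}}{2\tau_{k}}(\alpha_{k}^{j}-\alpha_{k}^{j-1})^{2}+\frac{\sigma_{0}}{2\tau_{k}}(\beta_{k}^{j}-\beta_{k}^{j-1})^{2}.
\]
Summing this over $j=1,\dots,i$ and invoking Lemma \ref{lemma energy bound} gives
\[
\sum_{j=1}^{i}\frac{(\alpha_{k}^{j}-\alpha_{k}^{j-1})^{2}}{\tau_{k}}+\sum_{j=1}^{i}\frac{(\beta_{k}^{j}-\beta_{k}^{j-1})^{2}}{\tau_{k}}\leq\frac{2M_{0}}{\sigma_{0}}
\]
for every $i$. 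Since $\dot{\alpha}_{k}$ is constant and equal to $(\alpha_{k}^{i}-\alpha_{k}^{i-1})/\tau_{k}$ on the interval $(t_{k}^{i-1},t_{k}^{i})$ by \eqref{alpha inter}, integrating in time converts each sum into $\int_{0}^{\infty}(\dot{\alpha}_{k})^{2}\,dt$ (respectively $\int_{0}^{\infty}(\dot{\beta}_{k})^{2}\,dt$), yielding \eqref{alha dot k bounds} with $M_{1}:=2M_{0}/\sigma_{0}$.

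Next, the H\"older estimate \eqref{alpha k holder} follows at once from \eqref{alha dot k bounds} by writing
\[
|\alpha_{k}(t_{2})-\alpha_{k}(t_{1})|\leq\int_{t_{1}}^{t_{2}}|\dot{\alpha}_{k}(t)|\,dt\leq|t_{2}-t_{1}|^{1/2}\Bigl(\int_{t_{1}}^{t_{2}}(\dot{\alpha}_{k}(t))^{2}dt\Bigr)^{1/2}\leq M_{1}^{1/2}|t_{2}-t_{1}|^{1/2},
\]
and analogously for $\beta_{k}$. Finally, the two-sided inequality \eqref{support} is obtained by applying \eqref{alpha k holder} with $t_{1}=0$ and $t_{2}=t$, recalling that $\alpha_{k}(0)=\alpha_{0}$, $\beta_{k}(0)=\beta_{0}$ by \eqref{hkzero}, and using the trivial inequality $\alpha_{k}(t)\leq\beta_{k}(t)$ coming from $(\hat{\alpha}_{k}(t),\hat{\beta}_{k}(t),\hat{h}_{k}(t,\cdot))\in\mathcal{A}_{s}$.

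There is no real obstacle here: everything follows mechanically from the a priori energy bound, once one recognizes that the minimum-movement scheme provides the squared-increment control of the endpoints for free through the dissipation term $\mathcal{T}_{\tau_{k}}$. The only small care needed is to verify that the telescoping sum of the other (nonnegative) contributions to $\mathcal{T}_{\tau_{k}}$ can simply be discarded, which is justified by \eqref{surface energy nonnegative} and the positivity of $\mathcal{E}$.
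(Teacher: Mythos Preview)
Your proof is correct and follows essentially the same approach as the paper: extract the squared-increment bound from Lemma~\ref{lemma energy bound} via the dissipation term in $\mathcal{T}_{\tau_k}$, set $M_1:=2M_0/\sigma_0$, and deduce \eqref{alpha k holder} and \eqref{support} by the fundamental theorem of calculus and H\"older's inequality.
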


\begin{proof}
By \eqref{surface energy nonnegative}, \eqref{tau energy}, and Lemma
\ref{lemma energy bound} we obtain%
\[
\frac{1}{\tau_{k}}\sum_{j=1}^{\infty}(\alpha_{k}^{j}-\alpha_{k}^{j-1})^{2}%
\leq\frac{2M_{0}}{\sigma_{0}}=:M_{1}\,.
\]
By \eqref{alpha inter} the previous inequality can be written as%
\[
\int_{0}^{\infty}(\dot{\alpha}_{k}(t))^{2}dt=\sum_{j=1}^{\infty}\int%
_{t_{k}^{j-1}}^{t_{k}^{j}}(\dot{\alpha}_{k}(t))^{2}dt\leq M_{1}\,.
\]
The first inequality \eqref{alpha k holder} now follows from the fundamental
theorem of calculus and H\"{o}lder's inequality. Since $\alpha_{k}%
(0)=\alpha^{0}$ the first inequality \eqref{support} is a direct consequence of 
\eqref{alpha k holder}. Similar estimates hold for $\beta_{k}$.
\end{proof}

Define%
\begin{align}
H_{k}^{i}(x)  &  :=\int_{-\infty}^{x}\check{h}_{k}^{i}(\rho)\,d\rho
\,,\text{\quad}H_{k}(t,x):=\int_{-\infty}^{x}h_{k}(t,\rho)\,d\rho
\,.\label{H k}\\
\hat{H}_{k}(t,x)  &  :=\int_{-\infty}^{x}\hat{h}_{k}(t,\rho)\,d\rho\,.
\label{Hk pc}%
\end{align}
Observe that by \eqref{h inter},%
\begin{align}
H_{k}(t,x)  &  =H_{k}^{i-1}(x)+\frac{t-t_{k}^{i-1}}{\tau_{k}}(H_{k}%
^{i}(x)-H_{k}^{i-1}(x))\quad\text{for }t\in\lbrack t_{k}^{i-1},t_{k}%
^{i}]\,,\label{H inter}\\
\hat{H}_{k}(t,x)  &  =H_{k}^{i}(x)\quad\text{for }(t_{k}^{i-1},t_{k}^{i}]\,,
\label{H inter pc}%
\end{align}
for $i\in\mathbb{N}$ and $x\in\mathbb{R}$.

\begin{proposition}
\label{proposition Hk}For every $k$,
\begin{equation}
\int_{0}^{\infty}\Vert\dot{H}_{k}(t,\cdot)\Vert_{L^{2}(\mathbb{R})}^{2}%
dt\leq2M_{0}\,, \label{Hk dot bound}%
\end{equation}
where $M_{0}$ is the constant in Lemma \ref{lemma energy bound}. In
particular,%
\begin{align}
\Vert H_{k}(t_{2},\cdot)-H_{k}(t_{1},\cdot)\Vert_{L^{2}(\mathbb{R})}  &
\leq(2M_{0})^{1/2}|t_{2}-t_{1}|^{1/2}\label{Hk holder}\\
\Vert H_{k}(t,\cdot)\Vert_{L^{2}(\mathbb{R})}  &  \leq(2M_{0})^{1/2}%
t^{1/2}+\Vert H^{0}\Vert_{L^{2}(\mathbb{R})} \label{Hk bound L2}%
\end{align}
for every $t,t_{1},t_{2}\in\lbrack0,\infty)$ and for every $k$, where $H^{0}$
is defined in \eqref{H0}.
\end{proposition}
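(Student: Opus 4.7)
The plan is to read off $\dot{H}_k$ from the piecewise linear definition \eqref{H inter}, bound the resulting incremental quantity by the discrete dissipation term $\mathcal{T}_{\tau_k}$, and then telescope using the total energy bound from Lemma \ref{lemma energy bound}. The two pointwise-in-time estimates \eqref{Hk holder} and \eqref{Hk bound L2} will then fall out by Cauchy--Schwarz in time and the triangle inequality.

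More concretely, for $t\in[t_k^{i-1},t_k^i]$ formula \eqref{H inter} gives
$$\dot{H}_k(t,x)=\frac{H_k^i(x)-H_k^{i-1}(x)}{\tau_k},$$
so that
$$\int_{t_k^{i-1}}^{t_k^i}\Vert\dot{H}_k(t,\cdot)\Vert_{L^2(\mathbb{R})}^2\,dt=\frac{1}{\tau_k}\Vert H_k^i-H_k^{i-1}\Vert_{L^2(\mathbb{R})}^2.$$
Since $\sqrt{1+((\check{h}_k^{i-1})')^2}\ge 1$ and all three terms in \eqref{tau energy} are nonnegative, we deduce
$$\frac{1}{2\tau_k}\Vert H_k^i-H_k^{i-1}\Vert_{L^2(\mathbb{R})}^2\le \mathcal{T}_{\tau_k}(\alpha_k^i,\beta_k^i,h_k^i;\alpha_k^{i-1},\beta_k^{i-1},h_k^{i-1}).$$
Summing over $i\ge 1$ and applying Lemma \ref{lemma energy bound} yields \eqref{Hk dot bound}.

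For \eqref{Hk holder}, I would use the fundamental theorem of calculus applied to $t\mapsto H_k(t,\cdot)\in L^2(\mathbb{R})$ together with Minkowski's integral inequality and Cauchy--Schwarz:
$$\Vert H_k(t_2,\cdot)-H_k(t_1,\cdot)\Vert_{L^2(\mathbb{R})}\le\int_{t_1}^{t_2}\Vert\dot{H}_k(t,\cdot)\Vert_{L^2(\mathbb{R})}\,dt\le |t_2-t_1|^{1/2}\Bigl(\int_0^{\infty}\Vert\dot{H}_k(t,\cdot)\Vert_{L^2(\mathbb{R})}^2\,dt\Bigr)^{1/2},$$
and \eqref{Hk dot bound} closes the estimate. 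Finally, \eqref{Hk bound L2} follows from \eqref{Hk holder} applied with $t_1=0$ and $t_2=t$, the triangle inequality, and the identification $H_k(0,\cdot)=H_k^0=H^0$ coming from \eqref{hkzero} and \eqref{H0}.

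There is no real obstacle in the argument; the only point requiring care is the lower bound $\sqrt{1+((\check{h}_k^{i-1})')^2}\ge 1$, which lets us drop the geometric weight in \eqref{tau energy} and isolate the pure $L^2$ squared increment of $H_k^i-H_k^{i-1}$. Everything else is telescoping and Cauchy--Schwarz in time.
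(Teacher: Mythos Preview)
Your proposal is correct and follows essentially the same argument as the paper: compute $\dot H_k$ from the piecewise linear formula, drop the weight $\sqrt{1+((\check h_k^{i-1})')^2}\ge1$ in $\mathcal{T}_{\tau_k}$, telescope via Lemma~\ref{lemma energy bound}, and then deduce \eqref{Hk holder} and \eqref{Hk bound L2} by the fundamental theorem of calculus with Cauchy--Schwarz and the triangle inequality.
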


\begin{proof}
By \eqref{surface energy nonnegative}, \eqref{tau energy}, and Lemma
\ref{lemma energy bound},%
\[
\frac{1}{2\tau_{k}}\sum_{j=1}^{\infty}\int_{\mathbb{R}}(H_{k}^{j}%
(x)-H_{k}^{j-1}(x))^{2}dx\leq M_{0}\,.
\]
Using \eqref{H inter} we have%
\begin{align*}
\int_{0}^{\infty}\int_{\mathbb{R}}(\dot{H}_{k}(t,x))^{2}dxdt  &  =\sum
_{j=1}^{\infty}\int_{t_{k}^{j-1}}^{t_{k}^{j}}\int_{\mathbb{R}}(\dot{H}%
_{k}(t,x))^{2}dxdt\\
&  =\frac{1}{\tau_{k}}\sum_{j=1}^{\infty}\int_{\mathbb{R}}(H_{k}^{j}%
(x)-H_{k}^{j-1}(x))^{2}dx\leq2M_{0}\,,
\end{align*}
which gives \eqref{Hk dot bound}. The estimate \eqref{Hk holder} follows from
the fundamental theorem of calculus and H\"{o}lder's inequality. From
\eqref{H0} and \eqref{Hk holder} we obtain \eqref{Hk bound L2}.
\end{proof}

\bigskip

\begin{proposition}
\label{proposition holder}There exists a constant $M_{2}>0$ such that%
\begin{align}
\Vert h_{k}(t,\cdot)\Vert_{L^{2}(\mathbb{R})}  &  \leq M_{2}(t^{1/2}+{ (\beta_0-\alpha_0))}%
\,,\label{hk L2 bound}\\
\Vert h_{k}(t_{2},\cdot)-h_{k}(t_{1},\cdot)\Vert_{L^{2}(\mathbb{R})}  &  \leq
M_{2}|t_{2}-t_{1}|^{3/10}, \label{hk holder}%
\end{align}
for every $t,t_{1},t_{2}\in\lbrack0,\infty)$ and for every $k$.
\end{proposition}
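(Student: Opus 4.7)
The two estimates will be proved separately. The first is elementary; the second requires an interpolation argument.

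For \eqref{hk L2 bound}, observe that $h_k(t,\cdot)$ is a non-negative function, Lipschitz with constant at most $L_0$ (being a convex combination of the $L_0$-Lipschitz extensions $\check h_k^{i-1}$ and $\check h_k^{i}$), vanishing at the boundary of its support, with $\int_{\mathbb R} h_k(t,\cdot)\,dx = A_0$ by \eqref{average hk}. By Proposition \ref{proposition alpha}, this support is contained in an interval of length at most $(\beta_0-\alpha_0) + 2(tM_1)^{1/2}$. These three facts force
\[
\|h_k(t,\cdot)\|_{L^{\infty}(\mathbb R)} \leq \tfrac{L_0}{2}\bigl[(\beta_0-\alpha_0) + 2(tM_1)^{1/2}\bigr],
\]
whence $\|h_k(t,\cdot)\|_{L^2(\mathbb R)}^2 \leq \|h_k(t,\cdot)\|_{L^{\infty}}A_0$. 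Using $\sqrt{a+b}\leq\sqrt{a}+\sqrt{b}$ and separating the cases $t\leq 1$ and $t\geq 1$ gives \eqref{hk L2 bound} for a suitable $M_2$.

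For \eqref{hk holder}, set $G := H_k(t_2,\cdot) - H_k(t_1,\cdot)$ and $g := h_k(t_2,\cdot) - h_k(t_1,\cdot) = G'$. Proposition \ref{proposition Hk} gives $\|G\|_{L^2(\mathbb R)}\leq (2M_0)^{1/2}|t_2-t_1|^{1/2}$. A na\"ive interpolation $\|G'\|_{L^2}^2 \leq \|G\|_{L^2}\|G''\|_{L^2}$, combined with a uniform bound on $\|G''\|_{L^2} = \|h_k'(t_2)-h_k'(t_1)\|_{L^2}$ coming from $\|h_k'\|_{L^{\infty}}\leq L_0$ and the bounded support, yields only a $1/4$-H\"older estimate. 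To reach $3/10$, one exploits higher spatial regularity of $h_k$ at the critical Sobolev order $3/2$: Lemma \ref{lemma energy bound} gives $\mathcal S(\alpha_k^j,\beta_k^j,h_k^j)\leq M_0$, which bounds $\|h_k^{j\prime\prime}\|_{L^2}$ on the interior of the support, while the jumps of $h_k'$ at the endpoints have magnitudes bounded by $L_0$. This places $h_k(t,\cdot)$ uniformly in the weak Besov space $B^{3/2}_{2,\infty}(\mathbb R)$. A Gagliardo-Nirenberg-type interpolation
\[
\|G'\|_{L^2(\mathbb R)} \leq C\,\|G\|_{L^2(\mathbb R)}^{3/5}\,\|G\|_{B^{5/2}_{2,\infty}(\mathbb R)}^{2/5}
\]
then gives $\|g\|_{L^2(\mathbb R)}\leq C|t_2-t_1|^{3/10}$, which is \eqref{hk holder}.

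The main obstacle is this interpolation step. Because $h_k(t,\cdot)$ has jumps in its first derivative at the endpoints of its support, it lies in $H^{3/2-\varepsilon}$ for every $\varepsilon>0$ but fails to lie in $H^{3/2}$; one must therefore work at the weak-type Besov endpoint $B^{3/2}_{2,\infty}$, where the uniform bound requires combining the interior $L^2$-control on $h_k^{j\prime\prime}$ with the uniform bound $L_0$ on the boundary jumps. An alternative implementation avoids Besov spaces by splitting $g$ into a bulk piece on a common interior of the two supports (where spatial regularity estimates are available) and boundary pieces supported on intervals of length $O(|t_2-t_1|^{1/2})$ by Proposition \ref{proposition alpha}; on the boundary pieces the linear vanishing of $h_k$ near its endpoints yields an $L^2$-bound of order $|t_2-t_1|^{3/4}$, and a careful book-keeping of the bulk and boundary contributions recovers the claimed $3/10$ exponent.
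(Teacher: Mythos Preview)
For \eqref{hk L2 bound} your argument is correct and matches the paper's.

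For \eqref{hk holder} the paper takes a much shorter route than you propose. It never touches the interior $H^2$ bound on $h_k^i$, Besov spaces, or any bulk/boundary decomposition: it applies the one-dimensional Gagliardo--Nirenberg inequality (cited as \cite[Theorem~7.41]{leoni-book2}) directly to $v:=H_k(t_2,\cdot)-H_k(t_1,\cdot)$ in the form
\[
\|v'\|_{L^2(\mathbb R)}\le C_{GN}\,\|v\|_{L^2(\mathbb R)}^{3/5}\,\|v''\|_{L^\infty(\mathbb R)}^{2/5},
\]
and then observes that $v''=h_k'(t_2,\cdot)-h_k'(t_1,\cdot)$, so $\|v''\|_{L^\infty}\le 2L_0$ immediately from the uniform Lipschitz constraint. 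Combined with \eqref{Hk holder} this gives the exponent $3/10$ in one line. You considered $G''$ only in $L^2$ (which indeed yields exponent $1/4$); the paper's observation is that the same quantity is uniformly bounded in $L^\infty$, and the $L^2$--$L^\infty$ form of the interpolation is what the authors invoke.

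A caveat on your Besov route: real interpolation between $L^2$ and $B^{5/2}_{2,\infty}$ with parameter $\theta=2/5$ lands in $B^1_{2,\infty}$, which strictly contains $H^1=B^1_{2,2}$, so the multiplicative inequality you wrote for $\|G'\|_{L^2}$ does not follow as stated. If you want to run an argument via higher spatial regularity, a cleaner implementation is to observe that $G'''=h_k''(t_2,\cdot)-h_k''(t_1,\cdot)$ is a finite signed measure on $\mathbb R$ (an absolutely continuous part controlled via Proposition~\ref{proposition second i}, plus at most four Dirac masses of size at most $L_0$ at the endpoints) and then apply Gagliardo--Nirenberg with $j=1$, $m=3$, $r=1$: the same exponents $3/5$, $2/5$ emerge, now with $a=2/5\ge j/m=1/3$, so the classical inequality applies without issue.
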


\begin{proof}
By \eqref{h inter} and \eqref{support} we have that $h_{k}(t,x)=0$ for every
$t\in\lbrack0,\infty)$ and for $x\notin\lbrack\alpha^{0}-(tM_{1})^{1/2}%
,\beta^{0}+(tM_{1})^{1/2}]$. Since $\operatorname*{Lip}h_{k}(t,\cdot)\leq
L_{0}$ for every $t\in\lbrack0,\infty)$, we obtain \eqref{hk L2 bound}. To
prove \eqref{hk holder}, fix $t_{1},t_{2}\in\lbrack0,\infty)$ and $k$ and
apply Theorem 7.41 in \cite{leoni-book2} to the function $v(x):=H_{k}%
(t_{2},x)-H_{k}(t_{1},x)$, $x\in\mathbb{R}$, to get%
\begin{align*}
\Vert h_{k}(t_{2},\cdot)&-h_{k}(t_{1},\cdot)\Vert_{L^{2}(\mathbb{R})}  
=\Vert H_{k}^{\prime}(t_{2},\cdot)-H_{k}^{\prime}(t_{1},\cdot)\Vert
_{L^{2}(\mathbb{R})}\\
&  \leq C_{GN}\Vert H_{k}(t_{2},\cdot)-H_{k}(t_{1},\cdot)\Vert_{L^{2}%
(\mathbb{R})}^{3/5}\Vert H_{k}^{\prime\prime}(t_{2},\cdot)-H_{k}^{\prime
\prime}(t_{1},\cdot)\Vert_{L^{\infty}(\mathbb{R})}^{2/5}\\
&  \leq C_{GN}(2M_{0})^{3/10}|t_{2}-t_{1}|^{3/10}\Vert h_{k}^{\prime}%
(t_{2},\cdot)-h_{k}^{\prime}(t_{1},\cdot)\Vert_{L^{\infty}(\mathbb{R})}%
^{2/5}\\
&  \leq C_{GN}(2M_{0})^{3/10}(2L_{0})^{2/5}|t_{2}-t_{1}|^{3/10}%
\end{align*}
where in the last  inequalities  we used \eqref{Hk holder} and the fact that
$\operatorname*{Lip}h_{k}(t,\cdot)\leq L_{0}$ for every $t\in\lbrack0,\infty)$.
\end{proof}

\begin{proposition}
\label{proposition second i}There exists a constant $M_{3}>1$ such that for
every $i,k$ we have%
\begin{equation}
\int_{\alpha_{k}^{i}}^{\beta_{k}^{i}}|(h_{k}^{i})^{\prime\prime}(x)|^{2}dx\leq
M_{3}\,. \label{h'' i bound}%
\end{equation}
In particular,
\begin{equation}
|(h_{k}^{i})^{\prime}(x_{2})-(h_{k}^{i})^{\prime}(x_{1})|\leq M_{3}%
^{1/2}(x_{2}-x_{1})^{1/2} \label{h' i holder}%
\end{equation}
for all $\alpha_{k}^{i}\leq x_{1}\leq x_{2}\leq\beta_{k}^{i}$.
\end{proposition}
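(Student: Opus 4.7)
The strategy is to read the $H^{2}$ bound directly off the energy inequality provided by Lemma~\ref{lemma energy bound}, using crucially the structural inequality $\gamma>\gamma_{0}$ and the a priori Lipschitz bound $\operatorname*{Lip}h_{k}^{i}\le L_{0}$.

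First, by Lemma~\ref{lemma energy bound} and the nonnegativity of every term in $\mathcal{F}_{k}^{i-1}$, we have $\mathcal{S}(\alpha_{k}^{i},\beta_{k}^{i},h_{k}^{i})\le M_{0}$ for all $i,k$. Recalling the decomposition
\[
\mathcal{S}(\alpha,\beta,h)=\Bigl(\gamma\int_{\alpha}^{\beta}\sqrt{1+(h^{\prime})^{2}}\,dx-\gamma_{0}(\beta-\alpha)\Bigr)+\frac{\nu_{0}}{2}\int_{\alpha}^{\beta}\frac{(h^{\prime\prime})^{2}}{(1+(h^{\prime})^{2})^{5/2}}\,dx\,,
\]
and the fact, observed in \eqref{surface energy 2}, that the bracketed term is nonnegative as a consequence of $\gamma>\gamma_{0}$, I would conclude that the curvature integral is bounded:
\[
\frac{\nu_{0}}{2}\int_{\alpha_{k}^{i}}^{\beta_{k}^{i}}\frac{((h_{k}^{i})^{\prime\prime})^{2}}{(1+((h_{k}^{i})^{\prime})^{2})^{5/2}}\,dx\le\mathcal{S}(\alpha_{k}^{i},\beta_{k}^{i},h_{k}^{i})\le M_{0}\,.
\]

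Second, since $(\alpha_{k}^{i},\beta_{k}^{i},h_{k}^{i})\in\mathcal{A}_{s}$ gives $\operatorname*{Lip}h_{k}^{i}\le L_{0}$, we have the pointwise estimate $(1+((h_{k}^{i})^{\prime})^{2})^{5/2}\le(1+L_{0}^{2})^{5/2}$, which yields
\[
\int_{\alpha_{k}^{i}}^{\beta_{k}^{i}}|(h_{k}^{i})^{\prime\prime}(x)|^{2}\,dx\le(1+L_{0}^{2})^{5/2}\int_{\alpha_{k}^{i}}^{\beta_{k}^{i}}\frac{((h_{k}^{i})^{\prime\prime})^{2}}{(1+((h_{k}^{i})^{\prime})^{2})^{5/2}}\,dx\le\frac{2M_{0}(1+L_{0}^{2})^{5/2}}{\nu_{0}}\,.
\]
Setting $M_{3}:=\max\{1,2M_{0}(1+L_{0}^{2})^{5/2}/\nu_{0}\}+1$ delivers \eqref{h'' i bound} with a constant independent of both $i$ and $k$.

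Third, the H\"older estimate \eqref{h' i holder} is an immediate consequence of the fundamental theorem of calculus and the Cauchy--Schwarz inequality: for $\alpha_{k}^{i}\le x_{1}\le x_{2}\le\beta_{k}^{i}$,
\[
|(h_{k}^{i})^{\prime}(x_{2})-(h_{k}^{i})^{\prime}(x_{1})|=\Bigl|\int_{x_{1}}^{x_{2}}(h_{k}^{i})^{\prime\prime}(s)\,ds\Bigr|\le(x_{2}-x_{1})^{1/2}\Vert(h_{k}^{i})^{\prime\prime}\Vert_{L^{2}((\alpha_{k}^{i},\beta_{k}^{i}))}\le M_{3}^{1/2}(x_{2}-x_{1})^{1/2}\,.
\]
There is no real obstacle here: the entire argument is a repackaging of the uniform energy bound of Lemma~\ref{lemma energy bound} combined with the dewetting hypothesis $\gamma>\gamma_{0}$, which is what makes the pure-surface part of $\mathcal{S}$ nonnegative and hence allows the curvature integral to be isolated on the right-hand side without importing a $\beta_{k}^{i}-\alpha_{k}^{i}$ dependence.
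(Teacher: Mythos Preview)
Your proof is correct and follows essentially the same route as the paper: bound $\mathcal{S}(\alpha_{k}^{i},\beta_{k}^{i},h_{k}^{i})\le M_{0}$ via Lemma~\ref{lemma energy bound}, use \eqref{surface energy 2} to isolate the curvature integral, then invoke $\operatorname*{Lip}h_{k}^{i}\le L_{0}$ to remove the weight and finish with the fundamental theorem of calculus and Cauchy--Schwarz. The paper's proof is just a terser version of yours, arriving at the same constant $2\nu_{0}^{-1}(1+L_{0}^{2})^{5/2}M_{0}$.
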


\begin{proof}
It follows from \eqref{surface energy 2}, \eqref{surface energy nonnegative},
Lemma \ref{lemma energy bound}, and the fact that $\operatorname*{Lip}%
h_{k}^{i}\leq L_{0}$ that%
\[
\int_{\alpha_{k}^{i}}^{\beta_{k}^{i}}((h_{k}^{i})^{\prime\prime}(x))^{2}%
dx\leq(1+L_{0}^{2})^{5/2}\int_{\alpha_{k}^{i}}^{\beta_{k}^{i}}\frac
{((h_{k}^{i})^{\prime\prime}(x))^{2}}{(1+((h_{k}^{i})^{\prime}(x))^{2})^{5/2}%
}dx\leq2\nu_{0}^{-1}(1+L_{0}^{2})^{5/2}M_{0}\,.
\]
By the fundamental theorem of calculus and H\"{o}lder's inequality this
implies \eqref{h' i holder}.
\end{proof}

\section{Convergence to the Evolution Problem}\label{section 9}

 Throughout this section we assume that \begin{gather}(\alpha_{0},\beta_{0},h_{0})\in\mathcal{A}_{s}\,,
\label{assumption 1}
\\
h_{0}(x)>0\quad\text{for every }x\in (\alpha_{0},\beta_{0})\,,\label{h0 positive}
\\
h_{0}^{\prime}(\alpha_{0})>0\quad\text{and\quad}h_{0}^{\prime}(\beta_{0})<0\,,
\label{derivatives 0 not zero}%
\\
\operatorname*{Lip}h_{0}<L_{0}\,.
\label{Lip < L0}
\end{gather}

\begin{proposition}\label{rtyu8}
Let $\alpha_{k}$,
$\beta_{k}$, $\hat{\alpha}_{k}$, and $\hat{\beta}_{k}$ be defined as in
\eqref{alpha inter}, \eqref{beta inter}, and \eqref{h inter pc}. Then there
exist functions $\alpha$, $\beta:(0,\infty)\rightarrow\mathbb{R}$ such that
$\alpha,\beta\in H^{1}((0,T))$ for every $T>0$ and, up to subsequences (not
relabeled),
\begin{align}
\alpha_{k}  &  \rightharpoonup\alpha\quad\text{and}\quad\beta_{k}%
\rightharpoonup\beta\quad\text{weakly in }H^{1}%
((0,T))\,,\label{alpha k converge}\\
\alpha_{k}  &  \rightarrow\alpha\quad\text{and}\quad\beta_{k}\rightarrow
\beta\quad\text{uniformly in }[0,T]\,,\label{alpha k converge uniformly}\\
\hat{\alpha}_{k}  &  \rightarrow\alpha\quad\text{and}\quad\hat{\beta}%
_{k}\rightarrow\beta\quad\text{uniformly in }[0,T]\,,
\label{alpha hat converge uniformly}%
\end{align}
for every $T>0$.  Moreover,
\begin{equation}\label{tyu88}
\alpha(0)=\alpha_{0}\,,\quad\beta(0)=\beta_{0}\,,\ \text{ and }\ \beta(t)-\alpha(t)\geq\sqrt{\frac{2A_{0}}{L_{0}}}%
\end{equation}
for every $t\geq0$.
\end{proposition}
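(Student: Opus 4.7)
The plan is to extract convergent subsequences for $\alpha_k$ and $\beta_k$ in $H^1((0,T))$ on every bounded time interval by a standard diagonal argument based on the a priori estimates already established in Proposition~\ref{proposition alpha}, and then to deduce the uniform convergence of $\hat\alpha_k$ and $\hat\beta_k$ by estimating their deviation from the linear interpolants $\alpha_k$ and $\beta_k$. First, I would fix $T>0$. From \eqref{alha dot k bounds} we have $\int_0^T(\dot\alpha_k(t))^2\,dt\le M_1$, and since $\alpha_k(0)=\alpha_0$ is a fixed real number, the family $\{\alpha_k\}$ is bounded in $H^1((0,T))$. By the Banach--Alaoglu theorem there exists a subsequence (still denoted $\alpha_k$) and a function $\alpha^T\in H^1((0,T))$ such that $\alpha_k\rightharpoonup \alpha^T$ weakly in $H^1((0,T))$. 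By the compact embedding $H^1((0,T))\hookrightarrow C([0,T])$ this convergence is uniform on $[0,T]$. The analogous argument applies to $\beta_k$. A diagonal extraction over the sequence of intervals $[0,n]$, $n\in\mathbb{N}$, yields functions $\alpha,\beta\in H^1_{\mathrm{loc}}([0,\infty))$ such that \eqref{alpha k converge} and \eqref{alpha k converge uniformly} hold for every $T>0$.

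Next, to obtain \eqref{alpha hat converge uniformly}, I would compare the piecewise constant interpolants with the linear ones. For $t\in(t_k^{i-1},t_k^i]$, by definition \eqref{alpha inter} and \eqref{h inter pc} we have
\[
|\hat\alpha_k(t)-\alpha_k(t)|= \Big|\alpha_k^i-\alpha_k^{i-1}-\frac{t-t_k^{i-1}}{\tau_k}(\alpha_k^i-\alpha_k^{i-1})\Big|\le |\alpha_k^i-\alpha_k^{i-1}|\,,
\]
and by \eqref{alpha k holder} the right-hand side is bounded by $M_1^{1/2}\tau_k^{1/2}=M_1^{1/2}k^{-1/2}$, which tends to $0$ uniformly in $t$. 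Combined with \eqref{alpha k converge uniformly} this gives $\hat\alpha_k\to\alpha$ uniformly on $[0,T]$, and similarly for $\hat\beta_k$.

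Finally, for \eqref{tyu88}: the identities $\alpha(0)=\alpha_0$ and $\beta(0)=\beta_0$ follow immediately from $\alpha_k(0)=\alpha_0$, $\beta_k(0)=\beta_0$ (see \eqref{hkzero} and \eqref{alpha inter}--\eqref{beta inter}) and the uniform convergence on $[0,T]$. The lower bound on $\beta(t)-\alpha(t)$ is obtained by passing to the limit in \eqref{tyu87}, which reads $\hat\beta_k(t)-\hat\alpha_k(t)\ge\sqrt{2A_0/L_0}$ for every $t\ge 0$; the uniform convergence $\hat\alpha_k\to\alpha$ and $\hat\beta_k\to\beta$ preserves this inequality pointwise. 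No step presents a genuine obstacle here: the proof is essentially a compactness argument built on the energy estimates already derived in Lemma~\ref{lemma energy bound} and Proposition~\ref{proposition alpha}, together with a diagonal extraction to handle the unbounded time interval.
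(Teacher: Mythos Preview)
Your proof is correct and follows essentially the same approach as the paper's: both use the $H^1$ bounds from Proposition~\ref{proposition alpha} together with the fixed initial values to extract weakly convergent subsequences, upgrade to uniform convergence via the compact embedding $H^1\hookrightarrow C$, and then control $|\hat\alpha_k-\alpha_k|$ by the H\"older estimate \eqref{alpha k holder} to transfer convergence to the piecewise constant interpolants. The only cosmetic difference is that you spell out the diagonal extraction over $[0,n]$ explicitly, whereas the paper leaves it implicit.
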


\begin{proof}
Proposition \ref{proposition alpha}, together with the initial conditions $\alpha_k(0)=\alpha_{0}$ and $\beta_k(0)=\beta_{0}$,   implies \eqref{alpha k converge}, which in
turn yields \eqref{alpha k converge uniformly}  in view of the compact
embedding. By \eqref{alpha inter} and \eqref{h inter pc}, for every $k$ and
$t\in\lbrack0,T]$, there exists a $\hat{t}\in\lbrack0,T]$ with $t\leq\hat
{t}<t+\tau_{k}$ such that $\hat{\alpha}_{k}(t)=\alpha_{k}(\hat{t})$. By
\eqref{alpha k holder}, $|\hat{\alpha}_{k}(t)-\alpha_{k}(t)|\leq M_{1}%
^{1/2}\tau_{k}$. This implies that $\hat{\alpha}_{k}-\alpha_{k}\rightarrow0$
uniformly. Similarly, $\hat{\beta}_{k}-\beta_{k}\rightarrow0$ uniformly.
Hence, \eqref{alpha hat converge uniformly} follows from
\eqref{alpha k converge uniformly}. The equalities in \eqref{tyu88} follow from \eqref{alpha k converge uniformly}, since $\alpha_k(0)=\alpha_{0}$ and $\beta_k(0)=\beta_{0}$. 
The inequality in \eqref{tyu88} follows from \eqref{tyu87} and \eqref{alpha hat converge uniformly}.
\end{proof}

\begin{proposition}
\label{proposition existence h}Let $h_{k}$ be defined as in \eqref{h inter}. Then there
exist a subsequence (not relabeled) and a nonnegative function $h_{\ast}\in
C^{0,3/10}([0,\infty);L^{2}(\mathbb{R}))$ such that for every $t\in
\lbrack0,\infty)$,
\begin{align}
\operatorname*{Lip}h_{\ast}(t,\cdot)  &  \leq L_{0}\quad\text{and}\quad
h_{\ast}(t,x)=0\quad\text{for }x\notin(\alpha(t),\beta
(t))\label{h star support}\\
&  h_{k}(t,\cdot)\rightarrow h_{\ast}(t,\cdot)\text{\quad uniformly in
}\mathbb{R}\,,\label{hk uniformly}\\
&  h_{k}^{\prime}(t,\cdot)\overset{\ast}{\rightharpoonup}h_{\ast}^{\prime
}(t,\cdot)\quad\text{weakly star in }L^{\infty}(\mathbb{R})\,.
\label{hk prime star}%
\end{align}
Moreover, if we let $h(t,\cdot)$ be the restriction of $h^{\ast}(t,\cdot)$ to
$(\alpha(t),\beta(t))$, then $h(t,\cdot)\in H_{0}^{1}((\alpha(t),\beta
(t)))\cap H^{2}((\alpha(t),\beta(t)))$ and%
\begin{gather}
\int_{\alpha(t)}^{\beta(t)}h(t,x)\,dx=A_{0}\,\,,\label{h average}\\
\int_{\alpha(t)}^{\beta(t)}| h^{\prime\prime}(t,x)|^{2}dx\leq M_{3}%
\,,\label{h H2 bound}\\
|h^{\prime}(t,x_{2})-h^{\prime}(t,x_{1})|\leq M_{3}^{1/2}(x_{2}-x_{1}%
)^{1/2}\quad\text{for all }x_{1},x_{2}\text{ with }\alpha(t)\leq x_{1}\leq
x_{2}\leq\beta(t) \label{h prime holder}%
\end{gather}
for every $t\in\lbrack0,\infty)$.
\end{proposition}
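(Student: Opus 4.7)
The plan is to extract a limit by an Arzel\`a--Ascoli / diagonal argument applied first in space at a countable dense set of times and then to use the uniform H\"older-in-time estimate \eqref{hk holder} to propagate the subsequence to every $t \geq 0$.

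First I would fix a countable dense set $D \subset [0,\infty)$ (for instance $\mathbb{Q} \cap [0,\infty)$). For every $t \in D$ the family $\{h_k(t,\cdot)\}_k$ is uniformly Lipschitz (with constant $L_0$) and uniformly compactly supported thanks to \eqref{support}, so by Arzel\`a--Ascoli it is relatively compact in $C^0(\mathbb{R})$. A diagonal extraction gives a single subsequence (not relabeled) such that $h_k(t,\cdot) \to h_*(t,\cdot)$ uniformly on $\mathbb{R}$ for every $t \in D$, with $h_*(t,\cdot)$ nonnegative, $L_0$-Lipschitz, and supported in $[\alpha(t),\beta(t)]$ (using \eqref{alpha k converge uniformly}). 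Proposition \ref{proposition holder} gives the uniform bound
\[
\|h_k(t,\cdot)-h_k(s,\cdot)\|_{L^2(\mathbb{R})} \leq M_2 |t-s|^{3/10}
\]
for all $k$ and all $s,t \geq 0$. A standard Cauchy-sequence argument using this inequality and the $L^2$-Cauchy property of $\{h_k(t,\cdot)\}$ for $t \in D$ then shows that $\{h_k(t,\cdot)\}$ is $L^2$-Cauchy for every $t\geq 0$; the limit $h_*(t,\cdot)$ inherits the $3/10$-H\"older estimate in time, hence $h_* \in C^{0,3/10}([0,\infty);L^2(\mathbb{R}))$.

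For each fixed $t\geq 0$ the convergence $h_k(t,\cdot) \to h_*(t,\cdot)$ in $L^2$ combined with the uniform Lipschitz bound and the uniform-support estimate \eqref{support} upgrades to uniform convergence on $\mathbb{R}$ (equicontinuity plus compact support plus $L^2$ convergence implies uniform convergence), giving \eqref{hk uniformly}. The uniform bound $\|h_k'(t,\cdot)\|_{L^\infty} \leq L_0$ together with $h_k(t,\cdot)\to h_*(t,\cdot)$ uniformly yields by integration by parts the weak-$*$ convergence $h_k'(t,\cdot) \overset{\ast}{\rightharpoonup} h_*'(t,\cdot)$ in $L^\infty(\mathbb{R})$, which is \eqref{hk prime star}. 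The support condition in \eqref{h star support} follows since $\alpha_k(t)\to\alpha(t)$, $\beta_k(t)\to\beta(t)$ uniformly on compacts, and \eqref{h average} follows by passing to the limit in \eqref{average hk} using uniform convergence and uniform support. In particular $h(t,\alpha(t))=h(t,\beta(t))=0$, so $h(t,\cdot)\in H_0^1((\alpha(t),\beta(t)))$.

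The delicate point is obtaining the $H^2$ bound \eqref{h H2 bound} and the H\"older estimate \eqref{h prime holder} at the limit, because the relevant bound from Proposition \ref{proposition second i} is only available on the moving intervals $(\alpha_k^i,\beta_k^i)$. I would handle this by working on arbitrary compactly contained subintervals: given $t>0$ and $[a,b] \subset (\alpha(t),\beta(t))$, the uniform convergence of the endpoints ensures that for all $k$ sufficiently large both $(\alpha_k^{i-1},\beta_k^{i-1})$ and $(\alpha_k^i,\beta_k^i)$ (with $i$ the index corresponding to $t$) contain $[a,b]$, so that by the convex-combination structure of the linear interpolation and Proposition \ref{proposition second i},
\[
\int_a^b |h_k''(t,x)|^2\,dx \leq M_3.
\]
Weak compactness in $L^2([a,b])$ together with the uniqueness of distributional limits (using the already established uniform convergence of $h_k(t,\cdot)$) gives $h''(t,\cdot)\in L^2([a,b])$ with the same bound; taking the supremum over $[a,b] \Subset (\alpha(t),\beta(t))$ yields \eqref{h H2 bound}. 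The H\"older estimate \eqref{h prime holder} follows on compactly contained subintervals by passing to the limit in the analogous bound \eqref{h' i holder} for the piecewise-constant interpolation (which has the same pointwise limits on compact subintervals), and extends to the closed interval $[\alpha(t),\beta(t)]$ by continuity of $h'(t,\cdot)$ on $(\alpha(t),\beta(t))$ (guaranteed by the $H^2$ regularity). The main obstacle throughout is precisely this interaction between the moving supports and the local nature of the available a priori estimates, which forces the argument to be carried out on compactly contained subintervals and passed to the limit by exhaustion.
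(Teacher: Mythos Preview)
Your proof is correct and follows essentially the same approach as the paper: compactness from the uniform Lipschitz bound and the $3/10$-H\"older-in-time estimate, then the $H^2$ bound by restricting to compactly contained subintervals and exhausting. Two small points the paper makes explicit that you gloss over: the support of the linear interpolant $h_k(t,\cdot)$ is $[\min\{\alpha_k^{i-1},\alpha_k^i\},\max\{\beta_k^{i-1},\beta_k^i\}]$, which by \eqref{alpha k holder} is only within $(M_1\tau_k)^{1/2}$ of $[\alpha_k(t),\beta_k(t)]$ (this is what actually yields \eqref{h star support} and the inclusion $[a,b]\subset(\alpha_k^{i-1},\beta_k^{i-1})\cap(\alpha_k^i,\beta_k^i)$ you need for the $H^2$ step), and \eqref{h prime holder} follows directly from \eqref{h H2 bound} by the fundamental theorem of calculus and Cauchy--Schwarz, so your detour through the piecewise-constant interpolant is unnecessary.
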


\begin{remark}
It follows from the previous proposition that $(\alpha(t),\beta(t),h(t,\cdot
))\in\mathcal{A}_{s}$ for every $t\in\lbrack0,\infty)$.
\end{remark}

\begin{proof}
[Proof of Proposition \ref{proposition existence h}]By Proposition
\ref{proposition holder} and the Ascoli--Arzel\`{a} theorem there exist a
subsequence (not relabeled) and a nonnegative function $h_{\ast}\in
C^{0,3/18}([0,\infty);L^{2}(\mathbb{R}))$ such that for every $t\in
\lbrack0,\infty)$,
\begin{equation}
h_{k}(t,\cdot)\rightharpoonup h_{\ast}(t,\cdot)\quad\text{weakly in }%
L^{2}(\mathbb{R})\,. \label{hk weakly}%
\end{equation}
Since $\operatorname*{Lip}h_{k}(t,\cdot)\leq L_{0}$ for every $t\in
\lbrack0,\infty)$ and every $k$, we obtain that $\operatorname*{Lip}h_{\ast
}(t,\cdot)\leq L_{0}$ and that \eqref{hk prime star} holds.

Fix $t\in\lbrack0,\infty)$ and $k$ and find $i$ such $t_{k}^{i-1}\leq t\leq
t_{k}^{i}$. Since $\alpha_{k}(t_{k}^{i-1})=\alpha_{k}^{i-1}$ and $\alpha
_{k}(t_{k}^{i})=\alpha_{k}^{i}$, (see \eqref{alpha inter}), by
\eqref{alpha k holder} we have
\begin{equation}
\alpha_{k}(t)-(M_{1}\tau_{k})^{1/2}\leq\min\{\alpha_{k}^{i-1},\alpha_{k}%
^{i}\}\leq\max\{\alpha_{k}^{i-1},\alpha_{k}^{i}\}\leq\alpha_{k}(t)+(M_{1}%
\tau_{k})^{1/2}, \label{e1}%
\end{equation}
where we used the fact that $t_{k}^{i}-t_{k}^{i-1}=\tau_{k}$. Similarly we can
show that
\begin{equation}
\beta_{k}(t)-(M_{1}\tau_{k})^{1/2}\leq\min\{\beta_{k}^{i-1},\beta_{k}%
^{i}\}\leq\max\{\beta_{k}^{i-1},\beta_{k}^{i}\}\leq\beta_{k}(t)+(M_{1}\tau
_{k})^{1/2}\,. \label{e2}%
\end{equation}
Therefore by \eqref{h inter} we have
\begin{equation}
h_{k}(t,x)=0\quad\text{for }x\notin(\alpha_{k}(t)-(M_{1}\tau_{k})^{1/2}%
,\beta_{k}(t)+(M_{1}\tau_{k})^{1/2})\,. \label{support hk}%
\end{equation}
Hence, by \eqref{alpha k converge uniformly} and \eqref{hk weakly} we obtain
that $h_{\ast}(t,x)=0$ for $x\notin(\alpha(t),\beta(t))$. In turn, since
$\operatorname*{Lip}h_{k}(t,\cdot)\leq L_{0}$, we can apply the
Ascoli--Arzel\`{a} theorem in the $x$ variable and we obtain
\eqref{hk uniformly}. Properties \eqref{average hk},
\eqref{alpha k converge uniformly}, \eqref{hk uniformly}, and
\eqref{support hk} imply \eqref{h average}.

To prove \eqref{h H2 bound} observe that by \eqref{h inter}, \eqref{e1},\ and
\eqref{e2}, for every $x\in(\alpha_{k}(t)+(M_{1}\tau_{k})^{1/2},\beta
_{k}(t)-(M_{1}\tau_{k})^{1/2})$ we have%
\[
h_{k}(t,x)=h_{k}^{i-1}(x)+\frac{t-t_{k}^{i-1}}{\tau_{k}}(h_{k}^{i}%
(x)-h_{k}^{i-1}(x))\,.
\]
Fix $a<b$ such that $\alpha(t)<a<b<\beta(t)$. By
\eqref{alpha k converge uniformly} we have that $\alpha_{k}(t)+(M_{1}\tau
_{k})^{1/2}<a<b<\beta_{k}(t)-(M_{1}\tau_{k})^{1/2}$ for all $k$ sufficiently
large, and so $h_{k}^{\prime\prime}(t,\cdot)\in L^{2}((a,b))$ and by
\eqref{h'' i bound}, we have that
\[
\int_{b}^{a}| h_{k}^{\prime\prime}(t,x)|^{2}dx\leq\frac{t-t_{k}^{i-1}}{\tau
_{k}}\int_{b}^{a}|(h_{k}^{i})^{\prime\prime}(x)|^{2}dx+\frac{t_{k}^{i}-t}%
{\tau_{k}}\int_{b}^{a}|(h_{k}^{i-1})^{\prime\prime}(x)|^{2}dx\leq M_{3}\,.
\]
It follows from \eqref{hk uniformly} that $h^{\prime\prime}(t,\cdot)\in
L^{2}((a,b))$ and
\begin{equation}\label{rtyu0}
\int_{a}^{b}| h^{\prime\prime}(t,x)|^{2}dx\leq M_{3}\,.
\end{equation}
Taking the limit as $a\rightarrow\alpha(t)$ and $b\rightarrow\beta(t)$ we
conclude that $h(t,\cdot)\in H^{2}((\alpha(t),\beta(t)))$ and that
\eqref{h H2 bound} holds. In turn, by the fundamental theorem of calculus and
H\"{o}lder's inequality, we obtain \eqref{h prime holder}.
\end{proof}

\begin{proposition}
\label{proposition existence H}Let  $\{h_{k}\}_{k}$ and $h_{\ast}$ be the subsequence and
the function given in Proposition \ref{proposition existence h}, and let $\{H_{k}\}_{k}$ be defined by \eqref{H k}. Then up to a
further subsequence (not relabeled),
\begin{equation}
H_{k}\rightharpoonup H\quad\text{weakly in }H^{1}((0,T);L^{2}(\mathbb{R}%
))\,\text{ for every }T>0\,, \label{Hk weakly}%
\end{equation}
with
\begin{equation}
H(t,x):=\int_{-\infty}^{x}h_{\ast}(t,\rho)\,d\rho\,. \label{H star}%
\end{equation}

\end{proposition}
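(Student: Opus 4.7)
The plan is to first obtain compactness of $\{H_k\}$ in $H^1((0,T);L^2(\mathbb{R}))$ from the estimates already derived in Proposition \ref{proposition Hk}, and then identify the weak limit by combining the pointwise convergence $h_k(t,\cdot)\to h_*(t,\cdot)$ supplied by Proposition \ref{proposition existence h} with a dominated-convergence argument that uses the uniform compact support of the $h_k(t,\cdot)$.

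Fix $T>0$. By \eqref{Hk bound L2} we have
\[
\|H_k(t,\cdot)\|_{L^2(\mathbb{R})}\le (2M_0)^{1/2}T^{1/2}+\|H^0\|_{L^2(\mathbb{R})}
\quad\text{for all }t\in[0,T]\,,
\]
so that $\|H_k\|_{L^2((0,T);L^2(\mathbb{R}))}$ is bounded uniformly in $k$, while \eqref{Hk dot bound} gives $\|\dot H_k\|_{L^2((0,T);L^2(\mathbb{R}))}^2\le 2M_0$. Hence $\{H_k\}$ is bounded in the Hilbert space $H^1((0,T);L^2(\mathbb{R}))$. By reflexivity, together with a diagonal extraction along a sequence $T_n\to\infty$, we obtain a further subsequence (not relabeled) and a function $\widetilde H\in H^1_{\operatorname{loc}}([0,\infty);L^2(\mathbb{R}))$ such that $H_k\rightharpoonup \widetilde H$ weakly in $H^1((0,T);L^2(\mathbb{R}))$ for every $T>0$.

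It remains to identify $\widetilde H$ with the function $H$ defined in \eqref{H star}. For every $(t,x)\in [0,\infty)\times\mathbb{R}$, Proposition \ref{proposition existence h} gives $h_k(t,\cdot)\to h_*(t,\cdot)$ uniformly in $\mathbb{R}$, and by \eqref{support hk} together with \eqref{alpha k converge uniformly} the supports of the $h_k(t,\cdot)$ are contained in a common bounded interval $[\alpha(t)-1,\beta(t)+1]$ for all $k$ large enough. Therefore
\[
H_k(t,x)=\int_{-\infty}^x h_k(t,\rho)\,d\rho\longrightarrow \int_{-\infty}^x h_*(t,\rho)\,d\rho= H(t,x)
\]
pointwise on $[0,\infty)\times\mathbb{R}$. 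Since $|H_k(t,x)|\le A_0$ and $H_k(t,x)$ vanishes for $x$ outside an interval that is bounded uniformly in $k$ for $t\in[0,T]$ (by \eqref{support} and \eqref{support hk}), the dominated convergence theorem yields $H_k\to H$ in $L^2((0,T)\times\mathbb{R})$. By uniqueness of limits in the sense of distributions we conclude $\widetilde H=H$ a.e., which establishes \eqref{Hk weakly}.

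The only nontrivial point in this argument is ensuring that the pointwise/uniform convergence $h_k(t,\cdot)\to h_*(t,\cdot)$ is compatible with the weak $H^1$ limit of the primitives $H_k$; this is where the compact support bound \eqref{support hk}, which follows from the H\"older estimates in Proposition \ref{proposition alpha}, plays its role, allowing the application of dominated convergence on the slab $(0,T)\times\mathbb{R}$.
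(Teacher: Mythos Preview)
Your argument follows the same route as the paper's: extract a weak $H^1((0,T);L^2(\mathbb{R}))$ limit from the bounds of Proposition~\ref{proposition Hk}, then identify it using the pointwise convergence $h_k(t,\cdot)\to h_*(t,\cdot)$ together with dominated convergence. There is, however, one concrete slip in your identification step: it is not true that $H_k(t,x)$ vanishes for large $x$. Since $h_k(t,\cdot)\ge 0$ and $\int_{\mathbb{R}}h_k(t,\rho)\,d\rho=A_0$ by \eqref{average hk}, one has $H_k(t,x)=A_0$ for every $x$ to the right of the support of $h_k(t,\cdot)$, so $H_k(t,\cdot)$ is constant (and nonzero) near $+\infty$.

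The remedy is immediate: apply dominated convergence to $H_k-H$ rather than to $H_k$ itself, observing that $H(t,x)=A_0$ for $x\ge\beta(t)$ as well, so that $H_k(t,\cdot)-H(t,\cdot)$ is supported in a fixed bounded interval for $t\in[0,T]$ (by \eqref{support} and \eqref{support hk}) and is bounded by $A_0$; this gives $H_k-H\to 0$ strongly in $L^2((0,T)\times\mathbb{R})$ and hence $\widetilde H=H$. The paper sidesteps this point by testing the weak convergence against tensor products $\varphi(t)\psi(x)$ with $\varphi\in C_c^\infty((0,\infty))$ and $\psi\in C_c^\infty(\mathbb{R})$, so that the behaviour of $H_k$ at infinity never enters.
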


\begin{proof}
By \eqref{Hk dot bound} and \eqref{Hk bound L2} there exist a subsequence (not
relabeled) and a function $H\colon(0,\infty)\rightarrow L^{2}(\mathbb{R})$ such
that $H\in H^{1}((0,T);L^{2}(\mathbb{R}))$ for every $T>0$ and
\eqref{Hk weakly} holds. It suffices to prove that $H$ equals the right-hand
side of \eqref{H star}.

Let $\varphi\in C_{c}^{\infty}((0,\infty))$ and $\psi\in C_{c}^{\infty
}(\mathbb{R})$. Then by \eqref{Hk weakly},%
\[
\int_{0}^{\infty}\int_{\mathbb{R}}\varphi(t)\psi(x)H_{k}(t,x)\,dxdt\rightarrow
\int_{0}^{\infty}\int_{\mathbb{R}}\varphi(t)\psi(x)H(t,x)\,dxdt
\]
as $k\rightarrow\infty$. On the other hand,%
\begin{align*}
\int_{0}^{\infty}\int_{\mathbb{R}}\varphi(t)\psi(x)H_{k}(t,x)\,dxdt  &
=\int_{0}^{\infty}\int_{\mathbb{R}}\int_{-\infty}^{x}\varphi(t)\psi
(x)h_{k}(t,\rho)\,d\rho dxdt\\
&  \rightarrow\int_{0}^{\infty}\int_{\mathbb{R}}\int_{-\infty}^{x}%
\varphi(t)\psi(x)h(t,\rho)\,d\rho dxdt
\end{align*}
as $k\rightarrow\infty$, where we used the Lebesgue dominated convergence and
\eqref{hk L2 bound}, \eqref{alpha k converge uniformly},   \eqref{hk uniformly}, and \eqref{support hk}. Given the arbitrariness of
$\varphi$ and $\psi$, we obtain \eqref{H star}.
\end{proof}

Next we study the convergence of the piecewise constant interpolations
$\hat{H}_{k}$.

\begin{proposition}
Let  $\{H_{k}\}_{k}$
and $H$ be the subsequence and the function given in Proposition
\ref{proposition existence H}, and let $\{\hat{H}_{k}\}_{k}$ be defined by \eqref{Hk pc}. Then%
\begin{equation}
\hat{H}_{k}(t,\cdot)\rightharpoonup H(t,\cdot)\,\,\quad\text{weakly in }%
L^{2}(\mathbb{R})\text{ for every }t\geq0\,. \label{6001}%
\end{equation}

\end{proposition}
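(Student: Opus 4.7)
The plan is to reduce \eqref{6001} to the already-established weak convergence of the linear interpolations $H_k$, by showing that $\hat{H}_k(t,\cdot)-H_k(t,\cdot)\to 0$ strongly in $L^2(\mathbb{R})$ uniformly in $t$, and then proving pointwise-in-$t$ weak $L^2$ convergence $H_k(t,\cdot)\rightharpoonup H(t,\cdot)$ out of the weak convergence in $H^1((0,T);L^2(\mathbb{R}))$ given by Proposition \ref{proposition existence H}.

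For the first step, fix $t\ge 0$ and pick $i$ such that $t\in(t_k^{i-1},t_k^i]$. Comparing \eqref{H inter} and \eqref{H inter pc} one obtains the identity
\[
\hat{H}_k(t,\cdot)-H_k(t,\cdot)=\frac{t_k^i-t}{\tau_k}\bigl(H_k^i-H_k^{i-1}\bigr)
=\frac{t_k^i-t}{\tau_k}\bigl(H_k(t_k^i,\cdot)-H_k(t_k^{i-1},\cdot)\bigr),
\]
so that by \eqref{Hk holder}
\[
\Vert\hat{H}_k(t,\cdot)-H_k(t,\cdot)\Vert_{L^2(\mathbb{R})}\le \Vert H_k(t_k^i,\cdot)-H_k(t_k^{i-1},\cdot)\Vert_{L^2(\mathbb{R})}\le (2M_0\tau_k)^{1/2}\to 0
\]
as $k\to\infty$, uniformly in $t\ge 0$. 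Hence it suffices to prove that $H_k(t,\cdot)\rightharpoonup H(t,\cdot)$ weakly in $L^2(\mathbb{R})$ for every $t\ge 0$.

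For the second step, fix $\psi\in L^2(\mathbb{R})$ and set $f_k(t):=\langle H_k(t,\cdot),\psi\rangle$. By \eqref{Hk holder} the family $\{f_k\}$ is equi-Hölder continuous with exponent $1/2$ on $[0,T]$ for every $T>0$, and by \eqref{Hk bound L2} it is equibounded; moreover $f_k(0)=\langle H^0,\psi\rangle$ is fixed. The Ascoli–Arzel\`a theorem yields, along a subsequence not relabelled, a limit $f\in C^{0,1/2}([0,T])$ with $f_k\to f$ uniformly. On the other hand, from the weak convergence $H_k\rightharpoonup H$ in $H^1((0,T);L^2(\mathbb{R}))$ provided by \eqref{Hk weakly}, for every $\varphi\in C_c^\infty((0,T))$ one has
\[
\int_0^T\varphi(t)f_k(t)\,dt=\int_0^T\varphi(t)\langle H_k(t,\cdot),\psi\rangle\,dt\to \int_0^T\varphi(t)\langle H(t,\cdot),\psi\rangle\,dt,
\]
which identifies $f(t)=\langle H(t,\cdot),\psi\rangle$ for a.e.\ $t$, and by continuity for every $t\in[0,T]$. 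Since the limit is the same along every convergent subsequence, the whole sequence $f_k$ converges pointwise. By the arbitrariness of $\psi\in L^2(\mathbb{R})$, we conclude $H_k(t,\cdot)\rightharpoonup H(t,\cdot)$ weakly in $L^2(\mathbb{R})$ for every $t\ge 0$.

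Combining the two steps, for every $\psi\in L^2(\mathbb{R})$ and every $t\ge 0$,
\[
\langle \hat{H}_k(t,\cdot),\psi\rangle= \langle \hat{H}_k(t,\cdot)-H_k(t,\cdot),\psi\rangle+\langle H_k(t,\cdot),\psi\rangle\to\langle H(t,\cdot),\psi\rangle,
\]
which proves \eqref{6001}. The only delicate point is the pointwise-in-$t$ identification of the weak limit: a single pass to a diagonal subsequence in a dense countable set of times, together with the equi-Hölder bound to upgrade to every $t$, suffices, and there is no real obstacle beyond combining \eqref{Hk holder} with \eqref{Hk weakly}.
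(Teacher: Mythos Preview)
Your proof is correct and follows essentially the same two-step strategy as the paper: first show $\Vert\hat{H}_k(t,\cdot)-H_k(t,\cdot)\Vert_{L^2(\mathbb{R})}\le (2M_0\tau_k)^{1/2}\to 0$, then establish $H_k(t,\cdot)\rightharpoonup H(t,\cdot)$ weakly in $L^2(\mathbb{R})$ for every $t$. The only minor difference is in the second step: the paper observes that $\psi_k(t):=\langle H_k(t,\cdot)-H(t,\cdot),\varphi\rangle$ converges weakly to $0$ in $H^1((0,T))$ and invokes the compact embedding $H^1\hookrightarrow C^0$ to get pointwise convergence directly, whereas you use the equi-H\"older bound \eqref{Hk holder} and Ascoli--Arzel\`a on $f_k=\langle H_k,\psi\rangle$ before identifying the limit via \eqref{Hk weakly}; both routes are equivalent here since the H\"older bound is itself a consequence of the $H^1$-in-time estimate.
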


\begin{proof}
Let $\varphi\in L^{2}(\mathbb{R})$ and define%
\[
\psi_{k}(t):=\int_{\mathbb{R}}(H_{k}(t,x)-H(t,x))\varphi(x)\,dx\,.
\]
By Proposition \ref{proposition existence H}, for every $T>0$ the function
$\psi_{k}\in H^{1}((0,T))$ and $\psi_{k}\rightharpoonup0$ weakly in
$H^{1}((0,T))$. This implies that $\psi_{k}(t)\rightarrow0$ for every
$t\in(0,T)$. By the arbitrariness of $\varphi$ and $T$, we deduce that
\begin{equation}
H_{k}(t,\cdot)\rightharpoonup H(t,\cdot)\,\,\quad\text{weakly in }%
L^{2}(\mathbb{R})\text{ for every }t\geq0\,. \label{6000}%
\end{equation}
By \eqref{H inter} and \eqref{Hk holder}, for every $t\in(t_{k}^{i-1}%
,t_{k}^{i}]$,
\[
\Vert\hat{H}_{k}(t,\cdot)-H_{k}(t,\cdot)\Vert_{L^{2}(\mathbb{R})}=\Vert
H_{k}(t_{k}^{i},\cdot)-H_{k}(t,\cdot)\Vert_{L^{2}(\mathbb{R})}\leq
(2M_{0})^{1/2}\tau_{k}^{1/2}\rightarrow0
\]
as $k\rightarrow\infty$. Together with \eqref{6000}, this implies \eqref{6001}.
\end{proof}

\begin{corollary}
Let $h_{\ast
}$ be as in Proposition \ref{proposition existence h} and  let $\{H_{k}\}_{k}$
and $H$ be the subsequence and the function given in Proposition
\ref{proposition existence H}. Fix $t\geq0$. Then the corresponding
subsequence $\{\hat{h}_{k}\}_{k}$ satisfies%
\begin{align}
&  \hat{h}_{k}(t,\cdot)\rightarrow h_{\ast}(t,\cdot)\text{\quad uniformly in
}\mathbb{R}\,,\label{6002}\\
&  \hat{h}_{k}^{\prime}(t,\cdot)\overset{\ast}{\rightharpoonup}h_{\ast
}^{\prime}(t,\cdot)\quad\text{weakly star in }L^{\infty}(\mathbb{R})\,.
\label{6003}%
\end{align}
\end{corollary}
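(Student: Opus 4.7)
The plan is to follow the template of the last part of the proof of Proposition~\ref{proposition existence h}, now applied to the piecewise constant interpolations. First I would observe that $\hat{h}_k(t,\cdot)$ is $L_0$-Lipschitz with $\hat{h}_k(t,x)=0$ for $x\notin(\hat{\alpha}_k(t),\hat{\beta}_k(t))$, and by \eqref{alpha hat converge uniformly} these intervals are contained in a fixed bounded set for $k$ large. The Ascoli--Arzel\`a theorem then extracts a subsequence (not relabeled) and an $L_0$-Lipschitz function $g$ supported in $[\alpha(t),\beta(t)]$ such that $\hat{h}_k(t,\cdot)\to g$ uniformly on $\mathbb{R}$.

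The key step is to identify $g$ with $h_*(t,\cdot)$ by exploiting \eqref{6001}. For any $\varphi\in C_c^\infty(\mathbb{R})$, integration by parts gives
\[
\int_{\mathbb{R}}\hat{h}_k(t,x)\varphi(x)\,dx=-\int_{\mathbb{R}}\hat{H}_k(t,x)\varphi'(x)\,dx\,,
\]
and, by \eqref{6001} together with \eqref{H star}, the right-hand side converges to $\int_{\mathbb{R}}h_*(t,x)\varphi(x)\,dx$, while the uniform convergence forces the left-hand side to converge to $\int_{\mathbb{R}}g(x)\varphi(x)\,dx$. The density of $C_c^\infty(\mathbb{R})$ in $L^1(\mathbb{R})$ (combined with the uniform compact support of $\hat{h}_k(t,\cdot)-g$) then yields $g=h_*(t,\cdot)$. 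Since every subsequence of $\{\hat{h}_k(t,\cdot)\}$ admits a further subsequence converging uniformly to the same limit $h_*(t,\cdot)$, the whole sequence converges uniformly, proving \eqref{6002}.

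For \eqref{6003} I would use the uniform bound $\|\hat{h}_k'(t,\cdot)\|_{L^\infty(\mathbb{R})}\le L_0$ together with \eqref{6002}. For $\psi\in C_c^\infty(\mathbb{R})$, a single integration by parts and the uniform convergence \eqref{6002} yield
\[
\int_{\mathbb{R}}\hat{h}_k'(t,x)\psi(x)\,dx=-\int_{\mathbb{R}}\hat{h}_k(t,x)\psi'(x)\,dx\to\int_{\mathbb{R}}h_*'(t,x)\psi(x)\,dx\,.
\]
Since the supports of $\hat{h}_k'(t,\cdot)$ lie in a common bounded interval for $k$ large, a truncation plus density argument based on the uniform $L^\infty$ bound extends this convergence to every $\psi\in L^1(\mathbb{R})$, giving the desired weak-star convergence. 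I do not foresee a genuine obstacle here: the corollary is essentially a compactness-plus-uniqueness-of-limits statement whose substantive ingredient, namely the weak $L^2$ convergence of the primitives $\hat{H}_k(t,\cdot)$ to $H(t,\cdot)$, is already supplied by the proposition immediately preceding it.
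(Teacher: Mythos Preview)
Your proof is correct and follows essentially the same approach as the paper: the paper's one-line argument notes that $\hat{h}_k(t,\cdot)=\hat{H}_k'(t,\cdot)\rightharpoonup H'(t,\cdot)=h_*(t,\cdot)$ weakly in $H^{-1}(\mathbb{R})$ by \eqref{6001}, and then invokes boundedness of $\{\hat{h}_k(t,\cdot)\}_k$ in $W^{1,\infty}(\mathbb{R})$ to conclude. Your Ascoli--Arzel\`a extraction plus integration-by-parts identification of the limit is precisely the detailed unpacking of that compressed statement.
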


\begin{proof}
Since $\hat{h}_{k}(t,\cdot)=\hat{H}_{k}^{\prime}(t,\cdot)\rightharpoonup
H^{\prime}(t,\cdot)=h_{\ast}(t,\cdot)$ weakly in $H^{-1}(\mathbb{R})$ in view
of the previous proposition, and $\{\hat{h}_{k}(t,\cdot)\}_{k}$ is bounded in
$W^{1,\infty}(\mathbb{R})$, the conclusion follows.
\end{proof}

\bigskip

In the remaining of this section we always assume that the sequences
$\{\alpha_{k}\}_{k}$, $\{\beta_{k}\}_{k}$, $\{h_{k}\}_{k}$, $\{H_{k}\}_{k}$
satisfy \eqref{alpha k converge}, \eqref{alpha k converge uniformly},
\eqref{hk uniformly}, \eqref{hk prime star}, and \eqref{Hk weakly},
\eqref{6001}, \eqref{6002}, and \eqref{6003}, and that $h(t,\cdot)$ is the restriction of $h{ _{\ast}}(t,\cdot)$ to
$(\alpha(t),\beta(t))$.

\begin{lemma}
Let $\{t_{k}\}_{k}$ be a sequence of nonnegative numbers converging to some
$t_{0}\geq0$. Then
\begin{equation}{ h_{k}(t_{k},\cdot)\rightarrow h_{\ast}(t_{0},\cdot)\,,\quad
\hat{h}_{k}(t_{k},\cdot)\rightarrow h_{\ast}(t_{0},\cdot)\text{\quad uniformly
in }\mathbb{R}\,.} \label{hk tk uniformly}%
\end{equation}
Moreover, if $\alpha(t_{0})<a<b<\beta(t_{0})$, then
\begin{equation}
{ h_{k}^{\prime}(t_{k},\cdot)\rightarrow h^{\prime}(t_{0},\cdot
)\,,\quad\hat{h}_{k}^{\prime}(t_{k},\cdot)\rightarrow h^{\prime}(t_{0},\cdot
)\quad\text{uniformly on }[a,b]\,. }\label{h prime k tk uniformly}%
\end{equation}
Finally, if $\{x_{k}\}_{k}$ is a sequence in $\mathbb{R}$ converging to some
$x_{0}\in\mathbb{R}$ such that $\hat{\alpha}_{k}(t_{k})\leq x_{k}\leq
\hat{\beta}_{k}(t_{k})$, then $\alpha(t_{0})\leq x_{0}\leq\beta(t_{0})$ and
\begin{equation}
\hat{h}_{k}^{\prime}(t_{k},x_{k})\rightarrow h^{\prime}(t_{0},x_{0})
\label{h' hat converges}%
\end{equation}

\end{lemma}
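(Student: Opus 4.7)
First I would prove the uniform convergence of $h_k(t_k,\cdot)$ and $\hat h_k(t_k,\cdot)$ to $h_\ast(t_0,\cdot)$. The starting point is the time-H\"older estimate \eqref{hk holder}, which gives
\[
\|h_k(t_k,\cdot)-h_k(t_0,\cdot)\|_{L^2(\mathbb R)}\le M_2|t_k-t_0|^{3/10}\to 0,
\]
while Proposition \ref{proposition existence h} ensures that $h_k(t_0,\cdot)\to h_\ast(t_0,\cdot)$ uniformly, hence in $L^2_{\mathrm{loc}}$. Combined with the uniform support bound from Proposition \ref{proposition alpha}, this yields $h_k(t_k,\cdot)\to h_\ast(t_0,\cdot)$ in $L^2(\mathbb R)$. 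On the other hand, the family $\{h_k(t_k,\cdot)\}_k$ is uniformly $L_0$-Lipschitz and uniformly compactly supported, so by Arzel\`a--Ascoli every subsequence admits a sub-subsequence converging uniformly on $\mathbb{R}$; its limit must coincide with the $L^2$-limit $h_\ast(t_0,\cdot)$, forcing the whole sequence to converge uniformly. For $\hat h_k(t_k,\cdot)$ I would note that by \eqref{h inter pc} one has $\hat h_k(t_k,\cdot)=h_k(t_k^{i_k},\cdot)$ for some $i_k$ with $|t_k^{i_k}-t_k|\le\tau_k\to0$, and apply the same argument with $t_k$ replaced by $t_k^{i_k}$.

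Second I would address the interior derivative convergence. Fix $[a,b]\subset(\alpha(t_0),\beta(t_0))$. By uniform convergence of $\alpha_k,\beta_k,\hat\alpha_k,\hat\beta_k$ and the fact that $\tau_k\to 0$, for $k$ large we have $[a,b]\subset(\alpha_k^{i-1},\beta_k^{i-1})\cap(\alpha_k^{i},\beta_k^{i})$ whenever $t_k\in[t_k^{i-1},t_k^{i}]$. On $[a,b]$ the function $h_k(t_k,\cdot)$ is then a convex combination of $h_k^{i-1}$ and $h_k^{i}$, so Proposition \ref{proposition second i} gives $\|h_k''(t_k,\cdot)\|_{L^2([a,b])}\le M_3^{1/2}$, and consequently $h_k'(t_k,\cdot)$ is $1/2$-H\"older on $[a,b]$ with constant $M_3^{1/2}$, uniformly in $k$. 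Since $h_k(t_k,\cdot)$ is also uniformly bounded, Arzel\`a--Ascoli produces a uniformly convergent subsequence of $h_k'(t_k,\cdot)$, and the uniform convergence of $h_k(t_k,\cdot)$ to $h_\ast(t_0,\cdot)$ forces the limit to be the distributional derivative $h'(t_0,\cdot)$ on $[a,b]$. Uniqueness of the limit promotes the convergence to the whole sequence. The argument for $\hat h_k'(t_k,\cdot)$ is identical since $\hat h_k(t_k,\cdot)=h_k^{i_k}$.

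Third I would handle the pointwise convergence at moving points $x_k$. Using \eqref{alpha k holder} and $|\hat\alpha_k-\alpha_k|\le M_1^{1/2}\tau_k$, I get
\[
|\hat\alpha_k(t_k)-\alpha(t_0)|\le |\hat\alpha_k(t_k)-\alpha_k(t_k)|+|\alpha_k(t_k)-\alpha_k(t_0)|+|\alpha_k(t_0)-\alpha(t_0)|\to 0,
\]
and similarly $\hat\beta_k(t_k)\to\beta(t_0)$. Passing to the limit in $\hat\alpha_k(t_k)\le x_k\le\hat\beta_k(t_k)$ then gives $\alpha(t_0)\le x_0\le\beta(t_0)$. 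When $x_0\in(\alpha(t_0),\beta(t_0))$, choose $[a,b]$ compactly contained in $(\alpha(t_0),\beta(t_0))$ with $x_0\in(a,b)$; eventually $x_k\in[a,b]$, and
\[
|\hat h_k'(t_k,x_k)-h'(t_0,x_0)|\le \|\hat h_k'(t_k,\cdot)-h'(t_0,\cdot)\|_{L^\infty([a,b])}+|h'(t_0,x_k)-h'(t_0,x_0)|\to 0
\]
by Step 2 and the continuity of $h'(t_0,\cdot)$. The main obstacle is the boundary case, say $x_0=\alpha(t_0)$, which I would overcome as follows. By Proposition \ref{proposition second i}, $\hat h_k'(t_k,\cdot)$ is $1/2$-H\"older on the full interval $[\hat\alpha_k(t_k),\hat\beta_k(t_k)]$ with constant $M_3^{1/2}$, and by \eqref{h prime holder} so is $h'(t_0,\cdot)$ on $[\alpha(t_0),\beta(t_0)]$. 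Fix $\delta>0$ small enough that $\alpha(t_0)+\delta\in(\alpha(t_0),\beta(t_0))$. Writing
\begin{align*}
&|\hat h_k'(t_k,x_k)-h'(t_0,\alpha(t_0))|\\
&\quad\le |\hat h_k'(t_k,x_k)-\hat h_k'(t_k,\hat\alpha_k(t_k)+\delta)|
+|\hat h_k'(t_k,\hat\alpha_k(t_k)+\delta)-h'(t_0,\alpha(t_0)+\delta)|\\
&\qquad+|h'(t_0,\alpha(t_0)+\delta)-h'(t_0,\alpha(t_0))|,
\end{align*}
I would bound the first and third terms by $M_3^{1/2}(|x_k-\hat\alpha_k(t_k)-\delta|^{1/2}+\delta^{1/2})$ using the two H\"older estimates, and treat the middle term with the interior convergence of Step 2 applied on any compact neighborhood of $\alpha(t_0)+\delta$ inside $(\alpha(t_0),\beta(t_0))$. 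Sending $k\to\infty$ and then $\delta\to 0$ completes the proof; the case $x_0=\beta(t_0)$ is symmetric.
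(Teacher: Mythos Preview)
Your proposal is correct and follows essentially the same three-step structure as the paper's proof. The only noteworthy variation is in Step~1: you identify the uniform limit via the $L^2$ H\"older estimate \eqref{hk holder} on $h_k$ itself, whereas the paper routes the identification through the primitive $H_k$ using \eqref{Hk holder} and weak $L^2$ convergence of $H_k(t_k,\cdot)$, then recovers $h_\ast(t_0,\cdot)$ as $H'(t_0,\cdot)$. Your route is marginally more direct; the paper's choice is natural since $H_k$ is where the $H^1$-in-time bound originates. In Step~3 you anchor at $\hat\alpha_k(t_k)+\delta$ while the paper anchors at $x_k+\varepsilon$; both reduce the boundary case to the already-established interior convergence plus the uniform $C^{0,1/2}$ control from \eqref{h' i holder} and \eqref{h prime holder}, and both close in the same way by sending the small parameter to zero.
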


\begin{proof}
Since $\operatorname*{Lip}h_{k}(t_{k},\cdot)\leq L_{0}$ for every $k$, by
\eqref{support hk} we can apply the Ascoli--Arzel\`{a} theorem to obtain that
up to a subsequence (not relabeled)%
\[
h_{k}(t_{k},\cdot)\rightarrow g(\cdot)\text{\quad uniformly in }\mathbb{R}\,
\]
for some Lipschitz continuous function $g$. On the other hand, by
\eqref{Hk holder},%
\[
\Vert H_{k}(t_{k},\cdot)-H_{k}(t_{0},\cdot)\Vert_{L^{2}(\mathbb{R})}%
\leq(2M_{0})^{1/2}|t_{k}-t_{0}|^{1/2}%
\]
and in view of \eqref{Hk weakly} we have that
\[
H_{k}(t_{0},\cdot)\rightharpoonup H(t_{0},\cdot)\quad\text{weakly in }%
L^{2}(\mathbb{R})\,.
\]
The last two properties imply that%
\[
H_{k}(t_{k},\cdot)\rightharpoonup H(t_{0},\cdot)\quad\text{weakly in }%
L^{2}(\mathbb{R})\,.
\]
Since $H_{k}^{\prime}(t_{k},\cdot)=h_{k}(t_{k},\cdot)$, it follows that
$g(\cdot)=H^{\prime}(t_{0},\cdot)=h_{\ast}(t_{0},\cdot)$. Since the limit does
not depend on the subsequence, this concludes the proof of
\eqref{hk tk uniformly}.

By \eqref{h'' i bound},%
\[
\int_{a}^{b}| h_{k}^{\prime\prime}(t_{k},x)|^{2}dx\leq M_{3}%
\]
for all $k$ sufficiently large. By H\"{o}lder's inequality this bound implies
that $h_{k}^{\prime}(t_{k},\cdot)$ are H\"{o}lder continuous of exponent
$\frac{1}{2}$ on $[a,b]$ uniformly with respect to $k$. By the
Ascoli--Arzel\`{a} theorem again and by \eqref{hk tk uniformly} we obtain
\eqref{h prime k tk uniformly}. In particular, if $\alpha(t_{0})<x<\beta
(t_{0})$, then%
\begin{equation}
h_{k}^{\prime}(t_{k},x)\rightarrow h^{\prime}(t_{0},x)\,.
\label{h prime k at x}%
\end{equation}

If $\alpha(t_{0})<x_{0}<\beta(t_{0})$, then we can choose $a$ and $b$ with
$\alpha(t_{0})<a<x_{0}<b<\beta(t_{0})$ and \eqref{h' hat converges} follows
from \eqref{h prime k tk uniformly}. It remains to consider the cases
$x_{0}=\alpha(t_{0})$ or $x_{0}=\beta(t_{0})$. We consider only the case
$x_{0}=\alpha(t_{0})$. Fix $0<\varepsilon<\beta(t_{0})-\alpha(t_{0})$ and
observe that $\hat{\alpha}_{k}(t_{k})<x_{k}+\varepsilon<\hat{\beta}_{k}%
(t_{k})$ for all $k$ sufficiently large. By what we just proved%
\begin{equation}
\hat{h}_{k}^{\prime}(t_{k},x_{k}+\varepsilon)\rightarrow h^{\prime}%
(t_{0},x_{0}+\varepsilon)\,. \label{7000}%
\end{equation}
By the fundamental theorem of calculus, H\"{o}lder's inequality,  and
\eqref{h'' i bound},%
\[
|\hat{h}_{k}^{\prime}(t_{k},x_{k}+\varepsilon)-\hat{h}_{k}^{\prime}%
(t_{k},x_{k})|\leq\varepsilon^{1/2}\left(  \int_{x_{k}}^{x_{k}+\varepsilon
}|\hat{h}_{k}^{\prime\prime}(t_{k},x)|^{2}\right)  ^{1/2}\leq\varepsilon
^{1/2}M_{3}^{1/2}\,.
\]
Similarly,%
\[
|h^{\prime}(t_{0},x_{0}+\varepsilon)-h^{\prime}(t_{0},x_{0})|\leq
\varepsilon^{1/2}M_{3}^{1/2}\,.
\]
Therefore, by \eqref{7000},%
\[
\limsup_{k\rightarrow\infty}|\hat{h}_{k}^{\prime}(t_{k},x_{k})-h^{\prime
}(t_{0},x_{0})|\leq2\varepsilon^{1/2}M_{3}^{1/2}.
\]
Letting $\varepsilon\rightarrow0^{+}$, we obtain \eqref{h' hat converges}.
\end{proof}

In what follows $C_{b}(\mathbb{R})$ is the space of bounded continuous
functions in $\mathbb{R}$ with the supremum norm.

\begin{lemma}
\label{lemma h star continuous in time}The function $t\mapsto h_{\ast}%
(t,\cdot)$ from $[0,\infty)$ into $C_{b}(\mathbb{R})$ is continuous.
\end{lemma}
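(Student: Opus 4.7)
Fix $t_{0}\in[0,\infty)$ and let $t_{n}\to t_{0}$ in $[0,\infty)$. I will prove that $h_{\ast}(t_{n},\cdot)\to h_{\ast}(t_{0},\cdot)$ uniformly on $\mathbb{R}$ by a compactness-plus-uniqueness argument: extract a uniformly convergent subsequence by Ascoli--Arzel\`a, identify its limit as $h_{\ast}(t_{0},\cdot)$ using the already established $L^{2}$-continuity in $t$, and then conclude that the whole sequence converges.

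\textbf{Compactness.} By \eqref{h star support} we have $\operatorname{Lip}h_{\ast}(t_{n},\cdot)\leq L_{0}$ for every $n$, so the sequence $\{h_{\ast}(t_{n},\cdot)\}_{n}$ is equicontinuous on $\mathbb{R}$. Moreover, passing the estimate \eqref{alpha k holder} to the limit via \eqref{alpha k converge uniformly} yields $|\alpha(t)-\alpha(s)|\leq M_{1}^{1/2}|t-s|^{1/2}$ and analogously for $\beta$, so $\alpha$ and $\beta$ are continuous on $[0,\infty)$. Consequently there exists a compact interval $K\subset\mathbb{R}$ containing $(\alpha(t_{n}),\beta(t_{n}))$ for every $n$ (and containing $(\alpha(t_{0}),\beta(t_{0}))$), and \eqref{h star support} forces $h_{\ast}(t_{n},\cdot)$ to vanish outside $K$. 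Combined with the uniform Lipschitz bound, this also gives a uniform $L^{\infty}$-bound. By the Ascoli--Arzel\`a theorem there exist a subsequence (not relabeled) and a Lipschitz function $g\colon\mathbb{R}\to\mathbb{R}$, with $\operatorname{Lip}g\leq L_{0}$ and $g=0$ outside $K$, such that $h_{\ast}(t_{n},\cdot)\to g$ uniformly on $\mathbb{R}$.

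\textbf{Identification of the limit.} By Proposition \ref{proposition existence h}, $h_{\ast}\in C^{0,3/10}([0,\infty);L^{2}(\mathbb{R}))$, so $h_{\ast}(t_{n},\cdot)\to h_{\ast}(t_{0},\cdot)$ in $L^{2}(\mathbb{R})$. Since uniform convergence on $\mathbb{R}$ of functions supported in the fixed compact set $K$ implies $L^{2}$-convergence, we obtain $g=h_{\ast}(t_{0},\cdot)$ almost everywhere, and then everywhere by continuity of both functions.

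\textbf{Conclusion.} Since every subsequence of $\{h_{\ast}(t_{n},\cdot)\}_{n}$ admits a further subsequence converging uniformly to the same limit $h_{\ast}(t_{0},\cdot)$, the full sequence converges uniformly to $h_{\ast}(t_{0},\cdot)$. As $\{t_{n}\}$ was arbitrary, this proves continuity of $t\mapsto h_{\ast}(t,\cdot)$ from $[0,\infty)$ into $C_{b}(\mathbb{R})$. No genuine obstacle is expected here; the only point to be careful about is that the supports $[\alpha(t_{n}),\beta(t_{n})]$ stay in a common compact set, which follows from the continuity of $\alpha$ and $\beta$ inherited in the limit from \eqref{alpha k holder}.
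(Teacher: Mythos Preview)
Your proof is correct and follows essentially the same approach as the paper: both use the $C^{0,3/10}$-in-time $L^2$-continuity of $h_\ast$ together with the uniform Lipschitz bound and compactly supported nature of $h_\ast(t_n,\cdot)$ to upgrade $L^2$-convergence to uniform convergence via Ascoli--Arzel\`a. The paper's version is more terse, omitting the explicit subsequence-plus-identification step that you spell out, but the argument is the same.
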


\begin{proof}
Let $\{t_{n}\}_{n}$ be a sequence in $[0,\infty)$ converging to $t_{0}$. Since
$h_{\ast}\in C^{3/10}([0,\infty);L^{2}(\mathbb{R}))$ we have $h_{\ast}%
(t_{n},\cdot)\rightarrow h_{\ast}(t_{0},\cdot)$ in $L^{2}(\mathbb{R})$. On the
other hand, $\operatorname*{Lip}h_{\ast}(t_{n},\cdot)\leq L_{0}$ for every $n$
and by \eqref{h star support} the supports of the functions $h_{\ast}%
(t_{n},\cdot)$ are contained in a compact set independent of $n$. Hence, the
Ascoli--Arzel\`{a} theorem implies $h_{\ast}(t_{n},\cdot)\rightarrow h_{\ast
}(t_{0},\cdot)$ in $C_{b}(\mathbb{R})$.
\end{proof}

\begin{lemma}
\label{lemma h' continuous}Fix a bounded interval $I\subset\lbrack0,\infty)$
and let $a<b$ be such that
\[
\alpha(t)\leq a<b\leq\beta(t)
\]
for all $t\in I$. Then the function $t\mapsto h^{\prime}(t,\cdot)$ from $I$
into $ C^{0}([a,b])$ is continuous
\end{lemma}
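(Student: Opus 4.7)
The plan is to prove sequential continuity: given $t_n\to t_0$ in $I$, I will show that $h'(t_n,\cdot)\to h'(t_0,\cdot)$ uniformly on $[a,b]$ by combining a compactness argument (Ascoli--Arzel\`a) with the identification of the limit via the uniform convergence of $h(t_n,\cdot)$ already established.

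First, I would collect the uniform bounds on the sequence $\{h'(t_n,\cdot)\}_{n}$ restricted to $[a,b]$. The hypothesis $\alpha(t)\leq a<b\leq\beta(t)$ for $t\in I$ ensures that each $h'(t_n,\cdot)$ is well defined on $[a,b]$. Since $(\alpha(t_n),\beta(t_n),h(t_n,\cdot))\in\mathcal{A}_{s}$, we have $\operatorname{Lip} h(t_n,\cdot)\leq L_0$ and hence $\|h'(t_n,\cdot)\|_{L^\infty([a,b])}\leq L_0$. Moreover, by \eqref{h prime holder} in Proposition \ref{proposition existence h},
\[
|h'(t_n,x_2)-h'(t_n,x_1)|\leq M_{3}^{1/2}|x_2-x_1|^{1/2}
\]
for all $x_1,x_2\in[a,b]$, with $M_3$ independent of $n$. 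Thus $\{h'(t_n,\cdot)\}_{n}$ is bounded and equicontinuous in $C^{0}([a,b])$, so by Ascoli--Arzel\`a every subsequence has a further subsequence converging uniformly on $[a,b]$ to some $g\in C^{0}([a,b])$.

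Next, I would identify the limit $g$ with $h'(t_0,\cdot)$. By Lemma \ref{lemma h star continuous in time}, $h_{\ast}(t_n,\cdot)\to h_{\ast}(t_0,\cdot)$ uniformly on $\mathbb{R}$, and in particular $h(t_n,\cdot)\to h(t_0,\cdot)$ uniformly on $[a,b]$ (using that $h(t,\cdot)$ is the restriction of $h_{\ast}(t,\cdot)$ and that $[a,b]\subset[\alpha(t),\beta(t)]$ for every $t\in I$). Along the extracted subsequence, the fundamental theorem of calculus gives, for each $x\in[a,b]$,
\[
h(t_{n_k},x)-h(t_{n_k},a)=\int_{a}^{x}h'(t_{n_k},s)\,ds\longrightarrow\int_{a}^{x}g(s)\,ds,
\]
while the left-hand side converges to $h(t_0,x)-h(t_0,a)=\int_a^x h'(t_0,s)\,ds$. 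Differentiating the identity $\int_a^x g = \int_a^x h'(t_0,\cdot)$ in $x$ yields $g=h'(t_0,\cdot)$ on $[a,b]$. Since every subsequence of $\{h'(t_n,\cdot)\}_n$ admits a further subsequence converging uniformly to the same limit $h'(t_0,\cdot)$, the Urysohn subsequence principle gives the full convergence $h'(t_n,\cdot)\to h'(t_0,\cdot)$ in $C^{0}([a,b])$.

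There is no serious obstacle: the only delicate point is to be sure that every $h'(t_n,\cdot)$ is defined on $[a,b]$ with bounds independent of $n$, but this is guaranteed by the standing hypothesis $\alpha(t)\leq a<b\leq\beta(t)$ for $t\in I$ together with the uniform $H^{2}$-estimate \eqref{h H2 bound}. The argument is a routine combination of equicontinuity (from the uniform $L^2$ bound on $h''(t_n,\cdot)$) and identification of the limit via the already-available uniform convergence of $h(t_n,\cdot)$.
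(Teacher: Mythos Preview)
Your proof is correct and follows essentially the same approach as the paper: the paper uses the uniform $H^{2}$ bound \eqref{h H2 bound} to extract weak $H^{2}((a,b))$ convergence of $h(t_n,\cdot)$ to $h(t_0,\cdot)$ (the limit being identified via Lemma \ref{lemma h star continuous in time}) and then invokes the compact embedding $H^{2}\hookrightarrow C^{1}$, whereas you apply Ascoli--Arzel\`a directly to $h'(t_n,\cdot)$ using the equicontinuity from \eqref{h prime holder} and identify the limit by integrating. These are two packagings of the same compactness-plus-identification argument.
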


\begin{proof}
Let $\{t_{n}\}_{n}$ be a sequence in $I$ converging to $t_{0}\in I$. By Lemma
\ref{lemma h star continuous in time}, $h(t_{n},\cdot)\rightarrow
h(t_{0},\cdot)$ in $ C^{0}([a,b])$. On the other hand, by \eqref{h H2 bound}, the
sequence $\{h(t_{n},\cdot)\}_{n}$ is bounded in $H^{2}((a,b))$ and it
converges weakly to $h(t_{0},\cdot)$ in $H^{2}((a,b))$ and so $h^{\prime
}(t_{n},\cdot)\rightarrow h^{\prime}(t_{0},\cdot)$ in $ C^{0}([a,b])$.
\end{proof}

Since $h(t,\cdot)$ is defined only in $[\alpha(t),\beta(t)]$, its space
derivatives at the endpoints are one-sided.

\begin{lemma}
\label{lemma h' continuous at endpoint}The functions $t\mapsto h^{\prime
}(t,\alpha(t))$ and $t\mapsto h^{\prime}(t,\beta(t))$ are continuous on
$[0,\infty)$.
\end{lemma}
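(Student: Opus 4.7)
The plan is to reduce continuity at the moving endpoint $\alpha(t)$ to continuity of $h'(t,\cdot)$ at a nearby \emph{fixed} interior point, using the uniform spatial H\"older estimate \eqref{h prime holder} as a bridge. I will only treat $t\mapsto h'(t,\alpha(t))$, the case of $\beta(t)$ being analogous. Fix $t_{0}\in[0,\infty)$ and a sequence $t_{n}\to t_{0}$. The first step is to pick, for a given $\varepsilon>0$, a fixed $x$ with $\alpha(t_{0})<x<\beta(t_{0})$ such that
\[
M_{3}^{1/2}(x-\alpha(t_{0}))^{1/2}<\varepsilon\,.
\]
By the continuity of $\alpha$ and $\beta$ (Proposition \ref{rtyu8}), for all $n$ sufficiently large one has $\alpha(t_{n})<x<\beta(t_{n})$, and I would use the basic decomposition
\[
h'(t_{n},\alpha(t_{n}))-h'(t_{0},\alpha(t_{0}))=\bigl[h'(t_{n},\alpha(t_{n}))-h'(t_{n},x)\bigr]+\bigl[h'(t_{n},x)-h'(t_{0},x)\bigr]+\bigl[h'(t_{0},x)-h'(t_{0},\alpha(t_{0}))\bigr]\,.
\]

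The second step is to estimate the first and third brackets directly from the uniform H\"older bound \eqref{h prime holder}:
\[
|h'(t_{n},\alpha(t_{n}))-h'(t_{n},x)|\le M_{3}^{1/2}(x-\alpha(t_{n}))^{1/2},\qquad |h'(t_{0},\alpha(t_{0}))-h'(t_{0},x)|\le M_{3}^{1/2}(x-\alpha(t_{0}))^{1/2}\,.
\]
Since $\alpha(t_{n})\to\alpha(t_{0})$ and $x$ was chosen close to $\alpha(t_{0})$, both quantities are eventually bounded by (say) $2\varepsilon$.

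The third step handles the middle bracket. Pick $a,b$ with $\alpha(t_{0})<a<x<b<\beta(t_{0})$. By continuity of $\alpha,\beta$ there is $\delta>0$ such that $\alpha(t)<a$ and $b<\beta(t)$ for all $t\in I:=[t_{0}-\delta,t_{0}+\delta]\cap[0,\infty)$, so the hypotheses of Lemma \ref{lemma h' continuous} are satisfied on $I$ with these $a,b$. That lemma yields $h'(t,\cdot)\to h'(t_{0},\cdot)$ in $C^{0}([a,b])$ as $t\to t_{0}$ in $I$, and in particular $h'(t_{n},x)\to h'(t_{0},x)$. Combining the three estimates gives $\limsup_{n}|h'(t_{n},\alpha(t_{n}))-h'(t_{0},\alpha(t_{0}))|\le 3\varepsilon$, and since $\varepsilon>0$ is arbitrary this proves continuity at $t_{0}$.

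I do not anticipate a genuine obstacle here: the uniform $H^{2}$ control \eqref{h H2 bound} is what makes the $C^{1,1/2}$ spatial estimate available on each $[\alpha(t),\beta(t)]$ with a $t$-independent constant, so the moving endpoint can be absorbed into the H\"older term. The only care needed is in ordering the choices (first fix $\varepsilon$, then choose $x$ using the uniform H\"older constant $M_{3}^{1/2}$, then choose $a,b$, and finally take $n$ large enough), and in verifying that $[a,b]\subset[\alpha(t),\beta(t)]$ on a whole neighborhood of $t_{0}$, which follows from the continuity of $\alpha$ and $\beta$ established in Proposition \ref{rtyu8}.
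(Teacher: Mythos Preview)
Your proposal is correct and follows essentially the same approach as the paper: the same three-term decomposition, the uniform H\"older bound \eqref{h prime holder} on the endpoint terms, and Lemma \ref{lemma h' continuous} for the interior term. The only cosmetic difference is that the paper fixes $x$ first, passes to the $\limsup$ in $n$, and then lets $x\to\alpha(t_{0})^{+}$, whereas you fix $\varepsilon$ first and choose $x$ accordingly; the two orderings are equivalent.
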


\begin{proof}
It is enough to give the proof for $t\mapsto h^{\prime}(t,\alpha(t))$. Let
$\{t_{n}\}_{n}$ be a sequence in $[0,\infty)$ converging to $t_{0}$ and let
$x\in(\alpha(t_{0}),\beta(t_{0}))$. By continuity of $\alpha$ and $\beta$
there exist an open bounded interval $I\subset\lbrack0,\infty)$ and $a<b$ such
that%
\[
\alpha(t)\leq a<x<b\leq\beta(t)
\]
for all $t\in I$. By Lemma \ref{lemma h' continuous},
\[
h^{\prime}(t_{n},x)\rightarrow h^{\prime}(t_{0},x)
\]
as $n\rightarrow\infty$. By \eqref{h prime holder},%
\begin{align*}
|h^{\prime}  &  (t_{n},\alpha(t_{n}))-h^{\prime}(t_{0},\alpha(t_{0}%
))|\leq|h^{\prime}(t_{n},\alpha(t_{n}))-h^{\prime}(t_{n},x)|+|h^{\prime}%
(t_{n},x)-h^{\prime}(t_{0},x)|\\
&  \quad+|h^{\prime}(t_{0},x)-h^{\prime}(t_{0},\alpha(t_{0}))|\\
&  \leq M_{3}^{1/2}|\alpha(t_{n})-x|^{1/2}+|h^{\prime}(t_{n},x)-h^{\prime
}(t_{0},x)|+M_{3}^{1/2}|\alpha(t_{0})-x|^{1/2}.
\end{align*}
Letting $n\rightarrow\infty$ gives%
\[
\limsup_{n\rightarrow\infty}|h^{\prime}(t_{n},\alpha(t_{n}))-h^{\prime}%
(t_{0},\alpha(t_{0}))|\leq2M_{3}^{1/2}|\alpha(t_{0})-x|^{1/2}.
\]
Taking the limit as $x\rightarrow\alpha(t_{0})^{+}$ we conclude the proof.
\end{proof}

\begin{theorem}
\label{theorem positive} Under the assumptions \eqref{assumption 1}--\eqref{Lip < L0}, there exists $T_{0}>0$ such that for all $t\in\lbrack0,T_{0}]$,%
\begin{align}
h(t,x)  &  >0\quad\text{for all }x\in(\alpha(t),\beta(t))\,,\label{tyu85}\\
h^{\prime}(t,\alpha(t))  &  >0\quad\text{and\quad}h^{\prime}(t,\beta(t))<0\,.
\label{h' positive}%
\end{align}
\end{theorem}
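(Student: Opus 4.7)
The plan is to extend the three properties of the initial datum to a short time interval $[0,T_0]$ by a pure continuity argument, using only the regularity results established earlier in this section.

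First I would produce a lower bound for the slopes at the contact points. Lemma \ref{lemma h' continuous at endpoint} tells me that $t\mapsto h'(t,\alpha(t))$ and $t\mapsto h'(t,\beta(t))$ are continuous on $[0,\infty)$; at $t=0$ they reduce to $h_0'(\alpha_0)>0$ and $h_0'(\beta_0)<0$ respectively, by \eqref{derivatives 0 not zero} and the fact that $h(0,\cdot)=h_0$ on $(\alpha_0,\beta_0)$. Setting $\eta_{\ast}:=\tfrac12\min\{h_0'(\alpha_0),\,-h_0'(\beta_0)\}>0$, continuity supplies $T_1>0$ such that $h'(t,\alpha(t))\ge\eta_{\ast}$ and $h'(t,\beta(t))\le-\eta_{\ast}$ on $[0,T_1]$; this is exactly \eqref{h' positive}.

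Next I would combine this with the uniform H\"older control \eqref{h prime holder} inherited from Proposition \ref{proposition second i}. Fixing $\rho:=\eta_{\ast}^2/(4M_3)$, for every $t\in[0,T_1]$ and $x\in[\alpha(t),\alpha(t)+\rho]$ one gets
\[
h'(t,x)\ge h'(t,\alpha(t))-M_3^{1/2}(x-\alpha(t))^{1/2}\ge \eta_{\ast}-M_3^{1/2}\rho^{1/2}\ge\eta_{\ast}/2,
\]
so integration yields $h(t,x)\ge(\eta_{\ast}/2)(x-\alpha(t))>0$ on $(\alpha(t),\alpha(t)+\rho]$; the mirror bound holds on $[\beta(t)-\rho,\beta(t))$. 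To treat the remaining middle region I would invoke Lemma \ref{lemma h star continuous in time}, which gives $h_\ast(t,\cdot)\to h_0$ uniformly on $\mathbb R$ as $t\to 0^+$. Shrinking $\rho$ if necessary so that $2\rho\le\sqrt{2A_0/L_0}$ (which is allowed by \eqref{tyu88}), the continuity of $h_0$ together with \eqref{h0 positive} makes
\[
c_\rho:=\min_{x\in[\alpha_0+\rho/2,\beta_0-\rho/2]}h_0(x)
\]
strictly positive. The continuity of $\alpha,\beta$ at $0$ and the uniform convergence then let me pick $T_0\in(0,T_1]$ with $|\alpha(t)-\alpha_0|,|\beta(t)-\beta_0|<\rho/2$ and $\Vert h_\ast(t,\cdot)-h_0\Vert_{L^\infty(\mathbb R)}<c_\rho/2$ on $[0,T_0]$, so that $[\alpha(t)+\rho,\beta(t)-\rho]\subset[\alpha_0+\rho/2,\beta_0-\rho/2]$ and $h(t,x)\ge c_\rho/2>0$ there; pieced together with the wedge estimates this yields \eqref{tyu85}.

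I do not anticipate a genuine obstacle here: the argument is a calibrated continuity/openness chase. The only point requiring care is the order in which the parameters are chosen, namely $\eta_{\ast}$ from the initial slopes, $\rho$ from $\eta_{\ast}$ and $M_3$, then $c_\rho$ from $h_0$ and $\rho$, and finally $T_0$ from all three, so that the slope-based wedge estimates and the interior-positivity estimate overlap and cover the whole interval $(\alpha(t),\beta(t))$ uniformly for small $t$.
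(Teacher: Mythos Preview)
Your proposal is correct and follows essentially the same approach as the paper's proof: both use Lemma~\ref{lemma h' continuous at endpoint} to propagate the sign conditions on the endpoint slopes, then combine the H\"older estimate \eqref{h prime holder} for a wedge bound near the endpoints with Lemma~\ref{lemma h star continuous in time} for interior positivity, and patch the two regions together. The only differences are cosmetic---your explicit choice $\rho=\eta_\ast^2/(4M_3)$ versus the paper's implicit $\delta$ with $M_3^{1/2}\delta^{1/2}<\varepsilon$, and your direct inclusion $[\alpha(t)+\rho,\beta(t)-\rho]\subset[\alpha_0+\rho/2,\beta_0-\rho/2]$ versus the paper's intermediate choice of fixed $a,b$.
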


\begin{proof}
Fix $0<\varepsilon<\min\{h_{0}^{\prime}(\alpha_{0}),-h_{0}^{\prime}(\beta
_{0})\}$. By Lemma \ref{lemma h' continuous at endpoint} there exists
$T_{1}>0$ such that
\[
h^{\prime}(t,\alpha(t))\geq\varepsilon\quad\text{and\quad}h^{\prime}%
(t,\beta(t))\leq-\varepsilon
\]
for all $t\in\lbrack0,T_{1}]$. Fix $\delta>0$ such that $M_{3}^{1/2}%
\delta^{1/2}<\varepsilon$. By \eqref{h prime holder},%
\[
h^{\prime}(t,x)\geq h^{\prime}(t,\alpha(t))-M_{3}^{1/2}|x-\alpha(t)|^{1/2}%
\geq\varepsilon-M_{3}^{1/2}\delta^{1/2}>0
\]
for all $\alpha(t)\leq x\leq\alpha(t)+\delta$. Since $h(t,\alpha(t))=0$ by the
previous inequality we have that
\begin{equation}
h(t,x)>0\quad\text{for all }\alpha(t){ <} x\leq\alpha(t)+\delta\,\text{\ and
for all }0\leq t\leq T_{1}\,. \label{p1}%
\end{equation}
{ Moreover, }
\begin{equation}
h(t,x)>0\quad\text{for all }\beta(t)-\delta\leq x{ <}\beta(t)\,\text{\ and for
all }0\leq t\leq T_{1}\,. \label{p2}%
\end{equation}
Fix $a<b$ such that $\alpha_{0}<a<\alpha_{0}+\delta$ and $\beta_{0}%
-\delta<b<\beta_{0}$. Then there exists $0<T_{2}\leq T_{1}$ such that%
\begin{equation}
\alpha(t)<a<\alpha(t)+\delta\quad\text{and}\quad\beta(t)-\delta<b<\beta
(t)\quad\text{for every }0\leq t\leq T_{2}\,. \label{p2a}%
\end{equation}
Let $\eta:=\min_{[a,b]}h_{0}>0\,$. By Lemma
\ref{lemma h star continuous in time} there exists $0<T_{3}\leq T_{2}$ such
that%
\begin{equation}
h(t,x)\geq\frac{\eta}{2}\quad\text{for all }a\leq x\leq b\,\text{\ and for all
}0\leq t\leq T_{3}\,. \label{p3}%
\end{equation}
Combining \eqref{p1}--\eqref{p3}, we obtain $h(t,x)>0$
for all $x\in(\alpha(t),\beta(t))$ and for all $0\leq t\leq T_{3}$.
\end{proof}
 
\begin{proposition}
\label{proposition derivatives away} Under the assumptions of Theorem \ref{theorem positive}, 
there exist $k_{0}\in\mathbb{N}$  and $0<\eta_{0}<1$  such that
\begin{equation}
(h_{k}^{i})^{\prime}(\alpha_{k}^{i}) >2\eta_{0}\quad\text{and\quad}(h_{k}%
^{i})^{\prime}(\beta_{k}^{i})<-2\eta_{0}\label{h i prime positive}%
\end{equation}
for all $k\geq k_{0}$, and all $0\leq i\leq kT_{0}$.
\end{proposition}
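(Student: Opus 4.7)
The plan is to argue by contradiction, using compactness in time together with the convergence results established above, in particular \eqref{h' hat converges} which handles the delicate case of the derivative at the moving endpoint.

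First I would use Theorem \ref{theorem positive} and Lemma \ref{lemma h' continuous at endpoint}: the functions $t\mapsto h^{\prime}(t,\alpha(t))$ and $t\mapsto h^{\prime}(t,\beta(t))$ are continuous on $[0,T_{0}]$ and satisfy $h^{\prime}(t,\alpha(t))>0$ and $h^{\prime}(t,\beta(t))<0$ for every $t\in[0,T_{0}]$. By compactness of $[0,T_{0}]$ they attain positive minima in absolute value, so there exists $\eta_{0}\in(0,1)$ with
\[
h^{\prime}(t,\alpha(t))\geq 4\eta_{0}\quad\text{and}\quad h^{\prime}(t,\beta(t))\leq -4\eta_{0}\quad\text{for every }t\in[0,T_{0}]\,.
\]

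Next, suppose by contradiction that \eqref{h i prime positive} fails. Then, up to swapping roles of $\alpha$ and $\beta$, there exist sequences $k_{n}\to\infty$ and integers $i_{n}$ with $0\leq i_{n}\leq k_{n}T_{0}$ such that
\[
(h_{k_{n}}^{i_{n}})^{\prime}(\alpha_{k_{n}}^{i_{n}})\leq 2\eta_{0}\,.
\]
Set $s_{n}:=t_{k_{n}}^{i_{n}}\in[0,T_{0}]$. By compactness, after extracting a subsequence (not relabeled) we have $s_{n}\to t_{0}\in[0,T_{0}]$. By \eqref{alpha hat converge uniformly} and the identity $\hat{\alpha}_{k_{n}}(s_{n})=\alpha_{k_{n}}^{i_{n}}$ from \eqref{h inter pc}, we get $\alpha_{k_{n}}^{i_{n}}\to\alpha(t_{0})$. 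Similarly $\hat{\beta}_{k_{n}}(s_{n})\to\beta(t_{0})$, and the sequence $x_{n}:=\alpha_{k_{n}}^{i_{n}}$ satisfies $\hat{\alpha}_{k_{n}}(s_{n})\leq x_{n}\leq\hat{\beta}_{k_{n}}(s_{n})$ with $x_{n}\to\alpha(t_{0})$.

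Now I would invoke the pointwise convergence \eqref{h' hat converges} applied to $(s_{n},x_{n})$. Recalling that by \eqref{h inter pc} the piecewise-constant interpolation satisfies $\hat{h}_{k_{n}}^{\prime}(s_{n},x_{n})=(h_{k_{n}}^{i_{n}})^{\prime}(\alpha_{k_{n}}^{i_{n}})$, we obtain
\[
(h_{k_{n}}^{i_{n}})^{\prime}(\alpha_{k_{n}}^{i_{n}})\longrightarrow h^{\prime}(t_{0},\alpha(t_{0}))\geq 4\eta_{0}\,,
\]
which contradicts the assumption $(h_{k_{n}}^{i_{n}})^{\prime}(\alpha_{k_{n}}^{i_{n}})\leq 2\eta_{0}$. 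The symmetric argument handles the right endpoint, and taking the corresponding $k_{0}$ (depending on how large $n$ must be so that both derivative inequalities are satisfied uniformly in $i$ with $0\leq i\leq k T_{0}$) completes the proof.

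The only nontrivial ingredient is \eqref{h' hat converges}, which is already proved in the previous lemma and takes care of the subtle case $x_{0}=\alpha(t_{0})$ where only the one-sided derivative is defined; everything else is a standard contradiction plus compactness argument, so I do not expect a substantial obstacle beyond being careful with the uniform-in-$i$ dependence, which is handled by extracting subsequences of the indices $(k_{n},i_{n})$ rather than fixing $i$ in advance.
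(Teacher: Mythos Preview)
Your proposal is correct and follows essentially the same contradiction-plus-compactness strategy as the paper. The only minor difference is that you invoke \eqref{h' hat converges} directly to pass to the limit at the endpoint, whereas the paper re-derives this convergence inline via the H\"older bounds \eqref{h' i holder} and \eqref{h prime holder}; your shortcut is a legitimate streamlining since that lemma has already been established.
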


\begin{proof}
Since the function $t\mapsto
h^{\prime}(t,\alpha(t))$ is continuous by Lemma
\ref{lemma h' continuous at endpoint}, Theorem \ref{theorem positive} implies
that there exists $ 0<\eta_{0}<1$ such that%
\begin{equation}
h^{\prime}(t,\alpha(t))>2\eta_{0}\quad\text{for every }t\in\lbrack0,T_{0}]\,.
\label{c000}%
\end{equation}
We claim that there exists $k_{0}\in\mathbb{N}$ such that
\begin{equation}
(h_{k}^{i})^{\prime}(\alpha_{k}^{i})>2\eta_{0}\quad\text{for  all }k\geq k_{0}\text{ and }0\leq i\leq kT_{0}\,. \label{c2}%
\end{equation}
If not, then for every $n\in\mathbb{N}$ there exist $k_{n}\geq n$ and $0\leq
i_{n}\leq k_{n}T_{0}$ such that
\begin{equation}
(h_{k_{n}}^{i_{n}})^{\prime}(\alpha_{k_{n}}^{i_{n}})\leq2\eta_{0}\,. \label{c0}%
\end{equation}
Define $t_{n}:=t_{k_{n}}^{i_{n}}=i_{n}/k_{n}$. Since $0\leq t_{n}\leq
T_{0}$, up to a subsequence, $t_{n}\rightarrow t_{0}$ for some $t_{0}%
\in\lbrack0,T_{0}]$, and by \eqref{alpha k converge uniformly},
\begin{equation}
\alpha_{k_{n}}^{i_{n}}=\alpha_{k_{n}}(t_{n})\rightarrow\alpha(t_{0})\,.
\label{c00}%
\end{equation}

By \eqref{h inter}, we have $h_{k_{n}}^{i_{n}}(x)=h_{k_{n}}(t_{n},x)$ for
every $x\in\lbrack\alpha_{k_{n}}^{i_{n}},\beta_{k_{n}}^{i_{n}}]$, and so
$(h_{k_{n}}^{i_{n}})^{\prime}(\alpha_{k_{n}}^{i_{n}})=h_{k_{n}}^{\prime}%
(t_{n},\alpha_{k_{n}}^{i_{n}})$, where $h_{k_{n}}^{\prime}(t_{n},\alpha
_{k_{n}}^{i_{n}})$ is the right derivative of $h_{k_{n}}^{\prime}(t_{n}%
,\cdot)$ at $\alpha_{k_{n}}^{i_{n}}$. Fix $\alpha(t_{0})<x<\beta(t_{0})$ and
let $a,b\in\mathbb{R}$ be such that $\alpha(t_{0})<a<x<b<\beta(t_{0})$. By
\eqref{h prime k tk uniformly},
\begin{equation}
h_{k_{n}}^{\prime}(t_{n},x)\rightarrow h^{\prime}(t_{0},x)\,. \label{c1}%
\end{equation}
By \eqref{h' i holder} and \eqref{h prime holder},%
\begin{gather*}
|(h_{k_{n}}^{i_{n}})^{\prime}    (\alpha_{k_{n}}^{i_{n}})-h^{\prime}%
(t_{0},\alpha(t_{0}))|\leq|(h_{k_{n}}^{i_{n}})^{\prime}(\alpha_{k_{n}}^{i_{n}%
})-(h_{k_{n}}^{i_{n}})^{\prime}(x)|
\\
+|h_{k_{n}}^{\prime}(t_{n},x)-h^{\prime
}(t_{0},x)|+|h^{\prime}(t_{0},x)-h^{\prime}(t_{0},\alpha(t_{0}))|\\
  \leq M_{3}^{1/2}|\alpha_{k_{n}}(t_{n})-x|^{1/2}+|h_{k_{n}}^{\prime}%
(t_{n},x)-h^{\prime}(t_{0},x)|+M_{3}^{1/2}|\alpha(t_{0})-x|^{1/2}.
\end{gather*}
Letting $n\rightarrow\infty$ and using \eqref{c00} and \eqref{c1}, we get%
\[
\limsup_{n\rightarrow\infty}|(h_{k_{n}}^{i_{n}})^{\prime}(\alpha_{k_{n}%
}^{i_{n}})-h^{\prime}(t_{0},\alpha(t_{0}))|\leq2M_{3}^{1/2}|\alpha
(t_{0})-x|^{1/2}.
\]
Taking the limit as $x\rightarrow\alpha(t_{0})^{+}$ we obtain that
\[
(h_{k_{n}}^{i_{n}})^{\prime}(\alpha_{k_{n}}^{i_{n}})\rightarrow h^{\prime
}(t_{0},\alpha(t_{0}))\,,
\]
contradicting \eqref{c000} and \eqref{c0}. This proves the claim. A similar
argument holds for $\beta_{k}^{i}$ and so \eqref{h i prime positive} is satisfied.
\end{proof}

\begin{proposition}
\label{proposition functions away} Under the assumptions of Theorem \ref{theorem positive}, let $\eta_{0}$ be as in Proposition \ref{proposition derivatives away}, and let  
\begin{equation}\label{tyu90}
0<\delta<\frac{1}{2}\min_{t\in\lbrack0,T_{0}]}(\beta(t)-\alpha(t))\,.
\end{equation}
 Then 
there exist $k_{1}\geq k_{0}$  and $0<\eta_{1}<1$ such that
\begin{align}
h_{k}^{i}(x)  &  \geq2\eta_{1}\quad\text{for all }x\in\lbrack\alpha_{k}%
^{i}+\delta,\beta_{k}^{i}-\delta]\,,\label{h i positive uniform}
\\
h_{k}^{i}(x)  &  >0\quad\text{for all }x\in(\alpha_{k}^{i},\beta_{k}%
^{i})\,,\label{h i positive}
\end{align}
for all $k\geq k_{1}$ and $0\leq i\leq kT_{0}$.
\end{proposition}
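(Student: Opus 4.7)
The plan is to obtain \eqref{h i positive uniform} by a compactness-contradiction argument based on Theorem~\ref{theorem positive} and the uniform convergences collected earlier in Section~\ref{section 9}, and then to derive \eqref{h i positive} by combining the resulting bulk lower bound with a H\"{o}lder estimate on the derivative near each contact point, coming from Proposition~\ref{proposition derivatives away} and \eqref{h' i holder}.

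First I would introduce the auxiliary length $\bar{\delta}:=\min\{\delta,\eta_{0}^{2}/M_{3}\}$, which still satisfies \eqref{tyu90}. For the bulk estimate on $[\alpha_{k}^{i}+\bar{\delta},\beta_{k}^{i}-\bar{\delta}]$ I would argue by contradiction: assume no $\bar{\eta}_{1}>0$ and $k_{1}\geq k_{0}$ work. Then there exist sequences $k_{n}\to\infty$, $0\leq i_{n}\leq k_{n}T_{0}$, and $x_{n}\in[\alpha_{k_{n}}^{i_{n}}+\bar{\delta},\beta_{k_{n}}^{i_{n}}-\bar{\delta}]$ with $h_{k_{n}}^{i_{n}}(x_{n})\to 0$. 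By \eqref{support} the sequence $\{x_{n}\}$ is bounded, so, passing to a subsequence, $t_{n}:=i_{n}/k_{n}\to t_{0}\in[0,T_{0}]$ and $x_{n}\to x_{0}$. Combining \eqref{alpha hat converge uniformly} with the continuity of $\alpha$ and $\beta$ established in Proposition~\ref{rtyu8}, I would infer $\hat{\alpha}_{k_{n}}(t_{n})\to\alpha(t_{0})$ and $\hat{\beta}_{k_{n}}(t_{n})\to\beta(t_{0})$, whence $x_{0}\in[\alpha(t_{0})+\bar{\delta},\beta(t_{0})-\bar{\delta}]\subset(\alpha(t_{0}),\beta(t_{0}))$. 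The uniform convergence \eqref{hk tk uniformly} then gives $h_{k_{n}}^{i_{n}}(x_{n})=\hat{h}_{k_{n}}(t_{n},x_{n})\to h_{\ast}(t_{0},x_{0})=h(t_{0},x_{0})$, so $h(t_{0},x_{0})=0$, contradicting Theorem~\ref{theorem positive}. This yields a uniform constant $\bar{\eta}_{1}>0$ with $h_{k}^{i}\geq 2\bar{\eta}_{1}$ on $[\alpha_{k}^{i}+\bar{\delta},\beta_{k}^{i}-\bar{\delta}]$ for all $k$ large and $0\leq i\leq kT_{0}$.

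To obtain \eqref{h i positive}, I would apply Proposition~\ref{proposition derivatives away} and \eqref{h' i holder}: for $x\in[\alpha_{k}^{i},\alpha_{k}^{i}+\eta_{0}^{2}/M_{3}]$,
\[
(h_{k}^{i})'(x)\geq (h_{k}^{i})'(\alpha_{k}^{i})-M_{3}^{1/2}(x-\alpha_{k}^{i})^{1/2}\geq 2\eta_{0}-\eta_{0}=\eta_{0},
\]
and integrating from $\alpha_{k}^{i}$, where $h_{k}^{i}$ vanishes, gives $h_{k}^{i}(x)\geq\eta_{0}(x-\alpha_{k}^{i})>0$ on $(\alpha_{k}^{i},\alpha_{k}^{i}+\eta_{0}^{2}/M_{3}]$. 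The symmetric argument yields $h_{k}^{i}>0$ on $[\beta_{k}^{i}-\eta_{0}^{2}/M_{3},\beta_{k}^{i})$. Since $\bar{\delta}\leq\eta_{0}^{2}/M_{3}$, the two endpoint intervals together with $[\alpha_{k}^{i}+\bar{\delta},\beta_{k}^{i}-\bar{\delta}]$ (where the bulk bound applies) cover all of $(\alpha_{k}^{i},\beta_{k}^{i})$, establishing \eqref{h i positive}. Finally, since $\bar{\delta}\leq\delta$, the inclusion $[\alpha_{k}^{i}+\delta,\beta_{k}^{i}-\delta]\subset[\alpha_{k}^{i}+\bar{\delta},\beta_{k}^{i}-\bar{\delta}]$ together with the choice $\eta_{1}:=\min\{\bar{\eta}_{1},1/2\}$ gives both $0<\eta_{1}<1$ and \eqref{h i positive uniform}.

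The only delicate point is controlling the double limit $\hat{h}_{k_{n}}(t_{n},\cdot)\to h_{\ast}(t_{0},\cdot)$ when both the index $n$ and the time parameter $t_{n}$ vary simultaneously; this is precisely what \eqref{hk tk uniformly} provides, so no new estimates are needed and the rest of the argument reduces to invoking the pre-existing convergence and regularity results.
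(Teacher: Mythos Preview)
Your proof is correct and follows essentially the same approach as the paper's: a compactness-contradiction argument via \eqref{hk tk uniformly} and Theorem~\ref{theorem positive} for the bulk lower bound, and the H\"older estimate \eqref{h' i holder} together with \eqref{h i prime positive} for positivity near the endpoints. The only cosmetic difference is that you introduce the auxiliary $\bar{\delta}=\min\{\delta,\eta_{0}^{2}/M_{3}\}$ upfront and deduce both conclusions from a single bulk bound, whereas the paper proves \eqref{h i positive uniform} for the given $\delta$ first and then re-applies it with a smaller parameter $\varepsilon$ to obtain \eqref{h i positive}.
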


\begin{proof}
To prove \eqref{h i positive uniform}, we argue by contradiction and assume
that every $n\in\mathbb{N}$ there exist $k_{n}\geq n$, $0\leq i_{n}\leq
k_{n}T_{0}$, and $x_{n}\in\lbrack\alpha_{k_{n}}^{i_{n}}+\delta,\beta_{k_{n}%
}^{i_{n}}-\delta]$ such that $h_{k_{n}}^{i_{n}}(x_{n})\leq 1/n$. Define
$t_{n}:=t_{k_{n}}^{i_{n}}$. Since $0\leq t_{n}\leq T_{0}$, up to a subsequence
(not relabeled), $t_{n}\rightarrow t_{0}$ for some $0\leq t_{0}\leq T_{0}$. By
\eqref{alpha k converge uniformly},
\[
\alpha_{k_{n}}^{i_{n}}=\alpha_{k_{n}}(t_{n})\rightarrow\alpha(t_{0}%
)\,,\quad\beta_{k_{n}}^{i_{n}}=\beta_{k_{n}}(t_{n})\rightarrow\beta(t_{0})\,.
\]
Extracting a further subsequence (not relabeled), we have that $x_{n}%
\rightarrow x_{0}$ for some $x_{0}\in\lbrack\alpha(t_{0})+\delta,\beta
(t_{0})-\delta]$. Since%
\[
1/n\geq h_{k_{n}}^{i_{n}}(x_{n})=h_{k_{n}}(t_{n},x_{n})\rightarrow
h_{\ast}(t_{0},x_{0})
\]
by \eqref{hk tk uniformly},  we obtain a contradiction by Theorem
\ref{theorem positive}.

To show \eqref{h i positive}, fix $0<\varepsilon<\frac{1}{2}\min_{t\in\lbrack0,T_{0}]}(\beta(t)-\alpha(t))$ such that $M_{3}%
^{1/2}\varepsilon^{1/2}<2\eta_{0}$. By \eqref{h' i holder} and \eqref{c2},%
\[
(h_{k}^{i})^{\prime}(x)\geq(h_{k}^{i})^{\prime}(\alpha_{k}^{i})-M_{3}%
^{1/2}|x-\alpha_{k}^{i}|^{1/2}\geq2\eta_{0}-M_{3}^{1/2}\varepsilon^{1/2}>0
\]
for all $\alpha_{k}^{i}\leq x\leq\alpha_{k}^{i}+\varepsilon$ and $k\geq k_{0}%
$. Hence,
\begin{equation}
h_{k}^{i}(x)>0\quad\text{for }\alpha_{k}^{i}<x<\alpha_{k}^{i}+\varepsilon
\text{ and }k\geq k_{0}\,, \label{c3}%
\end{equation}
and in the same way we can show that
\begin{equation}
h_{k}^{i}(x)>0\quad\text{for }\beta_{k}^{i}-\varepsilon<x<\beta_{k}^{i}\text{
and }k\geq k_{0}\,. \label{c4}%
\end{equation}
 The positivity of $h_{k}^{i}(x)$ for $x\in[\alpha_{k}^{i}+\varepsilon,\beta_{k}^{i}-\varepsilon]$ is a consequence of \eqref{h i positive uniform} with $\delta=\varepsilon$.
\end{proof}

\begin{theorem}
\label{theorem lipschitz}
 Under the assumptions \eqref{assumption 1}--\eqref{Lip < L0}, there exists $0<T_{1}\leq
T_{0}$ such that
\begin{equation}\label{lipliplip}
\operatorname*{Lip}h_{\ast}(t,\cdot)< L_{0}%
\end{equation}
for every $t\in\lbrack0,T_{1}]$.
\end{theorem}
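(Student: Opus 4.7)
The plan is to argue by contradiction, exploiting the fact that the Lipschitz constant of $h_{\ast}(t,\cdot)$ on $\mathbb{R}$ equals $\Vert h^{\prime}(t,\cdot)\Vert_{L^{\infty}((\alpha(t),\beta(t)))}$. This identification holds because $h_{\ast}(t,\cdot)$ vanishes outside $[\alpha(t),\beta(t)]$ with $h_{\ast}(t,\alpha(t))=h_{\ast}(t,\beta(t))=0$, while $h(t,\cdot)\in H^{2}((\alpha(t),\beta(t)))\subset C^{1}([\alpha(t),\beta(t)])$ by \eqref{h H2 bound}, so that $h^{\prime}(t,\cdot)$ is continuous up to the endpoints and the maximum of $|h^{\prime}(t,\cdot)|$ is attained on the closed interval. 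Since $\operatorname{Lip}h_{0}<L_{0}$, there exists $\eta>0$ such that $\Vert h_{0}^{\prime}\Vert_{L^{\infty}((\alpha_{0},\beta_{0}))}\leq L_{0}-\eta$.

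Suppose, for contradiction, that no such $T_{1}>0$ exists. Then there exist $t_{n}\to 0^{+}$ with $t_{n}\in[0,T_{0}]$ and points $x_{n}\in[\alpha(t_{n}),\beta(t_{n})]$ such that $|h^{\prime}(t_{n},x_{n})|=\operatorname{Lip}h_{\ast}(t_{n},\cdot)\geq L_{0}-\eta/2$. Up to a subsequence (not relabeled), $x_{n}\to x_{0}$ for some $x_{0}\in[\alpha_{0},\beta_{0}]$, where we use \eqref{tyu88} to locate the limit inside $[\alpha_{0},\beta_{0}]$. I will reach a contradiction by showing, in each of the three cases below, that $|h^{\prime}(0,x_{0})|\geq L_{0}-\eta/2$, which is incompatible with $\Vert h_{0}^{\prime}\Vert_{L^{\infty}}\leq L_{0}-\eta$.

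If $x_{0}\in(\alpha_{0},\beta_{0})$, by continuity of $\alpha$ and $\beta$ (Proposition \ref{rtyu8}) there exist an open neighborhood $I$ of $0$ in $[0,T_{0}]$ and $a<b$ with $\alpha(t)<a<x_{0}<b<\beta(t)$ for all $t\in I$. Lemma \ref{lemma h' continuous} then yields $h^{\prime}(t_{n},\cdot)\to h^{\prime}(0,\cdot)$ uniformly on $[a,b]$, and since $x_{n}\to x_{0}\in(a,b)$ we obtain $h^{\prime}(t_{n},x_{n})\to h^{\prime}(0,x_{0})=h_{0}^{\prime}(x_{0})$, giving the desired contradiction. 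If instead $x_{0}=\alpha_{0}$, the triangle inequality, the uniform H\"older estimate \eqref{h prime holder}, and the continuity of $t\mapsto h^{\prime}(t,\alpha(t))$ (Lemma \ref{lemma h' continuous at endpoint}) give
\[
|h^{\prime}(t_{n},x_{n})-h_{0}^{\prime}(\alpha_{0})|\leq M_{3}^{1/2}|x_{n}-\alpha(t_{n})|^{1/2}+|h^{\prime}(t_{n},\alpha(t_{n}))-h_{0}^{\prime}(\alpha_{0})|\to 0\,,
\]
since $x_{n}\to\alpha_{0}$ and $\alpha(t_{n})\to\alpha_{0}$ by \eqref{alpha k converge uniformly} and \eqref{tyu88}. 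The symmetric argument using the continuity of $t\mapsto h^{\prime}(t,\beta(t))$ handles the case $x_{0}=\beta_{0}$.

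In all three cases we derive $|h_{0}^{\prime}(x_{0})|\geq L_{0}-\eta/2>L_{0}-\eta\geq\Vert h_{0}^{\prime}\Vert_{L^{\infty}((\alpha_{0},\beta_{0}))}$, a contradiction that completes the proof of \eqref{lipliplip}. The main delicate point is the endpoint case: one needs to transfer information about $h^{\prime}$ at the moving contact points $\alpha(t_{n})$ to information at nearby points $x_{n}\in[\alpha(t_{n}),\beta(t_{n})]$, which is precisely what the uniform $H^{2}$ bound \eqref{h H2 bound} via the H\"older estimate \eqref{h prime holder} and the continuity Lemma \ref{lemma h' continuous at endpoint} jointly provide.
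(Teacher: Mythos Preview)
Your argument is correct. It relies on the same ingredients as the paper's proof—the uniform H\"older bound \eqref{h prime holder}, Lemma \ref{lemma h' continuous} for the interior, and Lemma \ref{lemma h' continuous at endpoint} for the endpoints—but organizes them as a contradiction along a sequence $t_n\to 0^+$ with a trichotomy on the location of the limit point $x_0$, whereas the paper argues directly by decomposing $[\alpha(t),\beta(t)]$ into a $\delta$-neighborhood of each endpoint (controlled via \eqref{h prime holder} and endpoint continuity) and an interior piece on which it shows that $g(t):=\max_{[\alpha(t)+\delta,\beta(t)-\delta]}|h'(t,\cdot)|$ is continuous in $t$ with $g(0)<L_0$. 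The two proofs are essentially equivalent rearrangements of the same estimates.
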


\begin{proof}
Fix $L_{1}$ with $\operatorname*{Lip}h_{0}<L_{1}< L_{0}$. By Lemma
\ref{lemma h' continuous at endpoint} the function $t\mapsto h^{\prime
}(t,\alpha(t))$ is continuous and since $h^{\prime}(0,\alpha(0))=h_{0}%
^{\prime}(\alpha_{0})<L_{1}$, there exist $T_{1}>0$ such that
\[
h^{\prime}(t,\alpha(t))<L_{1}\quad\text{for all }t\in\lbrack0,T_{1}]\,.
\]
Fix $\delta>0$ such that $M_{3}^{1/2}\delta^{1/2}< L_{0}-L_{1}$. By
\eqref{h prime holder},
\[
|h^{\prime}(t,x)|\leq h^{\prime}(t,\alpha(t))+M_{3}^{1/2}\delta^{1/2}%
<L_{1}+M_{3}^{1/2}\delta^{1/2}< L_{0}%
\]
for every $t\in\lbrack0,T_{1}]$ and every $\alpha(t)\leq x\leq\alpha
(t)+\delta$. Similarly, taking $T_{1}$ smaller, if  needed, we obtain
\[
|h^{\prime}(t,x)|< L_{0}\quad\text{for every }t\in\lbrack0,T_{1}]\text{
and every }\beta(t)-\delta\leq x\leq\beta(t)\,.
\]
It remains to prove that%
\begin{equation}
\max_{x\in\lbrack\alpha(t)+\delta,\beta(t)-\delta]}|h^{\prime}(t,x)|<
L_{0}\,. \label{b2}%
\end{equation}
Let $g(t)$ denote the left-hand side of \eqref{b2}. To prove that $g$ is
continuous, fix $0\leq t_{0}\leq T_{1}$ and $a,b$ with $\alpha(t_{0}%
)<a<\alpha(t_{0})+\delta$ and $\beta(t_{0})-\delta<b<\beta(t_{0})$. By
continuity of $\alpha$ and $\beta$ there exists an open interval $I$
containing $t_{0}$ such that $a<\alpha(t)+\delta$ and $\beta(t)-\delta<b$ for
all $t\in I$. By Lemma \ref{lemma h' continuous}, $t\mapsto h^{\prime}%
(t,\cdot)$ from $I$ into $ C^{0}([a,b])$ is continuous. Since $\alpha$ and $\beta$
are continuous, it follows that $g$ is continuous in $I$. By the arbitrariness of
$t_{0}$, we conclude that $g$ is continuous in $[0,T_{1}]$. Using the fact
that $g(0)\leq\operatorname*{Lip}h_{0}< L_{0}$, taking $T_{1}$ even
smaller, if  needed, we obtain that $g(t)< L_{0}$ for all $t\in
\lbrack0,T_{1}]$. This concludes the proof.
\end{proof}

\begin{proposition}\label{proposition 9.15}
Let $T_{1}$ be as in Theorem \ref{theorem lipschitz} and $k_1$ be as in Proposition \ref{proposition functions away}. Then
there exists $ k_{2}\geq k_{1}$ such that
\begin{equation}
\operatorname*{Lip}\hat{h}_{k}(t,\cdot)<L_{0} \label{h i k Lipshitz}%
\end{equation}
for all $k\geq  k_{2}$, and all $t\in\lbrack0,T_{1}]$.
\end{proposition}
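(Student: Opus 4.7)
The plan is to argue by contradiction using the pointwise convergence \eqref{h' hat converges} of the piecewise-constant derivatives along any sequence of interior points, combined with the strict Lipschitz bound from Theorem \ref{theorem lipschitz}. Since $(\alpha_k^i,\beta_k^i,h_k^i)\in\mathcal{A}_{s}$, we already know $\operatorname*{Lip}\hat h_k(t,\cdot)\le L_0$, so the only issue is ruling out equality for large $k$.

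Suppose, for contradiction, that there exist sequences $k_n\to\infty$ and $t_n\in[0,T_1]$ with $\operatorname*{Lip}\hat h_{k_n}(t_n,\cdot)=L_0$. For each $n$, write $i_n$ for the index with $t_n\in(t_{k_n}^{i_n-1},t_{k_n}^{i_n}]$, so that $\hat h_{k_n}(t_n,\cdot)=\check h_{k_n}^{i_n}$. Since $h_{k_n}^{i_n}\in H^2((\alpha_{k_n}^{i_n},\beta_{k_n}^{i_n}))$ and, by Theorem \ref{theorem euler-lagrange} (or by embedding), its derivative is continuous on $[\alpha_{k_n}^{i_n},\beta_{k_n}^{i_n}]$, and since $\check h_{k_n}^{i_n}$ vanishes identically outside this interval, the Lipschitz constant of $\hat h_{k_n}(t_n,\cdot)$ on $\mathbb{R}$ coincides with $\|(h_{k_n}^{i_n})'\|_{L^\infty([\alpha_{k_n}^{i_n},\beta_{k_n}^{i_n}])}$, and this supremum is attained at some $x_n\in[\hat\alpha_{k_n}(t_n),\hat\beta_{k_n}(t_n)]$ with $|\hat h_{k_n}'(t_n,x_n)|=L_0$.

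By Proposition \ref{proposition alpha} and \eqref{support} the points $x_n$ are bounded, so, up to subsequences, $t_n\to t_0\in[0,T_1]$ and $x_n\to x_0$. By \eqref{alpha hat converge uniformly} we have $\hat\alpha_{k_n}(t_n)\to\alpha(t_0)$ and $\hat\beta_{k_n}(t_n)\to\beta(t_0)$, hence $x_0\in[\alpha(t_0),\beta(t_0)]$. Applying \eqref{h' hat converges} to the sequence $(t_n,x_n)$ yields
\[
\hat h_{k_n}'(t_n,x_n)\longrightarrow h'(t_0,x_0)\,,
\]
so that $|h'(t_0,x_0)|=L_0$. On the other hand Theorem \ref{theorem lipschitz} gives $\operatorname*{Lip}h_\ast(t_0,\cdot)<L_0$, which by the definition of the Lipschitz constant (and the fact that $h_\ast(t_0,\cdot)$ is $C^1$ on $[\alpha(t_0),\beta(t_0)]$ with $h'$ attaining its supremum there) forces $|h'(t_0,x_0)|<L_0$. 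This contradiction establishes the existence of $k_2\ge k_1$ with the required property.

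The only substantive ingredient is the convergence \eqref{h' hat converges} up to the endpoints, which is precisely the reason why the preceding lemma was stated in that uniform form; no further technical obstacle arises.
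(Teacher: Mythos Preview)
Your argument is correct and follows essentially the same route as the paper: assume by contradiction that equality is attained along a sequence, use the $C^1$ regularity of $\hat h_k(t,\cdot)$ on $[\hat\alpha_k(t),\hat\beta_k(t)]$ to locate points $x_n$ where $|\hat h_{k_n}'(t_n,x_n)|=L_0$, pass to convergent subsequences, apply \eqref{h' hat converges}, and contradict Theorem \ref{theorem lipschitz}. The only cosmetic difference is that the paper obtains boundedness of $x_n$ directly from \eqref{alpha hat converge uniformly} rather than invoking Proposition \ref{proposition alpha}.
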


\begin{proof}
Assume by contradiction that \eqref{h i k Lipshitz} does not hold. Recalling
that $\operatorname*{Lip}\hat{h}_{k}(t,\cdot)\leq L_{0}$ and that $\hat{h}%
_{k}(t,\cdot)\in C^{1,1/2}((\hat{\alpha}_{k}(t),\hat{\beta}_{k}(t)){ )}$ for every
$t$, for every $n\in\mathbb{N}$ there exist $k_{n}\geq n$, $t_{n}\in
\lbrack0,T_{1}]\ $and $x_{n}\in\lbrack\hat{\alpha}_{k_{n}}(t_{n}),\hat{\beta
}_{k_{n}}(t_{n})]$ such that
\begin{equation}
|\hat{h}_{k_{n}}^{\prime}(t_{n},x_{n})|=L_{0}\,. \label{b5}%
\end{equation}
Up to a subsequence, $t_{n}\rightarrow t_{0}$ for some $t_{0}\in\lbrack
0,T_{1}]$, and by \eqref{alpha hat converge uniformly},
\begin{equation}
\hat{\alpha}_{k_{n}}(t_{n})\rightarrow\alpha(t_{0})\quad\text{and\quad}%
\hat{\beta}_{k_{n}}(t_{n})\rightarrow\beta(t_{0})\,. \label{b3}%
\end{equation}
Hence, up to a subsequence (not  relabeled), $x_{n}\rightarrow x_{0}$ for some
$x_{0}\in\lbrack\alpha(t_{0}),\beta(t_{0})]$. By \eqref{h' hat converges} we
have that $\hat{h}_{k_{n}}^{\prime}(t_{n},x_{n})\rightarrow h^{\prime}%
(t_{0},x_{0})$. By \eqref{b5} and Theorem \ref{theorem lipschitz} we obtain a contradiction.
\end{proof}

Let $T_{1}$ be as in Theorem \ref{theorem lipschitz}. For every $t\in
\lbrack0,T_{1}]$, let $u(t,\cdot,\cdot)$ be the unique minimizer of the
problem
\begin{equation}\label{wer1}
\min\left\{  \int_{\Omega_{h(t,\cdot)}}W(Ev(x,y))\,dxdy:\,v\in\mathcal{A}%
_{e}(\alpha(t),\beta(t),h(t,\cdot))\right\}  \,,
\end{equation}
where $\mathcal{A}_{e}$ is defined in \eqref{class A h}.

\begin{proposition}
\label{proposition convergence u}Let  $T_{1}$ be as in Theorem \ref{theorem lipschitz} and let  
$\{t_{k}\}_{k}$ be a sequence in
$[0,T_{1}]$ converging to some $t_{0}$. Assume that for every $k$ there exists
$i_{k}\in\mathbb{N}\cup\{0\}$ such that $t_{k}=t_{k}^{i_{k}}$. Then
$\{u_{k}^{i_{k}}\}_{k}$ converges to $u(t_{0},\cdot,\cdot)$ weakly in
$H^{1}(\tilde{\Omega};\mathbb{R}^{2})$ for every open set $\tilde{\Omega
}\subset\Omega_{h(t_{0},\cdot)}$ with $\operatorname*{dist}(\tilde{\Omega
},\operatorname*{graph}(h(t_{0},\cdot))>0$.
\end{proposition}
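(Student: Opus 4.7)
The plan is to combine a compactness argument for $\{u_k^{i_k}\}_k$ with a $\Gamma$-convergence type argument for the elastic functional, exploiting the fact that for each $k$ the map $u_k^{i_k}$ is the unique minimizer of $v\mapsto\mathcal{E}(\alpha_k^{i_k},\beta_k^{i_k},h_k^{i_k},v)$ over $\mathcal{A}_e(\alpha_k^{i_k},\beta_k^{i_k},h_k^{i_k})$, because the surface and time-penalty terms in $\mathcal{F}_k^{i_k-1}$ do not depend on the displacement.

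First I would establish compactness. By \eqref{estimate in H1} in Theorem \ref{theorem existence}, $\|u_k^{i_k}\|_{H^1(\Omega_{h_k^{i_k}})}\le C$ with $C$ depending only on the structural parameters. Since $h_k^{i_k}=\hat h_k(t_k,\cdot)\to h(t_0,\cdot)$ uniformly by \eqref{hk tk uniformly} and $\alpha_k^{i_k}\to\alpha(t_0)$, $\beta_k^{i_k}\to\beta(t_0)$ by \eqref{alpha hat converge uniformly}, any open set $\tilde\Omega\Subset\Omega_{h(t_0,\cdot)}$ with positive distance from the graph of $h(t_0,\cdot)$ is contained in $\Omega_{h_k^{i_k}}$ for $k$ large. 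A diagonal argument as in the proof of Theorem \ref{theorem existence}, using the increasing sequence $\Omega_{h(t_0,\cdot)^\varepsilon}$ with $h(t_0,\cdot)^\varepsilon:=(h(t_0,\cdot)-\varepsilon)\vee 0$, extracts a subsequence (not relabeled) and produces $w\in H^1(\Omega_{h(t_0,\cdot)};\mathbb{R}^2)$ such that $u_k^{i_k}\rightharpoonup w$ weakly in $H^1(\tilde\Omega;\mathbb{R}^2)$ for every such $\tilde\Omega$. The Dirichlet condition $u_k^{i_k}(x,0)=(e_0x,0)$ on $(\alpha_k^{i_k},\beta_k^{i_k})$ passes to the limit by the convergence of the endpoints, so $w\in\mathcal{A}_e(\alpha(t_0),\beta(t_0),h(t_0,\cdot))$.

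Next I would identify $w$ as the unique minimizer $u(t_0,\cdot,\cdot)$ of \eqref{wer1}. The lower bound $\mathcal{E}(h(t_0,\cdot),w)\le\liminf_{k}\mathcal{E}(h_k^{i_k},u_k^{i_k})$ follows by convexity of $W$, weak convergence on $\tilde\Omega$, and passing to $\tilde\Omega\nearrow\Omega_{h(t_0,\cdot)}$. For the matching upper bound, given a competitor $v\in\mathcal{A}_e(\alpha(t_0),\beta(t_0),h(t_0,\cdot))$, I construct a recovery sequence $v_k\in\mathcal{A}_e(\alpha_k^{i_k},\beta_k^{i_k},h_k^{i_k})$ by transporting $v-w^0$, where $w^0(x,y):=(e_0x,0)$, through a diffeomorphism $\Phi_k\colon\Omega_{h(t_0,\cdot)}\to\Omega_{h_k^{i_k}}$ of the form
\[
\Phi_k(x,y):=\bigl(T_k(x),\,r_k(x)\,y\bigr),\qquad r_k(x):=\frac{h_k^{i_k}(T_k(x))}{h(t_0,x)},
\]
with $T_k$ the affine map sending $(\alpha(t_0),\beta(t_0))$ onto $(\alpha_k^{i_k},\beta_k^{i_k})$, and set $v_k:=w^0+(v-w^0)\circ\Phi_k^{-1}$. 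Since $\Phi_k$ fixes the $x$-axis, $v_k$ satisfies the Dirichlet condition. Using the uniform convergences $T_k\to\mathrm{id}$ and $r_k\to 1$ on compact subsets of $(\alpha(t_0),\beta(t_0))$ where $h(t_0,\cdot)>0$, together with $\nabla\Phi_k\to I$ pointwise a.e., I obtain $\mathcal{E}(h_k^{i_k},v_k)\to\mathcal{E}(h(t_0,\cdot),v)$ for sufficiently regular $v$. Combining this with the minimality of $u_k^{i_k}$ and the lower semicontinuity yields $\mathcal{E}(h(t_0,\cdot),w)\le\mathcal{E}(h(t_0,\cdot),v)$. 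Strict convexity of $\mathcal{E}$ on $\mathcal{A}_e(\alpha(t_0),\beta(t_0),h(t_0,\cdot))$, which holds via Lemma \ref{lemma korn} combined with the strict convexity of $W$ on $\mathbb{R}^{2\times 2}_{\mathrm{sym}}$, gives $w=u(t_0,\cdot,\cdot)$; since the limit is independent of the subsequence, the entire original sequence converges.

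The main obstacle is the analysis of the recovery sequence $v_k$ near the corners $(\alpha(t_0),0)$ and $(\beta(t_0),0)$, where $h(t_0,\cdot)$ vanishes and $r_k$ need not remain bounded. This will be handled by a density argument: first prove the energy convergence $\mathcal{E}(h_k^{i_k},v_k)\to\mathcal{E}(h(t_0,\cdot),v)$ for displacements $v$ such that $v-w^0$ is smooth and compactly supported in $\overline{\Omega_{h(t_0,\cdot)}}\setminus\{(\alpha(t_0),0),(\beta(t_0),0)\}$, and then invoke density of such functions in $\{\bar v\in H^1(\Omega_{h(t_0,\cdot)};\mathbb{R}^2):\bar v(x,0)=0\text{ a.e.\ in }(\alpha(t_0),\beta(t_0))\}$. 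The density relies on the Lipschitz regularity of $\Omega_{h(t_0,\cdot)}$ (from $\operatorname{Lip}h(t_0,\cdot)\le L_0$ in Theorem \ref{theorem lipschitz}) and on the strict positivity of the endpoint slopes $h^\prime(t_0,\alpha(t_0))$ and $-h^\prime(t_0,\beta(t_0))$ from Theorem \ref{theorem positive}, which provide uniform cone conditions at the corners allowing standard cut-off and mollification constructions.
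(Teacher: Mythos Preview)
Your compactness, diagonal extraction, and lower semicontinuity steps match the paper's proof essentially verbatim, and the conclusion via uniqueness of the minimizer and independence of the subsequence is the same.

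The genuine difference is in the recovery sequence. You transport competitors through a domain diffeomorphism $\Phi_k$ and then confront the fact that the vertical scaling $r_k=h_k^{i_k}(T_k(\cdot))/h(t_0,\cdot)$ degenerates at the corners, which forces you into a density argument (cutting off near the corners, relying on zero $H^1$ capacity of points). This works, but it is considerably heavier than what the paper does. The paper avoids the corners entirely: given a competitor $w\in\mathcal{A}_e(\alpha(t_0),\beta(t_0),h(t_0,\cdot))$, it uses the Lipschitz regularity of $\Omega_{h(t_0,\cdot)}$ (from \eqref{h' positive} and Theorem~\ref{theorem lipschitz}) to extend $w$ to a function $\tilde w\in H^1(U;\mathbb{R}^2)$ on the larger strip $U=(\alpha(t_0)-1,\beta(t_0)+1)\times(0,\infty)$ with $\tilde w(x,0)=(e_0x,0)$, exactly as in Step~1 of Theorem~\ref{theorem C3}. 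The restriction of $\tilde w$ to $\Omega_{h_k^{i_k}}$ is then an admissible competitor, and the convergence $\int_{\Omega_{h_k^{i_k}}}W(E\tilde w)\to\int_{\Omega_{h(t_0,\cdot)}}W(Ew)$ follows immediately from the uniform convergence of the profiles by dominated convergence, with no corner analysis at all. Your diffeomorphism approach would be the natural choice if no good $H^1$ extension were available (e.g.\ non-Lipschitz limit domain), but here the extension route is both shorter and cleaner.
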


\begin{proof}
Let $u_{k}:=u_{k}^{i_{k}}$. By minimality,
\begin{equation}
\int_{\Omega_{h_{k}(t_{k},\cdot)}}W(Eu_{k}(x,y))\,dxdy\leq\int_{\Omega
_{h_{k}(t_{k},\cdot)}}W(e_{0}I)\,dxdy=W(e_{0}I)A_{0}\,, \label{e10}%
\end{equation}
where $I$ is the $2\times2$ identity matrix and we used \eqref{area constraint}%
. Let $\alpha(t_{0})<a<b<\beta(t_{0})$ and let $\tilde{\Omega}$ be an open
set with boundary of class $C^{\infty}$ such that $[a,b]\times\{0\}\subset
\partial\tilde{\Omega}$ and $\operatorname*{dist}(\tilde{\Omega}%
,\operatorname*{graph}(h(t_{0},\cdot)){ )}>0$. 
 By \eqref{alpha k converge uniformly} and \eqref{hk tk uniformly} we have $\operatorname*{dist}(\tilde{\Omega}%
,\operatorname*{graph}(h_k(t_{k},\cdot)){ )}>0$ and
$u_{k}(x,0)=(e_{0}x,0)$ for all $x\in\lbrack a,b]$
and all $k$
sufficiently large. 
By Korn's inequality (see Lemma \ref{lemma korn})  we have that
$\{u_{k}\}_{k}$ is bounded in $H^{1}(\tilde{\Omega};\mathbb{R}^{2})$. Then
there exist a subsequence (not relabeled) and a function $v\in H^{1}%
(\tilde{\Omega};\mathbb{R}^{2})$ such that $u_{k}\rightharpoonup v$ weakly in
$H^{1}(\tilde{\Omega};\mathbb{R}^{2})$.

Take an increasing sequence of domains $\{\tilde{\Omega}_{n}\}_{n}$ as above
such that their union is $\Omega_{h(t_{0},\cdot)}$. By a diagonal argument, we
can extract a further subsequence (not relabeled) and construct a function
$v\in H_{\operatorname*{loc}}^{1}(\Omega_{h(t_{0},\cdot)};\mathbb{R}^{2})$
such that $u_{k}\rightharpoonup v$ weakly in $H^{1}(\tilde{\Omega}%
_{n};\mathbb{R}^{2})$ for every $\tilde{\Omega}_{n}$. By \eqref{W convex},
\eqref{e10}, and a lower semicontinuity argument we have that%
\[
\int_{\tilde{\Omega}_{n}}|Ev(x,y)|^{2}dxdy\leq C_{W}W(e_{0}I)A_{0}%
\]
for every $n$. By letting $n\rightarrow\infty$ we obtain%
\[
\int_{\Omega_{h(t_{0},\cdot)}}|Ev(x,y)|^{2}dxdy\leq C_{W}W(e_{0}I)A_{0}\,.
\]
In view of  \eqref{tyu85},  \eqref{h' positive},  and Theorem \ref{theorem lipschitz}, the set
$\Omega_{h(t_{0},\cdot)}$ has Lipschitz continuous boundary and since, by
construction, $v(x,0)=(e_{0}x,0)$ for all $x\in(a_{n},b_{n})$ for all $n$, we
can apply Korn's inequality (see Lemma \ref{lemma korn}) to conclude that
$v\in H^{1}(\Omega_{h(t_{0},\cdot)};\mathbb{R}^{2})$.

It remains to show that $v=u(t_{0},\cdot,\cdot)$. Let $w\in\mathcal{A}%
_{e}(\alpha(t_{0}),\beta(t_{0}),h(t_{0},\cdot))$. Since $h(t_{0},\cdot)\in
C^{1}([\alpha(t_{0}),\beta(t_{0})])$ by Proposition
\ref{proposition existence h} and \eqref{h' positive} holds we can argue as  at the end of Step 1 in  
the proof of Theorem \ref{theorem C3} to extend $w$ to
a function $\tilde{w}\in H^{1}(U;\mathbb{R}^{2})$, where $U:=(\alpha
(t_{0})-1,\beta(t_{0})+1)\times(0,\infty)$ and $\tilde{w}(x,0)=(e_{0}x,0)$ for
all $x\in(\alpha(t_{0})-1,\beta(t_{0})+1)$. By the minimality of $u_{k}$ in
$\Omega_{h_{k}(t_{k},\cdot)}$ we have%
\[
\int_{\Omega_{h_{k}(t_{k},\cdot)}}W(Eu_{k}(x,y))\,dxdy\leq\int_{\Omega
_{h_{k}(t_{k},\cdot)}}W(E\tilde{w}(x,y))\,dxdy\,.
\]
Letting $k\rightarrow\infty$ and using \eqref{hk tk uniformly} we obtain%
\[
\lim_{k\rightarrow\infty}\int_{\Omega_{h_{k}(t_{k},\cdot)}}W(E\tilde
{w}(x,y))\,dxdy=\int_{\Omega_{h(t_{0},\cdot)}}W(Ew(x,y))\,dxdy\,.
\]
On the other, by lower semicontinuity%
\begin{align*}
\int_{\tilde{\Omega}_{n}}W(Ev(x,y))\,dxdy&\leq\liminf_{k\rightarrow\infty}%
\int_{\tilde{\Omega}_{n}}W(Eu_{k}(x,y))\,dxdy\\&\leq\liminf_{k\rightarrow\infty
}\int_{\Omega_{h_{k}(t_{k},\cdot)}}W(Eu_{k}(x,y))\,dxdy
\end{align*}
for every $n$. Taking the limit as $n\rightarrow\infty$ we obtain%
\[
\int_{\Omega_{h(t_{0},\cdot)}}W(Ev(x,y))\,dxdy\leq\int_{\Omega_{h(t_{0}%
,\cdot)}}W(Ew(x,y))\,dxdy\,,
\]
which proves the minimality of $v$. By uniqueness, $v=u(t_{0},\cdot,\cdot)$,
and since the limit is independent of the subsequence, the entire sequence
$\{u_{k}\}_{k}$ converges to $u(t_{0},\cdot,\cdot)$ as in the statement.
\end{proof}

In what follows%
\begin{equation}
p_{1}:=\min\{6/5,p_{0}/(4-2p_{0})\}\quad\text{and}\quad q_{1}:=4p_{1}/(2+p_{1})\,,
\label{p1 and q1}%
\end{equation}
where $p_{0}$ is defined in \eqref{p0}. We observe that $1<p_{1}\leq6/5$ and
$4/3<q_{1}\leq3/2$.

\begin{theorem}\label{theorem 9.19}.  Under the assumptions \eqref{assumption 1}--\eqref{Lip < L0}, 
let $T_{1}$ be as in Theorem \ref{theorem lipschitz} and let $k_{2}$ be as in Proposition \ref{proposition 9.15}. Then 
there  exists  $M_{4}>0$ such that%
\begin{gather}
\int_{0}^{T_{1}}\Vert\hat{h}_{k}^{\prime\prime\prime}(t,\cdot)\Vert_{L^{q_{1}%
}((\hat{\alpha}_{k}(t),\hat{\beta}_{k}(t)))}^{p_{1}}dt\leq  M_{4}\Big(  \int%
_{0}^{T_{1}}(\hat{\beta}_{k}(t)-\hat{\alpha}_{k}(t))^{(2-5p_{1})/(4-2p_{1}%
)}\Big)  ^{1-p_{1}/2}+M_{4}\,, \label{h''' hat bound}%
\\
\int_{0}^{T_{1}}\int_{\hat{\alpha}_{k}(t)}^{\hat{\beta}_{k}(t)}|\hat{h}%
_{k}^{(\operatorname*{iv})}(t,x)|^{p_{1}}dxdt\leq  M_{4}\,, \label{bound bar h iv}%
\end{gather}
for all $k\geq  k_{2}$.
\end{theorem}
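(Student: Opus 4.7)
The plan is to apply the corner regularity estimate Theorem~\ref{theorem h iv Lp} (with $p=p_{1}$) at each discrete time $t_{k}^{i}\le T_{1}$ and then sum the resulting inequalities, turning the Riemann sums into the cumulative dissipation bounds from Propositions~\ref{proposition alpha} and~\ref{proposition Hk}. First I would verify that the hypotheses of Theorem~\ref{theorem h iv Lp} hold uniformly for $k\ge k_{2}$ and $0\le i\le kT_{1}$: the endpoint-slope condition \eqref{h' alpha and beta} with a $k,i$-independent $\eta_{0}$ comes from Proposition~\ref{proposition derivatives away}; the interior lower bound \eqref{h bounded from zero} follows from Proposition~\ref{proposition functions away} (applied with $\delta_{0}=\eta_{0}^{2}/(4M_{3})$, admissible for large $k$ in view of \eqref{tyu88} and \eqref{tyu90}); and \eqref{h'' L2 norm} with $M=M_{3}$ is provided by Proposition~\ref{proposition second i}.

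For \eqref{bound bar h iv}, Theorem~\ref{theorem h iv Lp} gives
\[
\int_{\alpha_{k}^{i}}^{\beta_{k}^{i}}|(h_{k}^{i})^{(\operatorname*{iv})}|^{p_{1}}\,dx\le c_{9}B_{\tau_{k},i}^{q},
\]
with $B_{\tau_{k},i}:=B_{\tau_{k}}(H_{k}^{i},H_{k}^{i-1},\alpha_{k}^{i},\alpha_{k}^{i-1},\beta_{k}^{i},\beta_{k}^{i-1})$ and $q\le 2$ by the choice $1<p_{1}\le 6/5$ (Remark~\ref{remark q}). Writing
\[
\int_{0}^{T_{1}}\!\int|\hat{h}_{k}^{(\operatorname*{iv})}|^{p_{1}}\,dx\,dt=\sum_{i=1}^{\lfloor T_{1}/\tau_{k}\rfloor}\tau_{k}\int_{\alpha_{k}^{i}}^{\beta_{k}^{i}}|(h_{k}^{i})^{(\operatorname*{iv})}|^{p_{1}}\,dx
\]
and inserting the quadratic upper bound \eqref{B tau alpha square} for $B_{\tau_{k},i}^{q}$, the task reduces to bounding three Riemann sums: the constant term contributes at most $4c_{9}T_{1}$; the $H_{k}$-contribution is handled by factoring $\sup_{[0,T_{1}]}(\hat\beta_{k}-\hat\alpha_{k})$ (uniformly bounded by \eqref{support}) out of the sum and invoking $\sum_{i}\|H_{k}^{i}-H_{k}^{i-1}\|_{L^{2}(\mathbb{R})}^{2}/\tau_{k}=\int_{0}^{\infty}\|\dot{H}_{k}\|_{L^{2}(\mathbb{R})}^{2}\,dt\le 2M_{0}$ from \eqref{Hk dot bound}; and the $\alpha_{k}$- and $\beta_{k}$-contributions combine to $4c_{9}\big(\int_{0}^{\infty}(\dot{\alpha}_{k})^{2}\,dt+\int_{0}^{\infty}(\dot{\beta}_{k})^{2}\,dt\big)\le 8c_{9}M_{1}$ by \eqref{alha dot k bounds}. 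This proves \eqref{bound bar h iv}.

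For \eqref{h''' hat bound} the key fact is that $(h_{k}^{i})''$ vanishes at both endpoints by Theorem~\ref{theorem endpoints}, so $\int_{\alpha_{k}^{i}}^{\beta_{k}^{i}}(h_{k}^{i})'''\,dx=0$. A Nirenberg--Gagliardo interpolation on the bounded interval $(\alpha_{k}^{i},\beta_{k}^{i})$ applied to $g=(h_{k}^{i})''$ (with the scaling-correct lower-order correction term in $\beta_{k}^{i}-\alpha_{k}^{i}$), combined with the $L^{2}$ bound $\|(h_{k}^{i})''\|_{L^{2}}^{2}\le M_{3}$ from Proposition~\ref{proposition second i}, yields a pointwise-in-time estimate of the form
\[
\|(h_{k}^{i})'''\|_{L^{q_{1}}}^{p_{1}}\le C\,(\beta_{k}^{i}-\alpha_{k}^{i})^{(2-5p_{1})/4}\,\|(h_{k}^{i})^{(\operatorname*{iv})}\|_{L^{p_{1}}}^{p_{1}^{2}/2}+C.
\]
Integrating in $t\in[0,T_{1}]$ and applying H\"older's inequality in time with conjugate exponents $s=2/(2-p_{1})$ and $s'=2/p_{1}$, chosen so that $(p_{1}^{2}/2)s'=p_{1}$ and $((2-5p_{1})/4)s=(2-5p_{1})/(4-2p_{1})$, produces the factor
\[
\Big(\int_{0}^{T_{1}}(\hat\beta_{k}-\hat\alpha_{k})^{(2-5p_{1})/(4-2p_{1})}\,dt\Big)^{1-p_{1}/2}\Big(\int_{0}^{T_{1}}\|\hat{h}_{k}^{(\operatorname*{iv})}\|_{L^{p_{1}}}^{p_{1}}\,dt\Big)^{p_{1}/2};
\]
using \eqref{bound bar h iv} to bound the last factor by $M_{4}^{p_{1}/2}$ then gives \eqref{h''' hat bound}. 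The main obstacle is recording the precise bounded-domain interpolation inequality that, after the above H\"older step, produces both the power $p_{1}^{2}/2$ on $\|(h_{k}^{i})^{(\operatorname*{iv})}\|_{L^{p_{1}}}$ and the exponent $(2-5p_{1})/(4-2p_{1})$ in $(\hat\beta_{k}-\hat\alpha_{k})$ simultaneously; once this interpolation is in place, the rest is a routine combination of the cumulative dissipation identities.
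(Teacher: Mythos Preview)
Your proof of \eqref{bound bar h iv} matches the paper's almost verbatim: verify the hypotheses of Theorem~\ref{theorem C3} uniformly in $k,i$ via Propositions~\ref{proposition second i}, \ref{proposition derivatives away}, \ref{proposition functions away}, \ref{proposition 9.15}, apply Theorem~\ref{theorem h iv Lp} with $p=p_{1}$, use $q\le2$ and \eqref{B tau alpha square}, and sum against the dissipation bounds \eqref{alha dot k bounds}, \eqref{Hk dot bound}. No issues there.

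For \eqref{h''' hat bound} there is a real gap. The inequality you posit,
\[
\|(h_{k}^{i})'''\|_{L^{q_{1}}}^{p_{1}}\le C(\beta_{k}^{i}-\alpha_{k}^{i})^{(2-5p_{1})/4}\|(h_{k}^{i})^{(\mathrm{iv})}\|_{L^{p_{1}}}^{p_{1}^{2}/2}+C,
\]
fails the $x$-scaling test: the left side scales like $|I|^{(2-3p_{1})/4}$ while your first right-hand term scales like $|I|^{(2-3p_{1}-4p_{1}^{2})/4}$, and these agree only when $p_{1}=0$. So no such interpolation exists; you have reverse-engineered the exponent $p_{1}^{2}/2$ from the final H\"older step, not from any valid Gagliardo--Nirenberg relation. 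The endpoint fact $(h_{k}^{i})''(\alpha_{k}^{i})=(h_{k}^{i})''(\beta_{k}^{i})=0$ is also a red herring here.

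The paper's route avoids this entirely by using an \emph{additive} bounded-interval interpolation (Leoni, Theorem~7.41) in which the length factor is paired with $\|(h_{k}^{i})''\|_{L^{2}}$ rather than with the fourth derivative:
\[
\|\hat h_{k}'''\|_{L^{q_{1}}}\le C(\hat\beta_{k}-\hat\alpha_{k})^{1/q_{1}-3/2}\|\hat h_{k}''\|_{L^{2}}+C\|\hat h_{k}''\|_{L^{2}}+C\|\hat h_{k}^{(\mathrm{iv})}\|_{L^{p_{1}}}.
\]
After raising to the power $p_{1}$ and integrating in time, the H\"older split with exponents $2/(2-p_{1})$ and $2/p_{1}$ is applied to the \emph{first} term only, producing $\bigl(\int(\hat\beta_{k}-\hat\alpha_{k})^{(2-5p_{1})/(4-2p_{1})}\bigr)^{1-p_{1}/2}\bigl(\int\|\hat h_{k}''\|_{L^{2}}^{2}\bigr)^{p_{1}/2}$; the second factor is $\le(T_{1}M_{3})^{p_{1}/2}$ by Proposition~\ref{proposition second i}. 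The $\|\hat h_{k}^{(\mathrm{iv})}\|_{L^{p_{1}}}^{p_{1}}$ term is bounded directly by Step~1. This is both simpler and rigorous; to fix your argument, replace your multiplicative interpolation by this additive one.
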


\begin{proof}
\textbf{Step 1. } In this proof $C$ denotes a constant, independent of $k$ and $i$, whose value can change from formula to formula.  Let $\hat{q}:=\max\{p_{1},5p_{1}/2-1,3p_{1}-2,2p_{1}-1\}$.
 Since $p_1\leq 6/5$, by Remark \ref{remark q} we have $\hat{q}\leq 2$, hence, by \eqref{B tau alpha square} and  \eqref{support},%
\begin{align*}
B_{\tau_{k}}(H_{k}^{i},H_{k}^{i-1},\alpha_{k}^{i},\alpha_{k}^{i-1},\beta
_{k}^{i},\beta_{k}^{i-1})^{\hat{q}}  &  \leq4+ C\Vert\dot{H}_{k}(t,\cdot)\Vert_{L^{2}(\mathbb{R})}^{2}
+4|\dot{\alpha}_{k}(t)|^{2}+4|\dot{\beta}_{k}(t)|^{2}\,.
\end{align*}

 By Proposition \ref{proposition second i}, $(\alpha_{k}^{i}, \beta_{k}^{i}, h_{k}^{i})$ satisfies \eqref{h'' L2 norm} with $M=M_3>1$. By Proposition \ref{proposition derivatives away} there exists $0<\eta_{0}<1$ such that $(\alpha_{k}^{i}, \beta_{k}^{i}, h_{k}^{i})$ satisfies \eqref{h' alpha and beta} for all $k\geq k_{0}$ and all $0\leq i\leq kT_{0}$. By \eqref{rtyu0} and \eqref{c000} for $t\in[0,T_{0}]$ we can apply Lemma \ref{qwe1} to $(\alpha(t), \beta(t), h(t,\cdot))$  obtaining $\beta(t)-\alpha(t)\geq 16\eta_{0}^2/M_{3}$, hence \eqref{tyu90} is satisfied by $\delta_{0}=\eta_{0}^2/(4M_{3})$. Therefore, by Proposition \ref{proposition functions away} there exists $0<\eta_{1}<1$ such that $(\alpha_{k}^{i}, \beta_{k}^{i}, h_{k}^{i})$ satisfies 
\eqref{h bounded from zero} and the { second} inequality in \eqref{rtyu2} for all $k\geq k_{1}$ and all $0\leq i\leq kT_{0}$.
Moreover, Proposition \ref{proposition 9.15}
implies that $h_{k}^{i}$ satisfies the { first} inequality in \eqref{rtyu2} for all $k\geq k_{2}$ and all $0\leq i\leq kT_{1}$. We conclude that all
 assumptions of Theorem \ref{theorem C3} are satisfied
 for all $k\geq k_{2}$ and all $0\leq i\leq kT_{1}$.
 
Hence, by Theorem \ref{theorem h iv Lp}, we have%
\begin{align*}
\int_{\alpha_{k}^{i}}^{\beta_{k}^{i}}|(h_{k}^{i})^{(\operatorname*{iv}%
)}(x)|^{p_{1}}dx  &  \leq c_{9}B_{\tau_{k}}(H_{k}^{i},H_{k}^{i-1},\alpha
_{k}^{i},\alpha_{k}^{i-1},\beta_{k}^{i},\beta_{k}^{i-1})^{\hat{q}}\\
&  \leq C+C\Vert\dot{H}_{k}(t,\cdot)\Vert_{L^{2}(\mathbb{R})}^{2}%
+C|\dot{\alpha}_{k}(t)|^{2}+{ C}|\dot{\beta}_{k}(t)|^{2}\,.
\end{align*}
Integrating in time over $[t_{k}^{i-1},t_{k}^{i}]$ and summing over all $1\leq
i\leq kT_{1}$ we obtain%
\begin{align}
\int_{0}^{T_{1}}\int_{\hat{\alpha}_{k}(t)}^{\hat{\beta}_{k}(t)}|\hat{h}%
_{k}^{(\operatorname*{iv})}(t,x)|^{p_{1}}dxdt  &  \leq CT_{1}+C\int_{0}^{T_{1}%
}\Vert\dot{H}_{k}(t,\cdot)\Vert_{L^{2}(\mathbb{R})}^{2}dt\label{m88}\\
&  \quad+C\int_{0}^{T_{1}}|\dot{\alpha}_{k}(t)|^{2}dt+C\int_{0}^{T_{1}}%
|\dot{\beta}_{k}(t)|^{2}dt\leq C\,,\nonumber
\end{align}
where in the last inequality we used \eqref{alha dot k bounds} and
\eqref{Hk dot bound}. This proves \eqref{bound bar h iv}.

\textbf{Step 2: }By standard interpolation results (\cite[Theorem
7.41]{leoni-book2}) for every $t\in\lbrack0,T_{1}]$,%
\begin{align*}
\Vert\hat{h}_{k}^{\prime\prime\prime}(t,\cdot)  &  \Vert_{L^{q_{1}}%
((\hat{\alpha}_{k}(t),\hat{\beta}_{k}(t)))}\leq C(\hat{\beta}_{k}%
(t))-\hat{\alpha}_{k}(t))^{1/q_{1}-3/2}\Vert\hat{h}_{k}^{\prime\prime}%
(t,\cdot)\Vert_{L^{2}((\hat{\alpha}_{k}(t),\hat{\beta}_{k}(t)))}\\
&  \quad+C\Vert\hat{h}_{k}^{\prime\prime}(t,\cdot)\Vert_{L^{2}((\hat{\alpha
}_{k}(t),\hat{\beta}_{k}(t)))}+C\Vert \hat{h}_{k}^{(\operatorname*{iv})}(t,\cdot
)\Vert_{L^{p_{1}}((\hat{\alpha}_{k}(t),\hat{\beta}_{k}(t)))}\,.
\end{align*}
In turn,%
\begin{align*}
\int_{0}^{T_{1}}  &  \Vert\hat{h}_{k}^{\prime\prime\prime}(t,\cdot
)\Vert_{L^{q_{1}}((\hat{\alpha}_{k}(t),\hat{\beta}_{k}(t)))}^{p_{1}}dt\\&\leq
C\int_{0}^{T_{1}}(\hat{\beta}_{k}(t))-\hat{\alpha}_{k}(t))^{1/2-5p_{1}/4}%
\Vert\hat{h}_{k}^{\prime\prime}(t,\cdot)\Vert_{L^{2}((\hat{\alpha}_{k}%
(t),\hat{\beta}_{k}(t)))}^{p_{1}}dt\\
&  \quad+C\int_{0}^{T_{1}}\Vert\hat{h}_{k}^{\prime\prime}(t,\cdot)\Vert
_{L^{2}((\hat{\alpha}_{k}(t),\hat{\beta}_{k}(t)))}^{p_{1}}dt+C\int_{0}^{T_{1}%
}\Vert \hat{h}_{k}^{(\operatorname*{iv})}(t,\cdot)\Vert_{L^{p_{1}}((\hat{\alpha}%
_{k}(t),\hat{\beta}_{k}(t)))}^{p_{1}}dt\,.
\end{align*}
By H\"{o}lder's inequality, \eqref{h'' i bound}, and \eqref{bound bar h iv},%
\begin{align*}
&\int_{0}^{T_{1}}  \Vert\hat{h}_{k}^{\prime\prime\prime}(t,\cdot
)\Vert_{L^{q_{1}}((\hat{\alpha}_{k}(t),\hat{\beta}_{k}(t)))}^{p_{1}}dt\\
&  \leq C\left(  \int_{0}^{T_{1}}(\hat{\beta}_{k}(t))-\hat{\alpha}%
_{k}(t))^{(2-5p_{1})/(4-2p_{1})}\right)  ^{1-p_{1}/2}\left(  \int_{0}^{T_{1}%
}\Vert\hat{h}_{k}^{\prime\prime}(t,\cdot)\Vert_{L^{2}((\hat{\alpha}%
_{k}(t),\hat{\beta}_{k}(t)))}^{2}dt\right)  ^{p_{1}/2}\\
&  \quad+CT_{1}^{1-p_{1}/2}\left(  \int_{0}^{T_{1}}\Vert\hat{h}_{k}%
^{\prime\prime}(t,\cdot)\Vert_{L^{2}((\hat{\alpha}_{k}(t),\hat{\beta}%
_{k}(t)))}^{2}dt\right)  ^{p_{1}/2}
\\&\quad+C\int_{0}^{T_{1}}\Vert
\hat{h}_{k}^{(\operatorname*{iv})}(t,\cdot)\Vert_{L^{p_{1}}((\hat{\alpha}_{k}%
(t),\hat{\beta}_{k}(t)))}^{p_{1}}dt\\
&  \leq C(T_{1}M_{3})^{p_{1}/2}\left(  \int_{0}^{T_{1}}(\hat{\beta}%
_{k}(t))-\hat{\alpha}_{k}(t))^{(2-5p_{1})/(4-2p_{1})}\right)  ^{1-p_{1}%
/2}+C T_{1} M_{3}^{p_{1}/2}+M_{4}\,.
\end{align*}
This proves \eqref{h''' hat bound}.
\end{proof}

\bigskip

\begin{theorem}\label{uyt01} Under the assumptions \eqref{assumption 1}--\eqref{Lip < L0},  let $T_{1}$ be as in Theorem \ref{theorem lipschitz}. Then 
for a.e. $t\in(0,T_{1})$ we have%
\begin{align}
\sigma_{0}\dot{\alpha}(t) & =\frac{\gamma}{J(t,\alpha(t))}- \gamma_{0}+\nu_{0}\frac{h^{\prime}(t,\alpha(t))}{(J(t,\alpha(t)))^{2}}\Big(\frac
{h^{\prime\prime}(t,\cdot)}{(J(t,\cdot))^{3}}\Big)^{\prime}(\alpha(t))\,,
\label{equation alpha dot}%
\\
\sigma_{0}\dot{\beta}(t)  &  =-\frac{\gamma}{J(t,\beta(t))}%
+\gamma_{0}-\nu_{0}\frac{h^{\prime}(t,\beta(t))}{J(t,\beta(t))^{2}}\Big(\frac
{h^{\prime\prime}(t,\cdot)}{J(t,\cdot)^{3}}\Big)^{\prime}(\beta(t))\,, \label{equation beta dot}%
\end{align}
 where $J(t,\cdot)$ is defined in \eqref{J and J0} using $h(t,\cdot)$. 
\end{theorem}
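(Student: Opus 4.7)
The plan is to pass to the limit as $k\to\infty$ in the discrete endpoint Young's laws of Theorem \ref{theorem endpoints}. The preliminary estimates collected in Step~1 of the proof of Theorem \ref{theorem 9.19} guarantee that for every $k\geq k_2$ and every $1\leq i\leq kT_1$ the quadruple $(\alpha_k^i,\beta_k^i,h_k^i,u_k^i)$ satisfies the hypotheses of Theorem \ref{theorem C3}, so Theorem \ref{theorem endpoints} applies. Combining \eqref{equation x l1} with $(h_k^i)^{\prime\prime}(\alpha_k^i)=0$ (which gives $\bigl((h_k^i)^{\prime\prime}/(J_k^i)^3\bigr)^{\prime}(\alpha_k^i)=(h_k^i)^{\prime\prime\prime}(\alpha_k^i)/J_k^i(\alpha_k^i)^3$) and using the piecewise constant interpolations, I would rewrite the identity as
\[
\sigma_0\,\dot\alpha_k(t)=\frac{\gamma}{\hat J_k(t,\hat\alpha_k(t))}-\gamma_0+\nu_0\,\frac{\hat h_k^{\prime}(t,\hat\alpha_k(t))\,\hat h_k^{\prime\prime\prime}(t,\hat\alpha_k(t))}{\hat J_k(t,\hat\alpha_k(t))^5}
\]
for a.e.\ $t\in(0,T_1)$ and every $k\geq k_2$, with $\hat J_k:=\sqrt{1+(\hat h_k^{\prime})^2}$.

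Next I would multiply by an arbitrary $\varphi\in C_c^{\infty}((0,T_1))$ and integrate over $(0,T_1)$. The left-hand side converges to $\sigma_0\int_0^{T_1}\dot\alpha(t)\varphi(t)\,dt$ by the weak $L^2$-convergence $\dot\alpha_k\rightharpoonup\dot\alpha$ furnished by \eqref{alha dot k bounds} and \eqref{alpha k converge}. For the first two right-hand side terms, pointwise convergence $\hat h_k^{\prime}(t,\hat\alpha_k(t))\to h^{\prime}(t,\alpha(t))$ given by \eqref{h' hat converges} and the uniform bound $|\hat h_k^{\prime}|\leq L_0$ yield, via dominated convergence, convergence of $\int_0^{T_1}\bigl(\gamma/\hat J_k(t,\hat\alpha_k(t))-\gamma_0\bigr)\varphi\,dt$ to $\int_0^{T_1}\bigl(\gamma/J(t,\alpha(t))-\gamma_0\bigr)\varphi\,dt$, and an analogous statement holds for the bounded prefactor $\nu_0\hat h_k^{\prime}(t,\hat\alpha_k(t))/\hat J_k(t,\hat\alpha_k(t))^5$.

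The main obstacle is identifying the limit of the pointwise value $\hat h_k^{\prime\prime\prime}(t,\hat\alpha_k(t))$ against the only available bounds \eqref{h''' hat bound} and \eqref{bound bar h iv}. Here the crucial observation is that $\hat h_k^{\prime\prime}(t,\hat\alpha_k(t))=0$. For each small fixed $\delta>0$ and $k$ large enough so that $\hat\alpha_k(t)+\delta<\hat\beta_k(t)$ uniformly (using \eqref{alpha hat converge uniformly} and \eqref{tyu87}), I would use the decomposition
\[
\hat h_k^{\prime\prime\prime}(t,\hat\alpha_k(t))=\hat h_k^{\prime\prime\prime}(t,\hat\alpha_k(t)+\delta)-\int_{\hat\alpha_k(t)}^{\hat\alpha_k(t)+\delta}\!\!\hat h_k^{(\operatorname*{iv})}(t,s)\,ds.
\]
After multiplication by $\varphi$ and integration in $t$, H\"older's inequality together with \eqref{bound bar h iv} shows that the remainder integral is $o(1)$ as $\delta\to 0^+$ uniformly in $k$. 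For the interior value, along a (not relabeled) subsequence $\hat h_k^{(\operatorname*{iv})}(t,\cdot)$ is bounded in $L^{p_1}$ for a.e.\ $t$; by Sobolev embedding this provides a uniform H\"older bound on $\hat h_k^{\prime\prime\prime}(t,\cdot)$ in space, and combined with the $C^3$-convergence on compact subsets of $(\alpha(t),\beta(t))$ inherited from \eqref{hk tk uniformly}--\eqref{h prime k tk uniformly}, an Ascoli-Arzel\`a argument yields the pointwise convergence $\hat h_k^{\prime\prime\prime}(t,\hat\alpha_k(t)+\delta)\to h^{\prime\prime\prime}(t,\alpha(t)+\delta)$ for a.e.\ $t$. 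Passing first $k\to\infty$ and then $\delta\to 0^+$ (using continuity of $h^{\prime\prime\prime}(t,\cdot)$ up to $\alpha(t)$, which holds since the limit $h$ inherits $W^{4,p_1}$ regularity in space by lower semicontinuity of \eqref{bound bar h iv}) identifies the limit of the third-derivative term as $\nu_0\int_0^{T_1}h^{\prime}(t,\alpha(t))h^{\prime\prime\prime}(t,\alpha(t))\varphi(t)/J(t,\alpha(t))^5\,dt$. Since $h^{\prime\prime}(t,\alpha(t))=0$ is inherited from the discrete identity, this coincides with $\nu_0\int_0^{T_1}\bigl(h^{\prime}(t,\alpha(t))/J(t,\alpha(t))^2\bigr)\bigl(h^{\prime\prime}(t,\cdot)/J(t,\cdot)^3\bigr)^{\prime}(\alpha(t))\varphi(t)\,dt$, and the arbitrariness of $\varphi$ gives \eqref{equation alpha dot}. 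Equation \eqref{equation beta dot} follows by the symmetric argument at the right endpoint.
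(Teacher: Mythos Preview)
Your overall strategy—passing to the limit in the discrete Young's law \eqref{equation x l1}, handling the bounded first-derivative terms via \eqref{h' hat converges} and dominated convergence, and isolating the third-derivative contribution—matches the paper's. The gap is in your treatment of $\hat h_k^{\prime\prime\prime}(t,\hat\alpha_k(t)+\delta)$.

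Your Ascoli--Arzel\`a argument requires, for a.e.\ fixed $t$, that $\|\hat h_k^{(\operatorname*{iv})}(t,\cdot)\|_{L^{p_1}}$ be bounded \emph{uniformly in $k$}. The estimate \eqref{bound bar h iv} only controls $\int_0^{T_1}\|\hat h_k^{(\operatorname*{iv})}(t,\cdot)\|_{L^{p_1}}^{p_1}dt$; this does not prevent $\|\hat h_k^{(\operatorname*{iv})}(t,\cdot)\|_{L^{p_1}}$ from blowing up along a ($t$-dependent) subsequence on sets of vanishing measure, so no $k$-uniform H\"older bound on $\hat h_k^{\prime\prime\prime}(t,\cdot)$ is available. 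Relatedly, \eqref{hk tk uniformly}--\eqref{h prime k tk uniformly} deliver only $C^0$/$C^1$ convergence, not the ``$C^3$-convergence on compact subsets'' you invoke. Thus neither the equicontinuity nor the identification step in your Ascoli--Arzel\`a argument is justified.

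The paper sidesteps pointwise-in-$t$ issues entirely by proving \emph{weak} convergence of $t\mapsto\hat h_k^{\prime\prime\prime}(t,\hat\alpha_k(t))$ in $L^{p_1}((0,T_1))$. The trick is to write $\hat h_k^{\prime\prime\prime}(t,\hat\alpha_k(t))=\hat h_k^{\prime\prime\prime}(t,x)-\int_{\hat\alpha_k(t)}^{x}\hat h_k^{(\operatorname*{iv})}(t,\rho)\,d\rho$ for $x$ in a \emph{fixed} interval $(a,b)\subset(\alpha(t),\beta(t))$, multiply by $\phi(x)\psi(t)$ with $\int\phi\neq 0$, and integrate in both variables. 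Averaging in $x$ lets one use the space-time weak convergence of $\hat h_k^{\prime\prime\prime}$ in $L^{p_1}_t L^{q_1}_x$ (from \eqref{h''' hat bound}) and of $\hat h_k^{(\operatorname*{iv})}$ in $L^{p_1}_{t,x}$ (from \eqref{bound bar h iv}), with only the pointwise convergence $\hat\alpha_k(t)\to\alpha(t)$ entering through the characteristic function $\chi_{(\hat\alpha_k(t),x)}$. Dividing by $\int\phi$ at the end recovers the desired weak limit in $t$. This argument never requires a uniform-in-$k$ bound at fixed $t$, and since the prefactor $\hat h_k^{\prime}/\hat J_k^5$ converges boundedly, the product converges weakly in $L^{p_1}$, which suffices to pair against $\dot\alpha_k$.
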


\begin{remark}\label{remark intrinsic boundary} To express \eqref{equation alpha dot} and \eqref{equation beta dot} in an intrinsic way, for every $t\in [0,T_{1}]$  we define $s(t,x)$ for $\alpha(t)\leq x\leq\beta(t)$ by
\begin{equation}\label{s(x)}
s(t,x):=\int_{\alpha(t)}^{x}\sqrt{1+(h^{\prime}(t,\rho))^{2}}d\rho
\end{equation}
The inverse of $s(t,\cdot)$, defined for 
$0=s(t,\alpha(t))\leq s\leq s(t,\beta(t))$, is denoted by $x(t,\cdot)$. Let $\kappa(t,\cdot):\big(s(t,\alpha(t)),s(t,\beta(t))\big)\rightarrow
\mathbb{R}$ be the signed curvature of the graph of $h(t,\cdot)$, considered as a
function of arclength. To be precise, we have%
\begin{equation}
\kappa(t,s):=\frac{h^{\prime\prime}(t,x(t,s))}{\Big(1+\big(h^{\prime}(t,x(t,s))\big)^{2}\Big)^{3/2}}\,,\label{kappa}
\end{equation}
hence
\[
\kappa(t,s(t,x))=\frac{h^{\prime\prime}(t,x)}{J(t,x)^{3}}
\]

Since
\[
\Big(\frac{h^{\prime\prime}(t,\cdot)}{(J(t,\cdot))^{3}}\Big)^{\prime}(x)=\partial_{s}\kappa%
\big(t,s(t,x)\big)J(t,x)\,,
\]
we can rewrite \eqref{equation alpha dot} and \eqref{equation beta dot} as%
\begin{align*}
& \sigma_{0}\dot{\alpha}(t)=\gamma\cos\theta_{\alpha}(t)-\gamma_{0}%
+\nu_{0}\partial_{s}\kappa
\big(t,s(t,\alpha(t))\big)\sin\theta_{\alpha}(t)\,,\\
& \sigma_{0}\dot{\beta}(t)=-\gamma\cos\theta_{\beta}(t)+\gamma_{0}
-\nu_{0}\partial_{s}\kappa
\big(t,s(t,\beta(t))\big)\sin\theta_{\beta}(t)\,,
\end{align*}
where
\begin{equation}
\theta_{\alpha}(t):=\arcsin\frac{h^{\prime}(t,\alpha(t))}{\sqrt{1+(h^{\prime}%
(t,\alpha(t)))^{2}}}\quad\text{and}\quad\theta_{\beta}(t):=\arcsin\frac{h^{\prime
}(t,\beta(t))}{\sqrt{1+(h^{\prime}(t,\beta(t)))^{2}}}\,, \label{theta1}%
\end{equation}
are the oriented angles between the oriented $x$-axis and the tangent to the
graph of $h(t,\cdot)$ at $(\alpha(t),0)$ and $(\beta(t),0)$, respectively. 
\end{remark}

\begin{proof}[Proof of Theorem \ref{uyt01}]
Fix $t\in\lbrack0,T_{1}]$ and $k\geq k_{2}$, where $k_{2}$ is as in Proposition \ref{proposition 9.15},  and find $i$ such $t_{k}^{i-1}\leq
t\leq t_{k}^{i}$. By \eqref{equation x l1},
\[
\sigma_{0}\frac{\alpha_{k}^{i}-\alpha_{k}^{i-1}}{\tau_{k}}=\frac{\gamma}%
{J_{k}^{i}(\alpha_{k}^{i})}-\gamma_{0}+\nu_{0}\frac{(h_{k}^{i})^{\prime
}(\alpha_{k}^{i})}{(J_{k}^{i}(\alpha_{k}^{i}))^{2}}\Big(\frac{(h_{k}%
^{i})^{\prime\prime}}{(J_{k}^{i})^{3}}\Big)^{\prime}(\alpha_{k}^{i})\,,
\]
 where $J_{k}^{i}(x):=(1+((h_{k}^{i})^{\prime}(x))^{2})^{1/2}$. Introducing
$\hat{J}_{k}(t,x):=(1+(\hat{h}_{k}^{\prime}(t,x))^{2})^{1/2}$, for every
$0\leq t\leq T_{1}$ we have

\begin{equation}
\sigma_{0}\dot{\alpha}_{k}(t-)=\frac{\gamma}{\hat{J}_{k}(t,\hat{\alpha}%
_{k}(t))}-\gamma_{0}+\nu_{0}\frac{\hat{h}_{k}^{\prime}(t,\hat{\alpha}%
_{k}(t))}{(\hat{J}_{k}(t,\hat{\alpha}_{k}(t)))^{2}}\Big(\frac{\hat{h}%
_{k}^{\prime\prime}(t,\cdot)}{(\hat{J}_{k}(t,\cdot))^{3}}\Big)^{\prime}%
(\hat{\alpha}_{k}(t))\,, \label{800}%
\end{equation}
where $\dot{\alpha}_{k}(t-)$ is the left derivative. We claim that%
\begin{equation}
\frac{\gamma}{\hat{J}_{k}(\cdot,\hat{\alpha}_{k}(\cdot))}\rightarrow
\frac{\gamma}{(1+(h^{\prime}(\cdot,\alpha(\cdot)))^{2})^{1/2}}\quad
\text{strongly in }L^{2}((0,T_{1}))\,. \label{801}%
\end{equation}
Since the function $s\mapsto 1/(1+s^{2})^{1/2}$ is 
$1$-Lipschitz 
continuous,  by \eqref{h' hat converges} we have 
\[
\left\vert \frac{1}{\hat{J}_{k}(t,\hat{\alpha}_{k}(t))}-\frac{1}%
{(1+(h^{\prime}(t,\alpha(t)))^{2})^{1/2}}\right\vert \leq|\hat{h}_{k}^{\prime
}(t,\hat{\alpha}_{k}(t))-h^{\prime}(t,\alpha(t))|\rightarrow0\,,
\]
 which gives  \eqref{801} by the dominated convergence theorem. 

To study the last term in \eqref{800} we observe that 
\begin{align*}
\frac{\hat{h}_{k}^{\prime}(t,\hat{\alpha}_{k}(t))}{\hat{J}_{k}(t,\hat{\alpha
}_{k}(t))^{2}}&\Big(\frac{\hat{h}_{k}^{\prime\prime}(t,\cdot)}{(\hat{J}%
_{k}(t,\cdot))^{3}}\Big)^{\prime}(\hat{\alpha}_{k}(t))    =\frac{\hat{h}%
_{k}^{\prime}(t,\hat{\alpha}_{k}(t))}{(\hat{J}_{k}(t,\hat{\alpha}_{k}%
(t)))^{5}}\hat{h}_{k}^{\prime\prime\prime}(t,\hat{\alpha}_{k}(t))\\&-3\frac
{(\hat{h}_{k}^{\prime}(t,\hat{\alpha}_{k}(t)))^{2}}{(\hat{J}_{k}(t,\hat
{\alpha}_{k}(t)))^{7}}(\hat{h}_{k}^{\prime\prime}(t,\hat{\alpha}_{k}%
(t)))^{2}  =\frac{\hat{h}_{k}^{\prime}(t,\hat{\alpha}_{k}(t))}{(\hat{J}_{k}%
(t,\hat{\alpha}_{k}(t)))^{5}}\hat{h}_{k}^{\prime\prime\prime}(t,\hat{\alpha
}_{k}(t))\,,
\end{align*}
where we used \eqref{h'' zero at endpoints}. We claim that%
\begin{equation}
\hat{h}_{k}^{\prime\prime\prime}(\cdot,\hat{\alpha}_{k}(\cdot))\rightharpoonup
h^{\prime\prime\prime}(\cdot,\alpha(\cdot))\quad\text{weakly in }L^{p_{1}%
}((0,T_{1}))\,. \label{weak convergence}%
\end{equation}
By the fundamental theorem of calculus,%
\[
\hat{h}_{k}^{\prime\prime\prime}(t,\hat{\alpha}_{k}(t))=\hat{h}_{k}%
^{\prime\prime\prime}(t,x)-\int_{\hat{\alpha}_{k}(t)}^{x}\hat{h}%
_{k}^{(\operatorname*{iv})}(t,\rho)\,d\rho\,,
\]
where $\hat{\alpha}_{k}(t)<x<\hat{\beta}_{k}(t)$. By Lemma
\ref{lemma endpoints} and the uniform continuity of $\alpha$ and $\beta$ (see
\eqref{alpha k converge}), we can subdivide $[0,T_{1}]$ into a finite  number 
of intervals $I$ such that for each of them there exist $a,b\in\mathbb{R}$
such that $\alpha(t)<a<b<\beta(t)$ for all $t\in I$. To prove
\eqref{weak convergence} it suffices to prove weak convergence in $L^{p_{1}%
}(I)$ for every such $I$. Fix $I=[t_{1},t_{2}]$ and the corresponding $a$,
$b$. By H\"{o}lder's inequality%
\[
|\hat{h}_{k}^{\prime\prime\prime}(t,\hat{\alpha}_{k}(t))|^{p_{1}}\leq
C|\hat{h}_{k}^{\prime\prime\prime}(t,x)|^{p_{1}}+C\int_{\hat{\alpha}_{k}%
(t)}^{\hat{\beta}_{k}(t)}|\hat{h}_{k}^{(\operatorname*{iv})}(t,\rho)|^{p_{1}%
}d\rho\,.
\]
Averaging in $x$ over $(a,b)$ and integrating in $t$ over $(t_{1},t_{2})$
gives%
\[
\int_{t_{1}}^{t_{2}}|\hat{h}_{k}^{\prime\prime\prime}(t,\hat{\alpha}%
_{k}(t))|^{p_{1}}dt\leq\frac{C}{b-a}\int_{t_{1}}^{t_{2}}\int_{a}^{b}|\hat
{h}_{k}^{\prime\prime\prime}(t,x)|^{p_{1}}dxdt+C\int_{t_{1}}^{t_{2}}\int%
_{\hat{\alpha}_{k}(t)}^{\hat{\beta}_{k}(t)}|\hat{h}_{k}^{(\operatorname*{iv}%
)}(t,\rho)|^{p_{1}}d\rho dt\,.
\]
Note that the first integral on the right-hand side is bounded because
$p_{1}\leq q_{1}$ and in view of \eqref{h''' hat bound}, while the second integral is bounded by \eqref{bound bar h iv}.

Let $\phi\in C_{c}^{\infty}((a,b))$ with $\int_{a}^{b}\phi(x)\,dx\neq0$ and
$\psi\in C_{c}^{\infty}((t_{1},t_{2}))$. Then%
\begin{align*}
\int_{a}^{b}\phi(x)\,dx&\int_{t_{1}}^{t_{2}}\hat{h}_{k}^{\prime\prime\prime
}(t,\hat{\alpha}_{k}(t))\psi(t)\,dt   =\int_{t_{1}}^{t_{2}}\int_{b}^{a}%
\hat{h}_{k}^{\prime\prime\prime}(t,x)\phi(x)\psi(t)\,dxdt\\
&  \quad-\int_{t_{1}}^{t_{2}}\int_{b}^{a}\int_{\mathbb{R}}\hat{h}%
_{k}^{(\operatorname*{iv})}(t,\rho)\chi_{(\hat{\alpha}_{k}(t),x)}(\rho
)\phi(x)\psi(t)\,d\rho dxdt,
\end{align*}
where $\hat{h}_{k}^{(\operatorname*{iv})}(t,\rho)\chi_{(\hat{\alpha}%
_{k}(t),x)}(\rho)$ is interpreted to be zero for $\rho\notin(\hat{\alpha}%
_{k}(t),x)$. By {\eqref{alpha hat converge uniformly}}, \eqref{6002},  and
\eqref{bound bar h iv}, we have%
\[
\hat{h}_{k}^{(\operatorname*{iv})}(t,\rho)\chi_{(\hat{\alpha}_{k}(t),x)}%
(\rho)\phi(x)\rightharpoonup h^{(\operatorname*{iv})}(t,\rho)\chi
_{(\alpha(t),x)}(\rho)\phi(x)\text{\quad weakly in }L^{p_{1}}({(t_{1}}%
,t_{2})\times\mathbb{R})
\]
for every fixed $x$. On the other hand, by \eqref{h''' hat bound},%
\[
\hat{h}_{k}^{\prime\prime\prime}(t,x)\rightharpoonup h^{\prime\prime\prime
}(t,x)\text{\quad weakly in }L^{p_{1}}((t_{1},t_{2});L^{q_{1}}((a,b))\,.
\]
Therefore,%
\begin{align*}
\int_{a}^{b}\phi(x)\,dx\int_{t_{1}}^{t_{2}}\hat{h}_{k}^{\prime\prime\prime
}(t,\hat{\alpha}_{k}(t))\psi(t)\,dt  &  \rightarrow\int_{t_{1}}^{t_{2}}%
\int_{b}^{a}h^{\prime\prime\prime}(t,x)\phi(x)\psi(t)\,dxdt\\
&  \quad-\int_{t_{1}}^{t_{2}}\int_{b}^{a}\int_{\alpha(t)}^{x}%
h^{(\operatorname*{iv})}(t,\rho)(\rho)\phi(x)\psi(t)\,d\rho dxdt\\
&  =\int_{a}^{b}\phi(x)dx\int_{t_{1}}^{t_{2}}h^{\prime\prime\prime}%
(t,\alpha(t))\psi(t)\,dt\,.
\end{align*}
Dividing by $\int_{a}^{b}\phi(x)\,dx$, we get
\[
\int_{t_{1}}^{t_{2}}\hat{h}_{k}^{\prime\prime\prime}(t,\hat{\alpha}%
_{k}(t))\psi(t)\,dt\rightarrow\int_{t_{1}}^{t_{2}}h^{\prime\prime\prime
}(t,\alpha(t))\psi(t)\,dt\,.
\]
By the arbitrariness of $\psi$ we obtain the weak convergence in $I$, which
suffices to prove \eqref{weak convergence}.

Arguing as in the first part of the proof 
\[\frac{\hat{h}_{k}^{\prime}%
(t,\hat{\alpha}_{k}(t))}{(\hat{J}_{k}(t,\hat{\alpha}_{k}(t)))^{5}}%
\rightarrow\frac{h^{\prime}(t,\alpha(t))}{(J(t,\alpha(t)))^{5}}
\]
pointwise and
is uniformly bounded. Therefore
\begin{equation}
\frac{\hat{h}_{k}^{\prime}(\cdot,\hat{\alpha}_{k}(\cdot))}{(\hat{J}_{k}%
(\cdot,\hat{\alpha}_{k}(\cdot)))^{5}}\hat{h}_{k}^{\prime\prime\prime}%
(\cdot,\hat{\alpha}_{k}(\cdot))\rightharpoonup\frac{\hat{h}_{k}^{\prime
}(t,\hat{\alpha}_{k}(t))}{(\hat{J}_{k}(t,\hat{\alpha}_{k}(t)))^{5}}%
h^{\prime\prime\prime}(\cdot,\alpha(\cdot))\quad\text{weakly in }L^{p_{1}%
}((0,T_{1}))\,. \label{802}%
\end{equation}
Combining \eqref{800}, \eqref{801}, and \eqref{802}, from \eqref{alpha k converge}  we obtain
\eqref{equation alpha dot}.
\end{proof}

\bigskip

Next we introduce the time derivative of $h$.

\begin{proposition}
For a.e. $t\in\lbrack0,+\infty)$ there exists an element of $H^{-1}((\alpha
(t),\beta(t)))$, denoted by $\dot{h}(t,\cdot)$, such that%
\begin{equation}
\frac{h(s,\cdot)-h(t,\cdot)}{s-t}\rightarrow\dot{h}(t,\cdot)\quad
\text{strongly in }H^{-1}((a,b)) \label{00}%
\end{equation}
for every $\alpha(t)<a<b<\beta(t)$.
\end{proposition}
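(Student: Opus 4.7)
My plan is to realize $\dot h(t,\cdot)$ as the spatial derivative of the time derivative of the primitive $H$, and to exploit the fact, established in Proposition~\ref{proposition existence H}, that $H\in H^1((0,T);L^2(\mathbb R))$ for every $T>0$; this provides an $L^2$-valued Bochner integrable time derivative $\dot H$ to which the Lebesgue differentiation theorem for Bochner maps can be applied. Since $h_{\ast}=\partial_x H$ by \eqref{H star}, differentiating in $x$ the time difference quotient of $H$ and passing through the continuous linear map $\partial_x\colon L^2\to H^{-1}$ will convert strong $L^2$-in-time convergence into the required strong $H^{-1}$-in-space convergence.

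In detail, by the Lebesgue differentiation theorem for Bochner integrable maps applied to $\dot H\in L^2_{\mathrm{loc}}([0,\infty);L^2(\mathbb R))$, there exists a set $N\subset[0,\infty)$ of Lebesgue measure zero such that, for every $t\notin N$,
\[
\frac{H(s,\cdot)-H(t,\cdot)}{s-t}=\frac{1}{s-t}\int_t^s\dot H(\sigma,\cdot)\,d\sigma\ \longrightarrow\ \dot H(t,\cdot)\quad\text{strongly in }L^2(\mathbb R),\ \text{as }s\to t\,.
\]
For such $t$, I would define $\dot h(t,\cdot)$ as the distributional $x$-derivative of the restriction of $\dot H(t,\cdot)$ to $(\alpha(t),\beta(t))$; since $\dot H(t,\cdot)\in L^2(\mathbb R)$, this is a well-defined element of $H^{-1}((\alpha(t),\beta(t)))$. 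Now fix $\alpha(t)<a<b<\beta(t)$. Continuity of $\alpha$ and $\beta$ (Proposition~\ref{rtyu8}) gives $\alpha(s)<a<b<\beta(s)$ for $|s-t|$ small, so on $(a,b)$ both $h(s,\cdot)$ and $h(t,\cdot)$ coincide with the restrictions of $\partial_x H(s,\cdot)$ and $\partial_x H(t,\cdot)$. The elementary bound $\|\partial_x f\|_{H^{-1}((a,b))}\le\|f\|_{L^2((a,b))}$, obtained by integration by parts against $H^1_0((a,b))$ test functions, then yields
\[
\left\|\frac{h(s,\cdot)-h(t,\cdot)}{s-t}-\dot h(t,\cdot)\right\|_{H^{-1}((a,b))}\le\left\|\frac{H(s,\cdot)-H(t,\cdot)}{s-t}-\dot H(t,\cdot)\right\|_{L^2((a,b))}\longrightarrow 0\,,
\]
which is exactly \eqref{00}.

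The argument is essentially a repackaging of the identity $\partial_t h=\partial_x\dot H$, so I do not anticipate any serious obstacle. The only point worth highlighting is that the null set of exceptional times produced by the Lebesgue differentiation step depends only on $\dot H$ and not on the subinterval $(a,b)$; this is what makes $\dot h(t,\cdot)\in H^{-1}((\alpha(t),\beta(t)))$ a single well-defined object, with the convergence in \eqref{00} holding simultaneously for every $\alpha(t)<a<b<\beta(t)$, rather than merely a family of local objects indexed by $(a,b)$.
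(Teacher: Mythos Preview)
Your proposal is correct and follows essentially the same approach as the paper: both arguments use $H\in H^{1}((0,T);L^{2}(\mathbb{R}))$ together with the continuous map $\partial_{x}\colon L^{2}\to H^{-1}$ to identify $\dot h(t,\cdot)$ with $\partial_{x}\dot H(t,\cdot)$, then restrict to $(a,b)$ via continuity of $\alpha,\beta$. The paper compresses your steps into the single observation that $h_{\ast}=\partial_{x}H\in H^{1}((0,T);H^{-1}(\mathbb{R}))$, whence the difference quotients of $h_{\ast}$ converge strongly in $H^{-1}(\mathbb{R})$ for a.e.~$t$, but the content is identical.
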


\begin{proof}
 Let us fix $T>0$.  By Proposition \ref{proposition existence H}, $H\in H^{1}((0,T);L^{2}%
(\mathbb{R}))$ and $H^{\prime}=h_{\ast}$. It follows that \begin{equation}\label{h* in H-1}
h_{\ast}\in
H^{1}((0,T);H^{-1}(\mathbb{R}))\,,
\end{equation}
and so for a.e. $t\in(0,T)$,%
\begin{equation}
\frac{h_{\ast}(s,\cdot)-h_{\ast}(t,\cdot)}{s-t}\rightarrow\dot{h}_{\ast
}(t,\cdot)\quad\text{strongly in }H^{-1}(\mathbb{R})\,. \label{000}%
\end{equation}
In particular, if $t\in\lbrack0,T]$ satisfies \eqref{000} and $\alpha
(t)<a<b<\beta(t)$, by the continuity of $\alpha$ and $\beta$ (see  Proposition \ref{rtyu8}), we
have $\alpha(s)<a<b<\beta(s)$, for all $s$ close to $t$. Therefore \eqref{000}
implies \eqref{00}.
\end{proof}

\begin{theorem}\label{final theorem} Under the assumptions \eqref{assumption 1}--\eqref{Lip < L0}, let $T_{1}$ be as in Theorem \ref{theorem lipschitz}. Then 
for a.e. $t\in(0,T_{1})$ we have
\begin{equation}
\dot{h}=\Big[-\gamma\frac{1}{J}\Big(\frac{h^{\prime}}{J}\Big)^{\prime\prime
}+\nu_{0}\frac{1}{J}\Big(\frac{h^{\prime\prime}}{J^{5}}\Big)^{\prime
\prime\prime}+\frac{5}{2}\nu_{0}\frac{1}{J}\Big(\frac{h^{\prime}(h^{\prime\prime})^{2}%
}{J^{7}}\Big)^{\prime\prime}+\frac{1}{J}\overline{W}^{\prime}\Big]^{\prime}
\label{weak equation h dot}%
\end{equation}
in $\mathcal{D}^{\prime}((\alpha(t),\beta(t)))$, where $\overline{W}$ is defined in 
\eqref{J and J0}.
\end{theorem}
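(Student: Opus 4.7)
The plan is to derive \eqref{weak equation h dot} by passing to the limit $k\to\infty$ in the discrete Euler--Lagrange identity \eqref{equation h}, applied to each incremental minimizer $(\alpha_k^i,\beta_k^i,h_k^i,u_k^i)$ with $(\alpha_k^{i-1},\beta_k^{i-1},h_k^{i-1})$ in the role of $(\alpha^0,\beta^0,h^0)$. Since $\dot h_k(t,\cdot) = (h_k^i - h_k^{i-1})/\tau_k$ on $(t_k^{i-1},t_k^i)$, \eqref{equation h} rewrites as the distributional identity $\dot h_k(t,x) = \hat B_k'(t,x)$, where $\hat B_k(t,\cdot)$ is the piecewise-constant-in-time interpolation of the bracket in \eqref{equation h}, built from $\hat h_k$, from a time-shifted copy of $\check h_k$ (for the coefficient $1/J^0$), and from the elastic trace $\overline W_k(t,x) = W(Eu_k^i(x,\hat h_k(t,x)))$. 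I would fix $t_0 \in (0,T_1)$, choose $t_1<t_2$ with $t_0 \in (t_1,t_2) \Subset (0,T_1)$ and $a<b$ with $\alpha(t) < a < b < \beta(t)$ on $[t_1,t_2]$ (possible by continuity of $\alpha,\beta$), and test against $\varphi(x)\psi(t)$ with $\varphi \in C_c^\infty((a,b))$, $\psi \in C_c^\infty((t_1,t_2))$. Since $\hat\alpha_k(t)<a<b<\hat\beta_k(t)$ on $[t_1,t_2]$ for all large $k$ by \eqref{alpha hat converge uniformly}, integrating by parts once in $x$ and once in $t$ yields
\begin{equation*}
-\int_{t_1}^{t_2}\!\!\int_a^b h_k(t,x)\,\varphi(x)\,\dot\psi(t)\,dxdt \,=\, -\int_{t_1}^{t_2}\!\!\int_a^b \hat B_k(t,x)\,\varphi'(x)\,\psi(t)\,dxdt.
\end{equation*}

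For the left-hand side, writing $h_k = H_k'$, integrating by parts in $x$, and using the weak convergence $H_k \rightharpoonup H$ in $H^1((0,T_1);L^2(\mathbb{R}))$ from Proposition \ref{proposition existence H} gives convergence to $-\int h\,\varphi\,\dot\psi\,dxdt$, which equals $\int \dot h\,\varphi\,\psi\,dxdt$ by \eqref{00}. For the right-hand side I would pass to the limit term by term in $\hat B_k$. Using \eqref{hk tk uniformly}, \eqref{h prime k tk uniformly}, and \eqref{alpha k holder} to compare $h_k^{i-1}$ and $h_k^i$, the factors depending only on $\hat h_k'$ (such as $1/\hat J_k$, $1/\hat J_k^0$, $\hat h_k'/\hat J_k$) converge uniformly on $[t_1,t_2]\times[a,b]$. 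The second derivative $\hat h_k''$ is uniformly bounded in $L^\infty_t L^2_x$ by \eqref{h'' i bound}; combined with the $L^{p_1}_t L^{q_1}_x$ control of $\hat h_k'''$ from Theorem \ref{theorem 9.19} and the $L^2_t H^{-1}_x$ control of $\dot h_k$ from \eqref{Hk dot bound}, an Aubin--Lions argument yields strong $L^2$ convergence of $\hat h_k''$ to $h''$ on $(t_1,t_2)\times(a,b)$. The highest-order factor $\hat h_k'''$, which after expansion appears linearly in $\hat B_k$, converges weakly in $L^{p_1}_t L^{q_1}_x$ to $h'''$ by Theorem \ref{theorem 9.19}, and multiplied by the uniformly convergent coefficients it passes to the limit in $L^1$ against $\varphi'\psi$.

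The main obstacle is the elastic term $\overline W_k/\hat J_k^0$. To identify its limit with $\overline W/J$, where $\overline W(t,x) = W(Eu(t,x,h(t,x)))$ and $u(t,\cdot,\cdot)$ is the minimizer of \eqref{wer1}, I would combine Proposition \ref{proposition convergence u}, which provides the weak-$H^1$ convergence of $u_k^i$ on subdomains compactly contained in $\Omega_{h(t,\cdot)}$, with the interior elliptic estimate of Theorem \ref{theorem regularity} applied in the non-degenerate band $\{(x,y): a < x < b,\ h(t,x)-\eta < y < h(t,x)\}$ for small $\eta > 0$. This upgrades the weak convergence to $C^{1,1/2}$ convergence of $u_k^i$ in a neighborhood of $\Gamma_{h_k^i}$ above $[a,b]$, hence to uniform convergence of $\overline W_k(t,\cdot) \to \overline W(t,\cdot)$ on $[a,b]$ for each $t \in [t_1,t_2]$. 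A dominated convergence argument in $t$, based on the uniform space-time trace bound for $\nabla u_k^i$ on $\Gamma_{h_k^i}$ obtained by integrating Corollary \ref{corollary Btau alpha} and Theorem \ref{theorem trace gradient} against \eqref{alha dot k bounds} and \eqref{Hk dot bound}, then completes the passage to the limit. The arbitrariness of $\varphi \in C_c^\infty((a,b))$ and $\psi \in C_c^\infty((t_1,t_2))$, together with the exhaustion of $(\alpha(t),\beta(t))$ by such $(a,b)$, yields \eqref{weak equation h dot} in $\mathcal{D}'((\alpha(t),\beta(t)))$ for a.e.\ $t \in (0,T_1)$.
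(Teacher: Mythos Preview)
Your localization and limit-passing strategy, and your handling of the elastic trace via Proposition~\ref{proposition convergence u} and Theorem~\ref{theorem regularity}, match the paper's approach (which, after the same reduction, cites \cite[Theorem~3.8]{fonseca2012motion}). However, there is a real gap in the derivative counting. The bracket $\hat B_k$ from \eqref{equation h} contains the term $\nu_0\,(J^0)^{-1}\bigl(h''/J^5\bigr)'''$, whose expansion produces \emph{fifth}-order derivatives of $\hat h_k$, not third-order ones. After a single integration by parts in $x$ you are left with $\int \hat B_k\,\varphi'\,\psi$, which still carries those fifth derivatives, and Theorem~\ref{theorem 9.19} supplies uniform bounds only up to $\hat h_k^{(\operatorname*{iv})}$. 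Your assertion that ``the highest-order factor $\hat h_k'''$ \dots\ appears linearly in $\hat B_k$'' is therefore incorrect, and the limit passage as written cannot close.

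The paper's remedy, implicit in the reference to \cite{fonseca2012motion}, has two ingredients beyond what you propose. First, one integrates by parts further, shifting all derivatives above the second off $\hat h_k$ and onto $\varphi$ and the factor $1/J^0$; this puts derivatives of $(\check h_k^{i-1})'$ up to third order into the coefficients. Second---and this is precisely the role of the two displayed interpolation inequalities quoted in the paper's proof---one upgrades the convergence of $\hat h_k'''$ from weak to \emph{strong} in $L^{12/5}((t_1,t_2);L^\infty((a,b)))$ via a Cauchy-sequence argument: the time-H\"{o}lder bound on $h_k'$ (the modified ``(3.16)'') combined with the $L^{p_1}$ bound on $\hat h_k^{(\operatorname*{iv})}$ forces $\{h_k'''\}$ to be Cauchy. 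Strong convergence of $h_k'$, $h_k''$, $h_k'''$ is then what allows all the nonlinear products---including the new $J^0$-coefficients generated by the extra integrations by parts---to pass to the limit. The merely weak convergence of $\hat h_k'''$ you rely on, even combined with your Aubin--Lions argument for $\hat h_k''$, would not suffice once those coefficients are present.
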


\begin{remark}\label{final intrinsic} To express \eqref{weak equation h dot} in an intrinsic way, besides the functions $s(t,\cdot)$, $x(t,\cdot)$, and $\kappa(t,\cdot)$ considered in Remark \ref{remark intrinsic boundary}, for $0=s(t,\alpha(t))\leq s\leq s(t,\beta(t))$ we introduce the normal velocity $\widetilde{V}(t,s)$ of the time dependent curve
$\Gamma_{h(t,\cdot)}$ at the point corresponding to the arclength parameter $s$, given by
\[
\widetilde{V}(t,s):=\frac{\dot{h}(t,x(t,s))}
{\sqrt{1+\big(h^{\prime}(t,x(t,s))\big)^{2}}}\,.
\]
Moreover, we introduce the chemical potential
$\zeta(t,\cdot)\colon \big(s(t,\alpha(t)), s(t,\beta(t))\big)\rightarrow\mathbb{R}$ given by%
\begin{equation}\nonumber
\zeta(t,s):=-\gamma\kappa(t,s)+\nu_{0}\Bigl(\partial_{ss}\kappa(t,s)+\frac{\kappa(t,s)^{3}}%
{2}\Bigr)+\widetilde{W}(t,s)\,,\label{chemical potential}
\end{equation}
where
\[
\widetilde{W}(t,s)   :=W\big(Eu(t,x(t,s),h(t,x(t,s))\big)\,.
\]
By direct computations (see \cite[Remark 3.2 and Lemma 6.7]{fonseca2012motion}) we obtain that \eqref{weak equation h dot} is equivalent to
\begin{equation}
\widetilde{V}(t,\cdot)=\partial_{ss}\zeta(t,\cdot)
\nonumber%
\end{equation}
in $\mathcal{D}\big((s(t,\alpha(t)),s(t,\beta(t))\big)$.
\end{remark}

\begin{proof}
By the continuity of $\alpha$ and $\beta$ (see Proposition \ref{rtyu8}), it suffices to prove that,  given
$a<b$ and a time interval $[t_{1},t_{2}]$ such that $\alpha(t)<a<b<\beta(t)$
for all $t\in\lbrack t_{1},t_{2}]$, the equality \eqref{weak equation h dot}  holds 
in $\mathcal{D}^{\prime}((a,b))$ for a.e. $t\in\lbrack t_{1},t_{2}]$. Let
$\delta>0$ be such that $\alpha(t)+3\delta<a<b<\beta(t)-3\delta$ for all
$t\in\lbrack t_{1},t_{2}]$. By \eqref{alpha k converge uniformly}, $\alpha
_{k}(t)+2\delta<a<b<\beta_{k}(t)-2\delta$ for all $t\in\lbrack t_{1},t_{2}]$
and all $k\geq  k^{*}$ for some $ k^{*}$. Hence, by Proposition
\ref{proposition functions away}  there exist $\zeta>0$  and $\hat{k}\ge k^{*}$  such that
\[
h_{k}(t,x)\geq\zeta\quad\text{for all }t\in\lbrack t_{1},t_{2}]\,,\text{ }%
x\in\lbrack a-\delta,b+\delta]\,,\text{ and }k\geq \hat{k}\,.
\]
Letting $k\rightarrow\infty$, by
\eqref{hk uniformly} we obtain that%
\[
h(t,x)\geq\zeta\quad\text{for all }t\in\lbrack t_{1},t_{2}]\,,\text{ }%
x\in\lbrack a-\delta,b+\delta]\,.
\]
In view of these inequalities  and of \eqref{equation h}  we can repeat the proof of
\cite[Theorem 3.8]{fonseca2012motion} to prove \eqref{weak equation h dot}\ in
$\mathcal{D}^{\prime}((a,b))$ for a.e. $t\in\lbrack t_{1},t_{2}]$. We observe
that since here we do not have periodic boundary conditions in $x$, the
argument used in \cite{fonseca2012motion} needs to be modified accordingly. To
be precise, the inequality (3.16) in \cite{fonseca2012motion} must be replaced
by%
\begin{align*}
\Vert h_{k}^{\prime}(\tau_{2},\cdot)-h_{k}^{\prime}(\tau_{1},\cdot
)\Vert_{L^{\infty}((a,b))}  &  \leq C\Vert h_{k}^{\prime\prime}(\tau_{2}%
,\cdot)-h_{k}^{\prime\prime}(\tau_{1},\cdot)\Vert_{L^{2}((a,b))}^{3/4}\Vert
h_{k}(\tau_{2},\cdot)-h_{k}(\tau_{1},\cdot)\Vert_{L^{2}((a,b))}^{1/4}\\
&  \quad+C\Vert h_{k}(\tau_{2},\cdot)-h_{k}(\tau_{1},\cdot)\Vert
_{L^{2}((a,b))}\\
&  \leq C|\tau_{2}-\tau_{1}|^{1/32}+C|\tau_{2}-\tau_{1}|^{1/8}%
\end{align*}
for every $\tau_{1},\tau_{2}\in\lbrack t_{1},t_{2}]$. Similarly, in the proof
of \cite[Corollary 3.7]{fonseca2012motion} the second displayed inequality
must be replaced by%
\begin{align*}
\int_{t_{1}}^{t_{2}}\Vert h_{n}^{\prime\prime\prime}(t,\cdot)-h_{m}%
^{\prime\prime\prime}(t,\cdot)\Vert_{L^{\infty}((a,b))}^{12/5}dt  &  \leq
C\sup_{t\in\lbrack t_{1},t_{2}]}\Vert h_{n}^{\prime}(t,\cdot)-h_{m}^{\prime
}(t,\cdot)\Vert_{L^{\infty}((a,b))}^{2/5}\\
&  \quad+\sup_{t\in\lbrack t_{1},t_{2}]}\Vert h_{n}^{\prime}(t,\cdot
)-h_{m}^{\prime}(t,\cdot)\Vert_{L^{\infty}((a,b))}^{12/5}\rightarrow0
\end{align*}
as $n,m\rightarrow\infty$.
\end{proof}

The following theorem summarizes the main results obtained in this section.

\begin{theorem} Under the assumptions \eqref{assumption 1}--\eqref{Lip < L0},  there exists $T>0$ such that the following items hold:
\begin{itemize}
\item[(i)] there exist 
two functions $\alpha, \beta\in H^{1}((0,T))$, with $\alpha(t)<\beta(t)$ for every $t\in[0,T]$, such that 
$\alpha(0)=\alpha_{0}$, $\beta(0)=\beta_{0}$, and 
\begin{equation}
\beta(t)-\alpha(t)\geq\sqrt{2A_{0}/L_{0}}\quad\text{for every }t\in [0,T]\,.
\label{9.73}
\end{equation}
\item[(ii)]  There exist $1<p_{1}<6/5$ and  a continuous function $h\geq0$, defined for $t\in [0,T]$ and $x\in[\alpha(t),\beta(t)]$, such that $h(0,x)=h_{0}(x)$ for every $x\in [\alpha_{0},\beta_{0}]$, 
$h(t,x)>0$ for every $t\in[0,T]$ and every $x\in (\alpha(t),\beta(t))$, $h(t,\alpha(t))=h(t,\beta(t))=0$,   $h^{\prime}(t,\alpha(t))>0$, and $h^{\prime}(t,\beta(t))<0$ for every $t\in[0,T]$,
and
\begin{gather}
 h(t,\cdot)\in H^{2}((\alpha(t),\beta(t)))
\ \text{ for every }t\in[0,T]\,,
\label{9.74}
\\
h(t,\cdot)\in W^{4,p_{1}}((\alpha(t),\beta(t)))
\text{ for a.e.\ }t\in[0,T]\,,
\label{9.78}
\\
\int_{\alpha(t)}^{\beta(t)}h(t,x)\,dx=A_{0}
\quad\text{for every }t\in[0,T]\,,
\label{h average bis}
\\
\operatorname*{Lip}h(t,\cdot) < L_{0}\quad\text{for every } t\in[0,T]\,.
\label{9.77}
\end{gather}
Moreover, if we denote by $h_{\ast}$ the extension of $h$ obtained by setting $h_{\ast}(t,x):=0$ for $x\in\mathbb{R}\setminus(\alpha(t),\beta(t))$, then
\begin{equation}
h_{\ast}\in
C^{0,3/10}([0,T];L^{2}(\mathbb{R}))\cap H^{1}((0,T);H^{-1}(\mathbb{R}))\,.
\label{9.79}
\end{equation}
\item[(iii)] The function $u$ introduced in \eqref{wer1} is such that
\begin{equation}
u(t,\cdot,\cdot)\in C^{3,1-1/p_{1}}(\overline{\Omega}^{a,b}_{h})\ \text{ for a.e.\ }t\in[0,T]\text{ and }\alpha(t)<a<b<\beta(t)\,,
\label{9.84}
\end{equation}
and $u(t,\cdot,\cdot)$ solves the boundary value problem
\begin{equation}
\left\{
 {\setstretch{1.1}
\begin{array}
[c]{ll}%
-\operatorname{div}\mathbb{C}Eu(t,x,y)
=0 & \text{in }\Omega_{h(t,\cdot)}%
\,,\\
\mathbb{C}Eu(t,x,h(t,x)) \nu^h(t,x)=0 & \text{for }x\in(\alpha(t),\beta(t)
)\,,\\
u(t,x,0)=(e_{0}x,0) & \text{for }x\in(\alpha(t),\beta(t))\,,
\end{array}
}
\right.\label{el7777}
\end{equation}
for a.e. $t\in (0,T)$.
\item[(iv)] Considering the functions $s$, $\kappa$, $\theta_\alpha$, $\theta_\beta$, and $\zeta$ introduced in  \eqref{s(x)}, \eqref{kappa}, \eqref{theta1}, and \eqref{chemical potential},  respectively, we have
\begin{gather}
\sigma_{0}\dot{\alpha}(t)=\gamma\cos\theta_{\alpha}(t)-\gamma_{0}%
+\nu_{0}\partial_{s}\kappa
\big(t,s(t,\alpha(t))\big)\sin\theta_{\alpha}(t)\ \text{ for a.e.\ }t\in[0,T]\,,
\label{9.81}\\
\sigma_{0}\dot{\beta}(t)=-\gamma\cos\theta_{\beta}(t)+\gamma_{0}
-\nu_{0}\partial_{s}\kappa
\big(t,s(t,\beta(t))\big)\sin\theta_{\beta}(t)\ \text{ for a.e.\ }t\in[0,T]\,,
\label{9.82}
\\
\widetilde{V}(t,\cdot)=\partial_{ss}\zeta(t,\cdot)
\ \text{ in }\ \mathcal{D}\big((s(t,\alpha(t)),s(t,\beta(t))\big)\ \text{ for a.e.\ }t\in[0,T] \,.
\label{9.85}
\end{gather}

\end{itemize}

\end{theorem}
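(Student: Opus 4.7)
The plan is to set $T:=T_{1}$, with $T_{1}$ given by Theorem \ref{theorem lipschitz}, and then assemble the conclusions by invoking the results developed throughout Section \ref{section 9}. The proof proposal is essentially bookkeeping: almost every assertion has already been established, so the main task is to match each item (i)--(iv) to the correct earlier statement and verify that the hypotheses of the theorems applied propagate from \eqref{assumption 1}--\eqref{Lip < L0}. Since \eqref{derivatives 0 not zero} together with \eqref{Lip < L0} implies that the assumptions of Theorems \ref{theorem positive} and \ref{theorem lipschitz} are satisfied, we have access to all intermediate results on $[0,T_{1}]$.

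For item (i), I would invoke Proposition \ref{rtyu8}: the functions $\alpha$ and $\beta$ constructed there belong to $H^{1}((0,T))$, satisfy $\alpha(0)=\alpha_{0}$, $\beta(0)=\beta_{0}$, and the lower bound \eqref{9.73} follows from \eqref{tyu88}. For item (ii), the function $h$ is the restriction of the function $h_{\ast}$ produced in Proposition \ref{proposition existence h}; its continuity in $(t,x)$ follows from Lemma \ref{lemma h star continuous in time} combined with the uniform Lipschitz bound and the area constraint gives \eqref{h average bis}. The $H^{2}$-regularity \eqref{9.74} is \eqref{h H2 bound}, and \eqref{9.77} is the content of Theorem \ref{theorem lipschitz}. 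Positivity of $h$ on $(\alpha(t),\beta(t))$ and the sign of $h'$ at the endpoints come from Theorem \ref{theorem positive}. For the space regularity \eqref{9.78}, I would set $p_{1}:=\min\{6/5,p_{0}/(4-2p_{0})\}$ as in \eqref{p1 and q1} and use Theorem \ref{theorem 9.19}, which establishes $\int_{0}^{T_{1}}\Vert \hat{h}_{k}^{(\mathrm{iv})}(t,\cdot)\Vert_{L^{p_{1}}}^{p_{1}}dt\leq M_{4}$, so by Fatou $h\in W^{4,p_{1}}((\alpha(t),\beta(t)))$ for a.e.\ $t$. The temporal regularity \eqref{9.79} is exactly \eqref{h* in H-1}, together with the H\"older estimate in Proposition \ref{proposition holder}.

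For item (iii), given \eqref{9.77} and the positivity of $h(t,\cdot)$ on $(\alpha(t),\beta(t))$, the minimizer $u(t,\cdot,\cdot)$ of \eqref{wer1} satisfies the boundary value problem \eqref{el7777} by the standard first variation (as in Step 1 of the proof of Theorem \ref{theorem euler-lagrange}). Since for a.e.\ $t$ we have $h(t,\cdot)\in W^{4,p_{1}}((\alpha(t),\beta(t)))\hookrightarrow C^{3,1-1/p_{1}}([\alpha(t),\beta(t)])$, elliptic regularity (\cite[Theorem 9.3]{agmon-douglis-nirenbergII}), applied in interior strips $\overline{\Omega}_{h}^{a,b}$ as in Theorem \ref{theorem euler-lagrange}, yields \eqref{9.84}. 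The identification of the limit $u$ as this minimizer, used implicitly when taking $k\to\infty$, is Proposition \ref{proposition convergence u}.

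For item (iv), equations \eqref{9.81} and \eqref{9.82} are Theorem \ref{uyt01} reformulated via Remark \ref{remark intrinsic boundary}, using the definitions \eqref{s(x)}, \eqref{kappa}, and \eqref{theta1}; note that $h''(t,\alpha(t))=h''(t,\beta(t))=0$ by passing to the limit in Theorem \ref{theorem endpoints}, so the reformulation is exact. Equation \eqref{9.85} is the intrinsic form of \eqref{weak equation h dot} in Theorem \ref{final theorem}, as spelled out in Remark \ref{final intrinsic}, once one notes that the definitions of $\kappa$, $\widetilde{V}$, and $\zeta$ are recalled from Section \ref{sec1}. The only non-routine point to check is that the weak derivative $\dot{h}$ appearing in \eqref{weak equation h dot} matches the normal velocity $\widetilde{V}$ via the geometric identity $\widetilde{V}=\dot{h}/J$, which is precisely the computation reproduced from \cite[Remark 3.2 and Lemma 6.7]{fonseca2012motion}. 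The main (already resolved) obstacle in this proof chain is the passage from the discrete Euler--Lagrange equation \eqref{equation x l1} to its limit form, carried out in Theorem \ref{uyt01} and requiring the weak convergence \eqref{weak convergence} of $\hat{h}_{k}'''(\cdot,\hat\alpha_{k}(\cdot))$; for the present theorem it suffices to quote it.
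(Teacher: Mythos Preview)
Your proposal is correct and follows essentially the same bookkeeping approach as the paper: set $T=T_{1}$ from Theorem \ref{theorem lipschitz}, and then read off each item from the corresponding earlier result (Proposition \ref{rtyu8} for (i); Proposition \ref{proposition existence h}, Theorems \ref{theorem positive}, \ref{theorem lipschitz}, \ref{theorem 9.19}, and \eqref{h* in H-1} for (ii); elliptic regularity plus first variation for (iii); Theorem \ref{uyt01} with Remark \ref{remark intrinsic boundary} and Theorem \ref{final theorem} with Remark \ref{final intrinsic} for (iv)). Your extra remarks on Fatou, Proposition \ref{proposition convergence u}, and the vanishing of $h''$ at the endpoints are harmless elaborations, though the latter two are not actually needed here since $u$ in item (iii) is defined directly by \eqref{wer1} and Remark \ref{remark intrinsic boundary} handles the intrinsic rewriting without invoking $h''=0$ at the endpoints.
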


\begin{proof} Item (i) follows from Proposition \ref{rtyu8}. 

Let $h$ and $h_{\ast}$ be the functions introduced in Proposition \ref{proposition existence h} and let $T$ be the constant $T_{1}$ introduced in Theorem \ref{theorem lipschitz}. By Proposition \ref{proposition existence h} and Theorem \ref{theorem positive}, we have that $h(t,\cdot)$ is strictly positive in $(\alpha(t),\beta(t))$ and vanishes at the endpoints, with $h^{\prime}(t,\alpha(t))>0$, and $h^{\prime}(t,\beta(t))<0$, for every $t\in[0,T]$. Property  \eqref{9.74} follows from \eqref{h H2 bound}; \eqref{9.78} from \eqref{6002} and \eqref{bound bar h iv}; \eqref{9.77} from \eqref{lipliplip}; \eqref{h average bis} from \eqref{h average}; and 
\eqref{9.79} from Proposition \ref{proposition existence h}
and \eqref{h* in H-1}.

Let $u$ be the function introduced in \eqref{wer1}. Then \eqref{9.84} follows from elliptic regularity (\cite[Theorem 9.3]{agmon-douglis-nirenbergII}), since
$h(t,\cdot)\in C^{3,1-1/p_{1}}([\alpha(t),\beta(t)])$ for a.e.\ $t\in [0,T]$ by \eqref{9.78}. In turn, by taking variations in \eqref{wer1}, we obtain that \eqref{el7777} holds.

Finally, considering the functions $s$, $\kappa$, $\theta_\alpha$, $\theta_\beta$, and $\zeta$ introduced in  \eqref{s(x)}, \eqref{kappa}, \eqref{theta1}, and \eqref{chemical potential},  respectively, we have that
\eqref{9.81} and \eqref{9.82} follow from Theorem \ref{uyt01} and Remark \ref{remark intrinsic boundary} , while \eqref{9.85} follows from
Theorem \ref{final theorem} and Remark \ref{final intrinsic}.
\end{proof}

\backmatter

\bmhead{Acknowledgements}
The research of G. Dal Maso  was supported by the National Research Project (PRIN  2017BTM7SN) 
``Variational Methods for Stationary and Evolution Problems with Singularities and 
 Interfaces", funded by the Italian Ministry of University and Research, that  of I. Fonseca by the National Science
Foundation under grants No. DMS-2205627 and 2108784, and that of G. Leoni under grant No.
DMS-2108784. 
 G. Dal Maso  is member of the Gruppo Nazionale per 
l'Analisi Matematica, la Probabilit\`a e le loro Applicazioni (GNAMPA) of the 
Istituto Nazionale di Alta Matematica (INdAM).
G. Leoni would like to thank Ian Tice for useful conversations on
the subject of this paper.
\section*{Data availability}

This research article does not involve empirical data. All results and analyses presented herein are derived from theoretical developments, mathematical proofs, and computational simulations. Therefore, there are no datasets to be made publicly available.



\bibliography{dewetting-references}

\end{document}